\newtheorem{theorem}{Theorem}
\newtheorem{lemma}[theorem]{Lemma}
\newtheorem{proposition}[theorem]{Proposition}
\newtheorem{definition}[theorem]{Definition}
\newtheorem{remark}[theorem]{Remark}
\newcommand{\dx} {\displaystyle\mathrm{d}x}
\newcommand{\dt} {\displaystyle\mathrm{d}t}
\newcommand{\ds} {\displaystyle\mathrm{d}s}
\def\O{\mathcal O}
\def\S{\mathcal S}
\def\K{\mathcal K}
\def\N{\mathcal N}
\def\H{\textbf{H}}
\def\L{\textbf{L}}
\def\div{\text{div}}
\begin{document}

\title{Inverse problem for the Navier-Stokes equations and identification of immersed obstacles in the Mediterranean Sea}

\author{Mourad Hrizi}
\address[Mourad Hrizi]{Department of Mathematics, High Institute of Applied Mathematics and Informatics of Kairouan,
University of Kairouan, Avenue Assad
Iben Fourat, 3100 Kairouan, Tunisia.}
\email{mourad-hrizi@hotmail.fr}

\author{Marwa Ouni}
\address[Marwa Ouni]{ University of Monastir, Faculty of Sciences of Monastir (FSM), UR13ES64, Analysis and Control of PDEs, 5019 Monastir, Tunisia.}
\email{ounimarwa03@gmail.com}

\author{Maatoug Hassine}
\address[Maatoug Hassine]{ University of Monastir, Faculty of Sciences of Monastir (FSM), UR13ES64, Analysis and Control of PDEs, 5019 Monastir, Tunisia.}
\email{maatoug.hassine@enit.rnu.tn}

\begin{abstract}
This paper presents a theoretical and numerical investigation of object detection in a fluid governed by the three-dimensional evolutionary Navier--Stokes equations. To solve this inverse problem, we assume that interior velocity measurements are available only within a localized subregion of the fluid domain. First, we present an identifiability result. We then formulate the problem as a shape optimization task: to identify the obstacle, we minimize a nonlinear least-squares criterion with a regularization term that penalizes the perimeter of the obstacle to be identified. We prove the existence and stability of a minimizer of the least-squares functional. To recover the unknown obstacle, we present a non-iterative identification method based on the topological derivative. The corresponding asymptotic expansion of the least-squares functional is computed in a straightforward manner using a penalty method. Finally, as a realistic application, we demonstrate the robustness and effectiveness of the proposed non-iterative procedure through numerical experiments using the \textsc{INSTMCOTRHD} ocean model, which incorporates realistic Mediterranean bathymetry, stratification, and forcing conditions.
\end{abstract}

\subjclass{49Q10, 35R30, 49Q12, 76D55}

\keywords{Geometric inverse problem, topological sensitivity analysis, topological gradient, penalization method, Navier-Stokes equations, INSTMCOTRHD ocean model }

\maketitle
\pagestyle{myheadings} \thispagestyle{plain} \markboth{}{}
\tableofcontents

\section{Introduction}

The identification of obstacles immersed in fluids is a problem of both theoretical and practical significance, with applications spanning environmental monitoring, medical imaging, autonomous underwater navigation, and the detection of aquatic mines. A timely example is the recent dam collapse in Ukraine, which displaced previously buried landmines into flooded areas, creating an urgent demand for robust detection methods. In large-scale marine environments such as the Mediterranean Sea, direct observation and localization of submerged objects are often infeasible due to poor visibility, significant depth, and limited accessibility. Under such constraints, inverse modeling techniques—particularly those grounded in fluid dynamics—provide a compelling alternative. By analyzing perturbations in the surrounding flow, these methods enable the reconstruction of hidden inclusions from partial or indirect measurements, such as velocity or pressure data obtained in interior or boundary regions of the fluid domain.

Numerous studies have addressed the identification of obstacles in various fluid regimes, employing mathematical models based on the Stokes, Oseen, and Navier–Stokes equations. For example, Alvarez et al.~\cite{AlvarezIPPIP2005} studied the reconstruction of an inaccessible rigid body $\omega^*$ immersed in a viscous fluid within a bounded domain $\Omega$, using velocity and Cauchy force measurements on $\partial\Omega$. Under suitable smoothness assumptions, they established identifiability for both stationary and time-dependent flows governed by the Stokes and Navier–Stokes systems, along with directional stability estimates. In contrast, Badra et al.~\cite{badra2011detecting} demonstrated the instability of such identification in stationary Stokes flow when only Dirichlet or Neumann boundary data are available.  Extending this analysis to the nonlinear setting, Caubet \cite{caubet2013instability} investigates the problem under the stationary Navier--Stokes equations with non-homogeneous Dirichlet boundary conditions. In a related direction, Caubet and Dambrine \cite{caubet2013stability144} study the stability of equilibrium obstacle shapes in the context of energy dissipation minimization governed by the Stokes equations (i.e., the drag minimization problem).
Building on the foundational work of Alessandrini et al. \cite{alessandrini2002detecting}, Beretta et al. \cite{EE11} derive a quantitative estimate for the size of an immersed obstacle in a viscous fluid governed by the  Stokes system, using velocity and Cauchy stress data collected from the outer boundary. In a related contribution, Heck et al. \cite{heck2007reconstruction} reconstruct obstacles in a bounded domain filled with an incompressible fluid by constructing complex geometrical optics (CGO) solutions for the stationary Stokes system with variable viscosity.
Further contributions by Caubet et al. \cite{CaubetIP2012,CaubetIPI2016} focus on the detection of small obstacles immersed in two- and three-dimensional Stokes flows. Their method combines the Kohn--Vogelius formulation with the topological derivative approach, yielding an efficient and theoretically grounded framework for shape reconstruction. More recently, Rabago et al. \cite{rabago2025detecting} revisited the same inverse identification problem addressed in \cite{CaubetIP2012,CaubetIPI2016}, but introduced a novel solution technique based on the coupled complex boundary method (CCBM). This alternative formulation offers improved numerical stability and computational efficiency, particularly for problems involving complex geometries and limited data. In \cite{ikehata2025integrating}, Ikehata develops an integrated theoretical framework combining the probe method and singular source techniques to address an inverse obstacle problem governed by the Stokes system in a bounded domain. In the context of ideal (inviscid) fluids, Conca et al. study the detection of moving obstacles in \cite{conca2008detecting14}, where they show that, for spherical obstacles, both the position and velocity of the center of mass can be recovered from a single boundary measurement. In \cite{conca2010detection14}, they demonstrate using complex analysis that such identifiability does not hold for arbitrary shapes. However, they extend the analysis to moving ellipses, proving that partial detection is possible when the solid exhibits certain symmetry properties. 
On the other hand, the identification of obstacles immersed in an Oseen fluid is studied in \cite{karageorghis2020identification14}, where the authors demonstrate that the shape of the obstacles can be numerically reconstructed from boundary measurements in the stationary regime, using the Method of Fundamental Solutions (MFS). The issue of unique identifiability for this inverse obstacle problem is further examined by Kress and Meyer in \cite{kress2000inverse14}.

In all these works, identification predominantly relies on boundary data, which are relatively easy to obtain but can yield severely ill-posed problems when the obstacle is deeply embedded or its boundary influence is weak. This often limits reconstruction stability and resolution. In contrast, interior measurements—collected by sensors within the fluid—offer more localized information and typically improve accuracy and robustness. However, such measurements are harder to acquire, especially in deep or inaccessible environments.

Motivated by these challenges, this work addresses the problem of detecting objects immersed in a three-dimensional, time-dependent Navier--Stokes fluid using interior velocity measurements. The aim is to recover both the location and the number of submerged obstacles from velocity data collected within a localized subregion of the fluid domain. This inverse problem is well known to be ill-posed \cite{AlvarezIPPIP2005}.  In this paper, we make substantial contributions to both the mathematical analysis and numerical treatment of this challenging problem. On the theoretical side, we first establish the uniqueness of the solution to the inverse problem. To identify the unknown obstacle, we reformulate the problem as an optimization problem that minimizes a least-squares functional measuring the discrepancy between the observed velocity field and the solution of the evolutionary Navier--Stokes equations restricted to the observation region. To further improve stability, we enhance the cost functional with a regularization term penalizing the perimeter of the unknown obstacle.  We then address two fundamental questions: (i) the existence of an optimal solution, and (ii) its stability under small perturbations of the measured data. This analysis requires proving the continuity of the direct problem with respect to variations of the obstacle in the Hausdorff topology, which we establish using Mosco convergence~\cite{BucurBook2005,bucur2001continuity}.

It is well known that a perimeter penalty corresponds to total variation (TV) regularization. However, numerically solving an optimization problem involving the TV term is challenging due to its non-differentiability. Various remedies have been proposed in the literature, including phase-field relaxations~\cite{aspri2022phase,beretta2018detection,bourdin2003design,rondi2011reconstruction} to address both non-convexity and non-differentiability, as well as reconstruction methods based on classical shape derivatives~\cite{afraites2022new,afraites2022coupled,DelfourBook2001,HenrotBook2005,SokolowskiBook1992}. Most of these strategies are iterative and require an initial guess for the obstacle’s location and shape. In contrast, the second part of the present work introduces a self-regularized, non-iterative approach based on topological sensitivity analysis, which operates without any \emph{a priori} knowledge of the obstacle’s position or geometry. The key idea is to compute an asymptotic expansion of the least-squares functional with respect to the insertion of an infinitesimal topological perturbation in the domain. The leading term in this expansion defines the \emph{topological gradient}, an indicator function that reveals the presence of hidden obstacles. Exploiting this quantity, we develop a fast, one-shot detection algorithm that localizes obstacles without iterative refinement. The effectiveness of the proposed method is validated through numerical experiments in a realistic Mediterranean Sea configuration (see Figure \ref{DD14}), using the three-dimensional ocean circulation model INSTMCOTRHD~\cite{alioua,marwa}. This model, built upon the Princeton Ocean Model framework, incorporates realistic bathymetry, stratification, and forcing conditions relevant to oceanographic applications.

\begin{figure}[!htb]
    \centering
    \includegraphics[width=50mm]{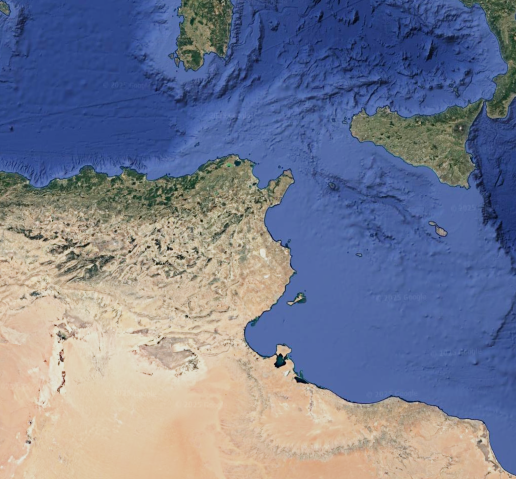}
    \caption{Study domain in the Mediterranean Sea.}
    \label{DD14}
\end{figure}

For completeness, we briefly review the concept of topological sensitivity. Topological sensitivity analysis provides a mathematical framework for quantifying how a shape-dependent cost functional changes in response to small geometric perturbations---such as the introduction of inclusions, cavities, cracks, or localized sources. The concept was first introduced by Schumacher \cite{SchumacherPhD1995} in the context of compliance minimization in linear elasticity. A rigorous mathematical foundation was later established by Sokołowski and Żochowski \cite{sokolowski1999topological}, who analyzed the Laplace operator under circular perturbations. Masmoudi subsequently developed a more general formulation based on the generalized adjoint method and the truncation technique \cite{masmoudi2002topological}. Since then, the framework has been extended to a wide range of partial differential equations \cite{AmstutzJMPA2006,garreau2001topological, bonnet2006topological,masmoudi2005topological, pommier2004topological, samet2003topological,JleliJMPA2015}. 
For a comprehensive overview, see the monograph by Novotny and Sokołowski \cite{novotny2012topological}. This methodology can be viewed as a specific case of asymptotic techniques thoroughly discussed in the books of Ammari and Kang \cite{AmmariBook2004} and Ammari et al. \cite{AmmariBook2013}. Stability and resolution analyses of topological-derivative-based imaging functionals have been carried out by Ammari et al. \cite{AmmariSIAM2012, AmmariSIAM2013}, demonstrating the method’s effectiveness in inverse scattering and elasticity problems; see also related contributions in \cite{GuzinaPRS2015}.\\

\noindent To introduce the main concept of the topological gradient, we consider a shape functional  $\Omega\longmapsto K(\Omega)$ that we aim to minimize, where $\Omega\subset\mathbb{R}^d$ (with $d=2,\,3$) is an open and bounded domain. For a given $\varepsilon>0$, let 
$\Omega\backslash\overline{\mathcal{C}_{z,\varepsilon}}$ represent the perturbed domain formed by removing a small topological perturbation defined as $\mathcal{C}_{z,\varepsilon}=z+\varepsilon\mathcal{C}$  from the original (unperturbed) domain $\Omega$, where $z\in\Omega$ and $\mathcal{C}\subset\mathbb{R}^d$ is a fixed, bounded domain containing the origin. The analysis of topological sensitivity leads to an asymptotic expansion of the shape functional $K$ in the following form :
\begin{align}\label{asy}
K(\Omega\backslash\overline{\mathcal{C}_{z,\varepsilon}})=K(\Omega)+\mu(\varepsilon)D_K(z)+o(\mu(\varepsilon)),
\end{align}
where:
\begin{itemize}
\item $\mu(\varepsilon)$ is a positive function that depends on the size of the geometric perturbation $\varepsilon$ and approaches zero as $\varepsilon$ tends to zero.
\item The function $z\mapsto D_K(z)$ is referred to as the ``topological sensitivity'' or ``topological gradient'' of $K$ at the point $z$. It can be mathematically defined as
\begin{equation}\label{DT-limit}
D_K(z):=\lim_{\varepsilon\rightarrow0}\frac{K(\Omega\backslash\overline{\mathcal{C}_{z,\varepsilon}})-K(\Omega)}{\mu(\varepsilon)}.
\end{equation}
\end{itemize}
In general, the topological gradient $z \mapsto D_K(z)$ provides a spatial map of sensitivity values throughout the domain $\Omega$, thereby serving as an effective indicator for the optimal location of topological changes---such as the insertion or removal of material in shape optimization, or the identification of unknown obstacles, as is the case in this work. In other words, to minimize the cost functional $K$, the optimal location for introducing a small perturbation in $\Omega$ corresponds to the region where $D_K$ attains its most negative values. Specifically, if $D_K(z)<0$, it follows that $$K(\Omega\backslash\overline{\mathcal{C}_{z,\varepsilon}})\leq
K(\Omega)\;\;\; \text{for sufficiently small values of}\;\; \varepsilon.$$ 
A special case occurs when \(\Omega \setminus \overline{\mathcal{C}_{z,\varepsilon}}\) is generated from \(\Omega\) via a family of smooth transformations  
\(
T_\varepsilon = I + \varepsilon \mathcal{V},
\) 
where \(\mathcal{V} : \mathbb{R}^d \to \mathbb{R}^d\) is a Lipschitz vector field and \(\mu(\varepsilon) = \varepsilon\).  
In this case, the limit~\eqref{DT-limit} coincides with the \emph{shape derivative} of \(K\), so the topological gradient can be viewed as a natural generalization of shape derivatives to problems involving topological changes.

The computation of the topological gradient \(D_K\) for the stationary Navier--Stokes equations was addressed in \cite{AmstutzESAIM2005} using a truncation method analogous to that employed in elasticity problems \cite{garreau2001topological}. In contrast, the authors in \cite{hassine2023topological} developed a more elaborate approach---avoiding truncation---to compute \(D_K\) for the non-stationary Navier--Stokes equations. A common difficulty in both works was that \(D_K\) depended on the shape of the inclusion \(\mathcal{C}\), with an explicit formula available only for the specific case \(\mathcal{C} = B(0,1)\). From a numerical perspective, reconstructing the shape of the obstacle is less critical in our setting---unlike determining its location---since we are working within a very large fluid domain (the Mediterranean Sea). Motivated by this, we adopt here a simpler approach based on a penalization technique combined with standard \emph{a priori} estimates of the state variables to compute the topological gradient for our problem. The main idea is to remove the Dirichlet boundary condition on \(\partial \mathcal{C}_{z,\varepsilon}\) by introducing a penalty parameter \(k\) that is large inside \(\mathcal{C}_{z,\varepsilon}\) and vanishes in the background \(\Omega \setminus \overline{\mathcal{C}_{z,\varepsilon}}\). This is reminiscent of the classical penalization technique used in finite element methods to enforce Dirichlet conditions \cite{angot1999penalization}.

This paper is organized as follows. We first introduce the general notation that we
adopt. Then, in Section~\ref{sec:problem-setting}, we describe in detail the considered inverse problem and state its uniqueness result. 
Section~\ref{sec:optimization-problem} reformulates the inverse problem as a topology optimization problem, while Section~\ref{well-posedness-weak-solution-direct-problem} addresses the existence and stability of an optimal solution. In Section~\ref{sec:gradient-topologique}, we compute the topological sensitivity of the problem using a penalization technique. Section~\ref{one-iter-alghorithm} presents a one-shot detection procedure based on the computed topological gradient and evaluates its robustness and effectiveness through numerical experiments using the \textsc{INSTMCOTRHD} ocean model. Concluding remarks are given in Section~\ref{sec:conclusion}. Finally, Section~\ref{sec:jutification} contains the proof of the uniqueness theorem stated in Section~\ref{sec:problem-setting} and the proof of a convergence result for the penalized problem used in Section~\ref{sec:gradient-topologique}.\\

\paragraph{\bf{General notation}}  
In this paper, we adopt the following notations. Let \( \O \subset \mathbb{R}^3 \) be an open set. The Lebesgue measure of \( \O \) is denoted by \( |\O| \).  We denote by \( L^p(\O) \), \( H^1_0(\O) \), and \( H^m(\O) \) the classical Lebesgue and Sobolev spaces, respectively. For vector-valued function spaces, we use bold notation: \( \mathbf{L}^p(\O) \), \( \mathbf{H}^1_0(\O) \), and \( \mathbf{H}^m(\O) \). We define the divergence-free Sobolev space as  
\begin{align*}  
\H^1_{0,\text{div}}(\O) = \big\{ v \in \H^1_0(\O) \mid \text{div } v = 0 \big\},  
\end{align*}  
with its dual space denoted by \( \big(\H^1_{0,\text{div}}(\O)\big)^\prime \).  The duality pairing between this dual space and \( \H^1_{0,\text{div}}(\O) \) is written as \(\big<\cdot,\cdot\big>_\O\).
For two second-order tensors \( \boldsymbol{A} = \{A_{ij}\} \) and \( \boldsymbol{B} = \{B_{ij}\} \) in the three-dimensional Euclidean space \( \mathbb{R}^3 \), we use the standard notation:  
\begin{equation*}
\boldsymbol{A} : \boldsymbol{B} = \sum_{i, j=1}^{3} A_{ij} B_{ij},  
\quad |\boldsymbol{A}| = \left( \sum_{i, j=1}^{3} A_{ij} A_{ij} \right)^{1 / 2}.  
\end{equation*}  
Additionally, for a vector \( \boldsymbol{a} \in \mathbb{R}^3 \), we define the operations  
\begin{equation*}
\boldsymbol{a} \cdot \boldsymbol{A} \quad \text{and} \quad \boldsymbol{A} \cdot \boldsymbol{a},  
\end{equation*}  
where their components are given by  
\begin{equation*}
(\boldsymbol{a} \cdot \boldsymbol{A})_j = \sum_{i=1}^{3} a_i A_{ij},  
\quad \text{and} \quad  
(\boldsymbol{A} \cdot \boldsymbol{a})_i = \sum_{j=1}^{3} a_j A_{ij}.  
\end{equation*}  
Moreover, given a Banach space \( \mathcal{Y} \) with norm \( \|\cdot\|_\mathcal{Y} \), and an interval \( I = (a_0, b_0) \), we denote by \( L^p(I; \mathcal{Y}) \) the space of functions \( h: I \rightarrow \mathcal{Y} \) such that  
\begin{equation*}
\|h\|_{L^p(I; \mathcal{Y})} =  
\begin{cases}  
\displaystyle\left(\int_{a_0}^{b_0} \|h(t)\|_\mathcal{Y}^p \,\dt \right)^{1/p}, & 1 \leq p < \infty, \\  
\displaystyle\operatorname{ess\,sup}_{t \in I} \|h(t)\|_\mathcal{Y}, & p = \infty,  
\end{cases}  
\end{equation*}  
is finite. Finally, to introduce the upcoming definition of domain regularity, we adopt the following notation. For any \( x \in \mathbb{R}^d \) (with $d=2,\,3$), we write
\[
x = (x^\prime, x_d), \quad \text{where } x^\prime \in \mathbb{R}^{d-1},\; x_d \in \mathbb{R}.
\]
Given \( r > 0 \), we denote by \( \mathcal{B}_r(x) \subset \mathbb{R}^d \) the set
\(
\mathcal{B}_r(x) := \left\{ (x^\prime, x_d) \in \mathbb{R}^d \; ; \; |x^\prime|^2 + x_d^2 < r^2 \right\},
\)
and we denote by \( \mathcal{B}_r^\prime(x^\prime) \subset \mathbb{R}^{d-1} \) the set
\(
\mathcal{B}_r^\prime(x^\prime) := \left\{ x^\prime \in \mathbb{R}^{d-1} \; ; \; |x^\prime|^2 < r^2 \right\}.
\)

\begin{definition}[Definition 2.1 \cite{alessandrini2002detecting}]\label{def-0}
Let $\O$ be a bounded domain in $\mathbb{R}^d$. Given $m,$ $\beta$ with $m\in\mathbb{N}$, $0<\beta\leq1$, we say that the boundary $\partial \O$ is of class $\mathcal{C}^{m,\beta}$ with constants $r_0$, $N_0,$ if for any point $s$ in the boundary $\partial \O$, there exists a rigid transformation of coordinates under which we have that $s$ is mapped to the origin (i.e., $s=0$) and
    \begin{equation*}
        \mathcal{B}_{r_0}(0)\cap\Omega=\big\{x\in \mathcal{B}_{r_0}(0)\; ; \; x_d> \Theta(x^\prime)  \big\},
    \end{equation*}
    where $\Theta$ is a $\mathcal{C}^{m,\beta}$ function on $\mathcal{B}^\prime_{r_0}(0)$, such that 
\begin{align*}
    &\Theta(0)=0,\\
    &\nabla\Theta(0)=0,\quad\text{when}\;m\geq1,\\
    &\big\| \Theta\big\|_{\mathcal{C}^{m,\beta}(\mathcal{B}^\prime_{r_0}(0),\mathbb{R}^d)}\leq r_0N_0.
\end{align*}
When $m=0$ and $\beta=1$ (i.e. $\partial \O\in \mathcal{C}^{0,1}$ with constants $r_0$, $N_0$), we also say that $\partial \O$ is of Lipschitz class with constants $r_0$, $N_0$.
\end{definition}

\section{The problem setting} \label{sec:problem-setting}
 Let $T>0$ and $\Omega\subset\mathbb{R}^3$ be a bounded open domain of class $\mathcal{C}^{2,1}$ with constants $r_0$ and $N_0$, containing an incompressible Newtonian fluid. Within this fluid flow domain $\Omega$, we assume the existence of an obstacle (rigid body) $\omega^*$ immersed in it (i.e. $\omega^*\subset\subset\Omega$). For simplicity, and without any loss of generality, we assume that the fluid density is equal to one. The fluid motion in the spatial-temporal domain $(\Omega\backslash\overline{\omega^*})\times (0,\,T)$ is governed by the non-stationary Navier-Stokes equations. For a given source term \(\mathcal{G}\), typically representing external forces such as gravity, and boundary data \(\phi \in \mathcal{C}^1([0,T]; \H^{\frac{3}{2}}(\partial \Omega))\) satisfying the flux compatibility condition
 \begin{equation}\label{condition-normalization}
    \displaystyle\int_{\partial\Omega}\phi\cdot\textbf{n}\,\ds=0,
  \end{equation}  
    the velocity field $u$ and pressure field $\pi$ satisfy the following system:
\begin{equation}\label{direct-problem}
\left\{
\begin{array}{rll}
\displaystyle \frac {\partial u}{\partial t}-\nu
\Delta u+ \N(u)+ \nabla \pi
 &= \mathcal{G} &\mbox{ in }  \,\,(\Omega\backslash\overline{\omega^*})\times (0,\,T) ,\\
\mbox{div}\, u &=  0 & \mbox{ in } \,\, (\Omega\backslash\overline{\omega^*}) \times (0,\,T) ,\\
u &= \phi & \mbox{ on }\,\,  \partial\Omega \times (0,\,T) , \\
u &=  0 & \mbox{ on } \,\, \partial\omega^* \times (0,\,T) ,\\
u(.,0)&= 0&\mbox{ in } \,\,  \Omega\backslash\overline{\omega^*}.
\end{array}
 \right.
\end{equation}
In this context, \( \mathbf{n} \) denotes the unit outward normal vector along the boundary \( \partial \Omega \). The parameter \( \nu > 0 \) represents the kinematic viscosity coefficient of the fluid, which can be interpreted as \( 1/Re \), where \( Re \) is the Reynolds number. Moreover, the convective term \( \mathcal{N}(u) \) is defined as
$$
\N(u):=(u\cdot\nabla)u=\big(\sum_{j=1}^{3}u_j {\partial}/{\partial x_j}\big)u,
$$
with $u_j$ is the $j^{th}$ component of the velocity field $u$ and ${\partial}/{\partial x_j}$ is the partial derivative with respect the $j^{th}$ coordinate $x_j$.


\vspace{0.3cm}

The forward problem consists of determining the velocity field \( u \) and the pressure field \( \pi \) in the fluid domain \( (\Omega \setminus \overline{\omega^*}) \times (0, T) \), given the Dirichlet boundary data \( \phi \), the source term \( \mathcal{G} \), and the obstacle \( \omega^* \). To analyze the well-posedness of this problem, we adopt the classical decomposition approach introduced by Leray \cite{leray1933etude} and further developed by Hopf \cite{hopf1940allgemeiner,hopf1955nonlinear}. Specifically, we write
\[
u = v + V,
\]
where \( V \) is a sufficiently smooth, divergence-free (solenoidal) vector field in \( \Omega \setminus \overline{\omega^*} \), satisfying the boundary conditions \( V = \phi \) on \( \partial\Omega \) and \( V = 0 \) on \( \partial\omega^* \). For a more comprehensive treatment of the existence of such a vector field $V$, the reader is referred to Lemma IV.2.3 in \cite{girault2012finite} and Lemma IX.4.2 in \cite{galdi2011introduction}. Consequently, the auxiliary field \( v \) satisfies a Navier--Stokes system with homogeneous Dirichlet boundary conditions in the perforated domain \( \Omega \setminus \overline{\omega^*} \). Therefore, proving the existence and uniqueness of a solution to the original system \eqref{direct-problem} reduces to establishing the well-posedness of the problem for \( v \). The existence and uniqueness of solutions to this homogeneous Navier--Stokes system have been extensively studied in the literature. We refer, for example, to the foundational works of Leray \cite{leray1933etude,leray1934essai, leray1934mouvement}, among many others. In her monograph \cite{ladyzhenskaya1969mathematical}, Ladyzhenskaya further observed that a Leray--Hopf weak solution of \eqref{direct-problem} is unique in the class \( L^8(0, T ; \L^4(\Omega)) \), which corresponds to a particular case of the Prodi--Serrin condition \cite{prodi1959teorema, Serrin1963TheIV} for the uniqueness of Leray--Hopf solutions:
\begin{align}\label{c-L}
v \in L^p(0, T ; \L^q(\Omega)), \quad \frac{2}{p} + \frac{3}{q} \leq 1, \quad q > 3.
\end{align}
J. L. Lions (1960) later generalized this result, proving that uniqueness holds in any spatial dimension \( d \), provided that \( v \in L^s(0,T; \L^r(\Omega)) \) with
\begin{equation*}
    \frac{2}{s} + \frac{d}{r} \leq 1, \quad \text{if } \Omega \text{ is bounded,}
\end{equation*}
and
\begin{equation*}
    \frac{2}{s} + \frac{d}{r} = 1, \quad \text{if } \Omega \text{ is unbounded.}
\end{equation*}
Moreover, Ladyzhenskaya \cite{ladyzhenskaya1969uniqueness} proved that any Leray--Hopf solution satisfying condition \eqref{c-L} is in fact smooth. This condition, now known as the Ladyzhenskaya--Prodi--Serrin regularity criterion, plays a fundamental role in the theory of incompressible flows. The endpoint case \( L^{\infty}(0, T ; \L^3(\Omega)) \) was later established by Escauriaza, Seregin, and {\v{S}}ver\'ak \cite{escauriaza2003l3} in 2003.  Furthermore, Lions and Masmoudi \cite{lions2001uniqueness} proved the uniqueness of mild and very weak solutions of the Navier--Stokes equations in the space \( \mathcal{C}([0,T); \L^3(\Omega)) \). Notably, Ladyzhenskaya and Kiselev (1957) \cite{olga1963} proved the existence of a weak solution to the Navier--Stokes problem \eqref{direct-problem} under the assumption that the Dirichlet boundary data $\phi$ belongs to the class \( L^{\infty}(0, T ; \L^4(\Omega)) \cap H^1([0,T];\H^1(\Omega)) \), and the source term lies in \( L^2(0,T; (\H^1_{0,\text{div}}(\Omega))') \). For completeness, we also mention the asymptotic regime where the Reynolds number \( Re \) is very large (i.e., viscosity \( \nu \) is small). In this case, Masmoudi \cite{masmoudi2007remarks} proved that the Navier--Stokes system \eqref{direct-problem} with vanishing boundary data (\( \phi = 0 \)) and nonzero initial condition behaves asymptotically like the incompressible Euler system. Related results can be found in the works of Constantin \cite{Constantin1986}, Swann \cite{swann1971convergence}, and Kato \cite{kato1972nonstationary}. In another direction, Masmoudi \cite{masmoudi1998euler} showed that weak solutions of the Navier--Stokes equations with a large Coriolis term converge to the Euler system with damping, as the Rossby number and both horizontal and vertical viscosities tend to zero. This convergence analysis makes use of boundary layer theory, particularly the Ekman layer \cite{ekman1905influence}, under appropriate initial data assumptions.

\medskip

The inverse problem addressed in the present paper involves the identification of an unknown obstacle \( \omega^* \) from partial domain measurements of the velocity field. To formalize this, let \( D^* \subset\subset \Omega \) be a fixed nonempty open set, and consider the following class of admissible obstacles:
\[
\mathcal{D} = \left\{ \omega \subset\subset \Omega \; ; \; \omega \text{ is a Lipschitz open set},\; \Omega \setminus \overline{\omega} \text{ is connected},\; \text{and } \omega \subset\subset D^* \right\}.
\]

\medskip

\noindent\textbf{Inverse Problem.}  
Let \( \Omega_0 \subset \Omega \setminus \overline{D^*} \) be a nonempty open set. The inverse problem studied in this work is to reconstruct an unknown obstacle \( \omega^* \in \mathcal{D} \) from internal measurements of the velocity field \( u \) in \( \Omega_0 \times (0, T) \). More precisely, given measurements \( u_{\text{meas}} \in L^2(0,T; \L^2(\Omega_0)) \), determine \( \omega^* \in \mathcal{D} \) such that
\[
u = u_{\text{meas}} \quad \text{in } \Omega_0 \times (0, T).
\]

\begin{remark}
We say that the measured velocity data \( u_{\text{meas}} \) is compatible if there exists an obstacle \( \omega^* \in \mathcal{D} \) such that the solution \( u \) to the forward problem \eqref{direct-problem}, corresponding to this obstacle, satisfies \( u = u_{\text{meas}} \) in \( \Omega_0 \times (0, T) \). Throughout this work, we assume that the data \( u_{\text{meas}} \) is compatible, i.e., the associated inverse problem admits at least one solution \( \omega^* \).
\end{remark}

In the study of the geometric inverse problem under consideration, three fundamental aspects typically arise: uniqueness (identifiability), stability, and identification (reconstruction). As a first step, we establish the following identifiability result.

\begin{theorem}[Uniqueness]\label{uniqueness} Let $\Omega_0$ be a nonempty open subset of $\Omega\backslash\overline{D^*}$.
Let $\omega_1^*$ and $\omega_2^*$ be two sets in $\mathcal{D}$, \( \mathcal{G} \in L^2(0,T; (\H^1_{0,\mathrm{div}}(\Omega))') \), and \( \phi \in  \mathcal{C}^1([0,T]; \H^{\frac{3}{2}}(\partial \Omega)) \) with $\phi\not=0$, satisfying the flux condition \eqref{condition-normalization}. Let \( (u_\ell, \pi_\ell) \) for \( \ell = 1,2 \) be a solution of 
    \begin{equation}\label{u-ell}
    \left\{
    \begin{array}{rll}
        \displaystyle \frac{\partial u_\ell}{\partial t} - \nu \Delta u_\ell + \N(u_\ell) + \nabla \pi_\ell
        &= \mathcal{G}, &\quad \text{in } (\Omega \setminus \overline{\omega^*_\ell}) \times (0, T), \\[8pt]
        \text{div}\; u_\ell &= 0, &\quad \text{in } (\Omega \setminus \overline{\omega^*_\ell}) \times (0, T), \\[8pt]
        u_\ell &= \phi, &\quad \text{on } \partial\Omega \times (0, T), \\[8pt]
        u_\ell &= 0, &\quad \text{on } \partial\omega^*_\ell \times (0, T), \\[8pt]
        u_\ell(\cdot,0) &= 0, &\quad \text{in } \Omega \setminus \overline{\omega^*_\ell}.
    \end{array}
    \right.
    \end{equation}
    Assume that \( (u_\ell, \pi_\ell) \) are such that
    \begin{equation}\label{obsevtion-condition-1}
      u_1 = u_2 \quad \text{in } \Omega_0 \times (0, T).   
    \end{equation}
    Then, it follows that \( \omega^*_1 = \omega^*_2 \).
\end{theorem}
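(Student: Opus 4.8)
The plan is to reduce the identifiability statement to a unique continuation property for the difference of the two flows, and then to close by a topological argument that exploits the no-slip conditions on the obstacle boundaries together with the hypothesis $\phi\neq 0$.

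First I would work on the joint fluid domain. Set $G:=\Omega\setminus\overline{\omega_1^*\cup\omega_2^*}$ and let $G_0$ be the connected component of $G$ meeting the measurement set $\Omega_0$; there both $(u_1,\pi_1)$ and $(u_2,\pi_2)$ are defined and solve \eqref{u-ell}. Introduce $w:=u_1-u_2$ and $q:=\pi_1-\pi_2$. Subtracting the two systems and rewriting the convective terms as $\N(u_1)-\N(u_2)=(u_1\cdot\nabla)w+(w\cdot\nabla)u_2$, the pair $(w,q)$ solves, in $G_0\times(0,T)$, the linear nonstationary Oseen-type system
\[
\partial_t w-\nu\Delta w+(u_1\cdot\nabla)w+(w\cdot\nabla)u_2+\nabla q=0,\qquad \operatorname{div} w=0,
\]
together with $w=0$ in $\Omega_0\times(0,T)$ by the observation hypothesis \eqref{obsevtion-condition-1}. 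The coefficients $u_1$ and $\nabla u_2$ are under control here: since $\phi\in\mathcal{C}^1([0,T];\H^{\frac{3}{2}}(\partial\Omega))$ and $\mathcal{G}\in L^2(0,T;(\H^1_{0,\mathrm{div}}(\Omega))')$, the Ladyzhenskaya--Prodi--Serrin regularity recalled in the introduction ensures that each $u_\ell$ is smooth enough for the first- and zeroth-order terms to be treated as admissible, bounded lower-order perturbations.

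Second, I would invoke unique continuation for the nonstationary Stokes/Oseen system (Carleman estimates in the spirit of Fabre--Lebeau), after eliminating the pressure, to propagate the vanishing of $w$ from the open set $\Omega_0$ to the whole connected component $G_0$; thus $u_1=u_2$ in $G_0\times(0,T)$. The connectedness built into the admissible class $\mathcal{D}$ (each $\Omega\setminus\overline{\omega_\ell^*}$ is connected) is what guarantees that this propagation reaches the obstacle boundaries and the outer boundary $\partial\Omega$. Then comes the topological contradiction: suppose $\omega_1^*\neq\omega_2^*$. For Lipschitz sets this forces, after possibly relabelling, $\mathcal{U}:=\omega_1^*\setminus\overline{\omega_2^*}\neq\emptyset$. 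A component of $\mathcal{U}$ borders $G_0$ across a relatively open piece $\Gamma\subset\partial\omega_1^*$ lying outside $\overline{\omega_2^*}$; on $\Gamma$ the no-slip condition gives $u_1=0$, while $u_1=u_2$ on the $G_0$-side forces $u_2=0$ on $\Gamma$. Since $\Gamma$ is interior to the fluid domain $\Omega\setminus\overline{\omega_2^*}$ of the second flow and $u_2$ additionally satisfies $u_2=0$ on the part of $\partial\mathcal{U}$ lying on $\partial\omega_2^*$, one obtains that $u_2$ is a solution vanishing on the full boundary of that component. A further unique continuation argument in the connected set $\Omega\setminus\overline{\omega_2^*}$ then propagates $u_2\equiv 0$ up to $\partial\Omega$, contradicting $u_2=\phi\neq0$ on $\partial\Omega\times(0,T)$; hence $\omega_1^*=\omega_2^*$.

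The hard part will be the unique continuation steps. Two points require care: one must verify that the regularity of $u_1,u_2$ coming from the Ladyzhenskaya--Prodi--Serrin criterion is precisely what is needed so that the coefficients of the Oseen system are admissible in the Carleman estimate, and one must eliminate the pressure cleanly so that the estimate applies to the velocity alone. The second delicate ingredient is the final propagation in paragraph three: producing, from the no-slip traces on $\partial\mathcal{U}$, enough vanishing data to trigger unique continuation and to carry $u_2\equiv 0$ all the way to $\partial\Omega$, together with the topological bookkeeping that ensures $G_0$ actually borders the exposed piece $\Gamma$. I expect these two continuation arguments, rather than the algebraic reduction, to be the genuine obstacles, and they are where the connectedness hypotheses on $\mathcal{D}$ and the assumption $\phi\neq 0$ are essential.
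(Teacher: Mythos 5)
Your first step coincides with the paper's: form $w=u_1-u_2$, observe that it solves a linear Oseen-type system with coefficients $u_1$ and $\nabla u_2$ controlled by the local-in-time regularity of the flows, and propagate $w\equiv 0$ from $\Omega_0$ to the connected exterior component by the Fabre--Lebeau unique continuation property (Lemma~\ref{u-c-l}). The genuine gap is in your third paragraph. Having shown that $u_2$ vanishes on the whole boundary of a component $\mathcal{U}$ of $\omega_1^*\setminus\overline{\omega_2^*}$, you invoke ``a further unique continuation argument'' to conclude $u_2\equiv 0$ in $\Omega\setminus\overline{\omega_2^*}$. This step fails as stated: unique continuation of Fabre--Lebeau type requires vanishing on an \emph{open} space-time set, not merely a homogeneous Dirichlet trace on the boundary of a subdomain (Stokes eigenfunctions vanish on the boundary without being identically zero), and from $\Gamma$ you have no control of the conormal/stress trace of $u_2$ from the $\mathcal{U}$ side, so no Cauchy-data continuation is available either.

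The ingredient you are missing, and the one the paper uses, is the zero initial condition. Restricted to $\mathcal{U}$, the field $u_2$ solves the Navier--Stokes initial-boundary value problem with homogeneous Dirichlet data on $\partial\mathcal{U}$ and $u_2(\cdot,0)=0$; multiplying by $u_2$ and integrating by parts (the convective term drops out by incompressibility and the vanishing boundary trace) gives
\begin{equation*}
\frac{\mathrm{d}}{\mathrm{d}t}\int_{\mathcal{U}}|u_2|^2\,\dx=-\,\nu\int_{\mathcal{U}}|\nabla u_2|^2\,\dx\le 0,
\end{equation*}
so the nonnegative, nonincreasing energy starting from zero forces $u_2\equiv 0$ on the \emph{open} set $\mathcal{U}\times(0,T_c)$; only then does Lemma~\ref{u-c-l} apply in $\Omega\setminus\overline{\omega_2^*}$ to reach $\partial\Omega$ and contradict $\phi\neq 0$. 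Two further points the paper handles and you do not: when $\mathcal{U}$ fails to be Lipschitz the integration by parts must be justified by a density argument after checking $u_2|_{\mathcal{U}}\in\H^1_0(\mathcal{U})$ via zero extensions, and when $\Omega\setminus\overline{\omega_1^*\cup\omega_2^*}$ is disconnected one must replace the union by the smallest enclosing set $\mathcal{S}$ with connected complement and enlarge $\mathcal{S}\setminus\overline{\omega_2^*}$ to a Lipschitz set inside $\omega_2^*$ before running the energy argument.
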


\begin{proof}
    For the proof of the uniqueness result stated in Theorem \ref{uniqueness}, we refer the reader to Section \ref{sec:uniqueness}.
\end{proof}

\begin{remark}
The assumption \( \Omega_0 \subset \Omega \setminus \overline{D^*} \) ensures that \( \Omega_0 \cap \omega^*_\ell = \emptyset \) for \( \ell = 1, 2 \), which is a key requirement for establishing the uniqueness result. While a more ideal assumption would be \( \Omega_0 \subset\subset \Omega \setminus \overline{\omega^*_1 \cup \omega^*_2} \), this is not practical, as the obstacles \( \omega^*_1 \) and \( \omega^*_2 \) are unknown. This practical limitation motivates the introduction of the intermediate subdomain \( D^* \subset\subset \Omega \) in the problem formulation.
\end{remark}

To streamline the mathematical framework, we adopt the following assumption throughout the remainder of the paper:

\paragraph{\textbf{Assumption (A1).}}  
The source term \( \mathcal{G} \) is a nontrivial function and is assumed to be sufficiently small.

\begin{remark} The assumption $(\textbf{A}1)$ plays a crucial role in ensuring the existence and uniqueness of solutions to \eqref{direct-problem}; see, for example, \cite[Section 3.5.2]{temam1973theory}. 
\end{remark}

Next, we reformulate the geometric inverse problem of detecting the obstacle \( \omega^* \) as an optimization problem.

\section{Optimization problem}\label{sec:optimization-problem}

The inverse problem of determining $\omega^*$ in \eqref{direct-problem} is known to be ill-posed \cite{AlvarezIPPIP2005,doubova2007identification}. To address this challenge, we reformulate it as a topology optimization problem. To present this optimization framework, we introduce the following class of admissible geometries:
\begin{equation}\label{D-ad}
    \mathcal{D}_{ad}:=\big\{ \omega\subset\subset\Omega\,;\; \omega\text{ is an open set },\; \partial\omega\in\mathcal{C}^{0,1}\;\;\text{with constants } r_0,\; N_0,\text{ and } \omega\subset\subset D^* \big\}.
\end{equation}
Unlike the class $\mathcal{D}$, the admissible class $\mathcal{D}_{ad}$ does not require $\Omega \setminus \overline{\omega}$ to be connected. The mathematical justification for this choice is provided in Remark~\ref{rem-jus}.

In this context, the unknown obstacle $\omega^*$ is determined as the solution to the following optimization problem:
\begin{equation}\label{topopt-problem}
\underset{\omega \in \mathcal{D}_{ad}}{\operatorname{Minimize}}\; \mathcal{K}(\omega)
\quad \text{subject to} \quad \eqref{problem-initial-guess},
\end{equation}
where $\mathcal{K}$ denotes the least-squares cost functional, defined for each admissible obstacle $\omega \in \mathcal{D}_{ad}$ by
\begin{equation}\label{L-S}
\mathcal{K}(\omega) := \int_0^T \int_{\Omega_0} 
\big| u_\omega - u_{\mathrm{meas}} \big|^2 \, \mathrm{d}x \, \mathrm{d}t.
\end{equation}
Here, the velocity field \(u_\omega\) and the associated pressure \(\pi_\omega\) denote 
the solution of the Navier–Stokes system in \(\Omega \setminus \overline{\omega}\):
\begin{equation}\label{problem-initial-guess}
\left\{
\begin{array}{rll}
\displaystyle \frac {\partial u_\omega}{\partial t}-\nu
\Delta u_\omega+ \N(u_\omega)+ \nabla \pi_\omega
 &= \mathcal{G} &\mbox{ in }  \,\,(\Omega\backslash\overline{\omega})\times (0,\,T) ,\\
\mbox{div}\, u_\omega &=  0 & \mbox{ in } \,\, (\Omega\backslash\overline{\omega}) \times (0,\,T) ,\\
u_\omega &= \phi & \mbox{ on }\,\,  \partial\Omega \times (0,\,T) , \\
u_\omega &=  0 & \mbox{ on } \,\, \partial\omega \times (0,\,T) ,\\
u_\omega(.,0)&= 0&\mbox{ in } \,\,  \Omega\backslash\overline{\omega}.
\end{array}
 \right.
\end{equation}

Since the interior observation data \(u_{\mathrm{meas}}\) is compatible, there exists 
\(\omega \in \mathcal{D}_{ad}\) solving the inverse problem. For this \(\omega\), we have 
\(u_\omega = u_{meas}\) in \(\Omega_0 \times (0,T)\), which implies \(\mathcal{K}(\omega) = 0\); 
hence, \(\omega\) is a minimizer of \(\mathcal{K}\). 
Let \(\omega \in \mathcal{D}_{ad}\) be a solution of \eqref{topopt-problem} and 
\(\widetilde{\omega} \in \mathcal{D}_{ad}\) be another solution such that 
\(\mathcal{K}(\widetilde{\omega}) = 0\). Then 
\(u_{\widetilde{\omega}} = u_{\mathrm{meas}} = u_{\omega}\) in 
\(\Omega_0 \times (0,T)\). By the identifiability result 
(Theorem~\ref{uniqueness}), it follows that \(\omega = \widetilde{\omega}\).  In summary, this discussion implies that the solution of \eqref{topopt-problem} is ``equivalent" to the solution of the considered inverse problem.

From a numerical perspective, the stability of the reconstruction remains a significant challenge, particularly in the presence of noise \cite{hadamard1923lectures}, as small perturbations in the measurements can lead to large deviations in the recovered obstacle. To address this numerical instability, regularization strategies are commonly employed to stabilize the inverse operator. Popular approaches include Tikhonov regularization and total variation techniques. In the present work, we adopt a regularization framework that augments the quadratic misfit functional \( \mathcal{K} \) with a perimeter-based penalty term designed to promote geometrically meaningful reconstructions. This leads to the formulation of a regularized optimization problem aimed at improving both the stability and robustness of the solution. Accordingly, we consider the following minimization problem:
\begin{equation}\label{regularization-topopt-problem}
    \displaystyle\operatorname*{Minimize}_{\omega\in\mathcal{D}_{ad}}\
\mathcal{K}_\gamma(\omega) = \mathcal{K}(\omega)+\gamma\text{Per}_\Omega(\omega),
\end{equation}
where $\gamma>0$ is a regularization parameter and $\text{Per}_\Omega(\omega)$ denotes the relative perimeter of $\omega$ in $\Omega.$ For completeness, we provide the definition of the relative perimeter.

\begin{definition}[See \cite{HenrotBook2005}]
    The relative perimeter of a set $\omega$ in $\Omega$ is defined according to the De Giorgi formula as:
    \begin{align}
\text{Per}_\Omega(\omega)=\sup\left\{\int_{\omega}\text{div}\
\Psi\,\dx\; ; \; \Psi\in \mathcal{C}^1_c(\Omega,\mathbb{R}^3),\;\|\Psi\|_{\infty}\leq1\right\},
\end{align}
where $\|\cdot\|_{\infty}$ is the essential supremum norm and $\mathcal{C}^1_c(\Omega,\mathbb{R}^3)$ is the space of continuously differentiable functions with compact support in $\Omega$. If $\text{Per}_\Omega(\omega)<\infty$, we say that $\omega$  has a finite perimeter in
$\Omega$. In this case, the perimeter $\text{Per}_\Omega(\omega)$ coincides with the Total Variation of the distributional gradient of $\chi_{\omega}$ (the characteristic function of the set $\omega$), namely
\begin{equation}
\text{Per}_\Omega(\omega)=|D\chi_{\omega}|(\Omega).
\end{equation}
\end{definition}

Next, we discuss the existence and stability of an optimal solution to the optimization problem \eqref{regularization-topopt-problem}.

\section{Well-posedness of the optimization problem}\label{well-posedness-weak-solution-direct-problem}

In this section, we address the question related to the existence and stability of a solution to the optimization problem described in \eqref{regularization-topopt-problem}. We start by examining essential foundational properties that underpin our analysis. These include the convergence in the Hausdorff sense, compactness of the set $\mathcal{D}_{ad}$, and Mosco convergence in Sobolev spaces.

\vspace{0.4cm}
\paragraph{\bf Hausdorff distance} Given two non-empty closed sets $F$ and $\Tilde{F}$ of $\Omega$, the Hausdorff distance between these sets is defined as follows:
\begin{equation}
    d_H(F,\,\Tilde{F})=\max\bigg\{\sup_{x\in F}\,\inf_{y\in \Tilde{F}} \text{dist}(x,y),\; \sup_{x\in \Tilde{F}}\,\inf_{y\in {F}} \text{dist}(x,y)\bigg\},
\end{equation}
where $\text{dist}(x,y)$ represents the Euclidean distance between the points $x$ and $y.$ In the case of open sets, where $F$ and $\Tilde{F}$ are open subsets of $\Omega$, the Hausdorff distance is defined in terms of their complements:
\begin{equation}
    d_{H^c}(F,\,\Tilde{F}):=d_H(\Omega\backslash F,\,\Omega\backslash\Tilde{F}).
\end{equation}
For further details and properties of the Hausdorff distance, we refer to \cite[Chapter 2]{HenrotBook2005}.

\vspace{0.4cm}
\paragraph{\bf Compactness of the class $\mathcal{D}_{ad}$} Let $\left\{\omega_n\right\}_{n \in \mathbb{N}}\subset\mathcal{D}_{ad}$  and $\omega\in\mathcal{D}_{ad}$. We say that $\omega_n$ converges to $\omega$ in the Hausdorff sense if
$$d_{H^c}(\omega_n,\omega)\to0\;\; \text{as}\;\; n\to\infty.$$
The following compactness result holds for the class of admissible solutions $\mathcal{D}_{ad}$:
\begin{lemma}\label{compacteness-D-ad}Let $\omega_n$ be a sequence of open sets in the class $\mathcal{D}_{ad}$. Then, there exists an open set 
$\omega \in\mathcal{D}_{ad}$ and a subsequence $\omega_{n_k}$ that converges to $\omega$ in the Hausdorff sense, and in the sense of characteristic functions. Furthermore,  $\overline{\omega}_{n_k}$ and $\partial \omega_{n_k}$ converge in the Hausdorff sense to $\overline{\omega}$ and $\partial \omega$, respectively.
\end{lemma}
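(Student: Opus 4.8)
The plan is to recognize Lemma~\ref{compacteness-D-ad} as an instance of the classical compactness theorem of Chenais for families of domains obeying a uniform cone (equivalently, uniform Lipschitz) condition; see \cite[Chapter~2]{HenrotBook2005} and \cite{BucurBook2005}. The starting observation is that the three defining constraints of $\mathcal{D}_{ad}$ all survive passage to a Hausdorff limit. Since every $\omega_n$ is contained in the fixed relatively compact set $D^*\subset\subset\Omega$, the complements $\Omega\setminus\omega_n$ form a sequence of closed subsets of the compact set $\overline{\Omega}$. By the Blaschke selection theorem (compactness of the family of closed subsets of a compact metric space for the Hausdorff distance), I would extract a subsequence, still denoted $\omega_{n_k}$, whose complements converge in the Hausdorff sense; by definition this means $d_{H^c}(\omega_{n_k},\omega)\to0$ for some open set $\omega\subset\overline{D^*}$. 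Applying the same selection principle to the closed sets $\overline{\omega}_{n_k}$ and $\partial\omega_{n_k}$, and passing to a diagonal subsequence, I obtain closed limits to which these sequences converge in the Hausdorff distance.

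The decisive step, and the main obstacle, is to show that the limit $\omega$ again belongs to $\mathcal{D}_{ad}$ and that the Hausdorff limits of $\overline{\omega}_{n_k}$ and $\partial\omega_{n_k}$ are exactly $\overline{\omega}$ and $\partial\omega$. This is where the uniform Lipschitz bound with fixed constants $r_0,N_0$ is indispensable. I would first translate the uniform $\mathcal{C}^{0,1}$ condition into a uniform $\varepsilon$-cone condition, a geometrically rigid property stating that at each boundary point a congruent open cone lies inside the domain while a reflected cone lies in the complement. The crux is that this property is closed under Hausdorff convergence: for any boundary point $x\in\partial\omega$ one selects boundary points $x_{n_k}\in\partial\omega_{n_k}$ with $x_{n_k}\to x$, extracts (by compactness of the unit sphere) a limiting cone axis, and verifies that the limiting cones are interior and exterior to $\omega$ using the Hausdorff convergence of complements. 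This yields that $\partial\omega$ carries the same uniform cone condition, hence $\partial\omega\in\mathcal{C}^{0,1}$ with the same constants $r_0,N_0$, that $\omega$ is open, and that the boundary cannot degenerate, so that the Hausdorff limits of $\overline{\omega}_{n_k}$ and $\partial\omega_{n_k}$ coincide with $\overline{\omega}$ and $\partial\omega$. The containment $\omega\subset\subset D^*$ persists because $\overline{\omega}_{n_k}\subset D^*$, together with the nondegeneracy furnished by the limiting cone condition, prevents $\partial\omega$ from touching $\partial D^*$.

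Finally, to upgrade Hausdorff convergence to convergence in the sense of characteristic functions, I would again exploit the uniform cone condition, which provides a uniform control on the Lebesgue measure of thin tubular neighbourhoods of the boundaries $\partial\omega_{n_k}$. Consequently the symmetric differences satisfy $|\omega_{n_k}\,\triangle\,\omega|\to0$, that is, $\chi_{\omega_{n_k}}\to\chi_\omega$ in $L^1(\Omega)$ (and in every $L^p(\Omega)$, $1\le p<\infty$). All of these convergences, namely Hausdorff convergence of the complements, of the closures, of the boundaries, and $L^1$ convergence of the characteristic functions, hold simultaneously for families satisfying a uniform cone condition; this is precisely the content of the compactness theorem in \cite[Theorem~2.4.10]{HenrotBook2005}, which I would invoke to assemble the conclusion. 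I expect the verification of the closedness of the cone condition under Hausdorff limits to be the genuinely technical point, whereas the subsequence extractions and the measure-theoretic convergence of the characteristic functions become routine once that rigidity is established.
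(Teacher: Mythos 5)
Your proposal is correct and follows essentially the same route as the paper, which simply invokes the compactness theorem for uniformly Lipschitz (uniform cone condition) domains — Theorems 2.4.7 and 2.4.10 of Henrot--Pierre — exactly the result you cite at the end. The only difference is that you additionally sketch the internal proof of that theorem (Blaschke selection plus stability of the cone condition under Hausdorff limits), which the paper leaves to the reference.
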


\begin{proof}
   This result follows directly from Theorem 2.4.10 and Theorem 2.4.7 in \cite{HenrotBook2006}.
\end{proof}

For completeness, we clarify that $\omega_n$ converges to $\omega$ in the sense of characteristic functions as $n \to \infty$ if (see, for instance, \cite[Definition 2.2.3]{HenrotBook2005}):
$$
\chi_{\omega_n}\longrightarrow\chi_{\omega}\;\;\hbox{in}\; L^1(\Omega).
$$

\begin{remark} \label{rem-jus}
The compactness of the class $\mathcal{D}_{ad}$ plays a crucial role in establishing the existence of an optimal solution to the minimization problem \eqref{regularization-topopt-problem}. Moreover, in $\mathcal{D}_{ad}$, we do not impose the connectedness of the set $\Omega \setminus \overline{\omega}$, as connectedness is not preserved under Hausdorff convergence of open sets. For a counterexample, see \cite[p. 33]{HenrotBook2005}.
\end{remark}

\vspace{0.4cm}
\paragraph{\bf Mosco convergence for Sobolev spaces} Following \cite{BucurBook2005,bucur2001continuity,giacomini2002stability}, we recall the definition of Mosco convergence and its associated properties. Let $X$ be a reflexive Banach space, and let 
$\left\{A_n\right\}_{n \in \mathbb{N}}$ be a sequence of closed subspaces of $X.$ We define two sets associated with this sequence:
\begin{itemize}
    \item $A^{\prime}$: the set of elements in $X$ that are weak limits of sequences taken from a subsequence $\left\{A_{n_\ell}\right\}_\ell$ of $\left\{A_n\right\}_n$, formally:
    $$A^{\prime}=\left\{x \in X \mid x=w-\lim _{\ell \rightarrow \infty} x_{n_\ell},\; x_{n_\ell} \in A_{n_\ell}\right\}.$$
    \item  $A^{\prime \prime}$: the set of elements in $X$ that are strong limits of sequences in $\left\{A_n\right\}$, formally:
    $$
    A^{\prime \prime}=\left\{x \in X \mid x=s-\lim _{n \rightarrow \infty} x_n,\; x_n \in A_n\right\}.
    $$
\end{itemize}

\begin{definition}
 Let $X$ be a reflexive Banach space, $\left\{A_n\right\}_{n \in \mathbb{N}}$ be a sequence of closed subspaces of $X$. We say that $A_n$ converges in the sense of Mosco as $n \rightarrow \infty$ if there exists $A\subset X$ such that  $A^\prime=A^{\prime \prime}=A.$ The subspace $A$ is called the Mosco limit of $A_n.$
\end{definition}

From the definition and the properties of $A^{\prime}$ and $A^{\prime \prime},$ it follows that the sequence $A_n$ converges to $A$ in the sense of Mosco if the following two conditions are satisfied:
\begin{align}
    \label{condition1}&\text{If}\; v_{n_\ell}\in A_{n_\ell}\;\text{is such that}\; v_{n_\ell}\rightharpoonup v\;\text{in}\; X\; \text{as}\; \ell \rightarrow \infty,\; \text{then}\; v\in A;\\
    \label{condition2}&\text{For any}\; v\in A,\;\text{there exists a sequence}\; v_{n}\in A_n\;\text{such that}\; v_n\rightarrow v\; \text{in}\; X\; \text{as}\; n \rightarrow \infty.
\end{align}

For any $\omega \in \mathcal{D}_{ad}$, we define an isometric immersion of $\H^1(\Omega \backslash \overline{\omega})$ into $L^2\left(\Omega, \mathbb{R}^{3+3^2}\right)$ as follows: To each $u \in \H^1(\Omega \backslash \overline{\omega}),$ we associate the vector $(u, \nabla u)$ with the convention that $u$ and $\nabla u$ are extended to zero in $\omega$. More precisely, the Sobolev space $\H^1(\Omega \backslash \omega)$ can be identified with a closed subspace of $L^2\left(\Omega, \mathbb{R}^{3+3^2}\right)$ via the map
\begin{equation}\label{identified}
\begin{aligned}
\H^1(\Omega \backslash \overline{\omega}) & \hookrightarrow L^2\left(\Omega, \mathbb{R}^{3+3^2}\right) \\
u & \rightarrow\left(u,\; \partial_i u_j\right), \quad\quad \forall i, j=1,\,2,\, 3
\end{aligned}
\end{equation}
with $u$ and its partial derivatives $\partial_i u_j$ are extended by zero within $\omega.$ Similarly, for $\Omega \backslash \overline{\omega_n},$ the vector-valued functions $u_n$ and $\nabla u_n$ are also extended by zero in $\omega_n$, ensuring a consistent identification.

Since we are dealing with uniform Lipschitz domains, the following result holds as an adaptation of Theorem 7.2.7 in \cite{BucurBook2005}. For further details, see also \cite{HenrotBook2005}.

\begin{theorem}\label{Mosco-convergence-Sobolev}
 Let us assume that $\omega_n,\; \omega \subset \Omega$ belong to the class of admissible solutions $\mathcal{D}_{ad}$. If $\overline{\omega_n}$ converges to $\overline{\omega}$ in the Hausdorff sense as $n\to\infty$, then $\H^1_{0,\text{div}}(\Omega\backslash\overline{\omega_n})$ converges to $\H^1_{0,\text{div}}(\Omega\backslash\overline{\omega})$ in the sense of Mosco  as $n\to\infty$.
\end{theorem}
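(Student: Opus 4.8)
The plan is to verify the two equivalent Mosco conditions \eqref{condition1} and \eqref{condition2} for the subspaces $A_n := \H^1_{0,\text{div}}(\Omega\setminus\overline{\omega_n})$ and $A := \H^1_{0,\text{div}}(\Omega\setminus\overline{\omega})$, regarded through the isometric immersion \eqref{identified} as closed subspaces of $L^2(\Omega,\mathbb{R}^{3+3^2})$. The first thing I would record is a reduction: since every $u\in\H^1_{0,\text{div}}(\Omega\setminus\overline{\omega_n})$ vanishes on the whole boundary $\partial(\Omega\setminus\overline{\omega_n})$, its extension by zero lies in $\H^1_0(\Omega)$ and the extended gradient coincides with the distributional gradient of the extension; consequently the $L^2(\Omega,\mathbb{R}^{3+3^2})$-norm reduces to the $\H^1_0(\Omega)$-norm, and $A_n$, $A$ are exactly the divergence-free fields of $\H^1_0(\Omega)$ supported in $\Omega\setminus\overline{\omega_n}$, resp. $\Omega\setminus\overline{\omega}$. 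Throughout I would use that, by Lemma~\ref{compacteness-D-ad}, the Hausdorff convergence $\overline{\omega_n}\to\overline{\omega}$ also yields $\chi_{\omega_n}\to\chi_{\omega}$ in $L^1(\Omega)$.

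For condition \eqref{condition1} (weak stability of the limit), suppose $v_{n_\ell}\in A_{n_\ell}$ with $v_{n_\ell}\rightharpoonup v$ in $L^2(\Omega,\mathbb{R}^{3+3^2})$, i.e.\ $v_{n_\ell}\rightharpoonup v$ weakly in $\H^1_0(\Omega)$. Since $\text{div}$ is a continuous linear map, the constraint $\text{div}\,v_{n_\ell}=0$ passes to the weak limit, giving $\text{div}\,v=0$, and $v\in\H^1_0(\Omega)$ because this space is weakly closed. The only genuine point is that $v=0$ a.e.\ in $\omega$: by the Rellich--Kondrachov theorem the weak $\H^1_0(\Omega)$ convergence upgrades to strong convergence $v_{n_\ell}\to v$ in $\mathbf{L}^2(\Omega)$, whence $|v_{n_\ell}|^2\to|v|^2$ in $L^1(\Omega)$; combining this with $\chi_{\omega_{n_\ell}}\to\chi_\omega$ in $L^1(\Omega)$ and the identity $\int_\Omega\chi_{\omega_{n_\ell}}|v_{n_\ell}|^2\,\dx=0$ (each $v_{n_\ell}$ vanishes in $\omega_{n_\ell}$) yields $\int_\omega|v|^2\,\dx=0$. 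Since $\omega$ is Lipschitz, the function $v\in\H^1_0(\Omega)$ vanishing a.e.\ on $\omega$ restricts to an element of $\H^1_{0,\text{div}}(\Omega\setminus\overline{\omega})=A$, which is \eqref{condition1}.

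For condition \eqref{condition2} (existence of a recovery sequence), the key classical input is the density of the space $\mathcal{V}(\Omega\setminus\overline{\omega})$ of smooth, compactly supported, divergence-free fields in $\H^1_{0,\text{div}}(\Omega\setminus\overline{\omega})$. Given $v\in A$ and $\epsilon>0$, I would choose $\varphi\in\mathcal{V}(\Omega\setminus\overline{\omega})$ with $\|v-\varphi\|_{\H^1_0(\Omega)}<\epsilon$. Because $\mathcal{K}:=\text{supp}\,\varphi$ is a compact subset of $\Omega\setminus\overline{\omega}$, it has positive distance from $\overline{\omega}$; the Hausdorff convergence $\overline{\omega_n}\to\overline{\omega}$ then forces $\mathcal{K}\cap\overline{\omega_n}=\emptyset$ for all large $n$, so that $\varphi$ is a smooth, divergence-free field compactly supported in $\Omega\setminus\overline{\omega_n}$, i.e.\ $\varphi\in A_n$. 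A standard diagonal argument over a sequence $\varphi_k\in\mathcal{V}(\Omega\setminus\overline{\omega})$ with $\|v-\varphi_k\|_{\H^1_0(\Omega)}\to0$ then produces $v_n\in A_n$ with $v_n\to v$ strongly, establishing \eqref{condition2}.

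The step I expect to be the main obstacle is the recovery direction \eqref{condition2}, where the divergence-free constraint must be preserved while localizing the approximant away from the moving holes $\omega_n$. This is precisely the point at which the uniform Lipschitz regularity (with fixed constants $r_0,N_0$) of the class $\mathcal{D}_{ad}$ is essential: it validates the hypotheses of the general Mosco theorem (Theorem~7.2.7 in \cite{BucurBook2005}) for the underlying $\H^1_0$ spaces by providing uniformly bounded extension operators and excluding capacity-type degeneracies of the perforations, thereby guaranteeing that the solenoidal approximation is robust along the whole sequence. Away from this point the argument is routine; the only care required is to check that the chosen identifications and zero-extensions remain compatible with the divergence-free structure.
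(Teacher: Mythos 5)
Your proposal is correct, but it takes a genuinely different (and more self-contained) route than the paper, which offers no detailed argument at all: the paper simply invokes Theorem~7.2.7 of \cite{BucurBook2005} for the Mosco convergence of the underlying $\H^1_0$ spaces over uniformly Lipschitz perforations and asserts that the divergence-free case follows "as an adaptation". You instead verify the two Mosco conditions directly. For \eqref{condition1} your chain -- weak closedness of the zero-extension/gradient pairs in $L^2(\Omega,\mathbb{R}^{3+3^2})$, stability of the constraint $\mathrm{div}\,v=0$ under weak limits, Rellich plus $\chi_{\omega_n}\to\chi_\omega$ in $L^1$ to get $v=0$ a.e.\ in $\omega$, and finally the characterization $\H^1_0(\Omega\setminus\overline{\omega})=\{v\in\H^1_0(\Omega): v=0 \text{ a.e.\ on }\overline{\omega}\}$ valid for Lipschitz $\omega$ -- is sound; the only points you leave implicit are (i) that the hypothesis of the theorem is Hausdorff convergence alone, so the convergence of characteristic functions must be extracted from Lemma~\ref{compacteness-D-ad} by a subsequence-uniqueness argument within the uniformly Lipschitz class, and (ii) the routine product-limit argument for $\int\chi_{\omega_{n_\ell}}|v_{n_\ell}|^2\,\dx\to\int\chi_\omega|v|^2\,\dx$; both are standard. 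For \eqref{condition2} your use of the density of smooth, compactly supported solenoidal fields in $\H^1_{0,\mathrm{div}}(\Omega\setminus\overline{\omega})$ (classical for bounded Lipschitz open sets), combined with the observation that Hausdorff convergence of $\overline{\omega_n}$ eventually separates $\overline{\omega_n}$ from any compact subset of $\Omega\setminus\overline{\omega}$, is exactly the right mechanism for preserving the divergence-free constraint along the recovery sequence, and the diagonal extraction closes the argument. What your approach buys is transparency about where the solenoidal constraint enters (weak closedness for M1, density of $\mathcal{V}$ for M2), whereas the paper's citation buys brevity at the cost of leaving the adaptation to the reader; your closing remark attributing the role of the uniform Lipschitz constants to extension operators is slightly off-target for your own argument (in your proof they are used for the equivalence of the set convergences and to keep the limit set in the class), but this does not affect correctness.
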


\subsection{Existence of minimizer for the functional}

The existence of an optimal solution to the optimization problem \eqref{regularization-topopt-problem} is established in the following theorem. Prior to presenting the theorem, we introduce the following supporting lemmas, which lays the groundwork for the subsequent proof.

\begin{lemma}(see \cite[Lemma 3.5, p. 237]{temam1973theory})\label{estimat-L-4} Let $\mathcal{O}$ be a bounded domain in \( \mathbb{R}^3 \) with Lipschitz boundary $\partial\O$. For all $w\in\H^1_0(\mathcal{O})$, the following inequality holds:
    \begin{equation*}
        \big\|w \big\|_{L^4(\mathcal{O})}^2\leq 2 \big\|w \big\|_{L^2(\mathcal{O})}^{1/2}\big\|\nabla w \big\|_{L^2(\mathcal{O})}^{3/2}.
    \end{equation*}
\end{lemma}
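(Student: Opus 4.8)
The plan is to reduce the assertion to a one-dimensional slicing argument that bootstraps the three-dimensional bound from its planar counterpart, which is what produces the sharp constant $2$. First I would reduce to the scalar case: writing $w=(w_1,w_2,w_3)$, the Euclidean norm $|w|$ lies in $H^1_0(\mathcal{O})$ and satisfies $\big|\nabla|w|\big|\le|\nabla w|$ almost everywhere (since $\partial_i|w|=(w\cdot\partial_i w)/|w|$ where $w\neq0$, and Cauchy--Schwarz), while $\|\,|w|\,\|_{L^p(\mathcal{O})}=\|w\|_{L^p(\mathcal{O})}$ for $p=2,4$. Hence it suffices to prove the inequality for a nonnegative scalar $w$. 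By density of $\mathcal{C}^\infty_c(\mathcal{O})$ in $H^1_0(\mathcal{O})$, together with the continuous embedding $H^1_0(\mathcal{O})\hookrightarrow L^4(\mathcal{O})$ (so that both sides pass to the limit), I may assume $w\in\mathcal{C}^\infty_c(\mathcal{O})$ and extend it by zero to all of $\mathbb{R}^3$.

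The key pointwise tool is the fundamental theorem of calculus applied to $w^2$ in each coordinate direction: for every $x$ and each $i\in\{1,2,3\}$,
\[
w^2(x)\le \int_{\mathbb{R}}\big|\partial_{x_i}(w^2)\big|\,dx_i = 2\int_{\mathbb{R}}\big|w\,\partial_{x_i}w\big|\,dx_i .
\]
Applying this in the two horizontal directions $x'=(x_1,x_2)$, integrating, and using Cauchy--Schwarz together with $ab\le\tfrac12(a^2+b^2)$, I would first record the planar Ladyzhenskaya inequality on each slice $\{x_3=\mathrm{const}\}$,
\[
\int_{\mathbb{R}^2}w^4\,dx'\;\le\;2\Big(\int_{\mathbb{R}^2}w^2\,dx'\Big)\Big(\int_{\mathbb{R}^2}|\nabla' w|^2\,dx'\Big),
\]
where $\nabla'=(\partial_{x_1},\partial_{x_2})$.

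Next I would integrate this slice estimate over $x_3$. With $A(x_3)=\int_{\mathbb{R}^2}w^2\,dx'$ and $B(x_3)=\int_{\mathbb{R}^2}|\nabla' w|^2\,dx'$, this gives $\int_{\mathbb{R}^3}w^4\,dx\le 2\big(\sup_{x_3}A(x_3)\big)\int_{\mathbb{R}}B(x_3)\,dx_3$. The final integral is at most $\|\nabla w\|_{L^2(\mathbb{R}^3)}^2$, while the vertical one-dimensional bound and Cauchy--Schwarz give $\sup_{x_3}A(x_3)\le 2\int_{\mathbb{R}^3}|w\,\partial_{x_3}w|\,dx\le 2\|w\|_{L^2}\|\nabla w\|_{L^2}$. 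Combining yields $\int_{\mathbb{R}^3}w^4\,dx\le 4\,\|w\|_{L^2}\,\|\nabla w\|_{L^2}^3$, i.e. $\|w\|_{L^4}^2\le 2\|w\|_{L^2}^{1/2}\|\nabla w\|_{L^2}^{3/2}$ after taking square roots; restricting the integrals back to $\mathcal{O}$ (the extension vanishing outside) finishes the proof.

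The computation itself is elementary, so the delicate points are the reduction to the scalar nonnegative case and the rigorous use of slicing for $H^1_0$ functions, both of which I handle by working first with $w\in\mathcal{C}^\infty_c(\mathcal{O})$ and passing to the limit. The hard part will be the bookkeeping of the constant: ensuring that the factor $2$ from the planar estimate and the factor $2$ from the vertical bound combine to give exactly the stated constant. This is precisely what forces the slicing argument, since a soft interpolation of $L^4$ between $L^2$ and the Sobolev space $L^6$ would reproduce the exponents $\tfrac12$ and $\tfrac32$ but not the explicit constant $2$.
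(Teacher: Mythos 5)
Your argument is correct: the reduction to a nonnegative scalar in \( \mathcal{C}^\infty_c(\mathcal{O}) \), the slicewise planar Ladyzhenskaya inequality with constant \(2\), and the vertical bound \( \sup_{x_3}A(x_3)\le 2\|w\|_{L^2}\|\nabla w\|_{L^2} \) combine to give exactly \( \int w^4\,\mathrm{d}x\le 4\|w\|_{L^2}\|\nabla w\|_{L^2}^3 \), which is the stated estimate after taking square roots. The paper offers no proof of this lemma --- it is quoted directly from Temam (Lemma 3.5, p.~237) --- and your slicing argument is precisely the classical Ladyzhenskaya/Temam derivation behind that citation, so you have simply supplied, correctly, the proof the paper omits.
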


\begin{lemma} (see  \cite[ Lemma IX.2.1, p. 591]{galdi2011introduction}).\label{lem-NS-prelim20} Let \( \O \) be a bounded domain with locally Lipschitz boundary in \( \mathbb{R}^3 \). Let \( \varphi \in \H^1(\O) \) with \( \text{div} \, \varphi = 0 \) in \( \O \). Then 
    \begin{equation}
        \int_{\O} (\varphi\cdot \nabla)v \cdot w\, \dx = -\int_{\O} (\varphi\cdot \nabla)w \cdot v\, \dx\;\;\;\;\text{   for all  }\; v,w\in\H^1_0(\Omega).
    \end{equation}
\end{lemma}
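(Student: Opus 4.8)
The plan is to establish the antisymmetry of the convective trilinear form by integration by parts, first deriving a general identity valid for arbitrary $\varphi\in\H^1(\O)$ and then specializing to the divergence-free case. Writing everything out componentwise, the claim reduces to verifying
\begin{equation*}
\sum_{i,j=1}^{3}\int_{\O}\varphi_j\,(\partial_j v_i)\,w_i\,\dx
= -\int_{\O}(\div\,\varphi)\,(v\cdot w)\,\dx
-\sum_{i,j=1}^{3}\int_{\O}\varphi_j\,(\partial_j w_i)\,v_i\,\dx,
\end{equation*}
after which the middle term vanishes because $\div\,\varphi=0$, yielding the stated identity.

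First I would check that every integral above is well defined and that the relevant multilinear forms are continuous. Since $\O\subset\mathbb{R}^3$ is bounded and Lipschitz, the Sobolev embedding $\H^1(\O)\hookrightarrow\L^4(\O)$ gives $\varphi,v,w\in\L^4(\O)$; combined with $\nabla v\in\L^2(\O)$ and Hölder's inequality with exponents $(4,2,4)$, the integrand $\varphi_j(\partial_j v_i)w_i$ lies in $L^1(\O)$, and similarly $(\div\,\varphi)(v\cdot w)\in L^1(\O)$ because $\div\,\varphi\in L^2(\O)$ and $v\cdot w\in L^2(\O)$. Using Lemma~\ref{estimat-L-4} to control the $\L^4$ norms of the $\H^1_0$ factors $v,w$, the same bound shows that the trilinear form $(\varphi,v,w)\mapsto\int_{\O}(\varphi\cdot\nabla)v\cdot w\,\dx$ and the associated bilinear divergence term depend continuously on their arguments in the $\H^1$-topology.

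The core computation is a componentwise integration by parts in the variable $x_j$: moving the derivative off $v_i$ produces a boundary term $\int_{\partial\O}\varphi_j v_i w_i\,n_j\,\ds$, which vanishes since $v\in\H^1_0(\O)$ has zero trace, together with the two volume contributions $-\int_{\O}(\partial_j\varphi_j)w_i v_i\,\dx$ and $-\int_{\O}\varphi_j(\partial_j w_i)v_i\,\dx$; summing over $i,j$ gives the general identity above. The main obstacle is that this manipulation is only immediate for smooth functions, so I would justify it by a density argument: choose $v_n,w_n\in\mathcal{C}^\infty_c(\O)$ with $v_n\to v$ and $w_n\to w$ in $\H^1(\O)$ (possible since $v,w\in\H^1_0(\O)$), and $\varphi_n\in\mathcal{C}^\infty(\overline{\O})$ with $\varphi_n\to\varphi$ in $\H^1(\O)$. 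For each $n$ the boundary integral vanishes exactly because $v_n$ has compact support, so the smooth identity holds termwise.

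Passing to the limit using the continuity established above yields
\begin{equation*}
\int_{\O}(\varphi\cdot\nabla)v\cdot w\,\dx
= -\int_{\O}(\div\,\varphi)\,(v\cdot w)\,\dx
-\int_{\O}(\varphi\cdot\nabla)w\cdot v\,\dx.
\end{equation*}
The subtle point here is that the smooth approximants $\varphi_n$ need not be divergence-free; however $\div\,\varphi_n\to\div\,\varphi=0$ in $L^2(\O)$, so the divergence term converges to $0$ and drops out, leaving precisely the desired antisymmetry $\int_{\O}(\varphi\cdot\nabla)v\cdot w\,\dx = -\int_{\O}(\varphi\cdot\nabla)w\cdot v\,\dx$. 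I expect the only genuinely delicate part to be bookkeeping of the integrability exponents and the choice of approximating sequences; the algebra of the integration by parts itself is routine.
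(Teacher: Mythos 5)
Your proof is correct: the componentwise integration by parts, the H\"older/Sobolev bookkeeping with exponents $(4,2,4)$, and the density argument (compactly supported approximants for $v,w$ so the boundary term vanishes, with the $\int_{\O}(\mathrm{div}\,\varphi_n)(v_n\cdot w_n)\,\dx$ term tending to zero since $\mathrm{div}\,\varphi_n\to 0$ in $L^2$) all go through on a bounded Lipschitz domain. The paper itself offers no proof, only a citation to Galdi's Lemma IX.2.1, and your argument is essentially the standard one given there, so there is nothing to compare beyond noting agreement.
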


Then, we have the following existence result.
\begin{theorem}\label{existence-optimal-solution}
For any $\gamma > 0$, there exists at least one minimizer for the optimization problem \eqref{regularization-topopt-problem}. \end{theorem}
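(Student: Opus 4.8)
The plan is to use the direct method of the calculus of variations. Since $\mathcal{K}_\gamma(\omega) = \mathcal{K}(\omega) + \gamma\,\mathrm{Per}_\Omega(\omega) \geq 0$ is bounded below, I would first take a minimizing sequence $\{\omega_n\} \subset \mathcal{D}_{ad}$ satisfying $\mathcal{K}_\gamma(\omega_n) \to \inf_{\omega \in \mathcal{D}_{ad}} \mathcal{K}_\gamma(\omega)$. The goal is then to extract a convergent subsequence whose limit lies in $\mathcal{D}_{ad}$ and to show that $\mathcal{K}_\gamma$ is lower semicontinuous along this sequence. Compactness is handled directly by Lemma~\ref{compacteness-D-ad}: there exist $\omega \in \mathcal{D}_{ad}$ and a subsequence $\omega_{n_k}$ such that $\omega_{n_k} \to \omega$ in the Hausdorff sense, in the sense of characteristic functions, and with $\overline{\omega}_{n_k} \to \overline{\omega}$, $\partial\omega_{n_k} \to \partial\omega$ in the Hausdorff sense. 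It remains to verify that $\mathcal{K}_\gamma(\omega) \leq \liminf_{k \to \infty} \mathcal{K}_\gamma(\omega_{n_k})$, from which minimality of $\omega$ follows.

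I would treat the two terms of $\mathcal{K}_\gamma$ separately. For the perimeter term, lower semicontinuity of $\omega \mapsto \mathrm{Per}_\Omega(\omega)$ with respect to $L^1$-convergence of characteristic functions is a classical property of the total variation (it follows directly from the De Giorgi supremum definition as a supremum of continuous linear functionals), so $\mathrm{Per}_\Omega(\omega) \leq \liminf_k \mathrm{Per}_\Omega(\omega_{n_k})$. The substantive work lies in the fit term $\mathcal{K}$. Here the key is a continuity/stability result for the forward problem: I must show that $u_{\omega_{n_k}} \to u_\omega$ in a topology strong enough to pass to the limit in $\int_0^T\!\int_{\Omega_0} |u_{\omega_{n_k}} - u_{\mathrm{meas}}|^2\,\dx\,\dt$. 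Concretely, using the Leray--Hopf decomposition $u_\omega = v_\omega + V$ and the divergence-free extension $V$, I would first derive uniform (in $n$) \emph{a priori} energy estimates for the solutions $u_{\omega_n}$ in the natural space $L^2(0,T;\H^1_{0,\mathrm{div}}) \cap L^\infty(0,T;\L^2)$, invoking Lemma~\ref{estimat-L-4} and Lemma~\ref{lem-NS-prelim20} to control the nonlinear convective term (together with the smallness hypothesis (A1) on $\mathcal{G}$ to guarantee the requisite bounds). These bounds provide weak compactness of $\{u_{\omega_n}\}$.

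The crux of the argument, and the step I expect to be the main obstacle, is passing to the limit in the \emph{nonlinear} Navier--Stokes system across a \emph{varying} geometry $\Omega \setminus \overline{\omega_n}$. This is precisely where Theorem~\ref{Mosco-convergence-Sobolev} enters: since $\overline{\omega_{n_k}} \to \overline{\omega}$ in the Hausdorff sense, the spaces $\H^1_{0,\mathrm{div}}(\Omega \setminus \overline{\omega_{n_k}})$ converge in the sense of Mosco to $\H^1_{0,\mathrm{div}}(\Omega \setminus \overline{\omega})$. Using the identification \eqref{identified} of $\H^1$ functions (extended by zero inside the obstacle) as a closed subspace of $L^2(\Omega,\mathbb{R}^{3+3^2})$, the Mosco conditions \eqref{condition1}--\eqref{condition2} allow one to (i) confirm that weak limits of admissible test functions remain admissible for the limit domain, and (ii) construct recovery sequences of test functions adapted to each $\omega_{n_k}$ converging strongly to any test function for the limit problem. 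I would then pass to the limit in the weak formulation of \eqref{problem-initial-guess}; the main technical delicacy is upgrading the weak convergence of $u_{\omega_{n_k}}$ to strong convergence in $L^2(0,T;\L^2)$ via an Aubin--Lions-type compactness argument (controlling the time derivative $\partial_t u_{\omega_n}$ uniformly in a suitable negative-order space), which is needed both to handle the quadratic nonlinearity $\N(u_{\omega_n})$ and to identify the limit as the genuine solution $u_\omega$ on $\Omega \setminus \overline{\omega}$. Once $u_{\omega_{n_k}} \to u_\omega$ strongly in $L^2(\Omega_0 \times (0,T))$ is established, continuity of $\mathcal{K}$ gives $\mathcal{K}(\omega) = \lim_k \mathcal{K}(\omega_{n_k})$, and combining with the lower semicontinuity of the perimeter yields $\mathcal{K}_\gamma(\omega) \leq \liminf_k \mathcal{K}_\gamma(\omega_{n_k})$, so that $\omega$ is the desired minimizer.
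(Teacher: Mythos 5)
Your proposal is correct and follows essentially the same route as the paper: direct method with a minimizing sequence, Hausdorff compactness of $\mathcal{D}_{ad}$, uniform a priori energy estimates (with the uniform Poincar\'e constant coming from the uniform Lipschitz character of the admissible obstacles), Mosco convergence of $\H^1_{0,\mathrm{div}}(\Omega\setminus\overline{\omega_n})$ to pass to the limit in the weak formulation across the varying geometry, identification of the limit with $u_\omega$ by uniqueness, and lower semicontinuity of the $L^2$ misfit and of the relative perimeter. The only notable difference is that you are more explicit than the paper about the Aubin--Lions step needed to upgrade weak to strong $L^2$ convergence for the convective term (the paper derives the requisite time-derivative bound but leaves this compactness argument implicit), and the paper simply assumes $\phi=0$ rather than invoking the lift $V$.
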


\begin{proof}To simplify the mathematical analysis involved in the proof of Theorem~\ref{existence-optimal-solution}, we assume homogeneous Dirichlet boundary conditions on $\partial\Omega$; that is, $\phi = 0$ on $\partial\Omega \times (0,T)$. Nonetheless, the proof remains valid for non-homogeneous boundary data, with appropriate technical modifications.

From \eqref{regularization-topopt-problem}, it can be easily observed that the functional $\K_\gamma$ is non-negative on the set of admissible solutions $\mathcal{D}_{ad}$. Therefore, there exists a minimizing sequence $\{\omega_n\}_n\subset\mathcal{D}_{ad}$ such that
\begin{align*}
\lim_{n\rightarrow\infty}\K_\gamma(\omega_n)=\inf_{\omega\in\mathcal{D}_{ad}}\K_\gamma(\omega).
\end{align*}
Since $\omega_n \in \mathcal{D}_{ad}$ and leveraging the compactness result established in Lemma \ref{compacteness-D-ad}, it follows that there exists an open set $\omega^0 \in \mathcal{D}_{ad}$ and a subsequence of $\{\omega_n\}_{n}$, still denoted by $\{\omega_n\}_{n}$, such that $\omega_n$ converges to $\omega^0$ in the Hausdorff sense and in the sense of
characteristic functions.

Next, we demonstrate that $\omega^0$ is a minimizer of the optimization problem \eqref{regularization-topopt-problem}. To achieve this, let $u_n$ denote the solution of the following problem:
\begin{equation}\label{problem-u-n}
\left\{
\begin{array}{rll}
\displaystyle \frac {\partial u_n}{\partial t}-\nu
\Delta u_n+ \N(u_n)+ \nabla \pi_n
 &= \mathcal{G} &\mbox{ in }  \,\,(\Omega\backslash\overline{\omega_n})\times (0,\,T) ,\\
\mbox{div}\, u_n &=  0 & \mbox{ in } \,\, (\Omega\backslash\overline{\omega_n}) \times (0,\,T) ,\\
u_n &= 0 & \mbox{ on }\,\,  \partial\Omega \times (0,\,T) , \\
u_n &=  0 & \mbox{ on } \,\, \partial\omega_n \times (0,\,T) ,\\
u_n(\cdot,0)&= 0&\mbox{ in } \,\,  \Omega\backslash\overline{\omega_n}.
\end{array}
 \right.
\end{equation}
We aim to show that $u_n$ converges to $u_{\omega^0}$ as $n\to\infty$, where \( u_{\omega^0} \) denotes the solution to the problem \eqref{problem-initial-guess} with \( \omega = \omega^0 \) and \( \phi = 0 \).

To establish this convergence result, we begin by recalling the notion of weak solutions to the Navier--Stokes system \eqref{problem-initial-guess} under homogeneous Dirichlet boundary conditions on \( \partial\Omega \). Various definitions of weak solutions can be found in the literature. Here, we adopt the classical approach introduced by Leray \cite{leray1933etude} and further developed by Temam \cite[Chapter III, Section 3]{temam1973theory}. Specifically, for a given source term $\mathcal{G} \in L^2\big(0,T;(\H^1_{0,\text{div}}(\Omega \setminus \overline{\omega_0}))^\prime\big)$, we say that $u_{\omega^0} \in L^2\big(0,T;\H^1_{0,\text{div}}(\Omega \setminus \overline{\omega_0})\big)$ is a weak solution to \eqref{problem-initial-guess} if it satisfies the following weak formulation:
\begin{align*}
\begin{split}
&\displaystyle \frac{\text{d}}{\dt} \int_{\Omega\backslash\overline{\omega_0}}u_{\omega^0}\cdot \zeta \dx+\nu \int_{\Omega\backslash\overline{\omega_0}}\hspace{-0.0cm}\nabla u_{\omega^0} :\nabla \zeta \dx + \int_{\Omega\backslash\overline{\omega_0}} (u_{\omega^0}\cdot \nabla) u_{\omega^0} \cdot \zeta\, \dx=\int_{\Omega\backslash\overline{\omega_0}}\hspace{-0.0cm} \mathcal{G}\cdot \zeta \dx,\\ 
&\text{for all }  \zeta \in \H^1_{0,\text{div}}(\Omega\backslash\overline{\omega_0})\;\;\text{ and }\; u_{\omega^0}(\cdot,0)= 0 \quad \mbox{in the}\quad \L^2-\text{sense},\;\text{i.e.},\\
&\qquad\qquad\qquad\qquad\qquad\qquad\qquad\big\| u_{\omega^0}(\cdot,t)\big\|_{\L^2(\Omega\backslash\overline{\omega_0})}\longrightarrow0\;\;\text{as}\;t\longrightarrow0^+.
\end{split}
\end{align*} 
According to \cite[Theorem 3.1, p. 226]{temam1973theory}, and under \textbf{Assumption (A1)}, the problem \eqref{problem-initial-guess} with \( \omega = \omega^0 \) and \( \phi = 0 \) admits a unique weak solution.\\

\noindent Similarly, one can show that problem \eqref{problem-u-n} admits a unique weak solution. Furthermore, using estimates (3.6) and (3.8) from Chapter 3 of \cite{pokorny2022navier}, combined with the Poincaré inequality, we obtain the following a priori estimate:
\begin{align}\label{estimation-u-omega}
\begin{split}
\big\|\frac{\partial u_n}{\partial t}\big\|_{L^{4/3}(0,T;(\H^1_{0,\text{div}}(\Omega\backslash\overline{\omega_n}))^\prime)}&+\big\|u_n\big\|_{L^\infty(0,T;\L^2(\Omega\backslash\overline{\omega_n}))}\\
&\qquad+\big\|u_n\big\|_{ L^2(0,T;\H^1(\Omega\backslash\overline{\omega_n}))}\leq
\overline{C} \big\|\mathcal{G}\big\|_{L^2(0,T;(\H^1_{0,\text{div}}(\Omega\backslash\overline{\omega_n}))^\prime)},
\end{split}
\end{align}
where \( \overline{C} = C(\Omega \setminus \overline{\omega_n}, \nu) \) is a positive constant depending only on the domain and the viscosity coefficient \( \nu \).\\

\noindent In the next step, we will show that the constant \( \overline{C} \) remains bounded independently of \( n \). Thanks to the uniform Lipschitz regularity of the boundaries $\partial\left(\Omega \backslash \overline{\omega_n}\right)=\partial\Omega\cup\partial\omega_n$ and $\partial(\Omega \backslash \overline{\omega})=\partial\Omega\cup\partial\omega$, the Poincaré inequality is uniform with respect to $n$ in $\H^1_0\left(\Omega \backslash \overline{\omega_n}\right)$. This uniformity arises because the Poincaré constants depend solely on the Lipschitz parameters $r_0$ and $N_0$ (as defined in Definition \ref{def-0}) of the domain $\partial\left(\Omega \backslash \overline{\omega_n}\right)$; see references \cite{HenrotBook2006}. Consequently, from the estimation \eqref{estimation-u-omega}, one can deduce that there exists a constant $C>0$, independent of $n$, such that

\begin{align}\label{estimation-u-omega-1}
\begin{split}
\big\|\frac{\partial u_n}{\partial t}\big\|_{L^{4/3}(0,T;(\H^1_{0,\text{div}}(\Omega\backslash\overline{\omega_n}))^\prime)}+\big\|u_n\big\|_{L^\infty(0,T;\L^2(\Omega\backslash\overline{\omega_n}))}+\big\|u_n\big\|_{ L^2(0,T;\H^1(\Omega\backslash\overline{\omega_n}))}\leq
C.
\end{split}
\end{align}
As a result, from the identification \eqref{identified}, we deduce that $\left\|u_n\right\|_{L^2(0,T;L^2(\Omega,\, \mathbb{R}^{3+3^2}))}$ is uniformly bounded. Up to subsequences, there exists $u^* \in L^2(0,T;L^2(\Omega, \mathbb{R}^{3+3^2}))$ such that
\begin{equation}\label{cv-identified}
u_n \rightharpoonup u^* \text { in } L^2(0,T;L^2(\Omega, \,\mathbb{R}^{3+3^2}))\;\;\text{as}\;n\rightarrow\infty,
\end{equation}
which implies that
\begin{equation}\label{cv-identified-new}
u_n(\cdot,t) \rightharpoonup u^*(\cdot,t) \text { in } L^2(\Omega, \,\mathbb{R}^{3+3^2})\;\;\text{a.a.}\; t\in(0,\,T).
\end{equation}
By applying Theorem \ref{Mosco-convergence-Sobolev} and utilizing the first condition of Mosco convergence for the spaces $A_n=\H^1_{0,\text{div}}(\Omega\backslash\overline{\omega_n}),\;\, A=\H^1_{0,\text{div}}(\Omega\backslash\overline{\omega^0})$, and $X=L^2(\Omega,\, \mathbb{R}^{3+3^2})$ (see \eqref{condition1}), we conclude that $u^*(\cdot,t) \in \H^1_{0,\text{div}}(\Omega\backslash\overline{\omega^0})$ for almost every  $t\in(0,T).$ Furthermore, for any $\psi \in \H^1_{0,\text{div}}(\Omega\backslash\overline{\omega^0})$, there exists a sequence $\psi_n\in \H^1_{0,\text{div}}(\Omega\backslash\overline{\omega_n})$ as indicated by the second condition \eqref{condition2}, such that
\begin{equation}
    \label{cv-s0}\psi_n \rightarrow \psi \text { in } L^2\left(\Omega,\, \mathbb{R}^{3+3^2}\right)\;\;\text{as}\;n\rightarrow\infty.
\end{equation}

\noindent Considering the weak formulation for the problem \eqref{problem-u-n} and we take $\psi_n$ as test function, we obtain
\begin{align}\label{weak-2-2}
\begin{split}
\displaystyle \int_0^T\big<\frac{\partial u_n}{\partial t},\, \psi_n \big>_{\Omega\backslash\overline{\omega_n}}\dt&+\nu \int_0^T\int_{\Omega\backslash\overline{\omega_n}}\nabla u_n :\nabla \psi_n \dx\dt\\
&\quad+ \int_0^T\int_{\Omega\backslash\overline{\omega_n}} (u_n\cdot \nabla) u_n \cdot \psi_n\, \dx\dt=\int_0^T\big< \mathcal{G},\, \psi_n \big>_{\Omega\backslash\overline{\omega_n}}\dt.
\end{split}
\end{align}

\noindent We now analyze each term on the right-hand side of \eqref{weak-2-2}. First, the integral on the right-hand side of \eqref{weak-2-2} can be split as follows:
\begin{equation}\label{eq-21}
    \int_0^T\int_{\Omega\backslash\overline{\omega_n}} \mathcal{G}\cdot \psi_n \dx\dt=\int_0^T\int_{\Omega\backslash\overline{\omega_n}}\mathcal{G}\cdot\big(\psi_n-\psi\big)\dx\dt+\int_0^T\int_{\Omega\backslash\overline{\omega_n}} \mathcal{G}\cdot \psi \dx\dt
\end{equation}
By the Cauchy-Schwarz inequality and \eqref{cv-s0}, we have
\begin{equation}
  \big|\int_0^T\int_{\Omega\backslash\overline{\omega_n}} \mathcal{G}\cdot \big(\psi_n-\psi\big) \dx\dt\big|\leq T\big\|\mathcal{G} \big\|_{L^2(0,T;(\H^1_{0,\text{div}}(\Omega\backslash\overline{\omega_n}))^\prime)}\big\|\psi_n-\psi \big\|_{\H^1_{0,\text{div}}(\Omega\backslash\overline{\omega_n})}\longrightarrow0\;\;\text{as}\;n\to\infty.
\end{equation}
Thus, due to the convergence of $\omega_n$ to $\omega^0$ in the Hausdorff or characteristic sense as $n \to \infty$, we deduce:
\begin{equation}\label{eq-21-cv}
    \int_0^T\int_{\Omega\backslash\overline{\omega_n}} \mathcal{G}\cdot \psi_n \dx\dt\longrightarrow\int_0^T\int_{\Omega\backslash\overline{\omega^0}} \mathcal{G}\cdot \psi \dx\dt\;\;\text{as}\;\; n\longrightarrow\infty.
\end{equation}
Similarly, for the first term on the right-hand side of \eqref{weak-2-2}, we have
\begin{equation}\label{kj-1}
    \int_0^T\int_{\Omega\backslash\overline{\omega_n}}\frac{\partial u_n}{\partial t}\cdot \psi_n \dx\dt=\int_0^T\int_{\Omega\backslash\overline{\omega_n}}\frac{\partial u_n}{\partial t}\cdot \big(\psi_n -\psi\big)\dx\dt+\int_0^T\int_{\Omega\backslash\overline{\omega_n}}\frac{\partial u_n}{\partial t}\cdot \psi \dx\dt.
\end{equation}
Using H\"older's inequality (for $p=4/3$ and $q=4$) along with \eqref{estimation-u-omega-1}, we get
\begin{align*}
    \big|\int_0^T\int_{\Omega\backslash\overline{\omega_n}}\frac{\partial u_n}{\partial t}\cdot \big(\psi_n -\psi\big)\dx\dt\big|& \leq \big(\int_0^T\big\|\frac{\partial u_n}{\partial t} \big\|_{(\H^1_{0,\text{div}}(\Omega\backslash\overline{\omega_n}))^\prime}\dt\big)\big\|\psi_n-\psi \big\|_{\H^1_{0,\text{div}}(\Omega\backslash\overline{\omega_n})}\\
    &\leq T^{1/4}\big\|\frac{\partial u_n}{\partial t} \big\|_{L^{4/3}(0,T;(\H^1_{0,\text{div}}(\Omega\backslash\overline{\omega_n}))^\prime)}\big\|\psi_n-\psi \big\|_{\H^1_{0,\text{div}}(\Omega\backslash\overline{\omega_n})}\\
    &\leq C T^{1/4}\big\|\psi_n-\psi \big\|_{\H^1_{0,\text{div}}(\Omega\backslash\overline{\omega_n})}.
\end{align*}
Consequently, from the convergence \eqref{cv-s0}, we conclude
\begin{equation}\label{kj-11}
    \int_0^T\int_{\Omega\backslash\overline{\omega_n}}\frac{\partial u_n}{\partial t}\cdot \big(\psi_n -\psi\big)\dx\dt\longrightarrow0\;\;\text{as}\;n\to\infty.
\end{equation}
Taking the limit as $n\to\infty$ in \eqref{kj-1}, and applying the results from \eqref{cv-identified} and \eqref{kj-11}, we derive
\begin{equation}\label{eq-21-cv21}
    \int_0^T\int_{\Omega\backslash\overline{\omega_n}}\frac{\partial u_n}{\partial t}\cdot \psi_n \dx\dt\longrightarrow\int_0^T\int_{\Omega\backslash\overline{\omega^0}}\frac{\partial u^*}{\partial t}\cdot \psi \dx\dt\;\;\text{as}\;\; n\longrightarrow\infty.
\end{equation}
Similarly, one can prove that 
\begin{equation}\label{cv-g-r}
    \int_0^T\int_{\Omega\backslash\overline{\omega_n}}\nabla u_n :\nabla \psi_n \dx\dt\longrightarrow \int_0^T\int_{\Omega\backslash\overline{\omega^0}}\nabla u^* :\nabla \psi \dx\dt\;\;\text{as}\;\; n\longrightarrow\infty.
\end{equation}
Let us now examine the convergence of the convective term, i.e., we prove that 
  \begin{equation}\label{convect-limit}
    \int_0^T\int_{\Omega\backslash\overline{\omega_n}} (u_n\cdot \nabla) u_n \cdot \psi_n\, \dx\dt\longrightarrow  \int_0^T\int_{\Omega\backslash\overline{\omega^0}} (u^*\cdot \nabla) u^* \cdot \psi\, \dx\dt\;\;\text{as}\;\; n\longrightarrow\infty.
\end{equation}
To prove this convergence result, let us split the convective term as follows
\begin{align}\label{covect-t-cv}
\begin{split}
\int_0^T\int_{\Omega\backslash\overline{\omega_n}} (u_n\cdot \nabla) u_n \cdot \psi_n\, \dx\dt&=  \int_0^T\int_{\Omega\backslash\overline{\omega_n}} (u_n\cdot \nabla) u_n \cdot \big(\psi_n-\psi\big)\, \dx\dt\\
&\qquad\quad+ \int_0^T\int_{\Omega\backslash\overline{\omega_n}} (u_n\cdot \nabla) u_n \cdot \psi\, \dx\dt.
    \end{split}
\end{align} 
We will now estimate the first term on the right-hand side of \eqref{covect-t-cv}.  By applying H\"older's inequality in space along with Lemma \ref{estimat-L-4}, we have
\begin{align*}
&\big|\int_0^T\int_{\Omega\backslash\overline{\omega_n}} (u_n\cdot \nabla) u_n \cdot \big(\psi_n-\psi\big)\, \dx\dt\big|\\
&\leq\int_0^T\int_{\Omega\backslash\overline{\omega_n}} \big|u_n\cdot\big( u_n\cdot \nabla\big(\psi_n-\psi\big)\big)\big|\, \dx\dt\\
&\leq \int_0^T\big\| \nabla\big(\psi_n-\psi\big) \big\|_{\L^2(\Omega\backslash\overline{\omega_n})} \big\| u_n \big\|_{\L^4(\Omega\backslash\overline{\omega_n})}^2\dt\\
&\leq 2 \big\| \nabla\big(\psi_n-\psi\big) \big\|_{\L^2(\Omega\backslash\overline{\omega_n})} \int_0^T\big\|\nabla u_n \big\|_{\L^2(\Omega\backslash\overline{\omega_n})}^{3/2} \big\| u_n \big\|_{\L^2(\Omega\backslash\overline{\omega_n})}^{1/2}\dt\\
&\leq 2 \big\| \nabla\big(\psi_n-\psi\big) \big\|_{\L^2(\Omega\backslash\overline{\omega_n})} \big\|\nabla u_n \big\|_{L^{3/2}(0,T;\L^{2}(\Omega\backslash\overline{\omega_n}))}^{2/3} \big\| u_n \big\|_{L^\infty(0,T;\L^2(\Omega\backslash\overline{\omega_n}))}^{1/2}.
\end{align*} 
Since $L^{2}(0,T;\L^{2}(\Omega\backslash\overline{\omega_n}))\hookrightarrow L^{3/2}(0,T;\L^{2}(\Omega\backslash\overline{\omega_n})),$  there exists a constant $C>0,$ independent of $n,$ such that
\begin{align*}
&\big|\int_0^T\int_{\Omega\backslash\overline{\omega_n}} (u_n\cdot \nabla) u_n \cdot \big(\psi_n-\psi\big)\, \dx\dt\big|\\
&\leq C \big\| \nabla\big(\psi_n-\psi\big) \big\|_{\L^2(\Omega\backslash\overline{\omega_n})} \big\|\nabla u_n \big\|_{L^{2}(0,T;\L^{2}(\Omega\backslash\overline{\omega_n}))}^{2/3} \big\| u_n \big\|_{L^\infty(0,T;\L^2(\Omega\backslash\overline{\omega_n}))}^{1/2}.
\end{align*} 
Therefore, from \eqref{cv-s0} and \eqref{estimation-u-omega-1}, we get
\begin{equation}
    \int_0^T\int_{\Omega\backslash\overline{\omega_n}} (u_n\cdot \nabla) u_n \cdot \big(\psi_n-\psi\big)\, \dx\dt\longrightarrow  0\;\;\text{as}\;\; n\longrightarrow\infty.
\end{equation}

\noindent To estimate the second term on the right-hand side of \eqref{covect-t-cv}, we note that the functions $u_n=\big(u_n^i\big)_{1\leq i\leq 3}$ and $\nabla u_n=\big( \partial_i u_n^j\big)_{1\leq i,j\leq 3}$ are extended by zero in $\omega_n$, while $u^*=\big(u^*_i\big)_{1\leq i\leq 3}$ and $\nabla u^*=\big( \partial_i u^*_j\big)_{1\leq i,j\leq 3}$ are extended by zero in $\omega^0$. Furthermore, applying Lemma \ref{lem-NS-prelim20} (since $u_n=u^*=\psi=0$ on $\partial\Omega$), we have
\begin{align*}
&\big|\int_0^T\int_{\Omega\backslash\overline{\omega_n}} (u_n\cdot \nabla) u_n \cdot \psi\, \dx\dt  -\int_0^T\int_{\Omega\backslash\overline{\omega^0}} (u^*\cdot \nabla) u^* \cdot \psi\, \dx\dt\big|\\
&=\big|\int_0^T\int_{\Omega} (u_n\cdot \nabla )u_n \cdot \psi\, \dx\dt  -\int_0^T\int_{\Omega} (u^*\cdot \nabla) u^* \cdot \psi\, \dx\dt\big|\\
&=\big|\int_0^T\int_{\Omega} \big[(u^*\cdot \nabla) \psi \cdot u^*\, \dx\dt  - (u_n\cdot \nabla) \psi \cdot u_n\big]\, \dx\dt\big|\\
&=\big| \sum_{k=1}^3\sum_{i=1}^3\int_0^T\int_\Omega\big( \big(u_i^*-u_n^i\big)\frac{\partial \psi_k}{\partial x_i}u^*_k\big)\dx\dt   -   \sum_{k=1}^3\sum_{i=1}^3\int_0^T\int_\Omega\big( u_n^i\frac{\partial \psi_k}{\partial x_i}\big(u_k^*-u_n^k\big)\big)\dx\dt  \big|.
\end{align*} 
Using the triangle inequality, we have
\begin{align*}
&\big|\int_0^T\int_{\Omega\backslash\overline{\omega_n}} (u_n\cdot \nabla )u_n \cdot \psi\, \dx\dt  -\int_0^T\int_{\Omega\backslash\overline{\omega^0}} (u^*\cdot \nabla )u^* \cdot \psi\, \dx\dt\big|\\
&\leq  \sum_{k=1}^3\sum_{i=1}^3\int_0^T\int_\Omega\big| \big(u_i^*-u_n^i\big)\frac{\partial \psi_k}{\partial x_i}u^*_k\big|\dx\dt  
+     \sum_{k=1}^3\sum_{i=1}^3\int_0^T\int_\Omega\big| u_n^i\frac{\partial \psi_k}{\partial x_i}\big(u_k^*-u_n^k\big)\big|\dx\dt .
\end{align*} 
By H\"older's inequality in space (with $p=6,$ $q=3$ and $r=2$), along with the Cauchy-Schwarz inequality (in time), we obtain
\begin{align*}
&\big|\int_0^T\int_{\Omega\backslash\overline{\omega_n}} (u_n\cdot \nabla) u_n \cdot \psi\, \dx\dt  -\int_0^T\int_{\Omega\backslash\overline{\omega^0}} (u^*\cdot \nabla) u^* \cdot \psi\, \dx\dt\big|\\
&\leq\big(\int_0^T\big\|u^*-u_n  \big\|_{L^3(\Omega)}   \big\| u^* \big\|_{L^6(\Omega)}\dt \big)  \big\| \psi \big\|_{L^2(\Omega)}+\big(\int_0^T\big\|u^*-u_n  \big\|_{L^3(\Omega)}   \big\| u_n \big\|_{L^6(\Omega)}\dt \big)  \big\| \psi \big\|_{L^2(\Omega)}\\
&\leq  \big\| \psi \big\|_{L^2(\Omega)} \big\|u^*-u_n  \big\|_{L^2(0,T;L^3(\Omega))}\big(  \big\|u^*  \big\|_{L^2(0,T;L^6(\Omega))}+\big\|u_n  \big\|_{L^2(0,T;L^6(\Omega))} \big).
\end{align*} \\

\noindent Finally, leveraging the embedding  $H^1(\Omega)\subset L^s(\Omega)$ for all $1\leq s\leq 6,$ along with the uniform estimate from \eqref{estimation-u-omega-1} and the convergence results from \eqref{cv-identified} and \eqref{cv-s0}, we conclude that the convergence result stated in \eqref{convect-limit} has been established.\\

\noindent Now, we pass to the limit as $n\longrightarrow\infty$ in \eqref{weak-2-2} and using the convergence results \eqref{eq-21-cv}, \eqref{eq-21-cv21}, \eqref{cv-g-r}, and \eqref{convect-limit} to obtain
\begin{align}\label{weak-2-2-2}
\begin{split}
\displaystyle \int_0^T\int_{\Omega\backslash\overline{\omega^0}}\frac{\partial u^*}{\partial t}\cdot \psi \,\dx\dt&+\nu \int_0^T\int_{\Omega\backslash\overline{\omega^0}}\nabla u^* :\nabla \psi\dx\dt\\
&\quad+ \int_0^T\int_{\Omega\backslash\overline{\omega^0}} (u^*\cdot \nabla) u^* \cdot \psi\, \dx\dt=\int_0^T\int_{\Omega\backslash\overline{\omega^0}} \mathcal{G}\cdot \psi \,\dx\dt,
\end{split}
\end{align}
for all $\psi\in \H^1_{0,\text{div}}(\Omega\backslash\overline{\omega^0}).$ \\

\noindent To conclude $u^*=u_{\omega^0}$, it remains to verify that $u^*(\cdot,0)=0$. To this end, let $\Psi\in C^1[0,\, T]$ with $\Psi(T) = 0$, and let $\theta\in \L^2(\Omega)$ be arbitrary. Integrating by parts over the time interval $(0,T)$ and using the fact that $u_n(\cdot,0)=0$ in $\Omega\backslash\overline{\omega_n}$, we obtain
\begin{align}
   \int_{0}^{T} \int_{\Omega\backslash\overline{\omega_n}} \big(\frac{\partial u_n}{\partial t}\cdot \theta\big)\Psi(t)\, \dx\dt=-\int_{0}^{T} \int_{\Omega\backslash\overline{\omega_n}}  \big(u_n \cdot\theta\big)\Psi'(t)\, \dx\dt.
\end{align}
Letting $n \rightarrow \infty$ in the above equality and applying the same arguments used earlier for convergence, we deduce
\begin{align}
   \int_{0}^{T} \int_{\Omega\backslash\overline{\omega^0}} \big(\frac{\partial u^*}{\partial t}\cdot\theta\big) \Psi(t)\, \dx\dt=-\int_{0}^{T} \int_{\Omega\backslash\overline{\omega^0}}  \big(u^* \cdot\theta\big)\Psi'(t)\, \dx\dt.
\end{align}
By performing integration by parts with respect to $t$ again, we also obtain:\begin{align*}
   \int_{0}^{T} \int_{\Omega\backslash\overline{\omega^0}} \big(\frac{\partial u^*}{\partial t}\cdot\theta\big) \Psi(t)\, \dx\dt=-\int_{0}^{T} \int_{\Omega\backslash\overline{\omega^0}}  \big(u^* \cdot\theta\big)\Psi'(t)\, \dx\dt-\big(\int_{\Omega\backslash\overline{\omega^0}} u^*(\cdot,0)\cdot\theta(\cdot)\, \dx\big)\Psi(0).
\end{align*}
Combining these two expressions yields:
\begin{equation*}
    \big(\int_{\Omega\backslash\overline{\omega^0}} u^*(x,0)\cdot\theta(x)\, \dx\big)\Psi(0)=0.
\end{equation*}
Choosing $\Psi(0)\not=0$ implies
\begin{equation}\label{initial-u0}
    \int_{\Omega\backslash\overline{\omega^0}} u^*(x,0)\cdot\theta(x)\, \dx=0,\;\;\;\text{for all}\;\theta\in\L^2(\Omega),
\end{equation}
which shows that $u^*(\cdot,0)=0$ in $\Omega\backslash\overline{\omega^0}$. Finally, from this result, the weak formulation \eqref{weak-2-2-2}, and the uniqueness of the weak solution to problem \eqref{problem-initial-guess}  (with $\omega=\omega^0$), we conclude that
\begin{equation}\label{uw-0}
    u^*=u_{\omega^0}.
\end{equation}

The concluding step of this proof is based on the lower semi-continuity of the cost functional  $\K_\gamma$. It is well-known that the 
$L^2(\Omega_0)$-norm exhibits lower semi-continuity. Additionally, the lower semi-continuity of the relative perimeter $\hbox{Per}_\Omega(\omega)$ (the second term in  $\K_\gamma$) follows directly from results presented in \cite{HenrotBook2005}. By integrating the lower semi-continuity of $\K_\gamma$ with the convergence results 
$d_{H^c}\big(\omega_n,\omega^0\big)\to0$ as $n\to\infty$ and the findings in \eqref{cv-identified} and \eqref{uw-0}, we derive:
\begin{equation}\mathcal{K}_\gamma\left(\omega^0\right)  \leq \liminf _{n \rightarrow \infty}\int_0^T \left(\int_{\Omega_0}\big|
u_{\omega_n}-u_{meas}\big|^2 \dx\right)\dt+\gamma \liminf _{n \rightarrow \infty}\;\hbox{Per}_\Omega(\omega_n).\end{equation}
This leads to the conclusion that
$$\mathcal{K}_\gamma\left(\omega^0\right) \leq \liminf _{n \rightarrow \infty} \mathcal{K}_\gamma\left(\omega_n\right)=\inf _{\omega\in \mathcal{D}_{ad}} \mathcal{K}_\gamma(\omega),$$
which demonstrates that $\omega^0$ is indeed a minimizer of the optimization problem described in \eqref{regularization-topopt-problem}.
\end{proof}

\subsection{Stability of the optimization problem}

In this section, we establish the stability of \eqref{regularization-topopt-problem}, showing that this minimization system effectively stabilizes the geometric inverse problem under consideration in relation to perturbations in the observed data within $\Omega_0$. To this end, let $u_{meas}^n$ represent a sequence of measurements of $u_{meas}$
  in the space $L^2(0,T;\L^2(\Omega_0)$. For each $n\in\mathbb{N}$, we define $\omega_n\in\mathcal{D}_{ad}$ as the solution to the following minimization problem:
\begin{equation}\label{reg-pertubed-optimi}
\operatorname*{Minimize}_{\omega\in \mathcal{D}_{ad}} \mathcal{K}^n_\gamma(\omega):=\int_0^T \big( \int_{\Omega_0} \big| u_\omega - u^n_{meas} \big|^2 \dx \big) \dt + \gamma \text{Per}_\Omega(\omega).
\end{equation}
In the subsequent theorem, we analyze the convergence of the sequence $\omega_n$ under the condition that the measured data satisfies $u_{meas}^n \longrightarrow u_{meas}^n$ in $L^2(0,T;\L^2(\Omega_0))$ as $n\longrightarrow \infty$.

\begin{theorem}\label{theorem-stability} If
$u_{meas}^n \longrightarrow u_{meas}$ in $L^2(0,T;\L^2(\Omega_0))$ as $n\longrightarrow \infty$, then there exists a subsequence of $\{\omega_n\}_n\subset\mathcal{D}_{ad}$, such that
$$d_{H^c}(\omega_n,\omega^\star)\longrightarrow0\;\; \text{as}\;\; n\longrightarrow\infty.$$
where $\omega^\star\in\mathcal{D}_{ad}$ is a minimizer of the optimization problem defined in \eqref{regularization-topopt-problem}, corresponding to the exact data $u_{meas}$.
\end{theorem}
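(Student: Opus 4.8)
The plan is to run a $\Gamma$-convergence-type stability argument, exploiting the compactness of $\mathcal{D}_{ad}$ together with the domain-continuity of the state already established in the proof of Theorem~\ref{existence-optimal-solution}. First I note that each perturbed problem \eqref{reg-pertubed-optimi} admits a minimizer $\omega_n$ by Theorem~\ref{existence-optimal-solution} (applied with data $u^n_{meas}$). Since $\{\omega_n\}_n\subset\mathcal{D}_{ad}$ and this class is compact for Hausdorff convergence (Lemma~\ref{compacteness-D-ad}), I would extract a subsequence, still denoted $\{\omega_n\}_n$, and an open set $\omega^\star\in\mathcal{D}_{ad}$ with $d_{H^c}(\omega_n,\omega^\star)\to0$, with $\overline{\omega_n}\to\overline{\omega^\star}$ in the Hausdorff sense, and with $\chi_{\omega_n}\to\chi_{\omega^\star}$ in $L^1(\Omega)$. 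The goal then reduces to proving that this Hausdorff limit $\omega^\star$ minimizes the unperturbed functional $\mathcal{K}_\gamma$ associated with the exact data $u_{meas}$. A crucial structural observation is that $u^n_{meas}$ enters only the misfit term and \emph{not} the state equation: the state $u_{\omega_n}$ is the solution of \eqref{problem-initial-guess} with $\omega=\omega_n$, hence depends only on the geometry.

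The core is a two-sided comparison. For the upper bound, fix an arbitrary competitor $\omega\in\mathcal{D}_{ad}$; optimality of $\omega_n$ gives $\mathcal{K}^n_\gamma(\omega_n)\le\mathcal{K}^n_\gamma(\omega)$. Because the state $u_\omega$ does not depend on $n$ while $u^n_{meas}\to u_{meas}$ strongly in $L^2(0,T;\L^2(\Omega_0))$, the misfit term on the right converges to $\int_0^T\int_{\Omega_0}|u_\omega-u_{meas}|^2\,\dx\dt$, and the perimeter term is unchanged; thus $\mathcal{K}^n_\gamma(\omega)\to\mathcal{K}_\gamma(\omega)$. Taking $\liminf_n$ yields $\liminf_n\mathcal{K}^n_\gamma(\omega_n)\le\mathcal{K}_\gamma(\omega)$.

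For the lower bound I would establish $\mathcal{K}_\gamma(\omega^\star)\le\liminf_n\mathcal{K}^n_\gamma(\omega_n)$ by combining three facts. First, continuity of the direct problem under Hausdorff perturbation of the obstacle: reusing the Mosco-convergence tool (Theorem~\ref{Mosco-convergence-Sobolev}) and the passage to the limit in the Navier--Stokes weak formulation carried out in Theorem~\ref{existence-optimal-solution} (which applies verbatim, since the states depend only on $\omega_n$), one gets $u_{\omega_n}\rightharpoonup u_{\omega^\star}$ in $L^2(0,T;L^2(\Omega,\mathbb{R}^{3+3^2}))$, hence weakly in $L^2(0,T;\L^2(\Omega_0))$. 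Second, since $u^n_{meas}\to u_{meas}$ strongly, the differences satisfy $u_{\omega_n}-u^n_{meas}\rightharpoonup u_{\omega^\star}-u_{meas}$ weakly in $L^2(0,T;\L^2(\Omega_0))$, so weak lower semicontinuity of the squared $L^2$-norm gives $\int_0^T\int_{\Omega_0}|u_{\omega^\star}-u_{meas}|^2\,\dx\dt\le\liminf_n\int_0^T\int_{\Omega_0}|u_{\omega_n}-u^n_{meas}|^2\,\dx\dt$. Third, lower semicontinuity of the relative perimeter under $L^1$-convergence of characteristic functions (as in \cite{HenrotBook2005}) yields $\text{Per}_\Omega(\omega^\star)\le\liminf_n\text{Per}_\Omega(\omega_n)$. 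Summing these gives the lower bound.

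Chaining the two bounds produces $\mathcal{K}_\gamma(\omega^\star)\le\liminf_n\mathcal{K}^n_\gamma(\omega_n)\le\mathcal{K}_\gamma(\omega)$ for every $\omega\in\mathcal{D}_{ad}$, so $\omega^\star$ minimizes $\mathcal{K}_\gamma$ for the exact data, which is the claim. I expect the main obstacle to be the state-continuity step, namely passing to the limit in the nonlinear convective term along the moving domains $\Omega\setminus\overline{\omega_n}$ when only weak $L^2$-compactness of $u_{\omega_n}$ is available; this is precisely the delicate estimate already treated in Theorem~\ref{existence-optimal-solution} through Lemma~\ref{estimat-L-4}, Lemma~\ref{lem-NS-prelim20}, and the uniform a priori bound \eqref{estimation-u-omega-1}, so I would invoke it rather than repeat it. A secondary technical point is the weak-times-strong combination in the misfit term, which is safe because $\Omega_0\subset\Omega\setminus\overline{D^*}$ lies uniformly away from every admissible obstacle, so the restrictions $u_{\omega_n}|_{\Omega_0}$ are well controlled and converge weakly there. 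If in addition the exact minimizer is unique (which holds for compatible data by the identifiability Theorem~\ref{uniqueness}), the same argument shows the full sequence converges.
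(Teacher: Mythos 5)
Your proposal is correct and follows essentially the same route as the paper's proof: compactness of $\mathcal{D}_{ad}$ in the Hausdorff sense, reuse of the state-continuity argument from Theorem~\ref{existence-optimal-solution} to get $u_{\omega_n}\rightharpoonup u_{\omega^\star}$, weak lower semicontinuity of the misfit and of the relative perimeter for the lower bound, and comparison with a fixed competitor $\omega$ (whose state is $n$-independent) combined with the strong convergence of $u^n_{meas}$ for the upper bound. The only point I would flag is your closing side remark: Theorem~\ref{uniqueness} gives uniqueness of the solution of the inverse problem (the zero set of $\mathcal{K}$), not uniqueness of minimizers of the regularized functional $\mathcal{K}_\gamma$, so the full-sequence convergence claim is not justified as stated — but this is outside the theorem being proved.
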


\begin{proof}
The existence of each $\omega_n$ is guaranteed by Theorem \ref{existence-optimal-solution}. Utilizing the compactness result from Lemma \ref{compacteness-D-ad}, there exists a set $\omega^\star\in\mathcal{D}_{ad}$ and a subsequence, still denoted by $\omega_n$, such that
$$d_{H^c}(\omega_n,\omega^\star)\longrightarrow0\;\; \text{as}\;\; n\longrightarrow\infty.$$
It remains to demonstrate that the set $\omega^\star$ is indeed a minimizer of the optimization problem defined in \eqref{regularization-topopt-problem}. By employing the same reasoning as in the proof of Theorem \ref{existence-optimal-solution}, we obtain the following convergence, up to a further subsequence still denoted by $\omega_n$:
\begin{align}\label{es44} u_{\omega_n} \rightharpoonup  u_{\omega^\star} \quad \text{in} \quad L^2(0,T; L^2(\Omega,\,\mathbb{R}^{3+3^2})) \quad \text{as} \quad n \longrightarrow \infty. \end{align}
 By leveraging the strong convergence of $u_{meas}^n$ to $u_{meas}$ in $L^2(0,T;\L^2(\Omega_0))$ as $n\longrightarrow \infty$ and using \eqref{es44}, we obtain
 \begin{align*}
u_{\omega_n}-u_{meas}^n \rightharpoonup  u_{\omega^\star}-u_{meas} \quad \text{in} \quad L^2(0,T; L^2(\Omega_0)) \quad \text{as} \quad n \longrightarrow \infty.
\end{align*}

\noindent Therefore, for any $\omega\in\mathcal{D}_{ad},$ we can utilize the lower semi-continuity of the $L^2$-norm and the relative perimeter to establish that
\begin{align*}
\mathcal{K}_\gamma(\omega^\star)&=\int_0^T\int_{\Omega_0}\big|u_{\omega^\star}-
u_{meas}\big|^2\dx\dt+\gamma\hbox{Per}_\Omega(\omega^\star)\\
&\leq\lim_{n\rightarrow\infty}\inf\int_0^T\int_{\Omega_0}\big|u_{\omega_n}-
u_{meas}^n\big|^2\dx\dt+\gamma\lim_{n\rightarrow\infty}\inf\hbox{Per}_\Omega(\omega^n)\\
&\leq\lim_{n\rightarrow\infty}\inf\big(\int_0^T\int_{\Omega_0}\big|u_{\omega_n}-
u_{meas}^n\big|^2\dx\dt+\gamma\hbox{Per}_\Omega(\omega^n)\big).
\end{align*}
On the other hand, since $\omega_n$ is a solution to the minimization problem \eqref{reg-pertubed-optimi}, we can conclude that
\begin{align*}
\mathcal{K}_\gamma(\omega^\star)&\leq\lim_{n\rightarrow\infty}\big(\int_0^T\int_{\Omega_0}\big|u_{\omega}-
u_{meas}^n\big|^2\dx\dt+\gamma\hbox{Per}_\Omega(\omega)\big)\\
&=\int_0^T\int_{\Omega_0}\big|u_{\omega}-
u_{meas}\big|^2\dx\dt+\gamma\hbox{Per}_\Omega(\omega)\\
&=\mathcal{K}_\gamma(\omega), \qquad \;\text{for all }
\omega\in\mathcal{D}_{ad}.
\end{align*}
This confirms that $\omega^\star\in\mathcal{D}_{ad}$ is a minimizer of the optimization problem defined in \eqref{regularization-topopt-problem}.
\end{proof}

In the following section, we numerically address the optimization problem \eqref{regularization-topopt-problem} and develop a non-iterative identification algorithm to reconstruct the location of the unknown obstacle $\omega^*$. Specifically, obtaining successive approximations for the solution to the considered geometric inverse problem by solving the optimization problem \eqref{regularization-topopt-problem} with a fixed parameter $\gamma>0$ presents several technical challenges. These include the non-differentiability of the cost functional $\mathcal{K}_\gamma$ arising from the non-differentiability of the relative perimeter function $\omega\longmapsto\text{Per}_\Omega(\omega)$, as well as the non-convexity of the admissible solutions set $\mathcal{D}_{ad}$. To address these challenges, we employ a self-regularized reconstruction approach based on the topological derivative method, as described below.

\begin{remark}
Importantly, the concept of the topological gradient is unaffected by the specific choice of Dirichlet boundary conditions on \( \partial\Omega \), provided the forward problem \eqref{direct-problem} remains well-posed in the sense of existence, uniqueness, and regularity. This invariance stems from the fact that the derivation of the topological gradient relies solely on the continuous dependence of the solution on the input data. As a result, whether the Dirichlet condition is homogeneous or not does not impact the subsequent analysis of the geometric inverse problem. Therefore, for simplicity and without loss of generality, we assume homogeneous Dirichlet boundary conditions for the computation of the topological gradient, namely,
\[
\phi = 0 \quad \text{on } \partial\Omega \times (0, T).
\]
\end{remark}

\section{Topological gradient-based approach}\label{sec:gradient-topologique}

We recall that the topological sensitivity analysis consists in the study of the variations of the functional $\mathcal{K}_\gamma$ with respect to the insertion of a small obstacle in $\Omega$. To illustrate the main idea of this method, we consider a small geometric perturbation defined as $\mathcal{C}_{z,\varepsilon}=z+\varepsilon\mathcal{C}$, where $\varepsilon>0$ represents the size of the perturbation and $z\in\Omega$ is its center. Here, \( \mathcal{C} \subset \mathbb{R}^3 \) denotes a fixed, bounded domain containing the origin, with a smooth boundary-for instance, of class \( \mathcal{C}^{2,1} \).

To compute the asymptotic expansion of $\K_\gamma$ without relying on the truncation method commonly used in the literature, we examine the following penalized Navier-Stokes problem:
\begin{equation}\label{problem-penalized-0}
\left\{
\begin{array}{rll}
\displaystyle \frac {\partial u_\varepsilon}{\partial t}-\nu
\Delta u_\varepsilon+ \N(u_\varepsilon)+ \nabla \pi_\varepsilon
 +k\chi_{\mathcal{C}_{z,\varepsilon}}\, u_\varepsilon&= \mathcal{G} &\mbox{ in }  \,\,\Omega\times (0,\,T) ,\\
\mbox{div}\, u_\varepsilon &=  0 & \mbox{ in } \,\, \Omega \times (0,\,T) ,\\
u_\varepsilon &= 0 & \mbox{ on }\,\,  \partial\Omega \times (0,\,T) , \\
u_\varepsilon(\cdot,0)&= 0&\mbox{ in } \,\,  \Omega.
\end{array}
 \right.
\end{equation}
In this context, $k$ is a positive constant and $\chi_{\mathcal{C}_{z,\varepsilon}}$ is the characteristic function of $\mathcal{C}_{z,\varepsilon}.$ Following the approach outlined in \cite{TemamBook1977}, the weak formulation for problem \eqref{problem-penalized} is stated as follows: Find $u_\varepsilon\in  L^2(0,T; \H^1_{0,\text{div}}(\Omega))$ such that

\begin{align}\label{weak-f-penalized}
\left\{
  \begin{array}{ll}
    \displaystyle\big<\frac{\partial u_\varepsilon}{\partial t},\, \varphi\big>_\Omega+ a_\varepsilon(u_\varepsilon,\varphi)=\big<\mathcal{G},\, \varphi\big>_\Omega \qquad \forall \varphi\in \H^1_{0,\text{div}}(\Omega)  \\
    \\
    \text{and } a.a.\,t\in(0,T)\;\text{ and } u_\varepsilon(\cdot,0)=0\;\text{in } \;\Omega. 
  \end{array}
\right.
\end{align}
In this formulation, the bilinear form $a_\varepsilon$ is defined by
\begin{align}
    \label{bilinear}a_\varepsilon(u,\varphi)&=\nu\int_{\Omega}\nabla u :\nabla \varphi \,\dx+ \int_{\Omega} (u\cdot \nabla) u \cdot \varphi\, \dx+\int_{\Omega} k\chi_{\mathcal{C}_{z,\varepsilon}} u\cdot \varphi\, \dx.
\end{align}
Using the Galerkin approximation method outlined in \cite{TemamBook1977}, we can demonstrate the existence of a weak solution $u_\varepsilon \in L^2(0,T; \H^1_{0,\text{div}}(\Omega))$ for the penalized problem \eqref{problem-penalized-0}.

The penalization method is a well-established technique often employed in finite element approximations to enforce Dirichlet boundary conditions. Through this approach, we can show that as $k$ approaches infinity within $\mathcal{C}_{z,\varepsilon}$ for a given
$\varepsilon,$ the corresponding solution $u_\varepsilon$
converges to the solution of the following perturbed Navier-Stokes system 
\begin{equation}\label{problem-perturbed}
\left\{
\begin{array}{rll}
\displaystyle \frac {\partial u_\varepsilon}{\partial t}-\nu
\Delta u_\varepsilon+ \N(u_\varepsilon)+ \nabla \pi_\varepsilon
 &= \mathcal{G} &\mbox{ in }  \,\,(\Omega\backslash\overline{\mathcal{C}_{z,\varepsilon}})\times (0,\,T) ,\\
\mbox{div}\, u_\varepsilon &=  0 & \mbox{ in } \,\, (\Omega\backslash\overline{\mathcal{C}_{z,\varepsilon}}) \times (0,\,T) ,\\
u_\varepsilon &= 0 & \mbox{ on }\,\,  \partial\Omega \times (0,\,T) , \\
u_\varepsilon &=  0 & \mbox{ on } \,\, \partial\mathcal{C}_{z,\varepsilon} \times (0,\,T) ,\\
u_\varepsilon(.,0)&= 0&\mbox{ in } \,\,  \Omega\backslash\overline{\mathcal{C}_{z,\varepsilon}}.
\end{array}
 \right.
\end{equation}
The rigorous mathematical justification for this convergence result is established in Theorem \ref{cv-penalized} of Section \ref{Justification-penalizing}.

\begin{remark}In the remainder of this section, we assume that the parameter $k$ is sufficiently large to guarantee that the solution of the penalized problem \eqref{problem-penalized-0} converges to the solution of the perturbed problem \eqref{problem-perturbed}. In addition, for clarity and to avoid confusion, we denote by $u_\varepsilon$ the solution of the problem \eqref{problem-penalized-0}.
\end{remark}

From these elements, we define the following shape function :
$$
K(\Omega\backslash\overline{\mathcal{C}_{z,\varepsilon}}):=\mathcal{K}_\gamma(\mathcal{C}_{z,\varepsilon}),
$$
where the perturbed functional $ \mathcal{K}_\gamma(\mathcal{C}_{z,\varepsilon})$ is defined by
\begin{align*}
\mathcal{K}_\gamma(\mathcal{C}_{z,\varepsilon})=\int_0^T \left(\int_{\Omega_0}\big|
u_\varepsilon-u_{meas}\big|^2 \dx\right)\dt+\gamma\text{Per}_\Omega(\mathcal{C}_{z,\varepsilon}).
\end{align*}
Since $\text{Per}_\Omega(\mathcal{C}_{z,\varepsilon})=\varepsilon^2\text{Per}_\Omega(\mathcal{C})$, it follows that
\begin{align}\label{perturbed-shape-function}
K(\Omega\backslash\overline{\mathcal{C}_{z,\varepsilon}})=\int_0^T \left(\int_{\Omega_0}\big|
u_\varepsilon-u_{meas}\big|^2 \dx\right)\dt+\gamma\varepsilon^2\text{Per}_\Omega(\mathcal{C}).
\end{align}
As stated in the introduction, we will derive an asymptotic expansion for
$K$ on the form
\begin{align}\label{asy}
K(\Omega\backslash\overline{\mathcal{C}_{z,\varepsilon}})=K(\Omega)+\mu(\varepsilon)D_K(z)+o(\mu(\varepsilon)),
\end{align}
where the unperturbed shape function $K(\Omega)$ is defined by $L^2$-norm without
the regularization term:
\begin{align}\label{unperturbed-shape-function}
K(\Omega)=\mathcal{K}(\emptyset)=\int_0^T \left(\int_{\Omega_0}\big|
u_0-u_{meas}\big|^2 \dx\right)\dt.
\end{align}
In this setting, the pair $(u_0,\pi_0)\in L^2(0,T; \H^1_{0,\text{div}}(\Omega))\times L^2(0,T;L^2(\Omega))$ is solution of
\begin{equation}\label{problem-perturbed-00}
\left\{
\begin{array}{rll}
\displaystyle \frac {\partial u_0}{\partial t}-\nu
\Delta u_0+ \N(u_0)+ \nabla \pi_0
 &= \mathcal{G} &\mbox{ in }  \,\,\Omega\times (0,\,T) ,\\
\mbox{div}\, u_0 &=  0 & \mbox{ in } \,\, \Omega \times (0,\,T) ,\\
u_0&= 0 & \mbox{ on }\,\,  \partial\Omega \times (0,\,T) , \\
u_0(.,0)&= 0&\mbox{ in } \,\,  \Omega.
\end{array}
 \right.
\end{equation}
From assumption (\textbf{A}$1$) and \cite[Theorems 3.7 and 3.8]{temam1973theory}, Temam established that the problem \eqref{problem-perturbed-00} admits a unique weak solution \( u_0\in L^\infty(0,T; \H^2(\Omega))\). By the Sobolev embedding theorem, this further implies that 
\begin{eqnarray}\label{regularity1}
    u_0 \in L^\infty(0,T; \mathcal{C}(\overline{\Omega})).
\end{eqnarray}

\begin{remark}\label{regularity}
As noted in \cite[Remark 3.8, p. 247]{TemamBook1977}, it is important to emphasize that a more regular solution to the evolutionary Navier-Stokes equations \eqref{problem-perturbed-00} can be obtained if the source term is sufficiently smooth. Specifically, if \( \mathcal{G} \in \mathcal{C}^\infty(\overline{\Omega} \times (0, T)) \) and \( \Omega \in\mathcal{C}^\infty \), then the solution \( u_0 \) to \eqref{problem-perturbed-00} belongs to \( \mathcal{C}^\infty(\overline{\Omega} \times (0, T)) \).
\end{remark}

Next, we determine the scalar function \( \varepsilon \mapsto \mu(\varepsilon) \) and the topological gradient \( D_K(z) \). Our approach relies on a preliminary estimate that captures the leading term of the variation in the velocity field, which significantly simplifies the mathematical analysis. Furthermore, to address the difficulties caused by the limited regularity of the data, we impose an additional assumption that alleviates these issues and facilitates a more straightforward derivation.

\vspace{0.3cm}
\paragraph{\bf Assumption (A2)} There exists \( \delta > 0 \) such that 
    \begin{equation}\label{condition-u0}
    \left\|\nabla u_0\right\|_{L^\infty(0,T;\,\L^2(\Omega))} \leq \delta < \frac{\nu}{\rho(\Omega)},
    \end{equation}
    where \begin{equation}\label{uni-con11}
        \rho(\Omega) = \displaystyle \frac{2\sqrt{2}}{3} \operatorname{meas}(\Omega)^{1/6} .
        \end{equation}

\begin{remark}\label{remark-smoothness}
The above assumption has been used by Galdi in \cite{galdi2011introduction} as a sufficient condition for ensuring the uniqueness of solutions to the Navier–Stokes problem.
\end{remark}

\subsection{Estimation of the perturbed velocity field}

The first step of our analysis is to examine the effect of the small obstacle 
$\mathcal{C}_{z,\varepsilon}$ on the fluid flow within the domain $\Omega$. 
We derive an estimate for the resulting perturbation in the velocity field. 
To this end, we first present two preliminary results.

\begin{lemma} \label{lem-NS-prelim1}(see \cite[Lemma IX.1.1, p. 588]{galdi2011introduction}).  Let \( \O \) be a bounded domain with locally Lipschitz boundary in \( \mathbb{R}^3 \). For all $(\varphi,v,w)\in\H^1(\O)\times \H^1(\O)\times \H^1(\O)$, there exists a constant $c(\O)>0$ such that
\begin{eqnarray}
\displaystyle \left|\int_{\O}  (\varphi\cdot \nabla)v\cdot w\, \dx\right |\leq c(\O)\,\left\|\varphi\right\|_{\H^1(\O)}\left\|\nabla v\right\|_{\L^2(\O)}\left\| w\right\|_{\H^1(\O)}.
\end{eqnarray}
Moreover, if $\varphi=w=0\,\mbox{ on }\,\partial \Omega$, we have
\begin{eqnarray}
\displaystyle  \left|\int_{\O}  (\varphi\cdot \nabla)v\cdot w\, \dx\right |\leq \rho(\O)\,\left\|\nabla \varphi\right\|_{\L^2(\O)}\left\|\nabla v\right\|_{\L^2(\O)}\left\| \nabla w\right\|_{\L^2(\O)},
\end{eqnarray}
where $\rho(\O)$ is defined analogously to \eqref{uni-con11}.
\end{lemma}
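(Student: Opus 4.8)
The plan is to reduce everything to H\"older's inequality together with a Sobolev embedding, handling the two estimates separately. Writing the trilinear form out as $\int_{\O}(\varphi\cdot\nabla)v\cdot w\,\dx=\sum_{i,j=1}^3\int_{\O}\varphi_i\,\partial_i v_j\,w_j\,\dx$, the pointwise Cauchy--Schwarz bound on the index contractions gives $|(\varphi\cdot\nabla)v\cdot w|\le|\varphi|\,|\nabla v|\,|w|$, so in both cases it suffices to estimate $\int_{\O}|\varphi|\,|\nabla v|\,|w|\,\dx$.

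For the first (general) inequality I would apply H\"older with the exponents $(4,2,4)$, admissible since $\tfrac14+\tfrac12+\tfrac14=1$, to obtain $\int_{\O}|\varphi|\,|\nabla v|\,|w|\,\dx\le\|\varphi\|_{\L^4(\O)}\,\|\nabla v\|_{\L^2(\O)}\,\|w\|_{\L^4(\O)}$. Since $\O$ is bounded with locally Lipschitz boundary in $\mathbb{R}^3$, the Sobolev embedding $\H^1(\O)\hookrightarrow\L^4(\O)$ is continuous (via $\H^1\hookrightarrow\L^6\hookrightarrow\L^4$), giving $\|\varphi\|_{\L^4(\O)}\le c_0\|\varphi\|_{\H^1(\O)}$ and likewise for $w$. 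Setting $c(\O)=c_0^2$ yields the first claim. Note that $\nabla v$ enters only through $\|\nabla v\|_{\L^2(\O)}$, which is why the middle factor is the gradient norm rather than the full $\H^1$ norm.

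For the refined inequality, where now $\varphi=w=0$ on $\partial\Omega$, the idea is to extend $\varphi$ and $w$ by zero to all of $\mathbb{R}^3$ (which preserves membership in $\H^1_0$) so that the \emph{whole-space} Sobolev constant can be used, and to recover the measure factor by a four-fold H\"older split. Concretely, I would apply the generalized H\"older inequality with exponents $(6,2,6,6)$, the fourth factor being the constant function $1$, admissible since $\tfrac16+\tfrac12+\tfrac16+\tfrac16=1$; this produces $\int_{\O}|\varphi|\,|\nabla v|\,|w|\,\dx\le \operatorname{meas}(\O)^{1/6}\,\|\varphi\|_{\L^6(\O)}\,\|\nabla v\|_{\L^2(\O)}\,\|w\|_{\L^6(\O)}$. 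Applying the Sobolev inequality $\|\psi\|_{\L^6(\O)}\le S_3\,\|\nabla\psi\|_{\L^2(\O)}$ (valid for $\psi\in\H^1_0(\O)$ with $S_3$ independent of $\O$, obtained from the zero-extension and the embedding $\dot{\H}^1(\mathbb{R}^3)\hookrightarrow\L^6(\mathbb{R}^3)$) to both $\varphi$ and $w$ then gives the bound with $\rho(\O)=S_3^2\,\operatorname{meas}(\O)^{1/6}$.

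The hard part is purely the constant: reproducing the exact value in \eqref{uni-con11} amounts to showing $S_3^2=\tfrac{2\sqrt2}{3}$, i.e. inserting the explicit (non-sharp) constant $S_3=2^{3/4}/\sqrt3$ in the three-dimensional embedding $\H^1_0\hookrightarrow\L^6$, which follows from the Gagliardo--Nirenberg--Sobolev inequality applied to the zero-extension on $\mathbb{R}^3$; everything else is routine H\"older bookkeeping. It is worth emphasizing that the domain-independence of $S_3$ (a consequence of extending by zero and invoking the whole-space constant) is precisely what makes $\rho(\O)$ depend on $\O$ only through $\operatorname{meas}(\O)$ --- a feature that is later exploited to keep the a priori estimates uniform across the perforated domains.
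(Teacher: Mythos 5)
This lemma is not proved in the paper at all: it is quoted verbatim from Galdi's monograph (Lemma IX.1.1), so there is no in-paper argument to compare against, and your proposal has to stand on its own. Your proof of the first inequality (H\"older with exponents $(4,2,4)$ followed by $\H^1(\O)\hookrightarrow\L^4(\O)$ on a bounded Lipschitz domain) is correct and is the standard route. Your proof of the second inequality is also structurally sound: the four-fold H\"older split with the constant function as fourth factor correctly produces the factor $\operatorname{meas}(\O)^{1/6}$, and extending by zero to use a domain-independent constant in $\H^1_0(\O)\hookrightarrow\L^6(\O)$ is exactly the right mechanism for making $\rho(\O)$ depend on $\O$ only through its measure. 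This yields the estimate with constant $S_3^2\operatorname{meas}(\O)^{1/6}$.

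The genuine gap is the numerical value you assign to $S_3$. You assert $S_3=2^{3/4}/\sqrt{3}$ and claim it ``follows from the Gagliardo--Nirenberg--Sobolev inequality applied to the zero-extension.'' It does not: the elementary Nirenberg argument (bound $\|u\|_{\L^{3/2}}$ by the product of the $\|\partial_i u\|_{\L^1}^{1/3}$, apply it to $|v|^4$, then use the arithmetic--geometric mean inequality) gives $\|v\|_{\L^6(\mathbb{R}^3)}\leq \tfrac{2}{\sqrt{3}}\|\nabla v\|_{\L^2(\mathbb{R}^3)}$, i.e.\ $S_3^2=\tfrac{4}{3}$, which is strictly larger than $\tfrac{2\sqrt{2}}{3}$. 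So your argument, as written, proves the inequality only with $\rho(\O)=\tfrac{4}{3}\operatorname{meas}(\O)^{1/6}$. The stated constant $\tfrac{2\sqrt{2}}{3}$ is nevertheless correct (the sharp Talenti constant satisfies $S_3^{\mathrm{sharp}}\approx 0.43$, well below $2^{3/4}/\sqrt{3}\approx 0.97$), but you would need to invoke a sharper Sobolev constant than the one your cited derivation produces, or follow Galdi's own derivation. This is not purely cosmetic here: $\rho(\Omega)$ enters quantitatively in Assumption (A2) and in the coercivity margin $\nu-\delta\,\rho(\Omega)>0$ used in Lemma~\ref{lemma-es1} and Theorem~\ref{cv-penalized}, so the admissible smallness threshold changes if the constant does.
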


We recall the following integral identity. 

\begin{lemma} (see  \cite[ Lemma IX.2.1, p. 591]{galdi2011introduction}).\label{lem-NS-prelim2} Let \( \O \) be a bounded domain with a locally Lipschitz boundary in \( \mathbb{R}^3 \). If \( \varphi \in \H^1(\O) \) satisfies \( \text{div} \, \varphi = 0 \) in \( \O \), then  
    \begin{equation}
        \int_{\O} (\varphi\cdot \nabla)v \cdot v\, \dx = 0\;\;\; \text{  for all  }\;v\in\H^1_0(\O).
    \end{equation}
\end{lemma}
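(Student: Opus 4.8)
The plan is to reduce the trilinear integral to a pure divergence by means of a pointwise algebraic identity, verify the claim first on the dense subspace of smooth compactly supported fields (where the divergence-free hypothesis can be used directly), and then transfer it to all of $\H^1_0(\O)$ by a density argument grounded in the continuity estimate of Lemma~\ref{lem-NS-prelim1}.

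First I would record the pointwise identity, valid for any smooth $v$,
\[
(\varphi\cdot\nabla)v\cdot v=\sum_{i,j=1}^{3}\varphi_i(\partial_i v_j)v_j=\tfrac{1}{2}\sum_{i=1}^{3}\varphi_i\,\partial_i|v|^2=\tfrac{1}{2}\,\varphi\cdot\nabla|v|^2.
\]
Taking $v\in\mathcal{C}^\infty_c(\O;\mathbb{R}^3)$, the scalar field $|v|^2$ belongs to $\mathcal{C}^\infty_c(\O)$, so by the very definition of the weak divergence of $\varphi\in\H^1(\O)$ together with the hypothesis $\div\,\varphi=0$,
\[
\int_{\O}(\varphi\cdot\nabla)v\cdot v\,\dx=\tfrac{1}{2}\int_{\O}\varphi\cdot\nabla|v|^2\,\dx=-\tfrac{1}{2}\int_{\O}(\div\,\varphi)\,|v|^2\,\dx=0,
\]
the boundary contribution being absent because $|v|^2$ has compact support in $\O$. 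This establishes the identity on $\mathcal{C}^\infty_c(\O;\mathbb{R}^3)$.

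To reach a general $v\in\H^1_0(\O)$, I would pick $v_n\in\mathcal{C}^\infty_c(\O;\mathbb{R}^3)$ with $v_n\to v$ in $\H^1(\O)$ and write $b(\varphi,a,c):=\int_{\O}(\varphi\cdot\nabla)a\cdot c\,\dx$ for the associated trilinear form. Splitting
\[
b(\varphi,v_n,v_n)-b(\varphi,v,v)=b(\varphi,v_n-v,v_n)+b(\varphi,v,v_n-v)
\]
and applying the first estimate of Lemma~\ref{lem-NS-prelim1} bounds the two terms by $c(\O)\|\varphi\|_{\H^1(\O)}\|\nabla(v_n-v)\|_{\L^2(\O)}\|v_n\|_{\H^1(\O)}$ and $c(\O)\|\varphi\|_{\H^1(\O)}\|\nabla v\|_{\L^2(\O)}\|v_n-v\|_{\H^1(\O)}$ respectively. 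Since $\{v_n\}$ is bounded in $\H^1(\O)$ and $v_n\to v$, both bounds tend to zero, whence $b(\varphi,v,v)=\lim_{n\to\infty}b(\varphi,v_n,v_n)=0$.

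The only genuinely delicate point is this passage to the limit, which hinges on the trilinear form being well-defined and continuous on $\H^1(\O)$; this is exactly what Lemma~\ref{lem-NS-prelim1} provides, the underlying integrability following from the three-dimensional Sobolev embedding $\H^1(\O)\hookrightarrow\L^6(\O)$. The remaining ingredients---the algebraic identity and a single integration by parts---are elementary, so I anticipate no further obstacles.
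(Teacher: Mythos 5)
Your proof is correct: the pointwise identity $(\varphi\cdot\nabla)v\cdot v=\tfrac12\,\varphi\cdot\nabla|v|^2$, the integration by parts against compactly supported test functions using $\mathrm{div}\,\varphi=0$, and the density argument via the continuity estimate of Lemma~\ref{lem-NS-prelim1} are all sound. The paper gives no proof of this statement---it simply cites Lemma IX.2.1 of Galdi's monograph---and your argument is essentially the standard one found there, so there is nothing further to compare.
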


We now establish an estimate that characterizes the behavior of the perturbed velocity field $u_\varepsilon$ with respect to the obstacle size $\varepsilon.$

\begin{lemma}\label{lemma-es1}
 Let $u_\varepsilon$ and $u_0$ be the solutions to \eqref{problem-penalized-0} and \eqref{problem-perturbed-00}, respectively. Then, there exists a constant $C>0$, independent of $\varepsilon$, such that
\begin{equation}\label{estimation-perturbed-velocity}
\big\|u_\varepsilon-u_0\big\|_{L^\infty(0,T;\L^2(\Omega))}+\big\|u_\varepsilon-u_0\big\|_{L^2(0,T;\H^1(\Omega))}\leq C\, \varepsilon^{\frac{5}{2}}.
\end{equation}
\end{lemma}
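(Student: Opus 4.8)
The plan is to set \( w_\varepsilon:=u_\varepsilon-u_0 \) and to close an energy estimate by testing the equation for \( w_\varepsilon \) with \( w_\varepsilon \) itself. Subtracting the weak formulation of \eqref{problem-perturbed-00} from the penalized weak formulation \eqref{weak-f-penalized}, and expanding the convective nonlinearity as
\[
(u_\varepsilon\cdot\nabla)u_\varepsilon-(u_0\cdot\nabla)u_0=(u_0\cdot\nabla)w_\varepsilon+(w_\varepsilon\cdot\nabla)u_0+(w_\varepsilon\cdot\nabla)w_\varepsilon,
\]
shows that \( w_\varepsilon \) satisfies, for a.a.\ \( t \) and all \( \varphi\in\H^1_{0,\text{div}}(\Omega) \), the identity \( \big<\partial_t w_\varepsilon,\varphi\big>_\Omega+\nu\int_\Omega\nabla w_\varepsilon:\nabla\varphi\,\dx+\int_\Omega[(u_0\cdot\nabla)w_\varepsilon+(w_\varepsilon\cdot\nabla)u_0+(w_\varepsilon\cdot\nabla)w_\varepsilon]\cdot\varphi\,\dx+k\int_{\mathcal{C}_{z,\varepsilon}}u_\varepsilon\cdot\varphi\,\dx=0 \). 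Since \( u_\varepsilon \) and \( u_0 \) are both solenoidal and vanish on \( \partial\Omega \), the difference \( w_\varepsilon(\cdot,t) \) lies in \( \H^1_{0,\text{div}}(\Omega) \), so it is an admissible test function, and \( w_\varepsilon(\cdot,0)=0 \).

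Choosing \( \varphi=w_\varepsilon \), the time-derivative term gives \( \tfrac12\tfrac{d}{dt}\|w_\varepsilon\|_{\L^2(\Omega)}^2 \) (by the standard Lions--Magenes identity, which holds once one checks \( \partial_t w_\varepsilon\in L^{4/3}(0,T;(\H^1_{0,\text{div}})') \)) and the viscous term gives \( \nu\|\nabla w_\varepsilon\|_{\L^2(\Omega)}^2 \). For the nonlinearity, Lemma~\ref{lem-NS-prelim2} annihilates both \( \int_\Omega(u_0\cdot\nabla)w_\varepsilon\cdot w_\varepsilon\,\dx \) and \( \int_\Omega(w_\varepsilon\cdot\nabla)w_\varepsilon\cdot w_\varepsilon\,\dx \), because \( u_0 \) and \( w_\varepsilon \) are solenoidal and \( w_\varepsilon\in\H^1_0 \). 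The only surviving term \( \int_\Omega(w_\varepsilon\cdot\nabla)u_0\cdot w_\varepsilon\,\dx \) is estimated by the second inequality of Lemma~\ref{lem-NS-prelim1} (applicable since \( w_\varepsilon=0 \) on \( \partial\Omega \)) by \( \rho(\Omega)\|\nabla u_0\|_{\L^2(\Omega)}\|\nabla w_\varepsilon\|_{\L^2(\Omega)}^2 \). Here Assumption (A2), i.e.\ \eqref{condition-u0}, is decisive: it forces \( \rho(\Omega)\|\nabla u_0\|_{\L^2(\Omega)}\le\rho(\Omega)\delta<\nu \), so this term is absorbed into the viscous term while leaving a strictly positive coefficient \( \nu-\rho(\Omega)\delta \) in front of \( \|\nabla w_\varepsilon\|_{\L^2(\Omega)}^2 \).

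The heart of the argument, and the step I expect to be the main obstacle, is the penalty term \( k\int_{\mathcal{C}_{z,\varepsilon}}u_\varepsilon\cdot w_\varepsilon\,\dx \). Writing \( u_\varepsilon=w_\varepsilon+u_0 \) splits it into \( k\int_{\mathcal{C}_{z,\varepsilon}}|w_\varepsilon|^2\,\dx\ge0 \), which I keep on the left-hand side, and the cross term \( k\int_{\mathcal{C}_{z,\varepsilon}}u_0\cdot w_\varepsilon\,\dx \). This cross term is where the exponent \( \tfrac52 \) is produced, and a naive \( \L^2\times\L^2 \) bound on \( \mathcal{C}_{z,\varepsilon} \) would yield only \( O(\varepsilon^{3/2}) \). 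The gain comes from spending the smallness of \( |\mathcal{C}_{z,\varepsilon}|=O(\varepsilon^3) \) on both factors asymmetrically: the boundedness \eqref{regularity1} gives \( \|u_0\|_{\L^2(\mathcal{C}_{z,\varepsilon})}\le\|u_0\|_{L^\infty(0,T;\mathcal{C}(\overline\Omega))}\,|\mathcal{C}_{z,\varepsilon}|^{1/2}=O(\varepsilon^{3/2}) \), while Hölder's inequality together with the Sobolev embedding \( \H^1(\Omega)\hookrightarrow\L^6(\Omega) \) gives \( \|w_\varepsilon\|_{\L^2(\mathcal{C}_{z,\varepsilon})}\le|\mathcal{C}_{z,\varepsilon}|^{1/3}\|w_\varepsilon\|_{\L^6(\Omega)}\le C\varepsilon\|\nabla w_\varepsilon\|_{\L^2(\Omega)} \). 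Their product is \( |k\int_{\mathcal{C}_{z,\varepsilon}}u_0\cdot w_\varepsilon\,\dx|\le Ck\,\varepsilon^{5/2}\|\nabla w_\varepsilon\|_{\L^2(\Omega)} \), and a Young inequality of the form \( Ck\,\varepsilon^{5/2}\|\nabla w_\varepsilon\|\le\tfrac{\nu-\rho(\Omega)\delta}{2}\|\nabla w_\varepsilon\|^2+C'\varepsilon^5 \) absorbs the gradient, using precisely the slack secured by (A2).

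Collecting the above yields the differential inequality
\[
\tfrac12\tfrac{d}{dt}\|w_\varepsilon\|_{\L^2(\Omega)}^2+\tfrac{\nu-\rho(\Omega)\delta}{2}\|\nabla w_\varepsilon\|_{\L^2(\Omega)}^2\le C'\varepsilon^5 .
\]
Integrating in time from \( 0 \) to \( t \), using \( w_\varepsilon(\cdot,0)=0 \), and taking the supremum over \( t\in(0,T) \) bounds both \( \|w_\varepsilon\|_{L^\infty(0,T;\L^2(\Omega))}^2 \) and \( \int_0^T\|\nabla w_\varepsilon\|_{\L^2(\Omega)}^2\,\dt \) by \( C''T\varepsilon^5 \); the uniform Poincaré inequality on \( \H^1_0(\Omega) \) then upgrades the gradient control to the full \( \H^1 \)-norm. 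Taking square roots delivers exactly \eqref{estimation-perturbed-velocity} with a constant \( C \) independent of \( \varepsilon \) (but depending on \( k \), \( \nu \), \( \delta \), \( T \) and \( \|u_0\|_{L^\infty(0,T;\mathcal{C}(\overline\Omega))} \)).
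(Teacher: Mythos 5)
Your proposal is correct and follows essentially the same route as the paper: the same energy identity for \(w_\varepsilon=u_\varepsilon-u_0\), the same cancellations via Lemmas~\ref{lem-NS-prelim20} and~\ref{lem-NS-prelim2}, the same asymmetric treatment of the cross term \(k\int_{\mathcal{C}_{z,\varepsilon}}u_0\cdot w_\varepsilon\) (boundedness of \(u_0\) giving \(\varepsilon^{3/2}\) times H\"older--Sobolev giving \(\varepsilon\)), and the same absorption of \(\int(w_\varepsilon\cdot\nabla)u_0\cdot w_\varepsilon\) via Assumption (A2). The only cosmetic difference is that you apply Young's inequality pointwise in time to obtain a differential inequality before integrating, whereas the paper integrates first and applies Young to the resulting global quantity; both yield the stated bound.
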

\begin{proof}
Let \( w_\varepsilon \) be the difference between \( u_\varepsilon \) and \( u_0 \), i.e., \( w_\varepsilon = u_\varepsilon - u_0 \). Then, from \eqref{problem-penalized-0} and \eqref{problem-perturbed-00}, one can easily verify that \( w_\varepsilon \) satisfies the following boundary value problem
\begin{equation}\label{problem-diff}
\left\{
\begin{array}{rll}
\displaystyle \frac {\partial w_\varepsilon}{\partial t}-\nu
\Delta w_\varepsilon+ \N(w_\varepsilon)+\big< D \N(w_\varepsilon),\,u_0 \big> +\nabla \Pi_\varepsilon
 +k\chi_{\mathcal{C}_{z,\varepsilon}}\, w_\varepsilon&= -k\chi_{\mathcal{C}_{z,\varepsilon}}\, u_0 &\mbox{ in }  \,\,\Omega\times (0,\,T) ,\\
\mbox{div}\, w_\varepsilon &=  0 & \mbox{ in } \,\, \Omega \times (0,\,T) ,\\
w_\varepsilon &= 0 & \mbox{ on }\,\,  \partial\Omega \times (0,\,T) , \\
w_\varepsilon(\cdot,0)&= 0&\mbox{ in } \,\,  \Omega.
\end{array}
 \right.
\end{equation}
Here, \( \Pi_\varepsilon = \pi_\varepsilon - \pi_0 \) represents the pressure variation, and \( D\mathcal{N}(w_\varepsilon) \) denotes the differential of the map \( w \mapsto \mathcal{N}(w) \) at \( w_\varepsilon \), which is applied to the field \( u_0 \) as follows
\begin{equation}\label{DN}
   \big< D \N(w_\varepsilon),\,u_0 \big>=(u_0\cdot\nabla )w_\varepsilon + (w_\varepsilon \cdot \nabla)u_0. 
\end{equation}
By utilizing the variational formulation of \eqref{problem-diff} and choosing $w_\varepsilon$ as a test function, we obtain
\begin{align}
\begin{split}\label{estim-R3-1}
    \int_0^{\tau}\int_{\Omega}\frac {\partial w_\varepsilon}{\partial t}\cdot w_\varepsilon\, \dx\dt&+\nu\int_0^{\tau}\int_\Omega\big|\nabla w_\varepsilon\big|^2\dx\dt+\int_0^{\tau}\int_\Omega\N(w_\varepsilon)\cdot w_\varepsilon\dx\dt\\
&+\int_0^\tau\int_\Omega\big[ (u_0\cdot\nabla )w_\varepsilon + (w_\varepsilon \cdot \nabla)u_0\big]\cdot w_\varepsilon\,\dx\dt+k\int_0^{\tau}\int_{\mathcal{C}_{z,\varepsilon}}\big|w_\varepsilon\big|^2\dx\dt\\
&\qquad\quad=-k\int_0^{\tau}\int_{\mathcal{C}_{z,\varepsilon}}u_0\cdot w_\varepsilon\,\dx\dt,
\end{split}
\end{align} 
for almost all $\tau\in [0,\,T].$ Since $w_\varepsilon=0$ on $\partial \Omega$, applying Lemma \ref{lem-NS-prelim20} and Lemma \ref{lem-NS-prelim2}, we get
\begin{eqnarray}\label{rel2}
\int_0^{\tau}\hspace{-0.2cm}\int_{\Omega}\N(w_\varepsilon) \cdot w_\varepsilon\, \dx\dt=0\,\text{ and }
\,\int_0^{\tau}\hspace{-0.2cm}\int_{\Omega} \hspace{-0.2cm}(u_0\cdot \nabla) w_\varepsilon \cdot w_\varepsilon\, \dx\dt=0.
\end{eqnarray}
Integrating in time and using the initial condition $w_\varepsilon(\cdot,0)=0$ in $\Omega$, it follows
\begin{eqnarray}\label{rel1}
\displaystyle \int_0^{\tau}\int_{\Omega}\frac {\partial w_\varepsilon}{\partial t}\cdot w_\varepsilon\, \dx\dt=\frac{1}{2} \int_{\Omega}\big|w_\varepsilon(\cdot,\tau)\big|^2\dx.
\end{eqnarray}
Inserting the relations \eqref{rel2} and \eqref{rel1} into \eqref{estim-R3-1}, we obtain
\begin{align}
\begin{split}\label{estim-R3-1-new}
   \frac{1}{2} \int_{\Omega}\big|w_\varepsilon(\cdot,\tau)\big|^2\dx&+\nu\int_0^{\tau}\int_\Omega\big|\nabla w_\varepsilon\big|^2\dx\dt+k\int_0^{\tau}\int_{\mathcal{C}_{z,\varepsilon}}\big|w_\varepsilon\big|^2\dx\dt\\
   &=-k\int_0^{\tau}\int_{\mathcal{C}_{z,\varepsilon}}u_0\cdot w_\varepsilon\,\dx\dt-\int_0^\tau\int_\Omega(w_\varepsilon \cdot \nabla)u_0\cdot w_\varepsilon\,\dx\dt,
\end{split}
\end{align} 
for almost all $\tau\in[0,\,T].$ \\

\noindent To derive the desired estimate for \(w_\varepsilon\), we analyze each term on the right-hand side of the equality \eqref{estim-R3-1-new}. 

\begin{itemize}
    \item[$-$] \emph{Estimate of the first term:}  
    Using the Cauchy-Schwarz inequality and the boundedness of \( u_0 \), we obtain  
    \begin{align}\label{gh12}
        \big| k \int_0^{\tau} \int_{\mathcal{C}_{z,\varepsilon}} u_0 \cdot w_\varepsilon \, \dx\dt \big|  
        \leq C_1\,\varepsilon^{\frac{3}{2}} \int_0^T \big(\int_{\mathcal{C}_{z,\varepsilon}} \big|w_\varepsilon\big|^2 \dx \big)^{\frac{1}{2}} \dt.
    \end{align}
    Applying the Sobolev embedding \( H^1(\Omega) \subset L^6(\Omega) \) and H\"older's inequality (with \( p=3 \) and \( q=3/2 \)), we deduce  
    \begin{align*}
        \int_0^T \big(\int_{\mathcal{C}_{z,\varepsilon}} \big|w_\varepsilon\big|^2 \dx \big)^{\frac{1}{2}} \dt  
        &\leq C_2 \varepsilon \int_0^T \big\| w_\varepsilon \big\|_{\L^6(\mathcal{C}_{z,\varepsilon})} \dt \\  
        &\leq C_3 \varepsilon \int_0^T \big\| w_\varepsilon \big\|_{\H^1(\Omega)} \dt.
    \end{align*}
    Substituting this bound into \eqref{gh12} and using the Cauchy-Schwarz inequality again, we conclude  
    \begin{align}\label{gh12f1}
        \big| k \int_0^{\tau} \int_{\mathcal{C}_{z,\varepsilon}} u_0 \cdot w_\varepsilon \, \dx\dt \big|  
        \leq C_4\,\varepsilon^{\frac{5}{2}} \big\|w_\varepsilon\big\|_{L^2(0,T;\H^1(\Omega))}.
    \end{align}

    \item[$-$] \emph{Estimate of the second term:}  
    Since \( w_\varepsilon = 0 \) on \( \partial \Omega \), we can apply Lemma \ref{lem-NS-prelim1} to obtain  
    \begin{align}
        \big| \int_0^\tau \int_\Omega (w_\varepsilon \cdot \nabla) u_0 \cdot w_\varepsilon \, \dx\dt \big|  
        \leq \rho(\Omega) \big\|\nabla w_\varepsilon\big\|_{L^2(0,T;\L^2(\Omega))}^2 \big\|\nabla u_0\big\|_{L^\infty(0,T;\L^2(\Omega))}.
    \end{align}
    Using assumption (A2), i.e.,  
    \begin{align*}
        \left\|\nabla u_0\right\|_{L^\infty(0,T;\,\L^2(\Omega))} \leq \delta < \frac{\nu}{\rho(\Omega)},
    \end{align*}
    As a result, the trilinear term satisfies  
    \begin{align}\label{estim-R3-3}
        \big| \int_0^\tau \int_\Omega (w_\varepsilon \cdot \nabla) u_0 \cdot w_\varepsilon \, \dx\dt \big|  
        \leq {\delta} \, \rho(\Omega)  \left\|\nabla w_\varepsilon\right\|^2_{L^2(0,T;\L^2(\Omega))}.
    \end{align}
\end{itemize}
Now, returning to the identity  \eqref{estim-R3-1-new}, we have
\begin{align}
\begin{split}\label{estim-R3-1-new1}
   \frac{1}{2} \int_{\Omega}\big|w_\varepsilon(\cdot,\tau)\big|^2\dx
   + \nu\int_0^{\tau}\int_\Omega\big|\nabla w_\varepsilon\big|^2\dx\dt
   &\leq\big|k\int_0^{\tau}\int_{\mathcal{C}_{z,\varepsilon}}u_0\cdot w_\varepsilon\,\dx\dt\big| \\
   &\quad+\big|\int_0^\tau\int_\Omega(w_\varepsilon \cdot \nabla)u_0\cdot w_\varepsilon\,\dx\dt\big|,
\end{split}
\end{align}
for almost every \( \tau \in [0, T] \). By combining \eqref{estim-R3-1-new1} with estimates \eqref{gh12f1} and \eqref{estim-R3-3}, we obtain
\begin{align*}
   \frac{1}{2} \int_{\Omega}\big|w_\varepsilon(\cdot,\tau)\big|^2\dx
   + \nu\int_0^{\tau}\int_\Omega\big|\nabla w_\varepsilon\big|^2\dx\dt
   \leq {\delta} \, \rho(\Omega)  \left\|\nabla w_\varepsilon\right\|^2_{L^2(0,T;\L^2(\Omega))}
   + C_4\,\varepsilon^{\frac{5}{2}} \big\|w_\varepsilon\big\|_{L^2(0,T;\H^1(\Omega))}.
\end{align*}
for almost every \( \tau \in [0, T] \). Taking the supremum over time, we deduce
\begin{align*}
    \big\|w_\varepsilon\big\|^2_{L^\infty(0,T;\L^2(\Omega))}
    + 2\big(\nu - {\delta} \, \rho(\Omega)\big)
    \big\|\nabla w_\varepsilon\big\|^2_{L^2(0,T;\L^2(\Omega))}
    \leq 2\,C_4\,\varepsilon^{\frac{5}{2}} \big\|w_\varepsilon\big\|_{L^2(0,T;\H^1(\Omega))}.
\end{align*}
Since \( \nu - {\delta}\,\rho(\Omega) > 0 \), we can apply the Poincaré inequality to conclude that
\begin{align*}
    \big\|w_\varepsilon\big\|^2_{L^\infty(0,T;\L^2(\Omega))}
    + \big\| w_\varepsilon\big\|^2_{L^2(0,T;\H^1(\Omega))}
    \leq \frac{2\,C_4}{\min\{1, 2\,c(\Omega)\,(\nu - {\delta} \, \rho(\Omega))\}}\,\varepsilon^{\frac{5}{2}} 
    \big\|w_\varepsilon\big\|_{L^2(0,T;\H^1(\Omega))}.
\end{align*}
Finally, using Young's inequality, we obtain 
\begin{align*}
    \big\|w_\varepsilon\big\|_{L^\infty(0,T;\L^2(\Omega))}
    + \big\| w_\varepsilon\big\|_{L^2(0,T;\H^1(\Omega))}
    \leq C\,\varepsilon^{\frac{5}{2}},
\end{align*}
where \( C \) is given by
\[
C := \frac{2\,C_4}{\min\{1, 2\, c(\Omega)\,(\nu - {\delta} \, \rho(\Omega))\}},
\]
which completes the proof.
\end{proof}

The derived estimate \eqref{estimation-perturbed-velocity} quantifies the impact of a small obstacle on variations in the velocity field and plays a key role in establishing the topological asymptotic expansion.

\subsection{Asymptotic analysis}

We are now prepared to present the main results of this section, namely the asymptotic expansion \eqref{asy}. Building upon the previous estimate, we develop a topological sensitivity analysis for the non-stationary Navier-Stokes problem. To this end, we introduce the Lagrangian \(\mathcal{L}_\varepsilon\) associated with the cost function \(K\), defined as follows:
\begin{align*}
\mathcal{L}_\varepsilon(\psi,\varphi)=K(\Omega\backslash\overline{\mathcal{C}_{z,\varepsilon}})+\int_0^T\displaystyle\big<\frac{\partial \psi}{\partial t},\, \varphi\big>_\Omega\dt+ \int_0^T a_\varepsilon(\psi,\varphi)\dt-\int_0^T \big<\mathcal{G},\, \varphi\big>_\Omega\dt,
\end{align*}
for all $(\psi,\varphi)\in L^2(0,T; \H^1_{0,\text{div}}(\Omega))\times \H^1_{0,\text{div}}(\Omega).$ As a direct consequence, since \(u_\varepsilon\) is the solution to \eqref{problem-penalized-0}, one can deduce that 
\begin{align*}
\mathcal{L}_\varepsilon(u_\varepsilon,\varphi)=K(\Omega\backslash\overline{\mathcal{C}_{z,\varepsilon}}),\qquad\forall\varphi\in \H^1_{0,\text{div}}(\Omega).
\end{align*}
Thus, from \eqref{perturbed-shape-function} and \eqref{unperturbed-shape-function}, we obtain  
\begin{align*}
\nonumber
K(\Omega\backslash\overline{\mathcal{C}_{z,\varepsilon}})-K(\Omega)&=\mathcal{L}_\varepsilon(u_\varepsilon,\varphi)-\mathcal{L}_0(u_0,\varphi)\\
\nonumber&=\K_\gamma(\mathcal{C}_{z,\varepsilon})-\K(\emptyset)+\int_0^T\displaystyle\big<\frac{\partial (u_\varepsilon-u_0)}{\partial t},\, \varphi\big>_\Omega\dt+ \int_0^T \big[a_\varepsilon(u_\varepsilon,\varphi)-a_0(u_0,\varphi)\big]\dt\\
\nonumber&=\int_0^T \int_{\Omega_0}\big|
(u_\varepsilon-u_0)+(u_0-u_{meas})\big|^2 \dx\dt-\int_0^T \int_{\Omega_0}\big|
u_0-u_{meas}\big|^2 \dx\dt\\
&+\int_0^T\big<\frac{\partial (u_\varepsilon-u_0)}{\partial t},\, \varphi\big>_\Omega\dt+ \int_0^T \big[a_\varepsilon(u_\varepsilon,\varphi)-a_0(u_0,\varphi)\big]\dt+\gamma\varepsilon^{2}\text{Per}_\Omega(\mathcal{C})\\
\nonumber&=2\int_0^T \int_{\Omega_0}\big(u_\varepsilon-u_0\big)\cdot\big(u_0-u_{meas}\big) \dx\dt+\big\|
u_\varepsilon-u_0\big\|^2_{L^2(0,T;\L^2(\Omega_0))}\\
&+\int_0^T\big<\frac{\partial (u_\varepsilon-u_0)}{\partial t},\, \varphi\big>_\Omega\dt+\int_0^T \big[a_\varepsilon(u_\varepsilon,\varphi)-a_0(u_0,\varphi)\big]\dt +\gamma\varepsilon^{2}\text{Per}_\Omega(\mathcal{C}),
\end{align*}
for all $\varphi\in \H^1_{0,\text{div}}(\Omega).$ Using the estimation \eqref{estimation-perturbed-velocity}, we deduce the existence of a constant $C>0$ independent of $\varepsilon,$ such that
\begin{align*}
    \big\|
u_\varepsilon-u_0\big\|^2_{L^2(0,T;\L^2(\Omega_0))}\leq C\varepsilon^{5}.
\end{align*}
Consequently, we have
\begin{equation}
     \big\|
u_\varepsilon-u_0\big\|^2_{L^2(0,T;\L^2(\Omega_0))}=o(\varepsilon^3).
\end{equation}
By inserting this estimation into the variation $ K(\Omega\backslash\overline{\mathcal{C}_{z,\varepsilon}})-K(\Omega),$ we obtain 
\begin{align}
\begin{split}\label{as-cl}
 K(\Omega\backslash\overline{\mathcal{C}_{z,\varepsilon}})-K(\Omega)&=2\int_0^T \int_{\Omega_0}\big(u_\varepsilon-u_0\big)\cdot\big(u_0-u_{meas}\big) \dx\dt\\
 &\qquad\quad+\int_0^T\big<\frac{\partial (u_\varepsilon-u_0)}{\partial t},\, \varphi\big>_\Omega\dt\\
&\quad\qquad\qquad +\int_0^T \big[a_\varepsilon(u_\varepsilon,\varphi)-a_0(u_0,\varphi)\big]\dt\\
&\qquad\qquad\qquad +\gamma\varepsilon^{2}\text{Per}_\Omega(\mathcal{C})+o(\varepsilon^3),\;\;\text{ for all } \varphi\in \H^1_{0,\text{div}}(\Omega) .
\end{split}
\end{align}
We will now estimate the term $$\displaystyle2\int_0^T \int_{\Omega_0}\big(u_\varepsilon-u_0\big)\cdot\big(u_0-u_{meas}\big) \dx\dt.$$ 
To perform this estimation, we first introduce an adjoint state $v_0$, which is defined as the solution to the following auxiliary boundary value problem: find $(v_0,p_0)$ such that
\begin{equation}\label{problem-adjoint}
\left\{
\begin{array}{rll}
\displaystyle -\frac {\partial v_0}{\partial t}-\nu
\Delta v_0+{}^t\nabla u_0\,v_0- (u_0\cdot\nabla)v_0+ \nabla p_0
&= -2\big(u_0-u_{meas}\big)\chi_{\Omega_0} &\mbox{ in }  \,\,\Omega\times (0,\,T) ,\\
\mbox{div}\, v_0 &=  0 & \mbox{ in } \,\, \Omega \times (0,\,T) ,\\
v_0 &= 0 & \mbox{ on }\,\,  \partial\Omega \times (0,\,T) , \\
v_0(\cdot,T)&= 0&\mbox{ in } \,\,  \Omega.
\end{array}
 \right.
\end{equation}
Since \( u_0 \) satisfies \eqref{condition-u0}, the existence and uniqueness of the solution to \eqref{problem-adjoint} follow directly by the change of variables \( t \longleftarrow T - t \) and the application of \cite[Lemma 2.1]{imanuvilov2001remarks} (see also \cite{TemamBook1977}). Moreover, this solution satisfies the regularity property $v_0\in L^2(0,T;\H^2(\Omega))$. In particular, by the Sobolev embedding theorem, we deduce that
\begin{equation}\label{regularity2}
    v_0\in L^2(0,T;\mathcal{C}(\overline{\Omega})).
\end{equation}

\vspace{0.4cm}
From the weak formulation of the adjoint problem \eqref{problem-adjoint}, choosing \( w_\varepsilon = u_\varepsilon - u_0 \) as a test function yields
\begin{align*}
    2\int_0^T\int_{\Omega_0}\big(u_0-u_{meas}\big)\cdot\big(u_\varepsilon-u_0\big)\dx\dt&=\int_0^T\int_{\Omega}\frac {\partial v_0}{\partial t} \cdot w_\varepsilon\,\dx\dt-\nu\int_0^T\int_\Omega\nabla v_0:\nabla w_\varepsilon\,\dx\dt\\
    &\qquad\quad-\int_0^T\int_\Omega\big({}^t\nabla u_0\,v_0- (u_0\cdot\nabla)v_0\big)\cdot w_\varepsilon\,\dx\dt.
\end{align*}
Recall that the velocity field variation \( w_\varepsilon = u_\varepsilon - u_0 \) is a solution of problem \eqref{problem-diff}.  Given that \( w_\varepsilon(\cdot,0) =v_0(\cdot,T) = 0 \) in \( \Omega \), integrating by parts in time gives
\begin{equation*}
    \int_0^T\int_{\Omega}\frac {\partial v_0}{\partial t} \cdot w_\varepsilon\,\dx\dt=-\int_0^T\int_{\Omega}\frac {\partial w_\varepsilon}{\partial t} \cdot v_0\,\dx\dt.
\end{equation*}
Moreover, using that \( v_0 = w_\varepsilon = 0 \) on \( \partial\Omega \times (0,T) \) and applying Lemma \ref{lem-NS-prelim20}, we obtain
\begin{align*}
       \int_0^T\int_\Omega\big( {}^t\nabla u_0\,v_0-(u_0\cdot\nabla)v_0\big)\cdot w_\varepsilon\,\dx\dt&=\int_0^T\int_\Omega\big((w_\varepsilon\cdot\nabla)u_0+ (u_0\cdot\nabla)w_\varepsilon\big)\cdot v_0\,\dx\dt\\
       &=\int_0^T\int_\Omega\big<D\N(w_\varepsilon),\,u_0\big>\cdot v_0\,\dx\dt,
\end{align*}
where $D\N$ denotes the differential of the nonlinear operator $\N,$ as defined in \eqref{DN}. Consequently, we deduce that
\begin{align}\label{cl-1}
  \nonumber  2\int_0^T\int_{\Omega_0}\big(u_0-u_{meas}\big)\cdot\big(u_\varepsilon-u_0\big)\dx\dt&=-\int_0^T\int_{\Omega}\frac {\partial w_\varepsilon}{\partial t} \cdot v_0\,\dx\dt-\nu\int_0^T\int_\Omega\nabla w_\varepsilon:\nabla v_0\,\dx\dt\\
    &\qquad\quad-\int_0^T\int_\Omega\big<D\N(w_\varepsilon),\,u_0\big>\cdot v_0\,\dx\dt.
\end{align}
On the other hand, by subtracting \eqref{weak-f-penalized} for \( \varepsilon \neq 0 \) with \( \varphi = v_0 \) from \eqref{weak-f-penalized} for \( \varepsilon = 0 \) with \( \varphi = v_0 \), and applying Lemma \ref{lem-NS-prelim20} (noting that \( v_0 = u_\varepsilon = 0 \) on \( \partial\Omega \)), we obtain
\begin{align}
\begin{split}\label{cl-2}
    \int_0^T\big<\frac{\partial (u_\varepsilon-u_0)}{\partial t},\, v_0\big>_\Omega&\dt+\int_0^T \big[a_\varepsilon(u_\varepsilon,v_0)-a_0(u_0,v_0)\big]\dt\\
    &=\int_0^{T}\int_{\Omega}\frac {\partial w_\varepsilon}{\partial t}\cdot v_0\, \dx\dt+\nu\int_0^{T}\int_\Omega\nabla w_\varepsilon:\nabla v_0\dx\dt\\
    &\quad+\int_0^T\int_\Omega\big<D\N(w_\varepsilon),\,u_0\big>\cdot v_0\,\dx\dt\\
&\qquad\quad+k\int_0^{T}\int_{\mathcal{C}_{z,\varepsilon}}u_\varepsilon\cdot v_0\,\dx\dt.
\end{split}
\end{align} 
Taking \( \varphi = v_0 \) in \eqref{as-cl} and using \eqref{cl-1} and \eqref{cl-2}, we arrive at
\begin{align}\label{as-cl-1}
 K(\Omega\backslash\overline{\mathcal{C}_{z,\varepsilon}})-K(\Omega)=k\int_0^{T}\int_{\mathcal{C}_{z,\varepsilon}}u_\varepsilon\cdot v_0\,\dx\dt+\gamma\varepsilon^{2}\text{Per}_\Omega(\mathcal{C})+o(\varepsilon^3).
\end{align}
In the final part of this paragraph, we focus on estimating the first term on the right-hand side of \eqref{as-cl-1}. We have
\begin{align*}
   k\int_0^{T}\int_{\mathcal{C}_{z,\varepsilon}} u_\varepsilon\cdot v_0 \,\dx\dt
   &= k\int_0^{T}\int_{\mathcal{C}_{z,\varepsilon}} (u_\varepsilon - u_0) \cdot v_0 \,\dx\dt 
   + k\int_0^{T}\int_{\mathcal{C}_{z,\varepsilon}} u_0 \cdot v_0 \,\dx\dt.
\end{align*}
Applying the change of variables \( x = z + \varepsilon y \), we deduce
\begin{align*}
    k\int_0^{T}\int_{\mathcal{C}_{z,\varepsilon}} u_0 \cdot v_0 \,\dx\dt
    &= k |\mathcal{C}| \varepsilon^3 \int_0^T u_0(z,t) \cdot v_0(z,t) \,\dt \\
    &\quad + k  \int_0^T \int_{\mathcal{C}_{z,\varepsilon}} 
    \big( u_0(x,t) \cdot v_0(x,t) - u_0(z,t) \cdot v_0(z,t) \big) \,\dx\dt.
\end{align*}
Using the continuity properties of $u_0$ and $v_0$, as established in \eqref{regularity1} and \eqref{regularity2}, we have
\[u_0(x,t) \cdot v_0(x,t) - u_0(z,t) \cdot v_0(z,t) \longrightarrow0\; \text{  as  } x\to z.\]
Consequently, 
\[
  \int_0^T \int_{\mathcal{C}_{z,\varepsilon}} 
    \big( u_0(x,t) \cdot v_0(x,t) - u_0(z,t) \cdot v_0(z,t) \big) \,\dx\dt = o(\varepsilon^3).
\]
By applying the same analysis as in the estimation of \eqref{gh12f1}, there exists a constant \( C > 0 \) (independent of \( \varepsilon \)) such that
\begin{align*}
    \big| k\int_0^{T}\int_{\mathcal{C}_{z,\varepsilon}} (u_\varepsilon - u_0) \cdot v_0 \,\dx\dt \big|  
    \leq C\,\varepsilon^{\frac{5}{2}} \big\|u_\varepsilon - u_0\big\|_{L^2(0,T;\H^1(\Omega))}.
\end{align*}
Using the estimate from Lemma \ref{lemma-es1}, we obtain
\begin{align*}
    \big| k\int_0^{T}\int_{\mathcal{C}_{z,\varepsilon}} (u_\varepsilon - u_0) \cdot v_0 \,\dx\dt \big| = o(\varepsilon^3).
\end{align*}
Thus, we obtain
\begin{align*}
   k\int_0^{T}\int_{\mathcal{C}_{z,\varepsilon}} u_\varepsilon \cdot v_0 \,\dx\dt
   =  k |\mathcal{C}| \varepsilon^3 \int_0^T u_0(z,t) \cdot v_0(z,t) \,\dt + o(\varepsilon^3).
\end{align*}

Finally, we conclude that the shape function \( K \) admits the following asymptotic expansion:
\begin{equation*}
    K(\Omega\backslash\overline{\mathcal{C}_{z,\varepsilon}}) - K(\Omega) 
    = k |\mathcal{C}| \varepsilon^3 \int_0^T u_0(z,t) \cdot v_0(z,t) \,\dt + \gamma\varepsilon^{2}\text{Per}_\Omega(\mathcal{C})+o(\varepsilon^3).
\end{equation*}

One of the key advantages of the topological derivative method is that it does not require an initial guess, in the sense that the initial domain can be chosen to be empty, i.e., \( \mathcal{C} = \varnothing \). This property makes the method highly efficient for identifying obstacles without the need for a carefully selected starting domain. Moreover, the topological derivative method—particularly when applied to misfit functions involving the \( L^2 \)-norm-has been observed to exhibit a self-regularization property. This means that additional regularization techniques, which are typically required to stabilize the inverse problem, become unnecessary. This self-regularizing behavior has been reported in various contexts, including the Stokes problem, Laplace equation, linear elasticity problem, and fractional diffusion problem \cite{AmstutzCC2005,CanelasIP2015,CanelasJCP2014,CaubetIP2012,CaubetIPI2016,hrizi2019new, FerreiraIP2017,FernandezAA2018,RochaSMO2017,menoret2020kohn,prakash2021noniterative}. However, despite strong numerical evidence supporting this phenomenon, a rigorous mathematical proof is still lacking. From this discussion, it follows that the regularization term, specifically the relative perimeter \( \text{Per}_\Omega(\mathcal{C}) \), has little to no impact on the reconstruction process. Consequently, the parameter \( \gamma \) can be chosen arbitrarily without significantly impacting the reconstruction process. In light of this and for the sake of simplicity, we set \( \gamma = \varepsilon \) for the remainder of this paper. Based on this assumption, we now summarize the topological asymptotic expansion of \( K \) in the following theorem.

\begin{theorem}\label{TD-G}
    The shape function \( K \)  (see \eqref{perturbed-shape-function}) admits the following topological asymptotic expansion:
    \begin{equation*}
        K(\Omega\backslash\overline{\mathcal{C}_{z,\varepsilon}}) - K(\Omega) 
        = k |\mathcal{C}| \varepsilon^3 D_K(z) + o(\varepsilon^3),
    \end{equation*}
    where \( D_K \) is the topological gradient, defined for any point \( x \in \Omega \) as
    $$
        D_K(x) := \int_0^T u_0(x,t) \cdot v_0(x,t) \,\dt + \text{Per}_\Omega(\mathcal{C}).
    $$
Here, \( u_0 \) and \( v_0 \) are the solutions to the state and adjoint problems \eqref{problem-perturbed-00} and \eqref{problem-adjoint}, respectively.
\end{theorem}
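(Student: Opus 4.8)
The plan is to collect the asymptotic identity already assembled via the Lagrangian and adjoint-state machinery and to extract its leading term. The natural starting point is the reduced expansion \eqref{as-cl-1},
\begin{equation*}
K(\Omega\backslash\overline{\mathcal{C}_{z,\varepsilon}})-K(\Omega)=k\int_0^{T}\int_{\mathcal{C}_{z,\varepsilon}}u_\varepsilon\cdot v_0\,\dx\dt+\gamma\varepsilon^{2}\text{Per}_\Omega(\mathcal{C})+o(\varepsilon^3),
\end{equation*}
in which the misfit part of the functional has been traded, through the adjoint equation \eqref{problem-adjoint}, for a single penalization integral over the inclusion $\mathcal{C}_{z,\varepsilon}$, plus the perimeter regularization term. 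What remains is purely to evaluate these two contributions to order $\varepsilon^3$.

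First I would localize the penalization integral by writing $u_\varepsilon=(u_\varepsilon-u_0)+u_0$. For the cross term, the same H\"older--Sobolev argument used to derive \eqref{gh12f1} gives
\begin{equation*}
\Big| k\int_0^{T}\int_{\mathcal{C}_{z,\varepsilon}} (u_\varepsilon - u_0) \cdot v_0 \,\dx\dt \Big| \leq C\,\varepsilon^{\frac{5}{2}}\,\big\|u_\varepsilon - u_0\big\|_{L^2(0,T;\H^1(\Omega))},
\end{equation*}
and invoking Lemma \ref{lemma-es1} bounds the right-hand side by $C\varepsilon^{5}=o(\varepsilon^3)$. For the remaining term, the change of variables $x=z+\varepsilon y$ produces $k|\mathcal{C}|\varepsilon^3\int_0^T u_0(z,t)\cdot v_0(z,t)\,\dt$ together with a correction measuring the oscillation of $u_0\cdot v_0$ on the shrinking inclusion; the uniform continuity of $u_0$ and $v_0$ on $\overline{\Omega}$, guaranteed by the regularity \eqref{regularity1} and \eqref{regularity2}, makes this correction $o(\varepsilon^3)$.

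It then remains to insert the choice $\gamma=\varepsilon$ (legitimated by the self-regularization discussion preceding the theorem), so that the perimeter term $\gamma\varepsilon^2\text{Per}_\Omega(\mathcal{C})$ becomes exactly of order $\varepsilon^3$ and, being a fixed constant independent of the insertion point $z$, can be folded into the topological gradient as a location-independent offset. Collecting the surviving $\varepsilon^3$ contributions and factoring out $k|\mathcal{C}|\varepsilon^3$ then yields the stated expansion with $D_K(z)=\int_0^T u_0(z,t)\cdot v_0(z,t)\,\dt+\text{Per}_\Omega(\mathcal{C})$.

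Since every analytic ingredient is in hand, the proof is largely bookkeeping; the one step demanding genuine care is the localization of the penalization integral. One must confirm that the cross term is genuinely $o(\varepsilon^3)$ rather than merely $O(\varepsilon^3)$: this rests on the sharp exponent $5/2$ in Lemma \ref{lemma-es1}, which, after pairing the $L^2$-bounded adjoint field $v_0$ against $u_\varepsilon-u_0$ over an inclusion of volume $O(\varepsilon^3)$, leaves a comfortable surplus power of $\varepsilon$. Were that estimate only $O(\varepsilon^2)$, the cross term would enter at the same order as the leading term and the expansion would collapse.
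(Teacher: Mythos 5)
Your proposal is correct and follows essentially the same route as the paper: starting from the reduced identity \eqref{as-cl-1}, splitting $u_\varepsilon=(u_\varepsilon-u_0)+u_0$ in the penalization integral, disposing of the cross term via the \eqref{gh12f1}-type bound combined with the $\varepsilon^{5/2}$ estimate of Lemma \ref{lemma-es1}, evaluating the leading term by the change of variables $x=z+\varepsilon y$ together with the continuity of $u_0\cdot v_0$ from \eqref{regularity1}--\eqref{regularity2}, and finally setting $\gamma=\varepsilon$ to fold the perimeter term into the $\varepsilon^3$ order. Your closing remark on why the exponent $5/2$ (rather than $2$) is what makes the cross term genuinely $o(\varepsilon^3)$ is an accurate reading of the structure of the argument.
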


\section{Implementation details and numerical experiments}\label{one-iter-alghorithm} 

This section presents numerical experiments designed to illustrate the effectiveness and robustness of the proposed reconstruction algorithm for identifying an unknown obstacle \( \omega^* \) embedded in a fluid flow domain, using interior velocity measurements. The reconstruction strategy is based on topological sensitivity analysis.

According to Theorem~\ref{TD-G}, the topological gradient of the cost functional \( K \) is given by
\[
D_K(x) := \int_0^T u_0(x,t) \cdot v_0(x,t) \, \dt + \text{Per}_\Omega(\mathcal{C}),
\]
where \( u_0 \) and \( v_0 \) denote the solutions to the state and adjoint problems \eqref{problem-perturbed-00} and \eqref{problem-adjoint}, respectively.

Recall that the proposed topological reconstruction algorithm does not require an initial guess. In particular, we initialize the algorithm with \( \mathcal{C} = \emptyset \), in which case the regularization term vanishes:
\[
\text{Per}_\Omega(\mathcal{C}) = 0.
\]
As a result, the topological gradient simplifies to
\begin{equation}\label{gradient-final}
    D_K(x) := \int_0^T u_0(x,t) \cdot v_0(x,t) \, \dt.
\end{equation}
Based on this simplified form of \( D_K \), the identification procedure is implemented as a one-shot algorithm composed of the following steps:

\vspace{0.3cm}

\begin{algorithm}[H]
\caption{One-Iteration Topological Identification Algorithm}
\label{alg:one-shot}
\begin{itemize}
    \item Solve the direct problem \eqref{problem-perturbed-00}.
    \item Solve the adjoint problem \eqref{problem-adjoint}.
    \item Compute the topological gradient \( D_K(x)=\displaystyle\int_0^T u_0(x,t) \cdot v_0(x,t) \, \mathrm{d}t \) at each point \( x \in \Omega \).
    \item Identify the negative local minima of \( D_K(x) \).
\end{itemize}
\end{algorithm}

\vspace{0.2cm}

\noindent In this non-iterative framework:
\begin{itemize}
    \item The location of the obstacle \( \omega^* \) is approximated by the point \( z^* \in \Omega \) where the topological gradient is most negative, i.e.,
    \[
    z^* = \operatorname*{arg\,min}_{x \in \Omega} D_K(x).
    \]
    \item The optimal size of the reconstructed obstacle \( \omega^* \) is approximated by a level-set of the topological gradient \( D_K \).
\end{itemize}

First-order topological gradient-based algorithms have been successfully applied in a variety of inverse problems, including crack detection from overdetermined boundary data \cite{AmstutzCC2005}, reconstruction of contact regions in semiconductor transistors \cite{hrizi2019reconstruction}, fluorescence optical tomography \cite{LaurainIP2013}, cardiac electrophysiology \cite{beretta2017reconstruction,BerettaIP2017}, analysis of 2D and 3D Fresnel experimental data \cite{carpio2021processing}, damage detection in thin plates \cite{pena2019detecting}, identification of multiple scatterers in 3D electromagnetism \cite{le2017topological1,le2018topological1}, and localization of small gas bubbles or obstacles in Stokes flow \cite{BenAbdaSIAM2009,CaubetIP2012}, among others.

In the current work, we extend this approach to a practical application. More precisely, we apply our one-shot numerical procedure to the identification of unknown submerged obstacles in the Mediterranean Sea. We begin by defining the computational domain and outlining the implementation details for computing the state \( u_0 \) and the adjoint state \( v_0 \).

\subsection{Implementation details}

The computational domain \( \Omega = L \times L \times H \) represents a sub-region of the central Mediterranean Sea along the Tunisian coastline, as illustrated in Figure \ref{domain}. Here, \( L \) denotes the horizontal extent and \( H \) the vertical depth of the domain. More precisely, \( \Omega \) corresponds to the intersection of the Mediterranean Sea and the rectangular area defined by the red square \( ABCD \). The corners of this square are geographically positioned as follows:
\begin{itemize}
    \item Point \( A \): south of Sardinia (Italy),
    \item Point \( B \): in the Tyrrhenian Sea, near Sicily (Italy),
    \item Point \( C \): southern Tunisia, near the Tunisia–Algeria border,
    \item Point \( D \): near the Tunisia–Libya border.
\end{itemize}
The rectangle \( ABCD \) spans approximately \( 700\,\mathrm{km} \) in length and \( 600\,\mathrm{km} \) in width.

\vspace{0.3cm}

The numerical simulations of the direct problem \eqref{problem-perturbed-00} and the adjoint problem \eqref{problem-adjoint} are performed using the three-dimensional ocean circulation model INSTMCOTRHD \cite{alioua, marwa}, which is built upon the well-established Princeton Ocean Model (POM). POM is a primitive-equation model designed to simulate large-scale and regional ocean dynamics under realistic atmospheric and hydrodynamic forcing conditions. The governing equations are formulated in a Cartesian coordinate system \( (O, x, y, z) \), where the \( x \)-axis points southward, the \( y \)-axis eastward, and the \( z \)-axis is oriented vertically upward. However, this Cartesian framework offers limited resolution for accurately representing complex bathymetry and capturing fine-scale processes near the surface layer.  To overcome these limitations, Blumberg and Mellor (1987) introduced the $\sigma$-coordinate transformation, a vertical coordinate system that follows the contours of the ocean bottom. This approach enhances the ability to resolve topographic features and boundary-layer dynamics with higher fidelity. The configuration is illustrated in Figure \ref{sigma}(a), and is particularly effective in representing the physical processes of interest within the oceanographic domain. The mathematical formulation and numerical implementation of POM are comprehensively detailed in the official manuals \cite{guide, guide2002}, the foundational works by Blumberg and Mellor \cite{mellor80, mellor87}, and the extended theoretical analyses provided by Mellor \cite{mellor85, mellor98}. The model computes the evolution of the following key variables:
\begin{itemize}
    \item The three components of the velocity field \( u = (u_1, u_2, u_3) \), representing zonal, meridional, and vertical velocities;
    \item Temperature $ \mathbf{T}$ and salinity $ \mathbf{S}$ fields, which influence density-driven flows and stratification;
    \item The free surface elevation \( \eta \), representing sea level height variations.
\end{itemize}

\vspace{0.2cm}
\paragraph{\bf{Spatial discretization using the Arakawa C-grid}} Spatial discretization is performed using a finite difference scheme on a rectilinear Arakawa C-grid \cite{arakawa}, which offers improved accuracy for geophysical flows by staggering scalar and vector quantities (see Figure \ref{arakawa}-(a)). The computational grid consists of \( 350 \times 300 \) horizontal points, corresponding to a spatial resolution of approximately 2 km (see Figure \ref{arakawa}-(b)). In the vertical direction, the model employs 18 terrain-following \(\sigma\)-levels to adapt to the bathymetry and to better capture boundary layer dynamics.
\begin{figure}[!htbp]
\centering
\includegraphics[width=50mm]{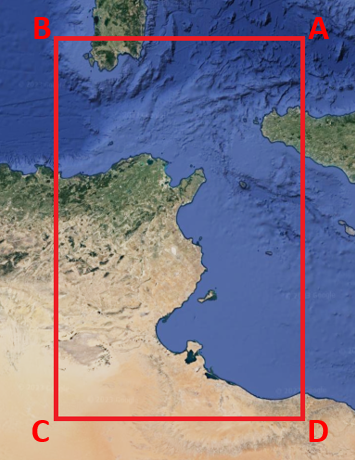}  
\caption{Sub-region of the Mediterranean Sea defining the computational domain \( \Omega \).}
\label{domain}
\end{figure}

\begin{figure} [!htbp]
     \centering
    \begin{tabular}{cc}
    \includegraphics[width=60mm]{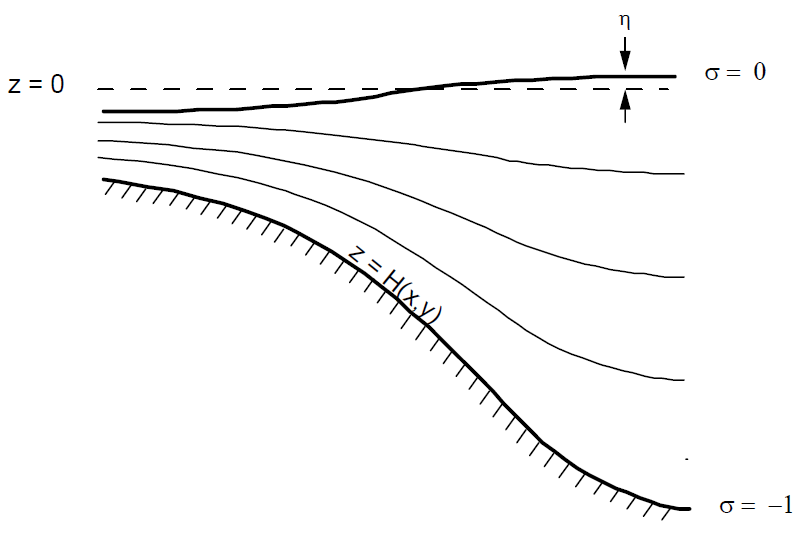}& \includegraphics[width=90mm]{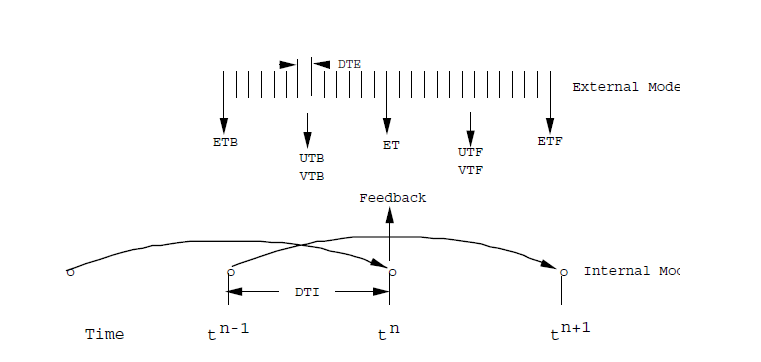}\\
(a)& (b)
    \end{tabular}
    \caption{ (a) The sigma coordinate system \cite{guide}. (b) A simplified illustration of the interaction of the External Mode and the Internal Mode. The former uses a short time step, $\mathrm{DTE}$, whereas the latter uses a long time step, $\mathrm{DTI}$. The external mode primarily provides the surface elevation to the internal mode whereas, as symbolized by ``Feedback'', the internal mode provides intergrals of momentum advection, density integrals and bottom stress to the external mode \cite{guide}.}\label{sigma}
    \end{figure}

\begin{figure} [!htbp]
     \centering
    \begin{tabular}{cc}
    \includegraphics[width=70mm]{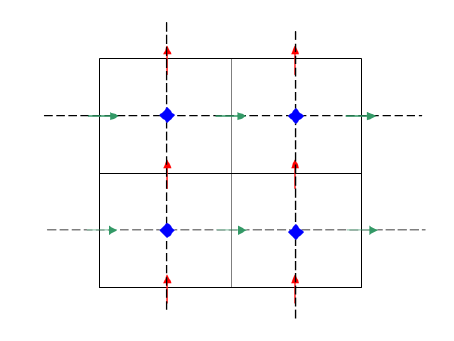}& \includegraphics[width=70mm]{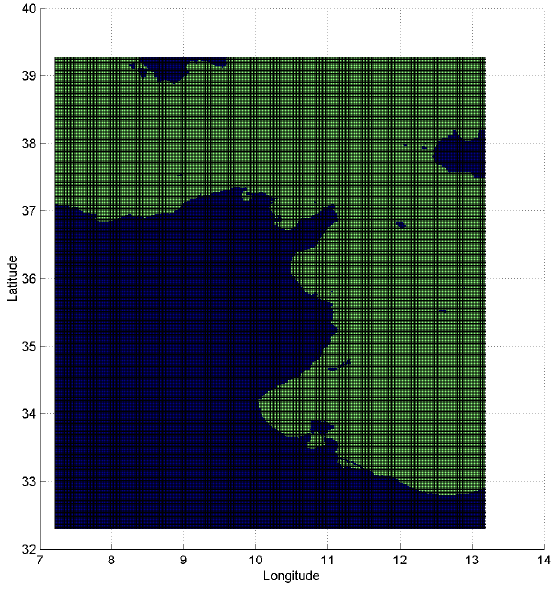}\\
(a)& (b)
    \end{tabular}
    \caption{ (a) Typical Arakawa C-grid layout: the green arrows indicate the location of the \( u_1 \)-velocity, and the red arrows indicate the \( u_2 \)-velocity components. Scalar quantities such as pressure, temperature, and salinity are located at the center (blue dot) of each cell. (b) Computational domain and horizontal discretization grid used in INSTMCOTRHD.}\label{arakawa}
    \end{figure}

\vspace{0.2cm}   
\paragraph{\bf{Temporal discretization}}
A \textit{leapfrog} (centered) differencing scheme is employed for the temporal discretization of the equations. The horizontal time differencing is treated explicitly, while the vertical differencing---used for vertical diffusion---is handled implicitly. The implicit formulation enables the use of fine vertical resolution, essential for resolving surface and bottom boundary layers, without requiring a reduction in the time step. To reduce computational cost, a \textit{time-splitting} technique---commonly referred to as \textit{mode splitting} \cite{simons,madala}---is implemented. This approach decouples the \textit{barotropic mode}, which governs fast, two-dimensional free surface variations (external mode), from the \textit{baroclinic mode}, which describes slower, three-dimensional internal dynamics associated with density variations (internal mode). External gravity waves propagate rapidly and thus require small time steps for numerical stability, whereas internal gravity waves propagate more slowly but demand high vertical resolution. Consequently, a much smaller time step is used for the barotropic mode. In the model, the external (barotropic) mode operates on a short time step (\texttt{DTE}$= 3$ seconds), while the internal (baroclinic) mode evolves on a longer time step (\texttt{DTI}$= 3$ minutes). A schematic representation of the internal and external time stepping is provided in Figure \ref{sigma}(b). Both time steps are determined based on the classical CFL condition.

\vspace{0.3cm}   
\paragraph{\bf{Time step constraints}} The choice of spatial and temporal resolution is governed by the Courant-Friedrichs-Lewy (CFL) stability condition \cite{guide}, which ensures that the numerical propagation of waves and advection processes remains stable. This criterion is expressed as:
\[
C \Delta t < \Delta x,
\]
where \( \Delta x \) denotes the horizontal grid spacing, \( \Delta t \) is the time step---either internal (\texttt{DTI}, see Figure \ref{sigma}(b)) or external (\texttt{DTE}, see Figure \ref{sigma}(b))---and \( C \) represents the maximum wave propagation speed, typically associated with gravity waves.

In practice, the horizontal grid resolution is fixed first, and the time step is chosen to satisfy the CFL condition. This ensures accurate resolution of both fast barotropic motions and slower baroclinic processes.

\vspace{0.3cm}
\paragraph{\bf{Lateral boundary conditions}} The lateral boundary conditions are imposed to ensure a realistic interaction between the computational domain and the surrounding ocean environment. For ``open boundaries'', a Dirichlet condition is applied:
\[
u = \phi \quad \text{on } \partial\Omega \times (0,\,T),
\]
where \( \phi \) represents the prescribed three-dimensional ocean velocity field. This data is obtained from the Mediterranean Sea Physics Analysis and Forecast dataset (product code: MEDSEA\_ANALYSISFORECAST\_PHY\_006\_013), which is produced using the Nucleus for European Modelling of the Ocean (NEMO) ocean model, version 3.6 \cite{NEMO}.

\noindent For ``closed boundaries'', the velocity field is set to zero, effectively imposing a no-flow condition:
\[
u = 0 \quad \text{on the impermeable parts of } \partial\Omega.
\]
The boundary velocity data \( \phi \) are available through the Copernicus Marine Environment Monitoring Service (CMEMS), accessible via their official portal at \url{https://marine.copernicus.eu}. These data provide temporally and spatially resolved forecasts and reanalyses of ocean circulation, making them highly suitable for high-fidelity boundary forcing in regional ocean models.

\vspace{0.3cm}
\paragraph{\bf Surface and bottom boundary conditions}

At the \emph{free surface} (i.e., the ocean--atmosphere interface), Neumann-type boundary conditions are imposed to ensure mass conservation and consistency with physical processes such as surface elevation dynamics and air--sea exchanges. Specifically, for the \emph{vertical velocity component} \( u_3 \), the kinematic boundary condition takes the form (see, e.g., \cite{mellor87}):
\[
u_3 = \frac{\partial \eta}{\partial t} + \mathbf{u} \cdot \nabla \eta \quad \text{on } z = \eta(x, y, t),
\]
where \( \eta(x, y, t) \) denotes the free surface elevation, and \( \mathbf{u} = (u_1, u_2) \) is the horizontal velocity vector. This condition ensures that fluid particles remain on the moving surface, preserving the impermeability of the air–sea interface.

\medskip

\noindent For \emph{horizontal velocity components} \( \mathbf{u} = (u_1, u_2) \), the vertical shear is directly influenced by the surface wind stress. The governing equations are given by \cite{mellor87}:
\begin{equation*}
\left\{
\begin{aligned}
& \frac{\partial u_1}{\partial z} = \frac{\tau_{0x}}{\rho_0 \, {K}_M}, \\
& \frac{\partial u_2}{\partial z} = \frac{\tau_{0y}}{\rho_0 \,  {K}_M},
\end{aligned}
\right.
\end{equation*}
where \( (\tau_{0x}, \tau_{0y}) \) denotes the components of the surface wind stress vector, \(  {K}_M \) is the vertical eddy viscosity (or vertical diffusivity) coefficient, and \( \rho_0 \) is a reference density. 

\begin{remark}
    In our numerical model, the kinematic viscosity term \( \nu \) corresponds to the coefficient \( K_M \), which quantifies the vertical turbulent viscosity. Notably, \( K_M \) is not treated as a constant but is instead computed dynamically using the Mellor–Yamada turbulence closure scheme \cite{mellor82}.
\end{remark}

\vspace{0.3cm}

\noindent\textbf{Surface forcing:} Wind stress at the ocean surface is a key driver of ocean circulation and is incorporated into the model as an external forcing term. To compute this wind stress, we utilize the $10\,\text{m}$ wind vector components (eastward and northward) provided by the European Centre for Medium-Range Weather Forecasts (ECMWF) \cite{owens2018ecmwf}. These data are essential for capturing the momentum exchange at the surface and are updated at regular intervals throughout the simulation to reflect evolving atmospheric conditions.

\medskip

\noindent At the \emph{ocean bottom} (i.e., the seafloor), located at \( z = -H(x, y) \), where \( H(x, y) \) is the bathymetry, a no-penetration (impermeability) condition is enforced:
\[
u_3 = 0 \quad \text{on } z = -H(x, y).
\]
Figure~\ref{bathymetry} displays the bathymetric profile of the computational domain. Scalar fields may also be subjected to Neumann or Robin-type conditions at the bottom, depending on modeled vertical mixing or specified boundary fluxes.

\begin{figure}[!htb]
\centering
\includegraphics[width=60mm]{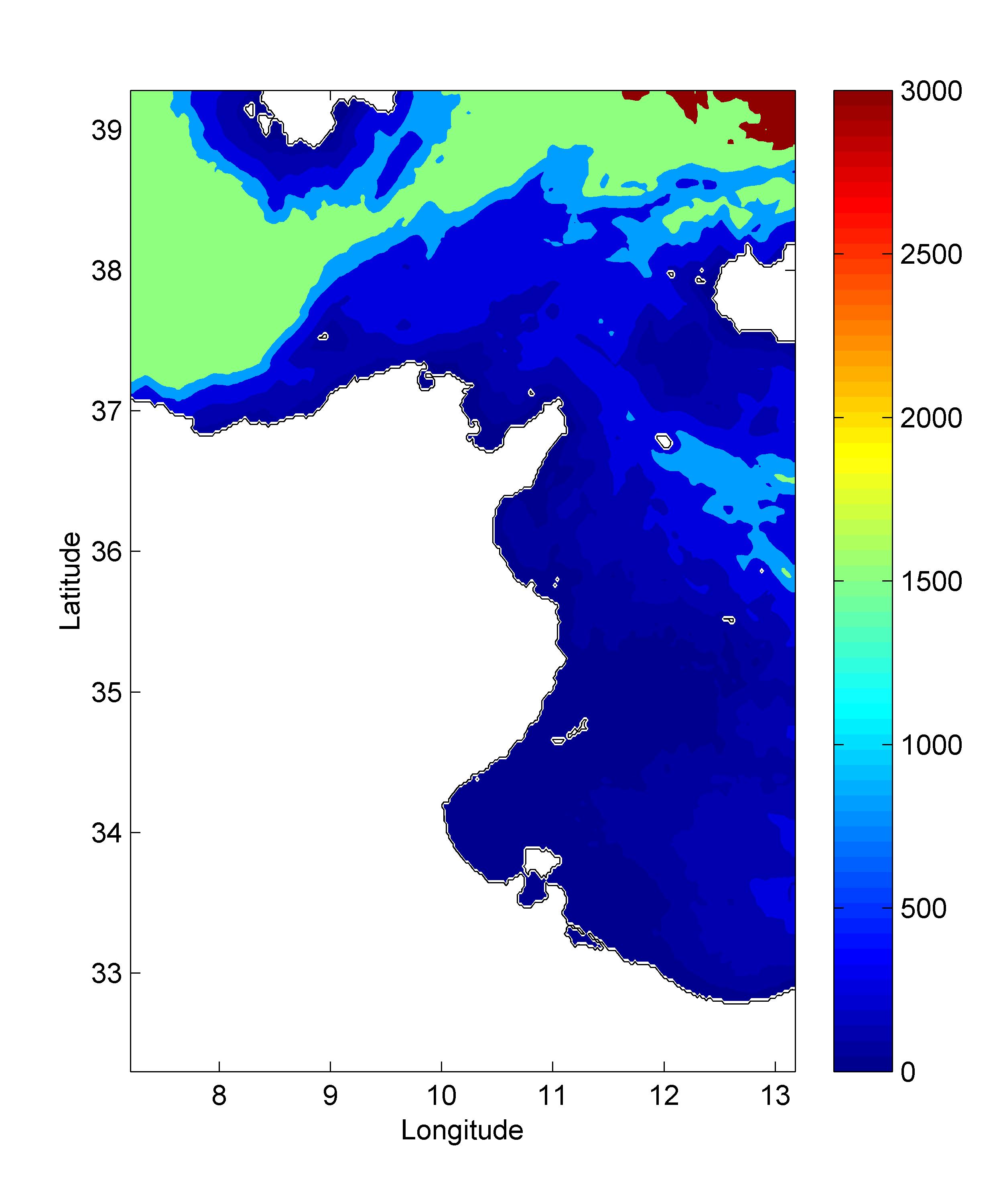}  
\caption{The model bathymetry.}
\label{bathymetry}
\end{figure}

\medskip

\noindent\textbf{External force \( \mathcal{G} \):} In the momentum equations governing the system, the source term \( \mathcal{G} \) represents the combined effect of all external forces influencing ocean dynamics, in addition to the standard advection and diffusion components. Specifically, \( \mathcal{G} \) accounts for the Coriolis force, atmospheric pressure gradients, surface wind stress, and bottom friction.

\subsection{Numerical experiments}

In this section, we assess the performance of our non-iterative detection procedure (see Algorithm 1) using seven numerical experiments. The POM model was executed on June 1, 2019, over a 24-hour simulation period. Numerical simulations were carried out using the \texttt{Fortran} programming language, with \texttt{MATLAB} employed for post-processing and visualization. In Examples 1 through 6, the observation domain \( \Omega_0 \) corresponds to the first vertical layer of the sigma-coordinate system, with a depth that varies spatially from \( 0.003\,\text{m} \) to \( 8\,\text{m} \), depending on the local bathymetry. In contrast, Example~7 considers a different observation domain \( \Omega_0 \), testing the robustness of the proposed method under varying spatial measurement configurations.

\subsubsection{Example 1: Obstacle identification in different sub-regions of the Sea}\label{example1}

The objective of the first numerical experiment is to evaluate the performance of the proposed topological sensitivity-based algorithm for detecting submerged obstacles located in various sub-regions of the Mediterranean Sea. Specifically, we aim to identify four distinct obstacles, denoted by \( \omega_i^* \) for \( i = 1, 2, 3, 4 \), each situated at a different location within the computational domain. The characteristics of these obstacles are defined as follows:
\begin{itemize}
    \item \textbf{Obstacle \( \omega_1^* \):} Defined over the grid region
    \[
    (201{:}205) \times (271{:}275) \times (2{:}6),
    \]
    which spans 4 grid points in each horizontal direction and 4 vertical layers. These vertical indices correspond to sigma levels near the surface, placing the obstacle at an average depth of approximately \( 6\,\mathrm{m} \). Its horizontal size is \( 8\,\text{km} \times 8\,\text{km} \), and vertically is in average between \( 68\,\mathrm{m} \) and \( 130\,\mathrm{m} \). 
    
    \item \textbf{Obstacle \( \omega_2^* \):} Located at
    \[
    (81{:}85) \times (291{:}295) \times (2{:}6),
    \]
    and lies in a shallower bathymetric region, resulting in an average depth of about \( 1\,\mathrm{m} \). Its horizontal size is \( 8\,\text{km} \times 8\,\text{km} \), its vertical size ranges on average between \( 22\,\mathrm{m} \) and \( 28\,\mathrm{m} \).

    \item \textbf{Obstacle \( \omega_3^* \):} Occupying the region
    \[
    (251{:}255) \times (171{:}175) \times (2{:}6),
    \]
    with characteristics similar to \( \omega_2^* \), and an average depth of approximately \( 1\,\mathrm{m} \). The horizontal size of the obstacle is \( 8\,\text{km} \times 8\,\text{km} \), its vertical size ranges on average between \( 18\,\mathrm{m} \) and \( 26\,\mathrm{m} \).

    \item \textbf{Obstacle \( \omega_4^* \):} Located in the southeastern part of the domain:
    \[
    (181{:}185) \times (91{:}95) \times (2{:}6),
    \]
    and positioned in an extremely shallow region, with an average depth of only \( 0.01\,\mathrm{m} \). The obstacle extends horizontally over \( 8\,\text{km} \times 8\,\text{km} \), with a vertical dimension equal to \( 1\,\mathrm{m} \).
\end{itemize}
Each obstacle \( \omega_i^* \) is marked as a small black square in Figure \ref{locobsS}, indicating its location within the Mediterranean basin. Particularly, the results corresponding to the detection of obstacle \( \omega_1^* \) are shown in Figure \ref{obs}. More specifically:
\begin{itemize}
    \item Figure \ref{obs}(a) displays the iso-values of the topological gradient \( D_K \), defined in \eqref{gradient-final}, over the entire computational domain. The red regions highlight the negative values of \( D_K \), which are indicative of the potential obstacle locations.
    
    \item A zoomed-in view of the iso-values near the actual position of \( \omega_1^* \) (small black square) is presented in Figure \ref{obs}(b), offering a clearer visualization of the detection precision.

    \item Figure \ref{obs}(c) illustrates a 3D plot of the topological gradient \( D_K \) in the vicinity of the true obstacle location, providing additional insight into the gradient behavior in three dimensions.
\end{itemize}

From the results shown in Figure \ref{obs}, we observe that the proposed algorithm successfully detects the obstacle \( \omega_1^* \) at the location where the topological gradient \( D_K \) reaches its most negative values (see the red region in Figure \ref{obs}(a)). This confirms the effectiveness of our numerical method in identifying \( \omega_1^* \).

\medskip

The detection results for the remaining obstacles \( \omega_2^*, \omega_3^* \), and \( \omega_4^* \) are presented in Figure \ref{Horizz}. As observed, each obstacle \( \omega_i^* \), for \( i = 2, 3, 4 \), is accurately detected in the region where the topological gradient \( D_K \) attains its most negative values. These results, along with those in Figure \ref{obs}, demonstrate that the proposed one-iteration algorithm reliably localizes the unknown obstacles with high precision, irrespective of their spatial location, depth, or the particular sub-region of the Mediterranean Sea in which they are embedded.

\begin{figure}[!htb]
\centering
\includegraphics[width=56mm]{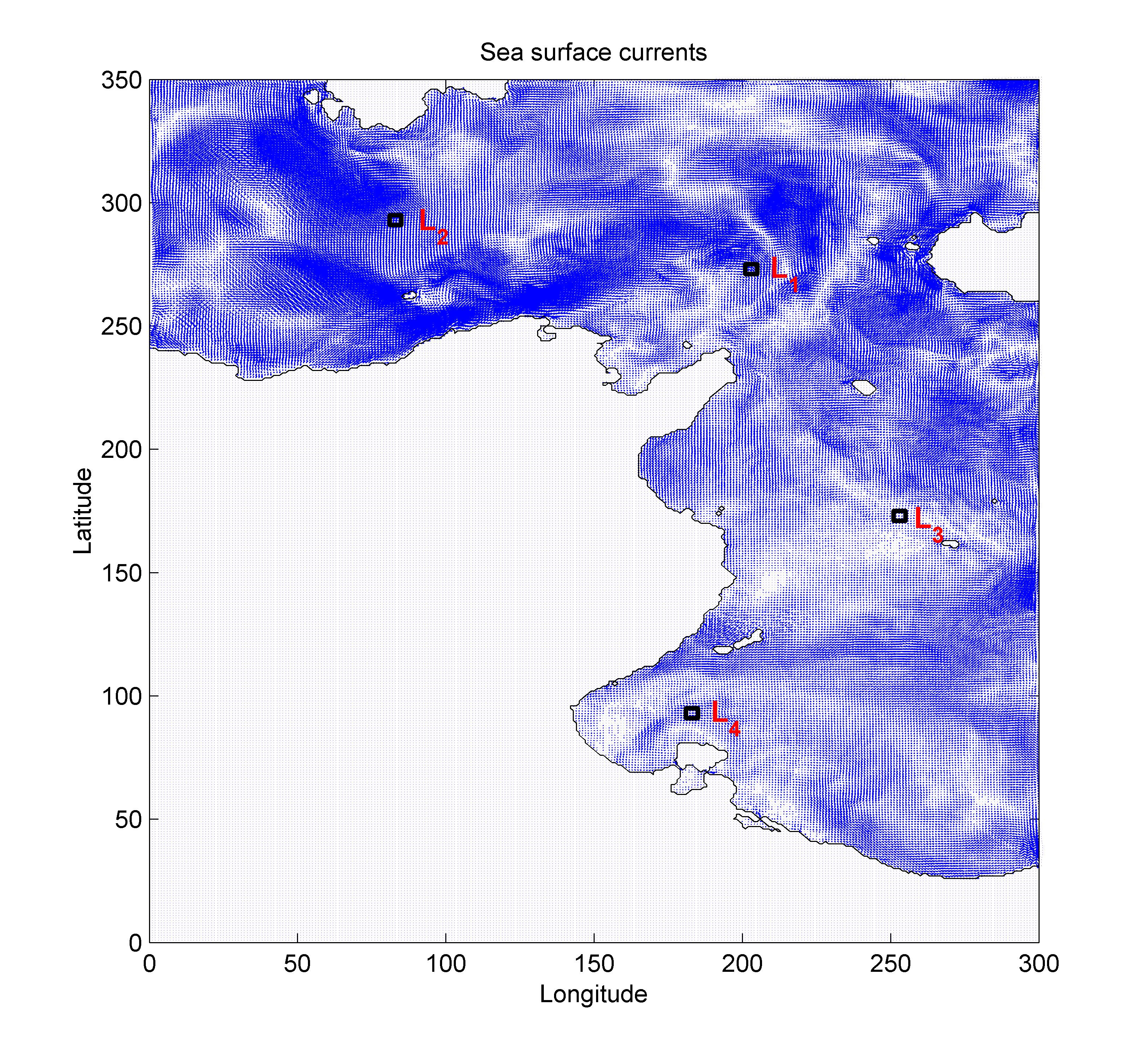}  
\caption{Locations \( \mathrm{L}_i \) of the true obstacles \( \omega_i^* \) (for \( i = 1, 2, 3, 4 \)), represented by four small black squares, to be identified in the presence of velocity flow within the computational domain.}
\label{locobsS}
\end{figure}

\begin{figure}[!htb]
    \centering
    \begin{tabular}{ccc}
        \includegraphics[width=50mm]{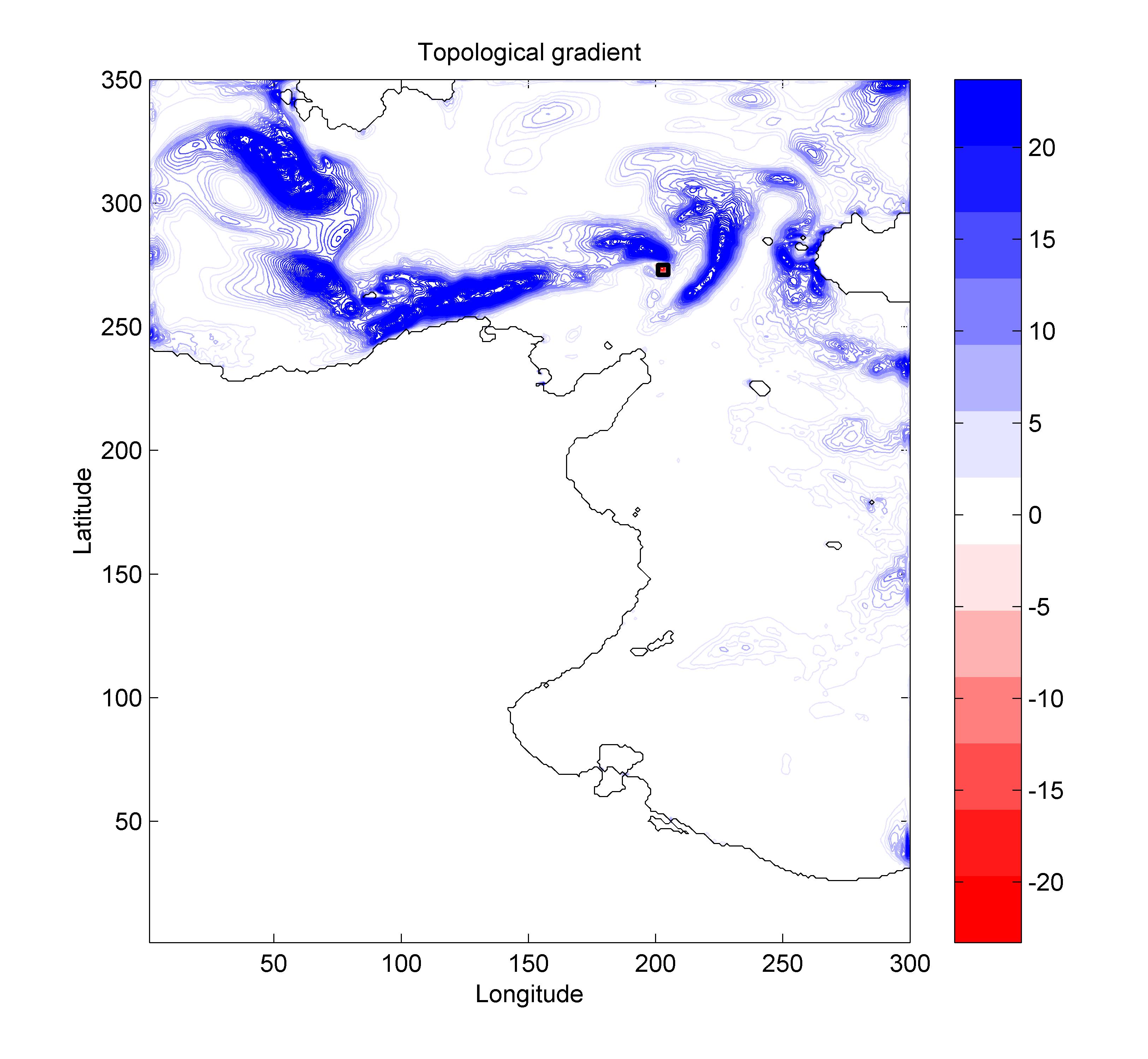} &
        \includegraphics[width=50mm]{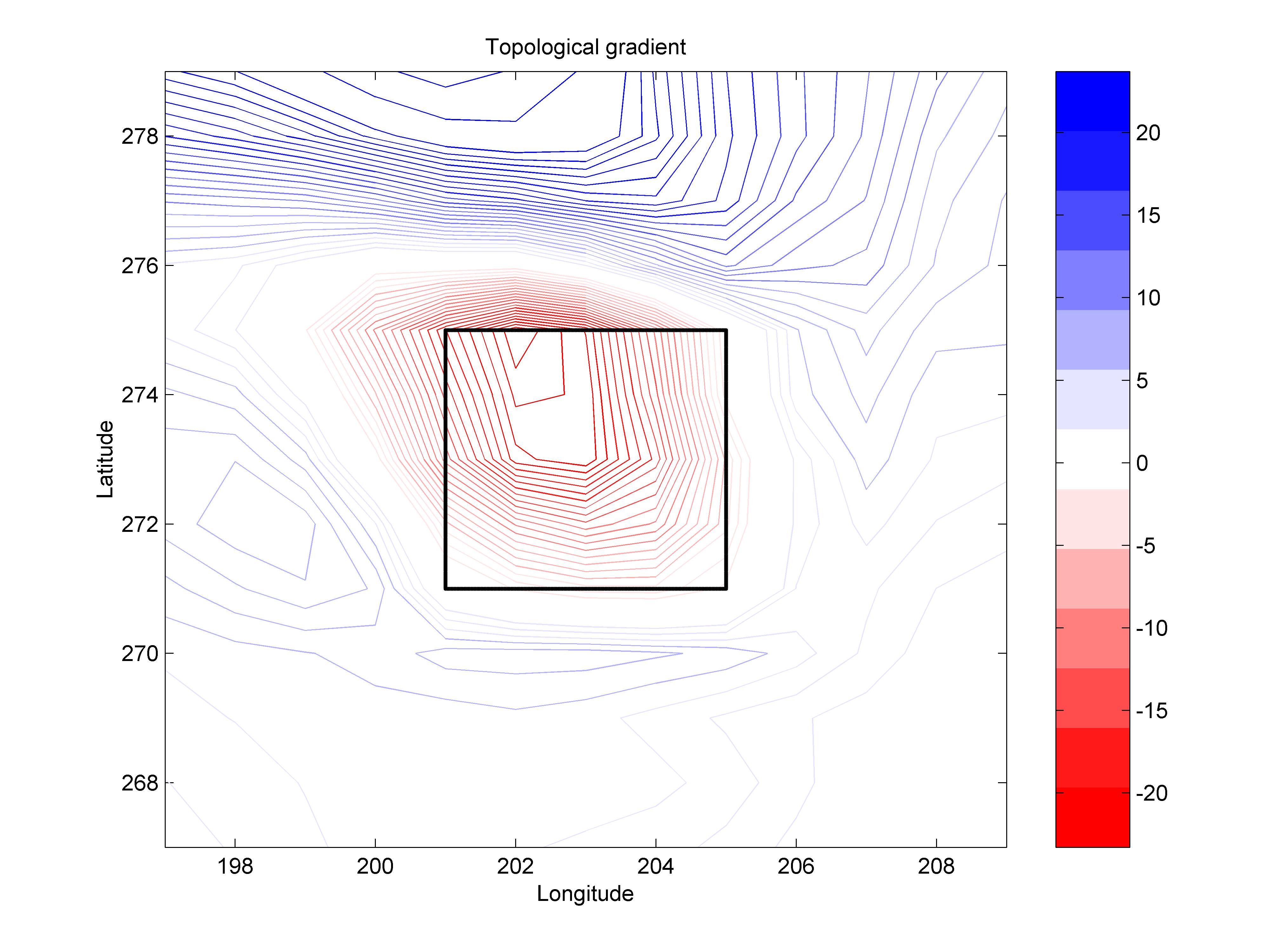} & 
        \includegraphics[width=50mm]{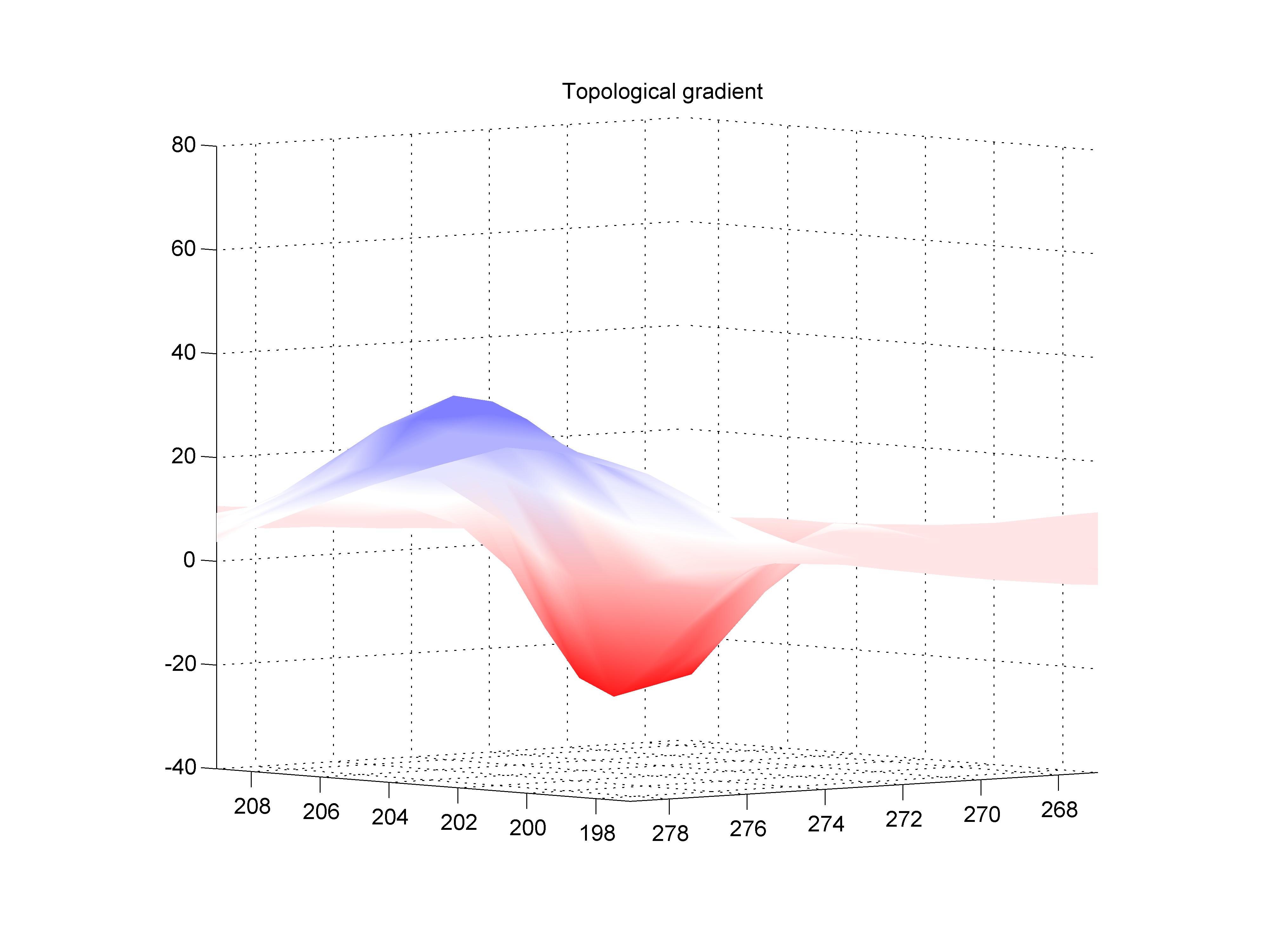} \\
        (a) &
        (b) &
        (c) 
    \end{tabular}
    \caption{Topological gradient-based identification of the obstacle \( \omega_1^* \) located at position \( \mathrm{L}_1 \). 
    (a) Iso-values of the topological gradient \( D_K \) over the domain, 
    (b) Zoom near the exact location of \( \omega_1^* \), highlighting the region of negative values of \( D_K \), and 
    (c) 3D visualization of \( D_K \) illustrating the local minimum corresponding to the obstacle.}
    \label{obs}
\end{figure}

\begin{figure}[!htb]
    \centering
    \begin{tabular}{ccc}
        \includegraphics[width=50mm]{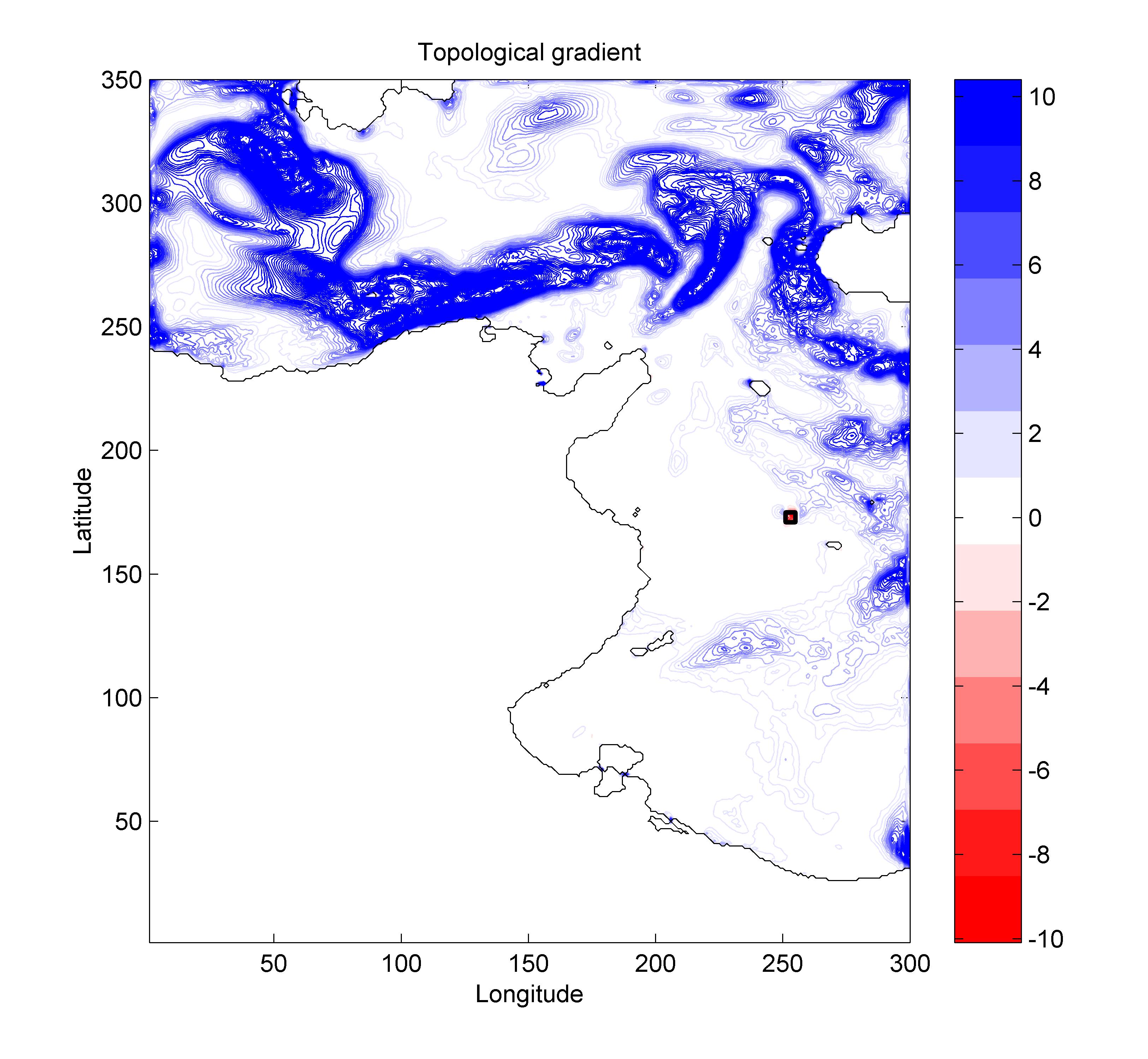} & \includegraphics[width=50mm]{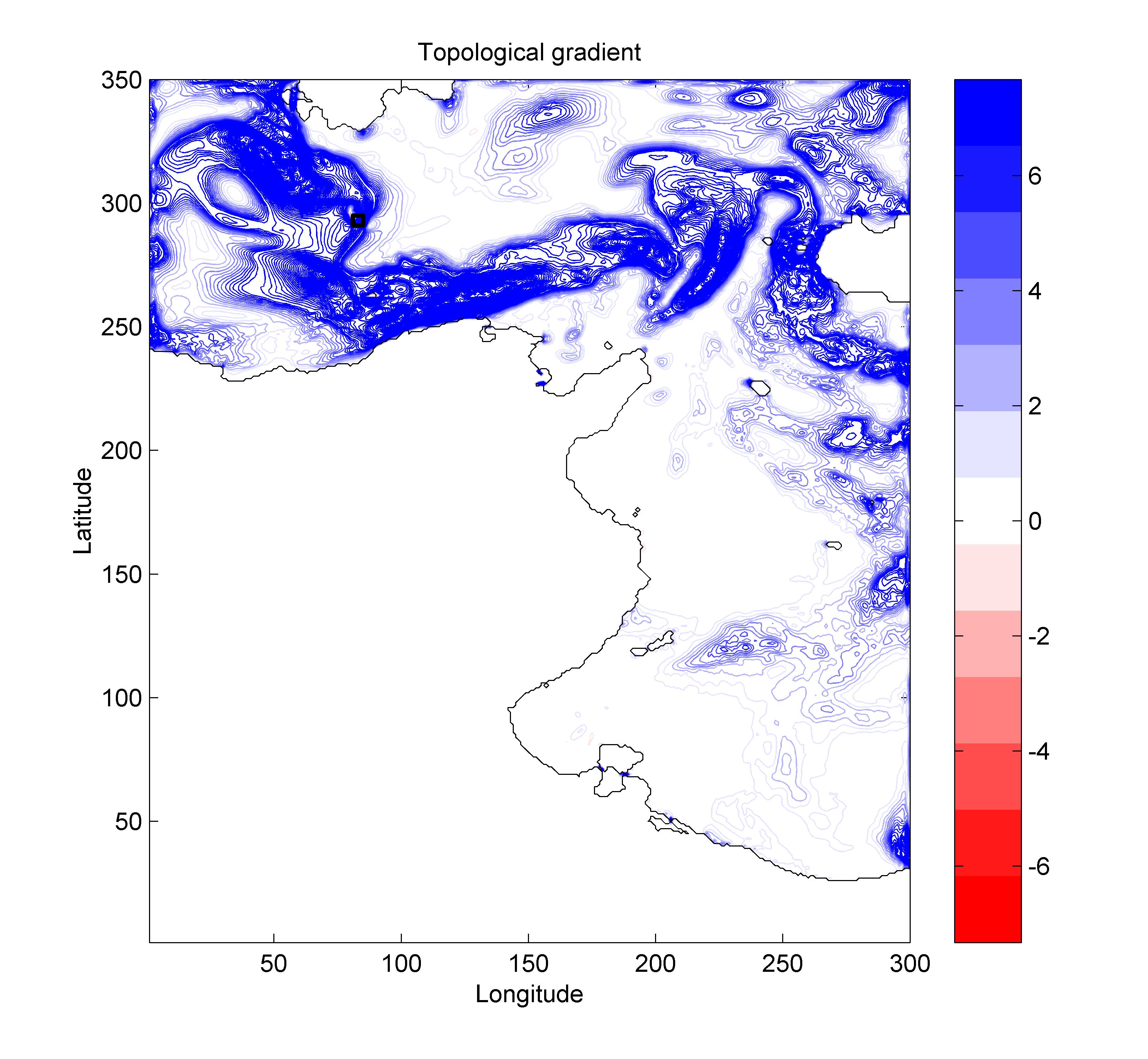} &  \includegraphics[width=50mm]{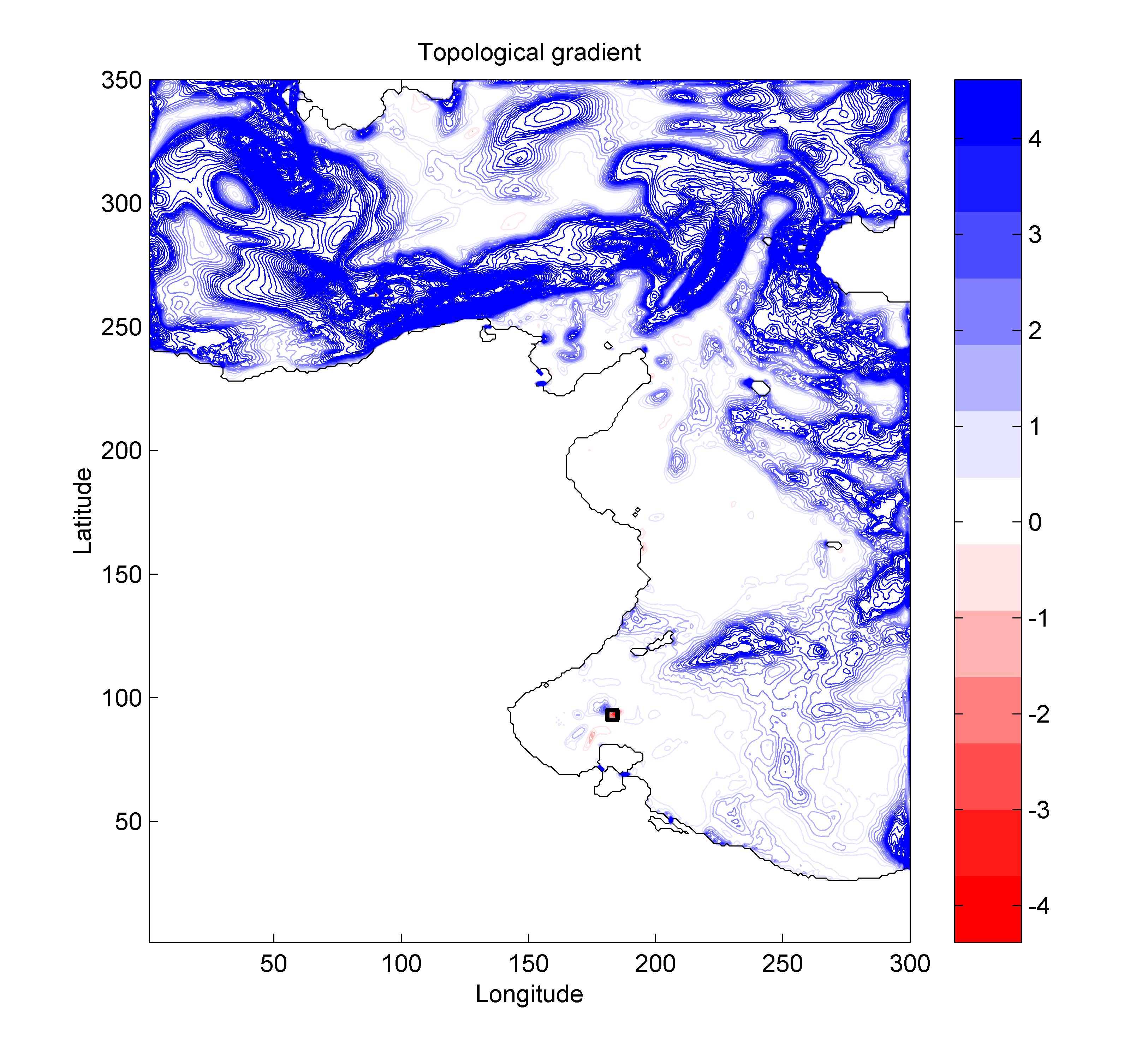}\\
(a) Location: \( \mathrm{L}_2 \)  & (b) Location: \( \mathrm{L}_3 \) & (c) Location: \( \mathrm{L}_4 \)\\
        \includegraphics[width=50mm]{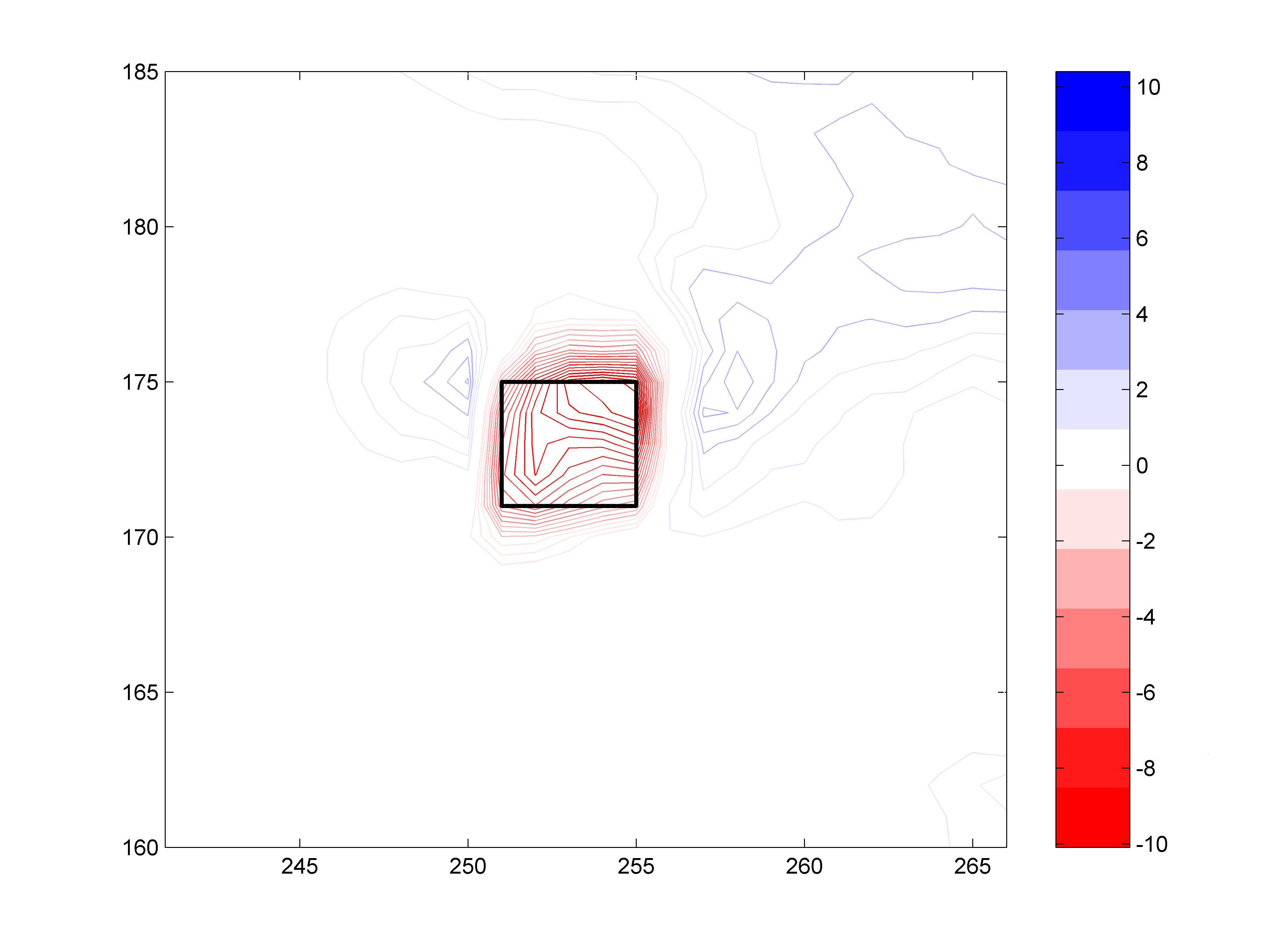} &
        \includegraphics[width=50mm]{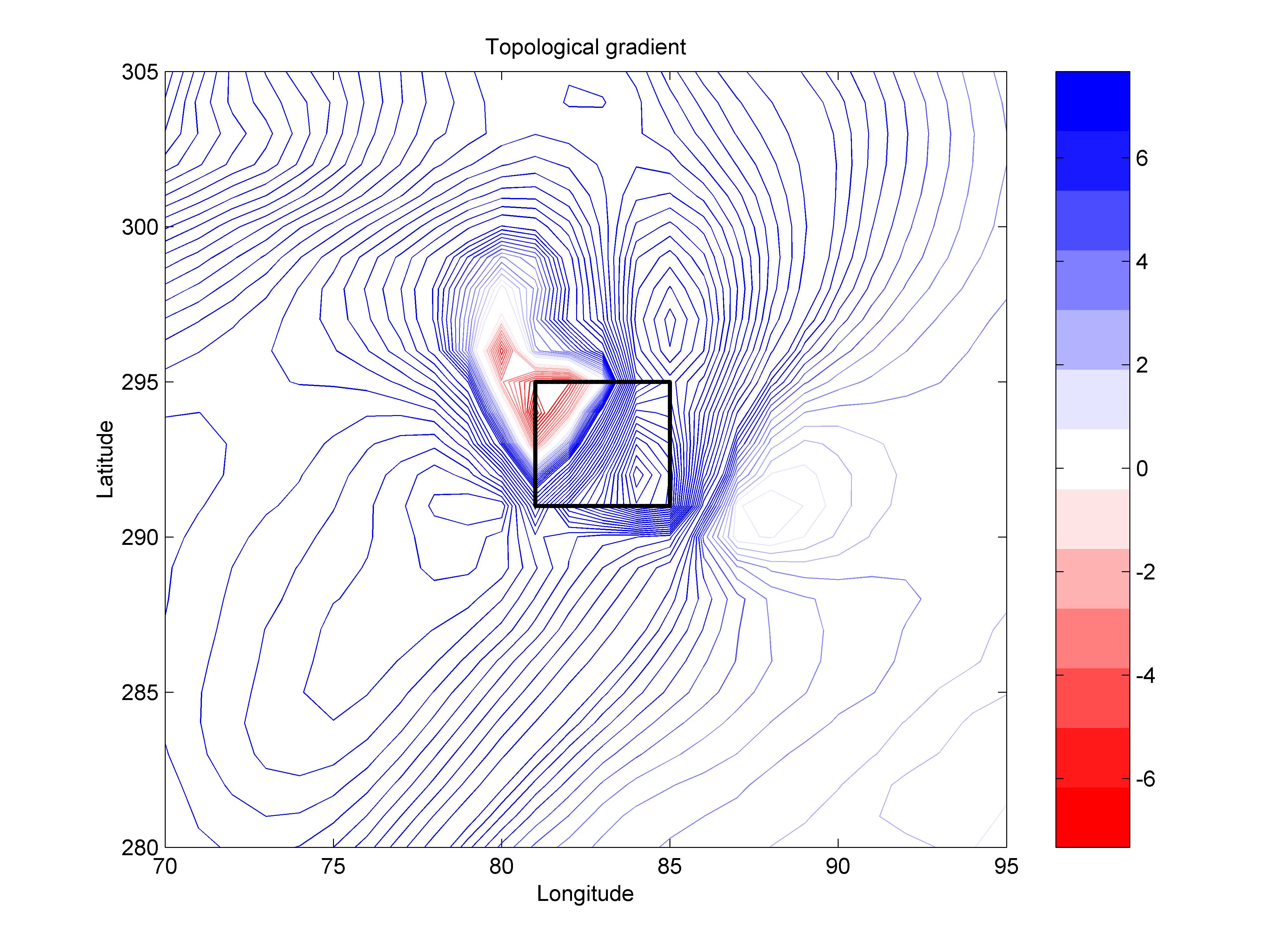} 
        & 
        \includegraphics[width=50mm]{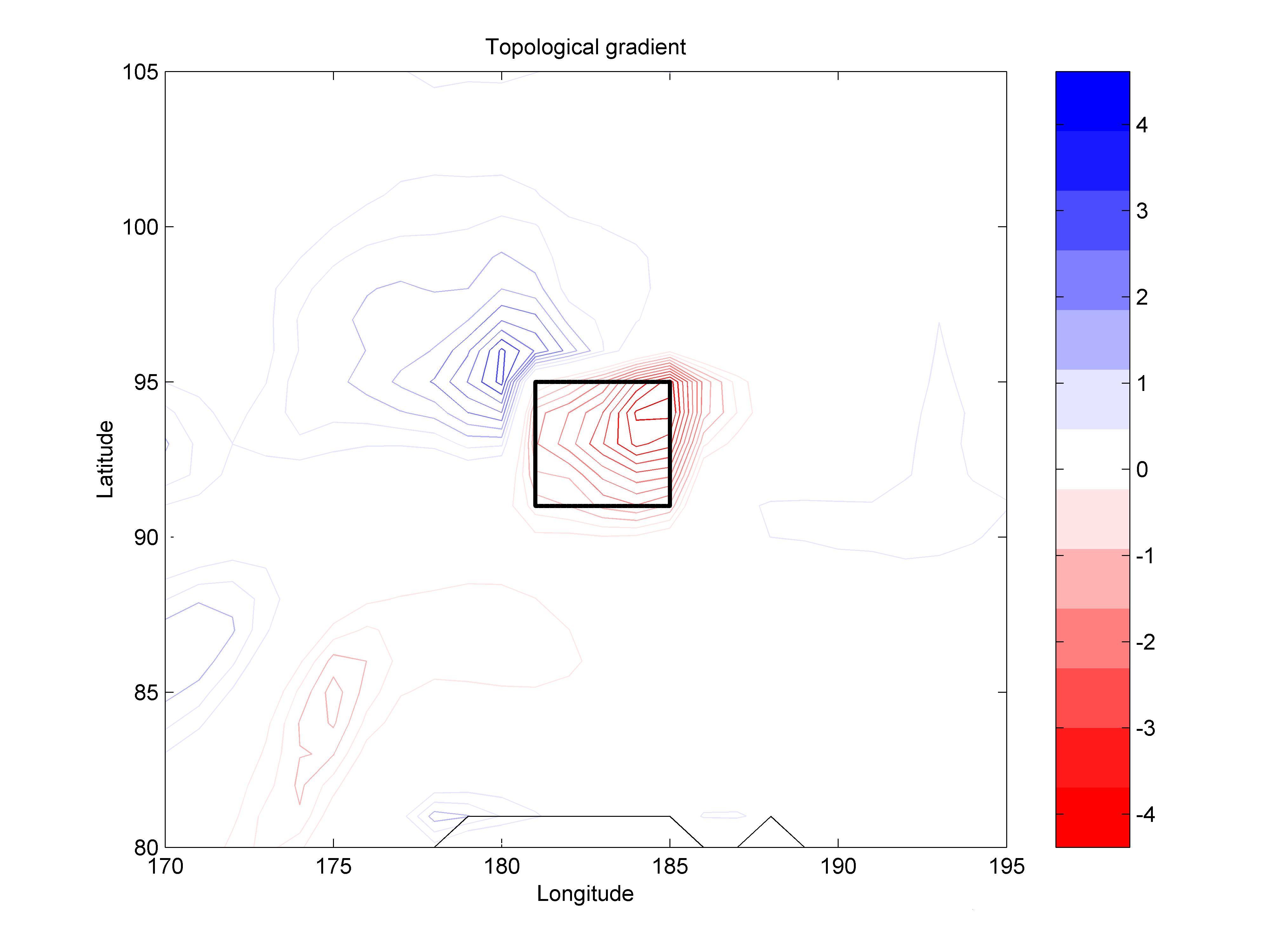} \\
        (d) Zoom of (a) & (e) Zoom of (b) & (f) Zoom of (c)
    \end{tabular}
    \caption{Topological gradient-based identification of obstacles \( \omega_i^* \) for \( i = 2, 3, 4 \) located in distinct sub-regions of the Mediterranean Sea. 
Subfigures (a), (b), and (c) display the iso-values of the topological gradient \( D_K \) at the corresponding locations \( \mathrm{L}_2 \), \( \mathrm{L}_3 \), and \( \mathrm{L}_4 \), respectively. 
Subfigures (d), (e), and (f) provide zoomed-in views around the true positions of the obstacles, illustrating the regions where \( D_K \) attains its most negative values, which indicate potential obstacle locations.}
    \label{Horizz}
\end{figure}

\begin{remark}
In the following examples, the horizontal dimensions of the obstacle are consistently fixed at \( 8\,\text{km} \times 8\,\text{km} \), unless stated otherwise. However, the vertical extent is not uniform, as it is influenced by the sigma-coordinate system, which adapts to the underlying bathymetry. For this reason, the vertical size will be computed and reported only for selected cases.
\end{remark}

\subsubsection{Example 2: Detection at varying depth levels}

In the first example, the obstacle was located relatively close to the sea surface. In the present example, we investigate how the performance of the proposed identification method is affected when the obstacle is positioned progressively deeper in the ocean, moving away from the surface toward the seabed.

This experiment aims to evaluate the effectiveness and robustness of the topological derivative-based algorithm in detecting a single submerged obstacle \( \omega^* \) situated at various depths within the water column. The goal is to assess the method's accuracy and sensitivity across a broad vertical range--from near-surface conditions to deep-sea environments. To this end, we consider three configurations in which the obstacle \( \omega^* \) is placed at average depths of approximately \( 6\,\mathrm{m} \), \( 260\,\mathrm{m} \), and \( 930\,\mathrm{m} \) below the sea surface (see Figure~\ref{locobsVert}).
\begin{figure}[!htb]
    \centering
    \includegraphics[width=60mm]{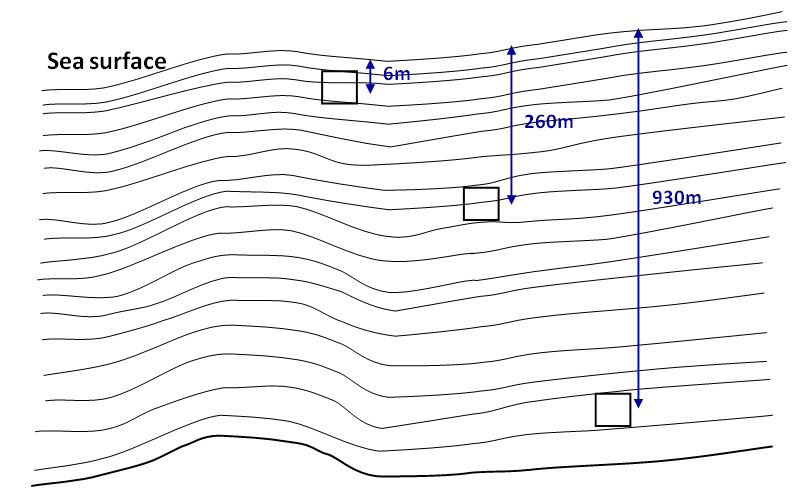}
    \caption{Depth Levels of the obstacle $\omega^*$ below the sea surface.}
    \label{locobsVert}
\end{figure}

\begin{figure}[!htb]
    \centering
    \begin{tabular}{ccc}
        \includegraphics[width=50mm]{Figures/6_obsS.jpg} & 
        \includegraphics[width=50mm]{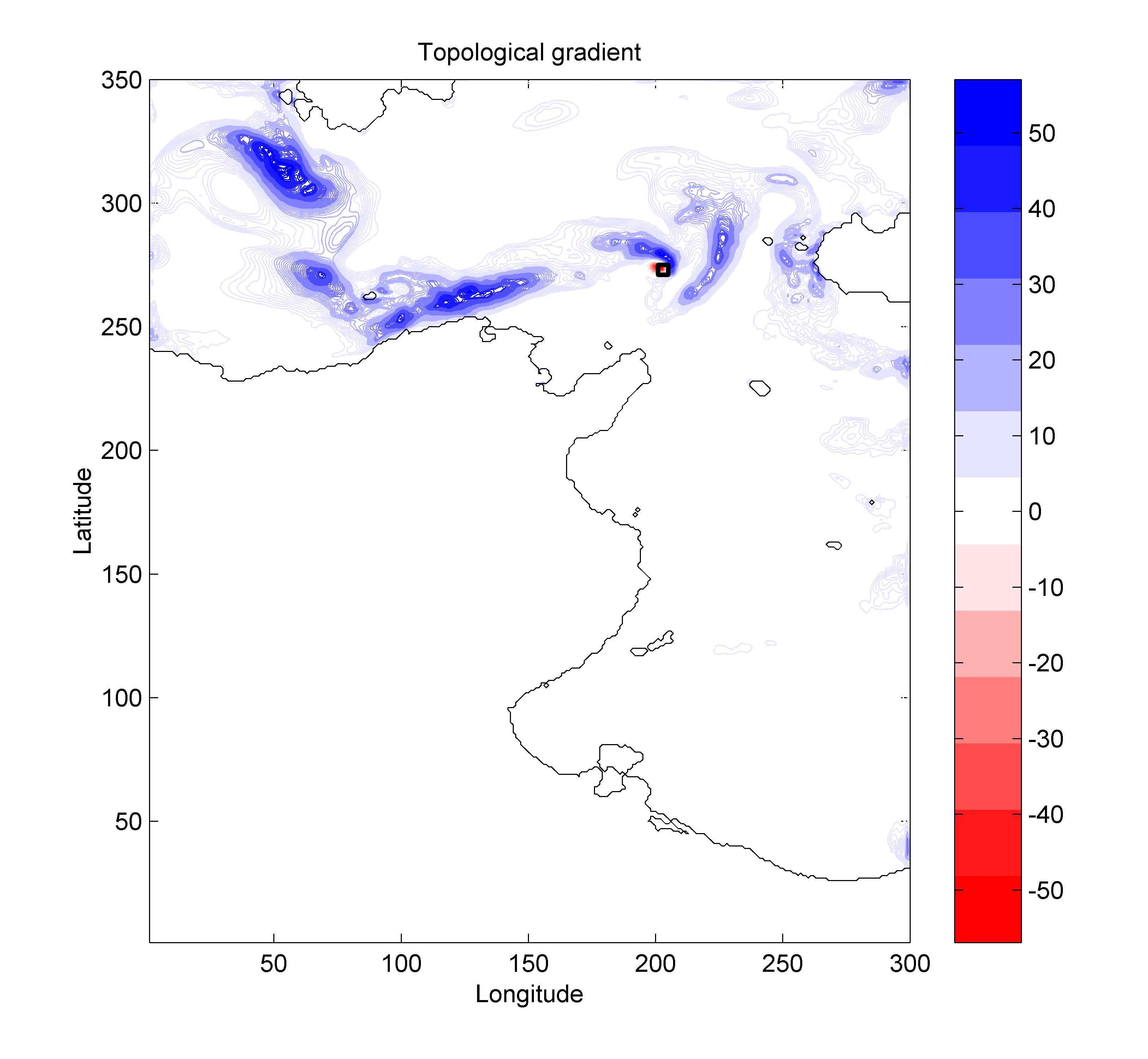} &  
        \includegraphics[width=50mm]{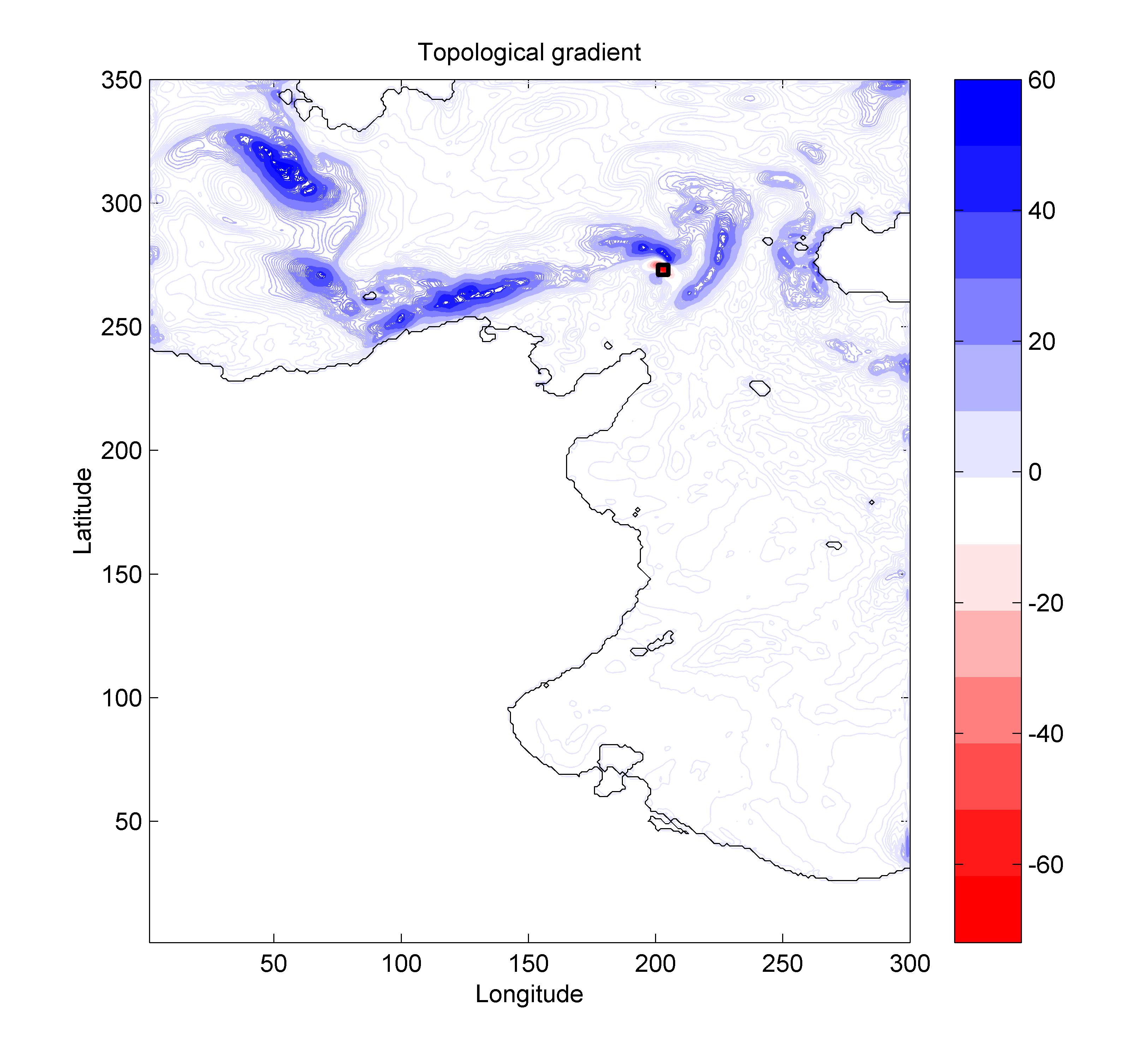} \\
        (a) Depth: \(6\,\mathrm{m}\) & (b) Depth: \(260\,\mathrm{m}\) & (c) Depth: \(930\,\mathrm{m}\) \\
        \includegraphics[width=50mm]{Figures/6_obsSz.jpg} &
        \includegraphics[width=50mm]{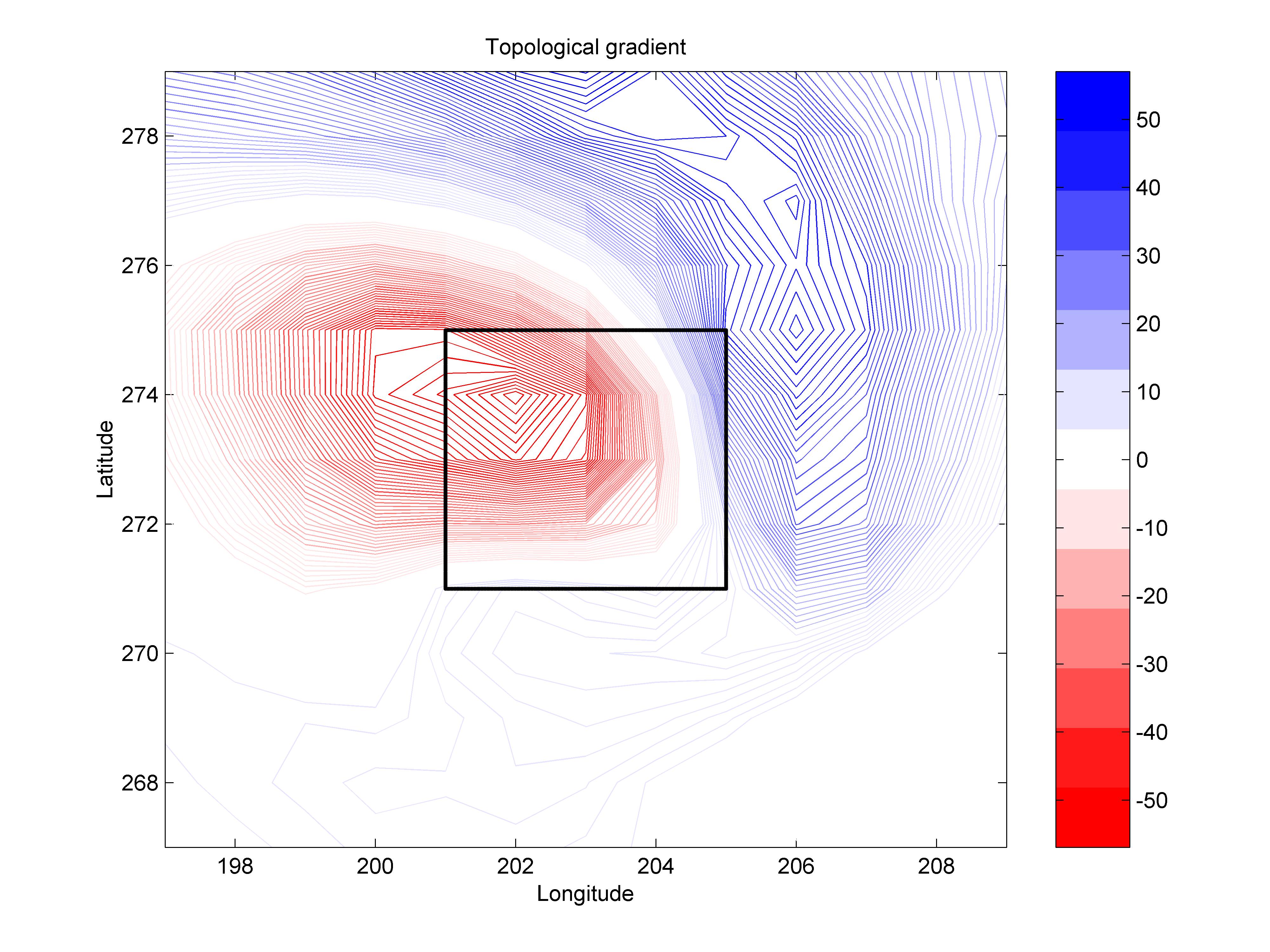} & 
        \includegraphics[width=50mm]{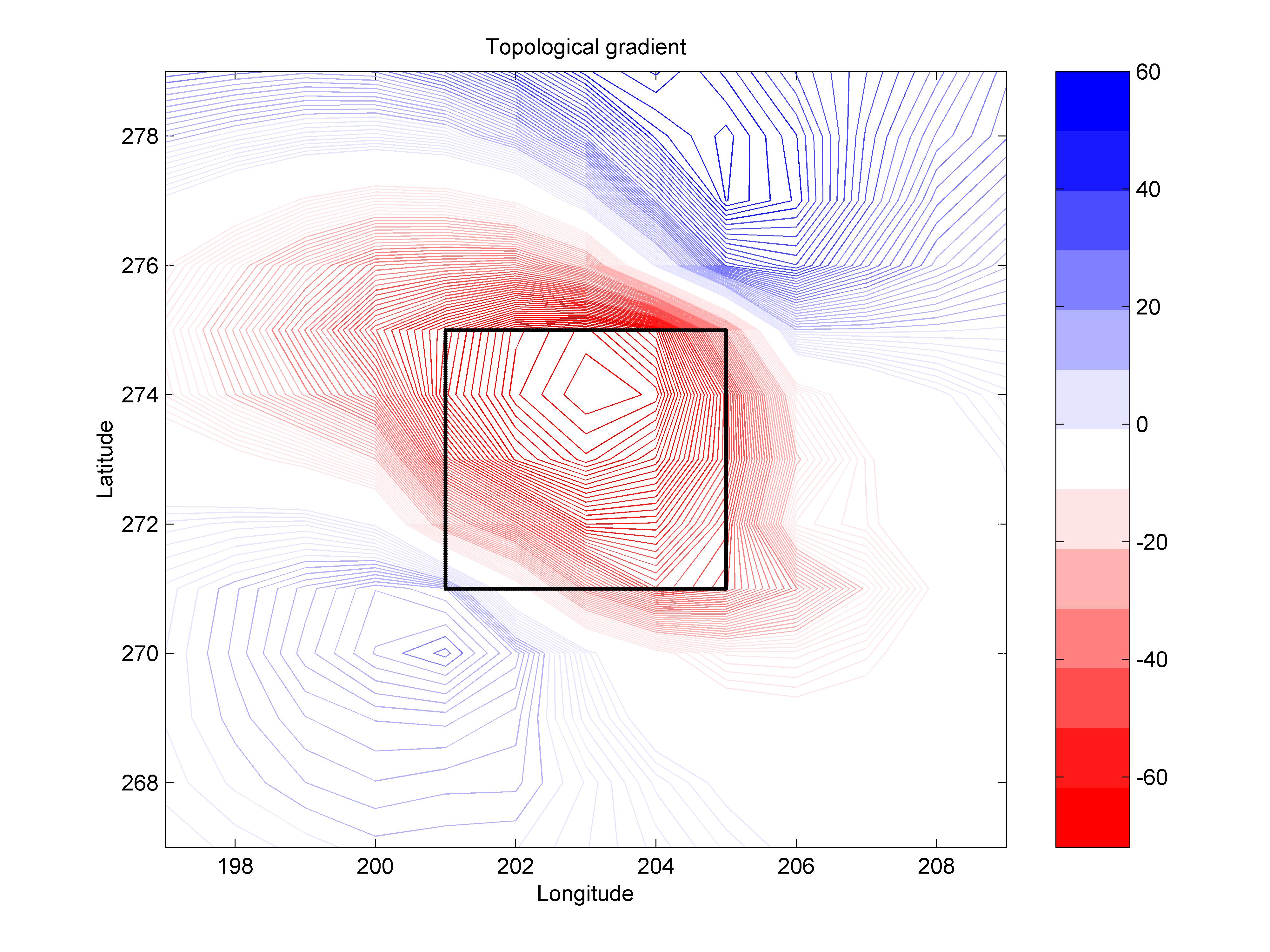} \\
        (d) Zoom of (a) & (e) Zoom of (b) & (f) Zoom of (c)
    \end{tabular}
    \caption{
        Topological gradient-based identification of a single obstacle \( \omega^* \) at three different depths: near-surface (6 m), mid-depth (260 m), and deep-water (930 m). Subfigures (a), (b), and (c) display the iso-values of \( D_K \) for each depth configuration. Subfigures (d), (e), and (f) present corresponding zoomed-in views near the true obstacle locations, highlighting the regions where \( D_K \) is most negative, indicating likely positions of the inclusion.
    }
    \label{SMF-1}
\end{figure}

The corresponding detection results are displayed in Figure \ref{SMF-1}. These numerical experiments demonstrate that the proposed method accurately identifies the location of the obstacle in all three configurations. In each case, the obstacle is successfully localized within the region where the topological gradient \( D_K \) attains its most negative values—highlighting the sensitivity of the method to the presence of inclusions.

The reconstruction results confirm that the topological sensitivity-based approach remains both effective and stable, even as the obstacle is positioned at increasing depths. This result underscores the robustness and versatility of the algorithm across different vertical layers of the ocean, from near-surface to deep-sea conditions.

\subsubsection{Example 3 : Sensitivity to the length of the obstacle}\label{size}

In this numerical experiment, we evaluate the sensitivity and robustness of the proposed one-shot algorithm with respect to variations in the \textbf{horizontal extent} of the submerged obstacle. Specifically, we consider three configurations of an inclusion with progressively decreasing horizontal dimensions, as illustrated in Figure~\ref{locobs-lengths1}, while maintaining an approximately constant vertical location.

\begin{figure}[!htb]
    \centering
    
    \begin{tabular}{ccc}
        \includegraphics[width=15mm]{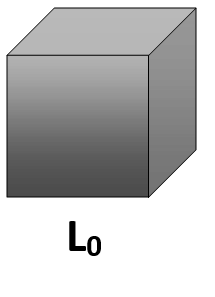} & 
        \includegraphics[width=15mm]{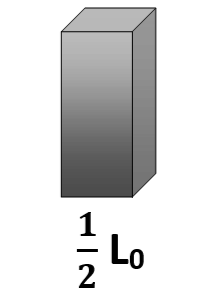} & 
        \includegraphics[width=15mm]{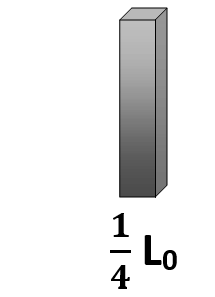}
    \end{tabular}
    \caption{The considered obstacles and their relative lengths.}
    \label{locobs-lengths1}
\end{figure}

\begin{figure}[!htb]
    \centering
    \begin{tabular}{ccc}
        \includegraphics[width=50mm]{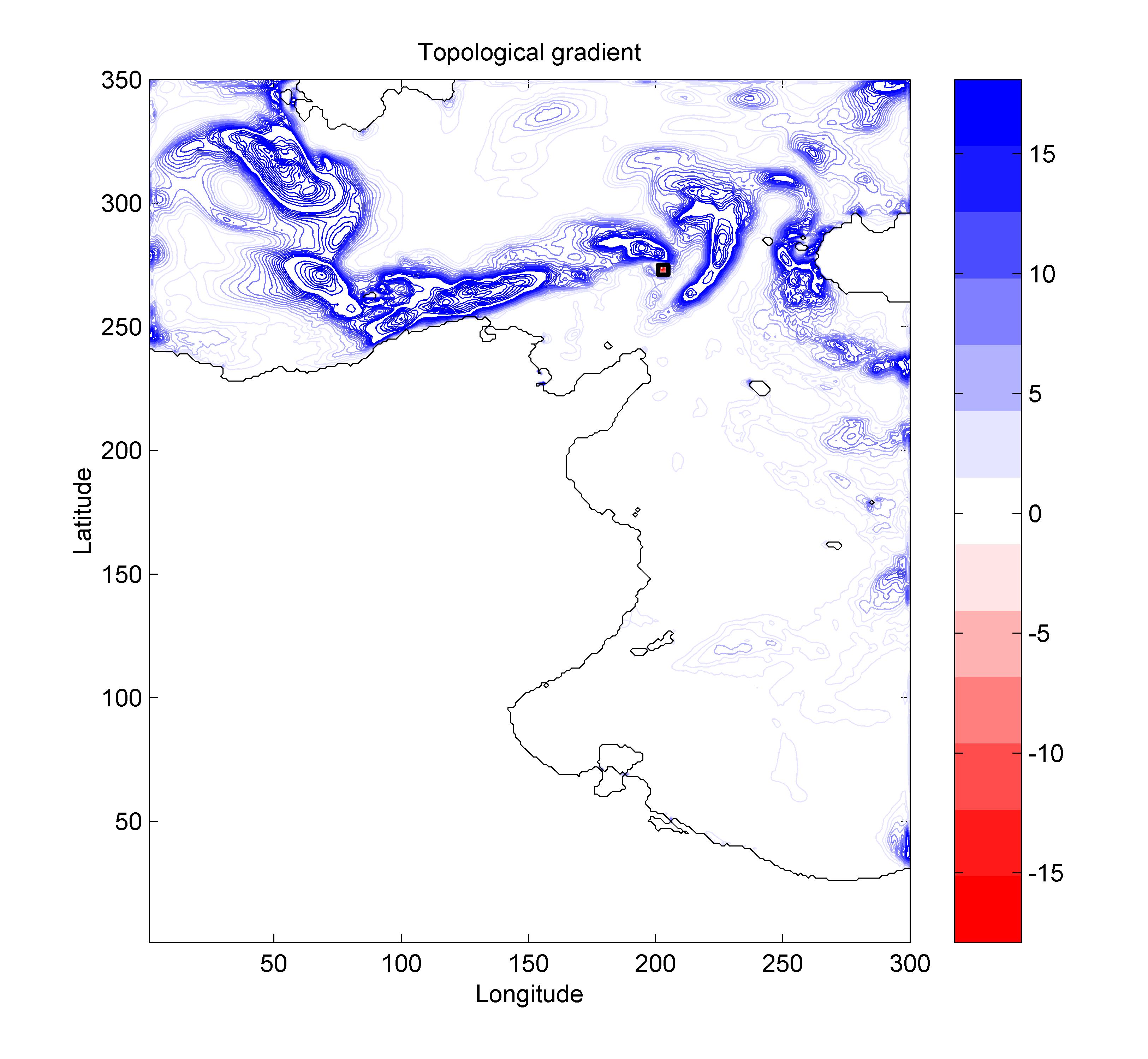} & 
        \includegraphics[width=50mm]{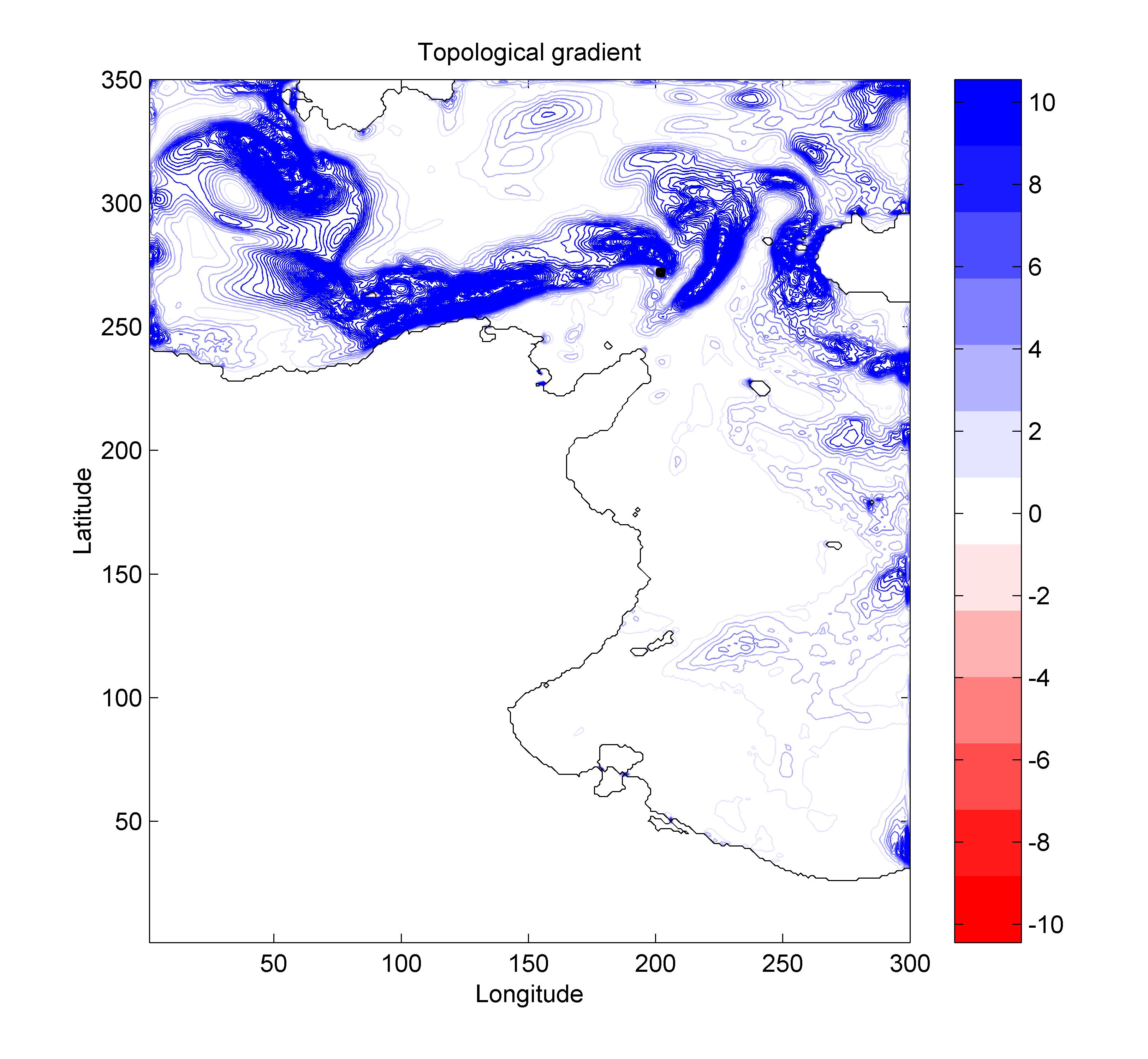} &  
        \includegraphics[width=50mm]{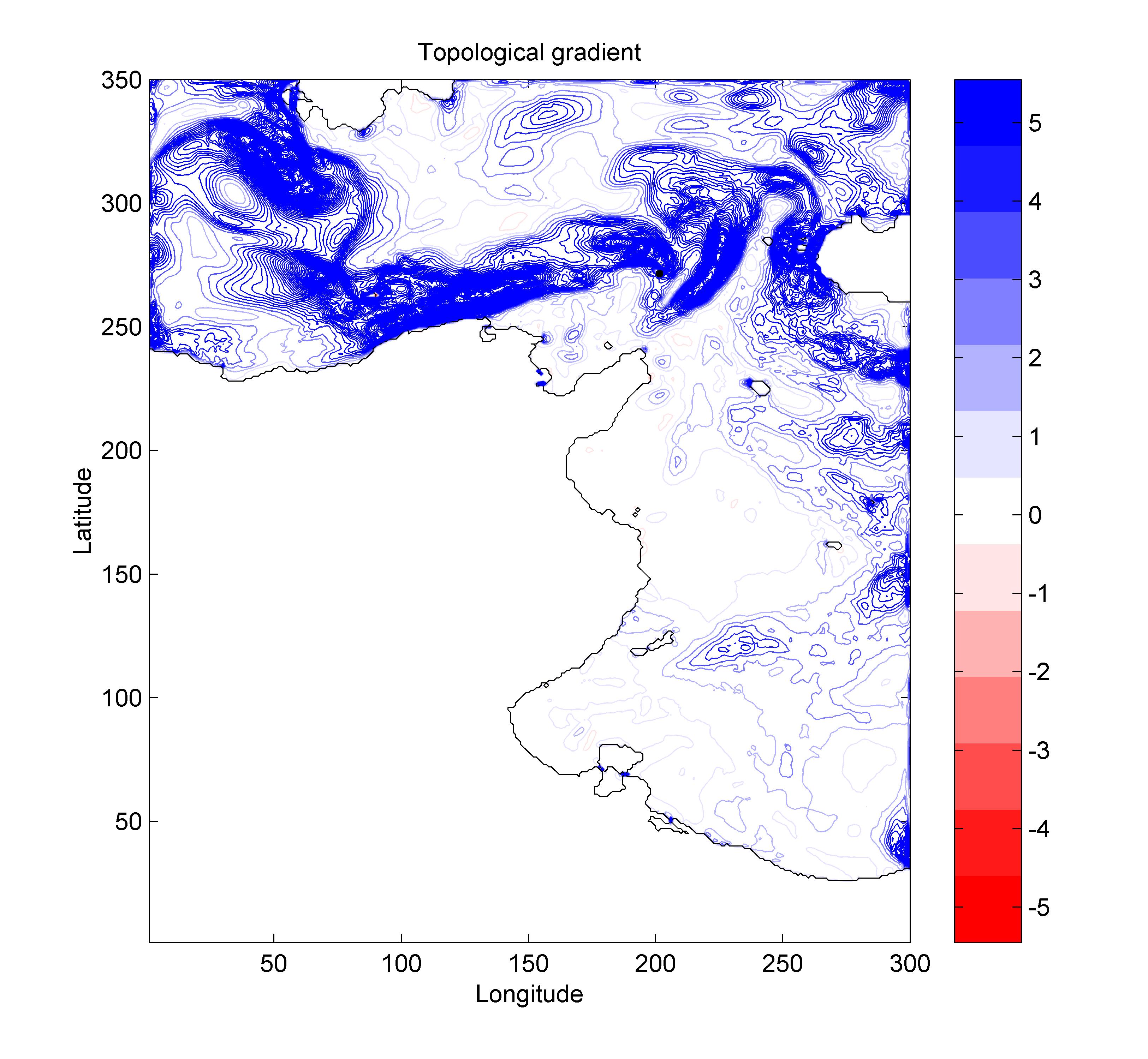} \\
        (a) Length: $\mathrm{L}_0$ & (b) Length: $\displaystyle\frac{1}{2}\mathrm{L}_0$ & (c) Length: $\displaystyle\frac{1}{4}\mathrm{L}_0$\\
        \includegraphics[width=50mm]{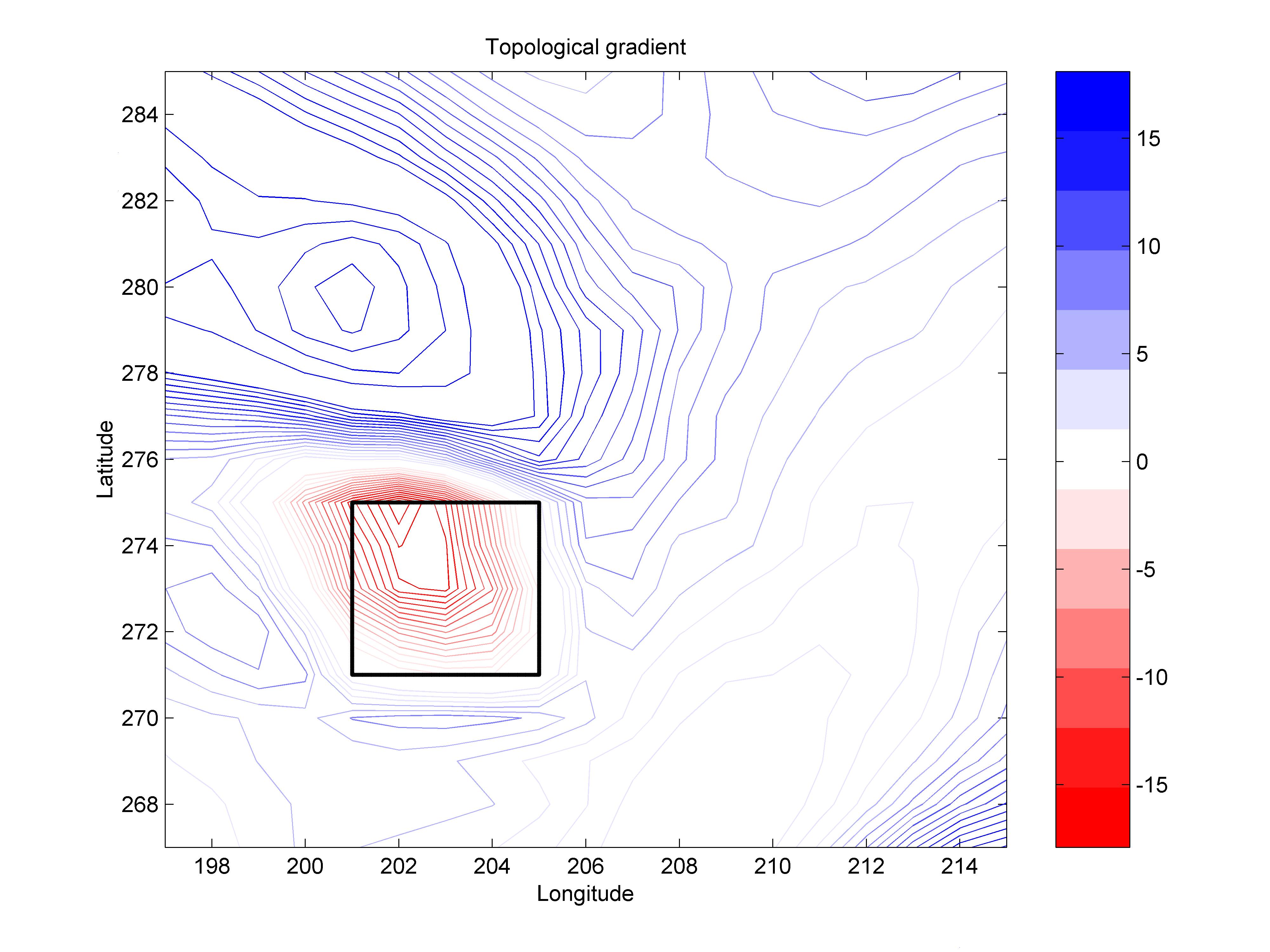} &
        \includegraphics[width=50mm]{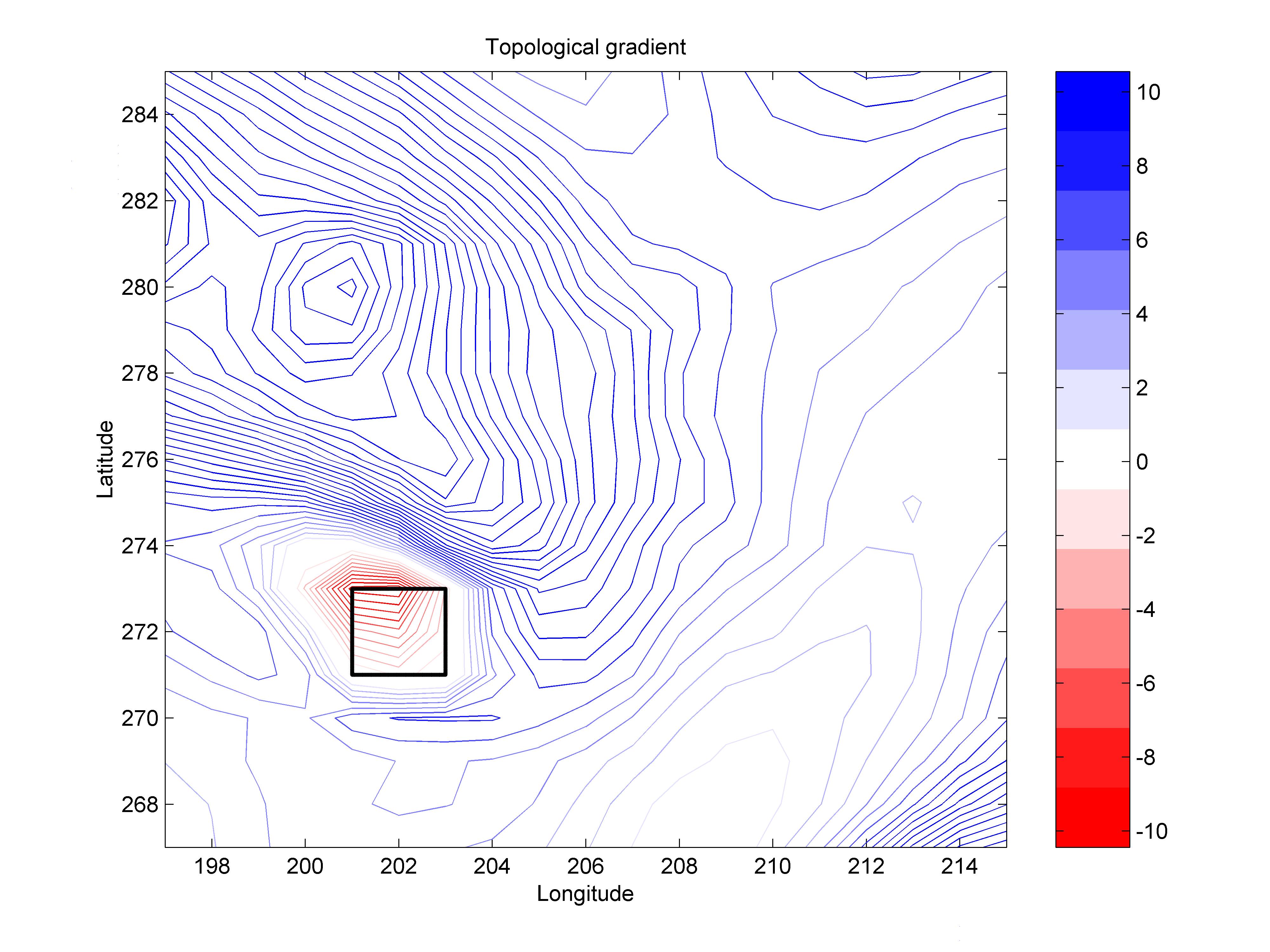} & 
        \includegraphics[width=50mm]{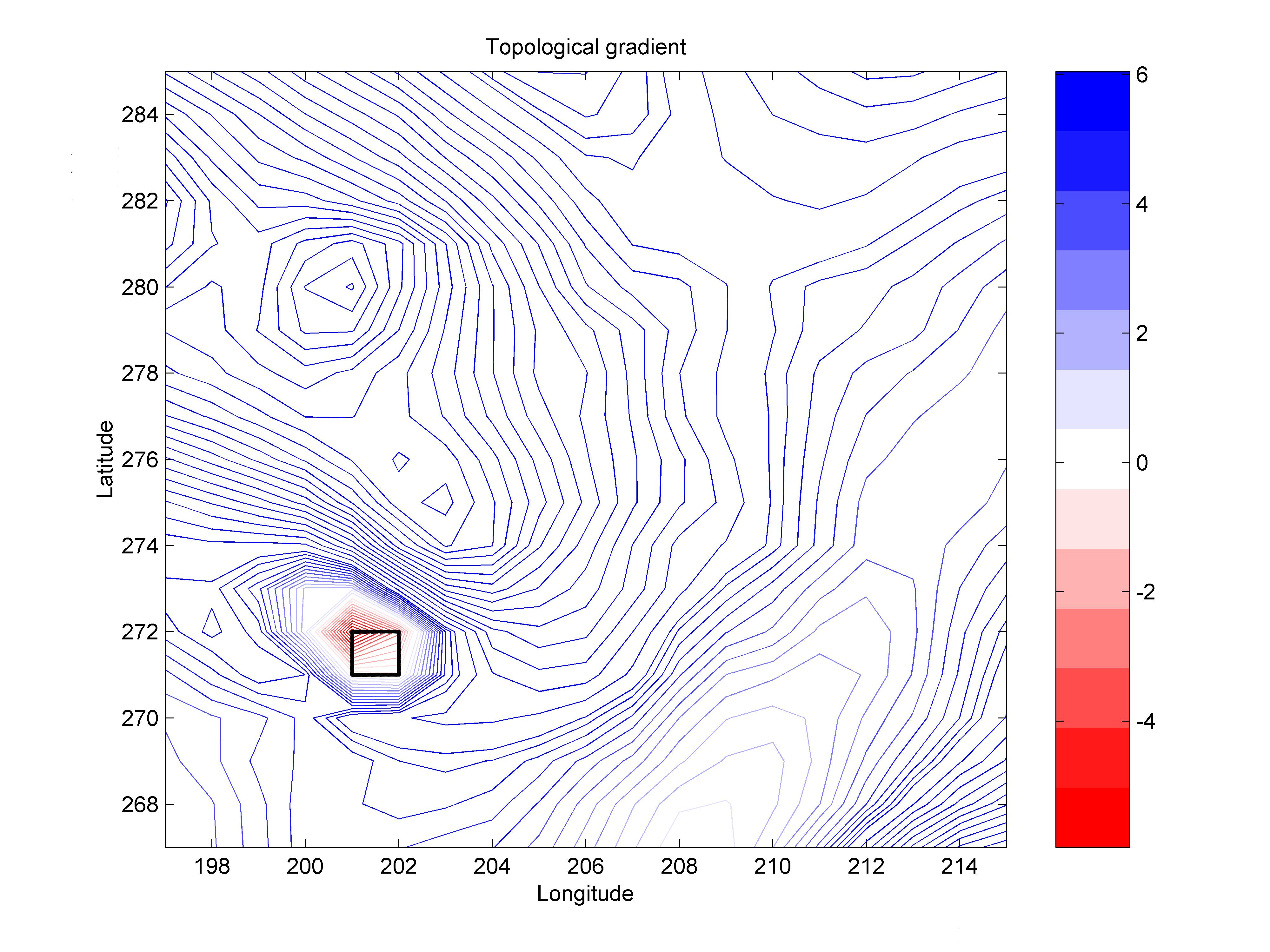} \\
        (d) Zoom of (a) & (e) Zoom of (b) & (f) Zoom of (c)
    \end{tabular}
    \caption{
        Topological gradient-based identification of obstacles with varying horizontal lengths. Subfigures (a), (b), and (c) illustrate the iso-value contours of $D_K$ for obstacles with lengths $\mathrm{L}_0=8\,\text{km}\times 8\,\text{km}$, $\frac{1}{2}\mathrm{L}_0=4\,\text{km}\times 4\,\text{km}$, and $\frac{1}{4}\mathrm{L}_0=2\,\text{km}\times 2\,\text{km}$, respectively. Subfigures (d), (e), and (f) provide magnified views near the actual obstacle locations, emphasizing regions where $D_K$ attains its most negative values, which correspond to the most probable positions of the obstacles.
    }
\label{sizeV-22}
\end{figure}

The number of vertical sigma layers is held fixed across all configurations; however, their physical thickness in meters may slightly vary due to bathymetric fluctuations. The vertical height of the first obstacle ranges from approximately $4.3\,\text{m}$ to $8\,\text{m}$, the second from $4.3\,\text{m}$ to $6\,\text{m}$, and the third from $4.3\,\text{m}$ to $5\,\text{m}$.

The aim is to examine how changes in the geometric scale of the inclusion affect the accuracy, sharpness, and stability of the reconstruction. This analysis provides insight into the algorithm’s capability to detect smaller-scale features and its potential limitations when applied to obstacles of diminishing size.

Figure \ref{sizeV-22} presents the corresponding detection results for each case. From these results, we observe that the proposed algorithm successfully localizes the obstacles in all configurations, with the topological gradient correctly highlighting the regions of negative sensitivity associated with the true obstacle locations. The reconstructions remain stable and precise across varying obstacle lengths, confirming that the algorithm is robust to geometric scaling and capable of accurately identifying inclusions of different horizontal extents.

\subsubsection{Example 4: Sensitivity to the height of the obstacle} \label{size}

Similar to Example 3, this experiment aims to evaluate the sensitivity of the proposed detection algorithm with respect to the ``vertical extent'' (i.e., height) of the submerged obstacle. In this test, we consider three obstacles that share approximately the same average depth, but have different vertical heights—ranging from full height \( \mathrm{H}_0 \), to half height \(\displaystyle \frac{1}{2}\,\mathrm{H}_0 \), and quarter height \( \displaystyle\frac{1}{4}\,\mathrm{H}_0 \). These configurations are depicted in Figure \ref{locobs-lengths}. The first obstacle has 4 sigma layers, the second 2 sigma layers, and the third a single sigma layer. All three obstacles are positioned at approximately the same depth level of 6\,m from the sea surface. The purpose of this test is to examine whether variations in obstacle height affect the accuracy and stability of the reconstruction.

The corresponding detection results are shown in Figure \ref{sizeV-2}. As illustrated, the proposed one-iteration algorithm remains robust and accurately localizes each obstacle regardless of its vertical scale. In all cases, the obstacle is clearly identified in the region where the topological gradient \( D_K \) attains its most negative values, confirming the method’s effectiveness even when the obstacle height becomes small.

\subsubsection{Example 5: Identification of multiple obstacles}

In Examples 1--4, we focused on the detection of a single submerged obstacle. However, it is well established in the literature that the numerical computation of the topological derivative is inherently independent of the number of obstacles or cavities present within the fluid domain (see, for instance, \cite{BenAbdaSIAM2009,CaubetIPI2016,CaubetIP2012}). This robustness stems from the fact that the topological gradient is computed pointwise and reflects the local sensitivity of the cost functional to the introduction of an infinitesimal inclusion at each point in the domain. As a result, the gradient field is naturally capable of capturing multiple anomalies simultaneously, without requiring any a priori knowledge of their number or positions.

To illustrate this key property, we now present a numerical experiment involving the simultaneous presence of multiple submerged obstacles. The aim is to evaluate the capability of the proposed one-shot reconstruction algorithm to detect all obstacles with high accuracy, regardless of their number, depth, or spatial configuration.

In this setting, we consider three obstacles, denoted by \( \omega_1^* \), \( \omega_2^* \), and \( \omega_3^* \), whose exact locations are illustrated in Figure \ref{exact-locations-3obs33}. Each obstacle has a horizontal extent of approximately \( 8\,\mathrm{km} \times 8\,\mathrm{km} \), and all are positioned at similar vertical levels (between layers 8 and 10). Specifically:
\begin{itemize}
    \item \( \omega_1^* \): located at grid indices \((201{:}205) \times (271{:}275) \times (8{:}10)\), corresponding to an average depth of approximately \(260\,\mathrm{m}\);
    \item \( \omega_2^* \): located at \((216{:}220) \times (271{:}275) \times (8{:}10)\), at an average depth of about \(255\,\mathrm{m}\);
    \item \( \omega_3^* \): located at \((191{:}195) \times (281{:}285) \times (8{:}10)\), with an average depth near \(207\,\mathrm{m}\).
\end{itemize}

The corresponding identification results are shown in Figure \ref{obs45}. As observed, the topological gradient clearly highlights three distinct negative zones corresponding to the true obstacle locations. These results confirm the robustness and efficiency of the proposed topological sensitivity-based method, which successfully detects multiple obstacles simultaneously with no prior information about their quantity or placement.

\begin{figure}[!htb]
    \centering
    \begin{tabular}{ccc}
        \includegraphics[width=15mm]{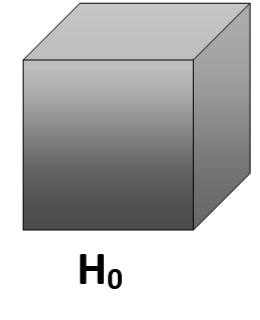} & 
        \includegraphics[width=15mm]{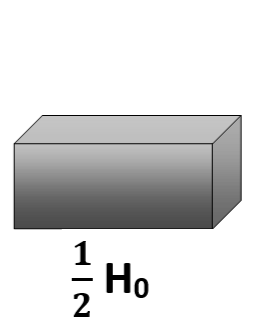} & 
        \includegraphics[width=15mm]{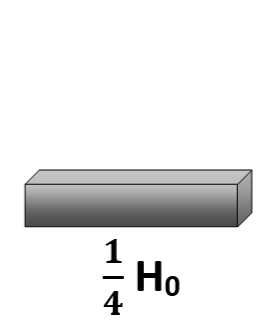}
    \end{tabular}
    \caption{The considered obstacles and their relative vertical heights: \( \mathrm{H}_0\), \( \frac{1}{2}\,\mathrm{H}_0 \), and \( \frac{1}{4}\,\mathrm{H}_0 \).}
    \label{locobs-lengths}
\end{figure}

\begin{figure}[!htb]
    \centering
    \begin{tabular}{ccc}
        \includegraphics[width=50mm]{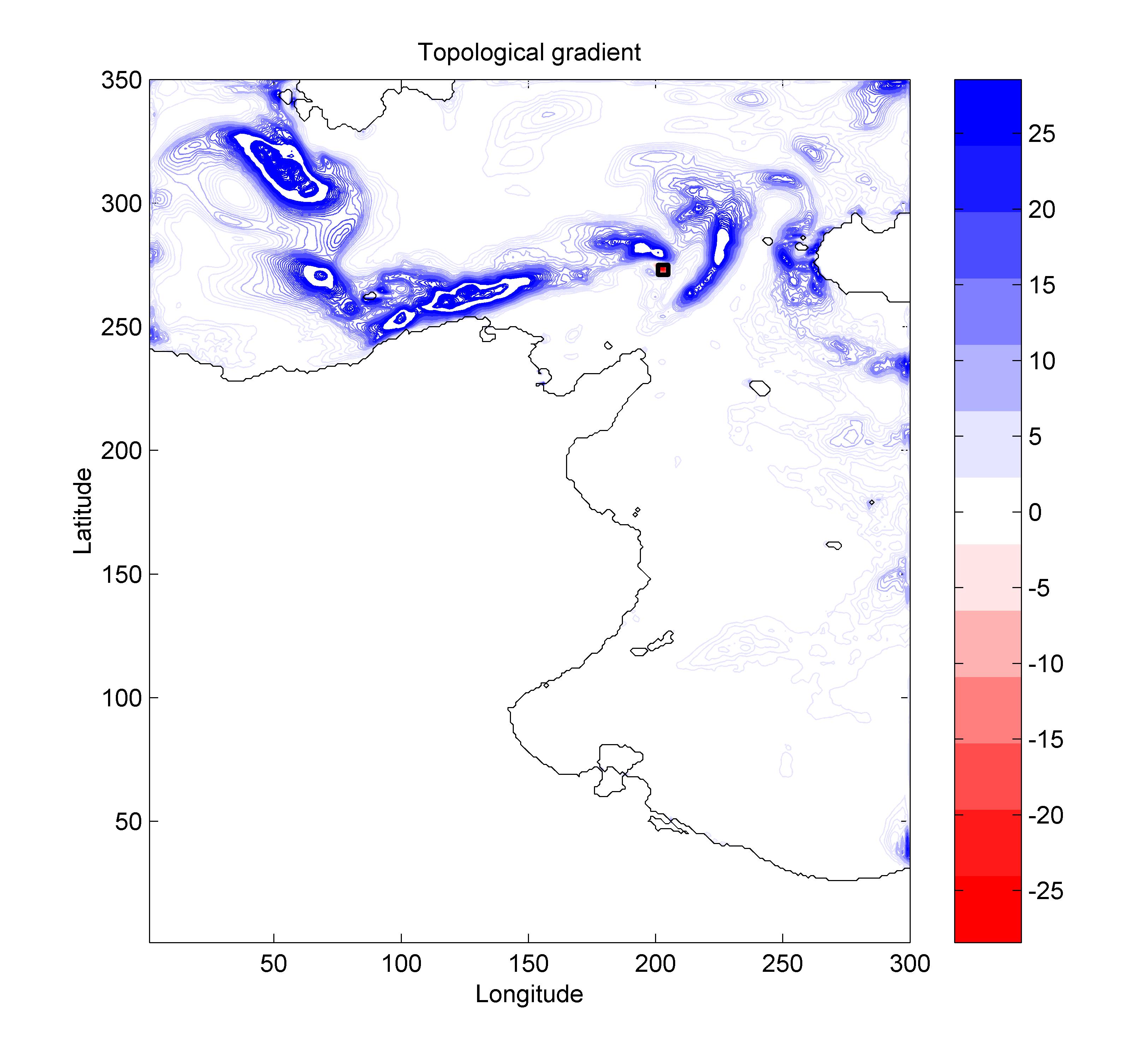} & 
        \includegraphics[width=50mm]{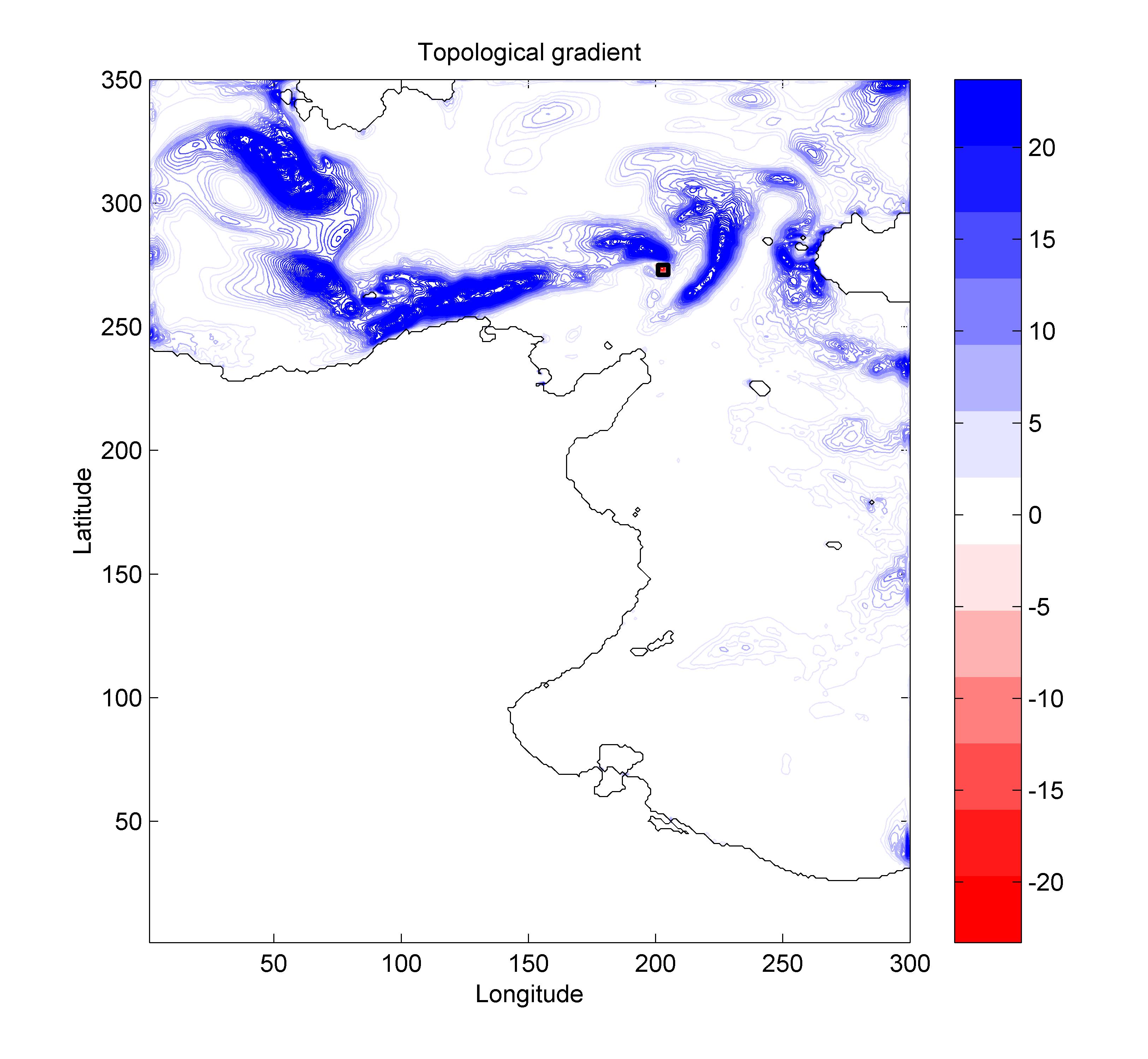} &  
        \includegraphics[width=50mm]{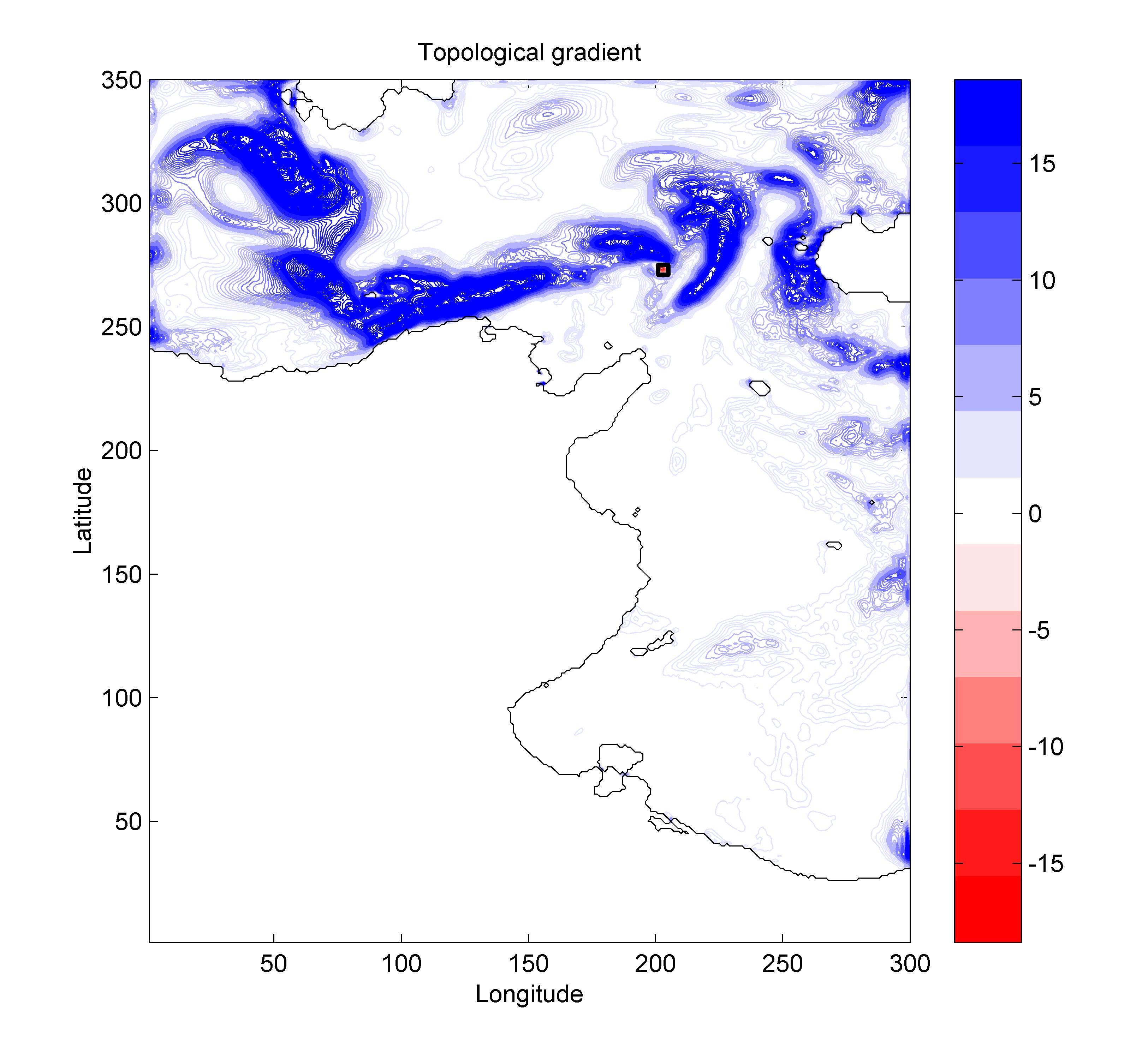} \\
        (a) Height: $\mathrm{H}_0$ & (b) Height: $\displaystyle\frac{1}{2}\mathrm{H}_0$ & (c) Height: $\displaystyle\frac{1}{4}\mathrm{H}_0$\\
        \includegraphics[width=50mm]{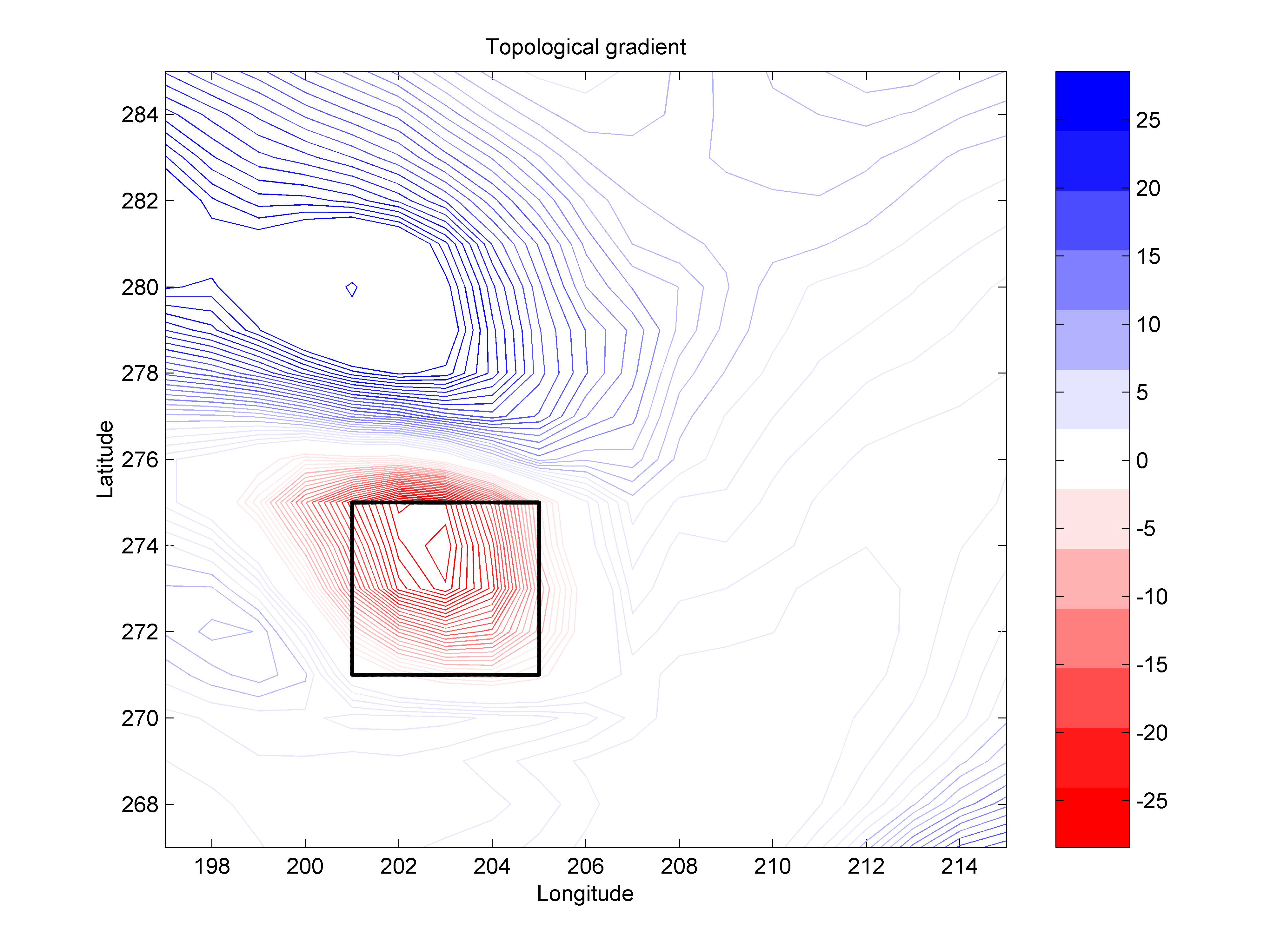} &
        \includegraphics[width=50mm]{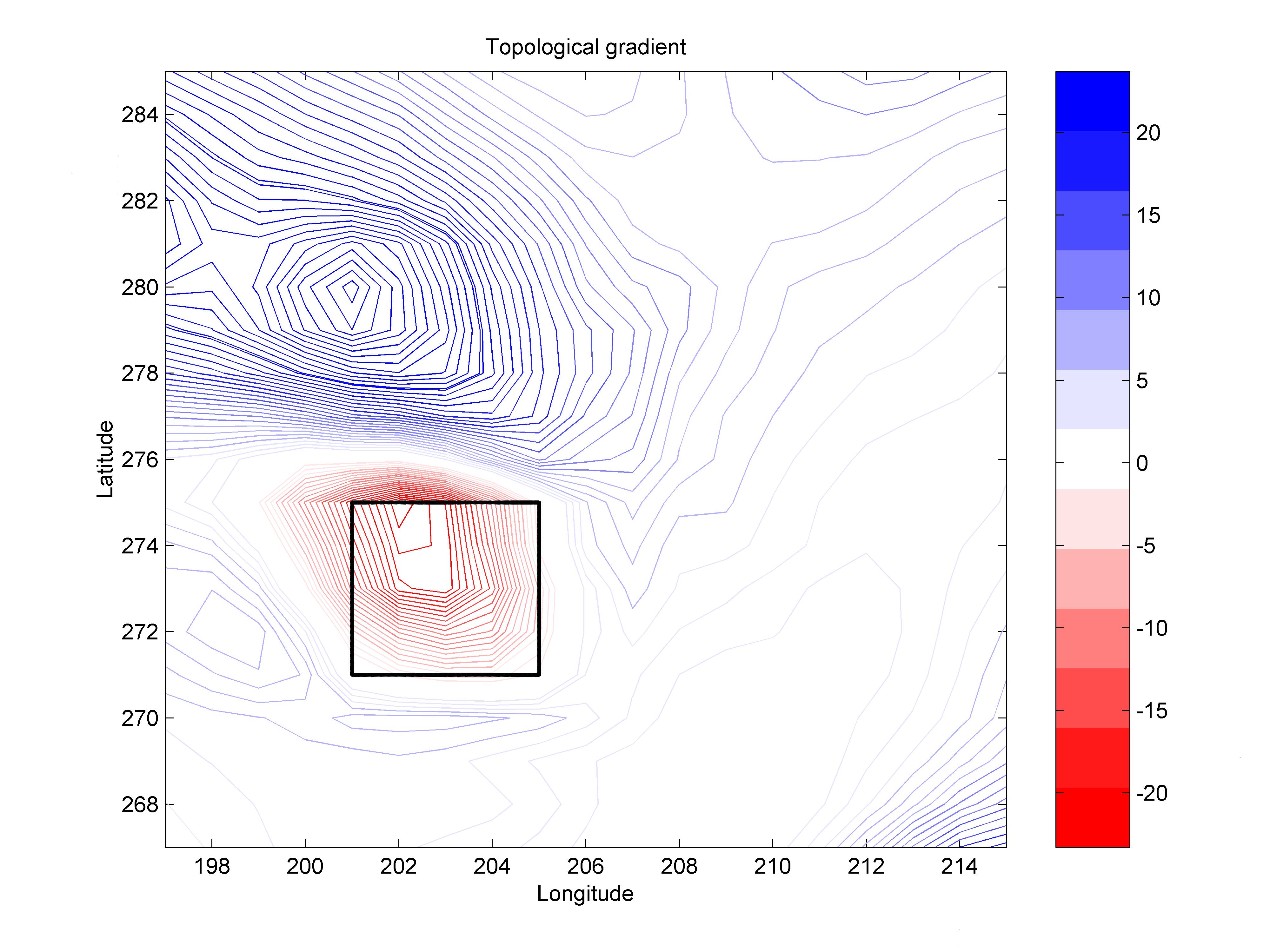} & 
        \includegraphics[width=50mm]{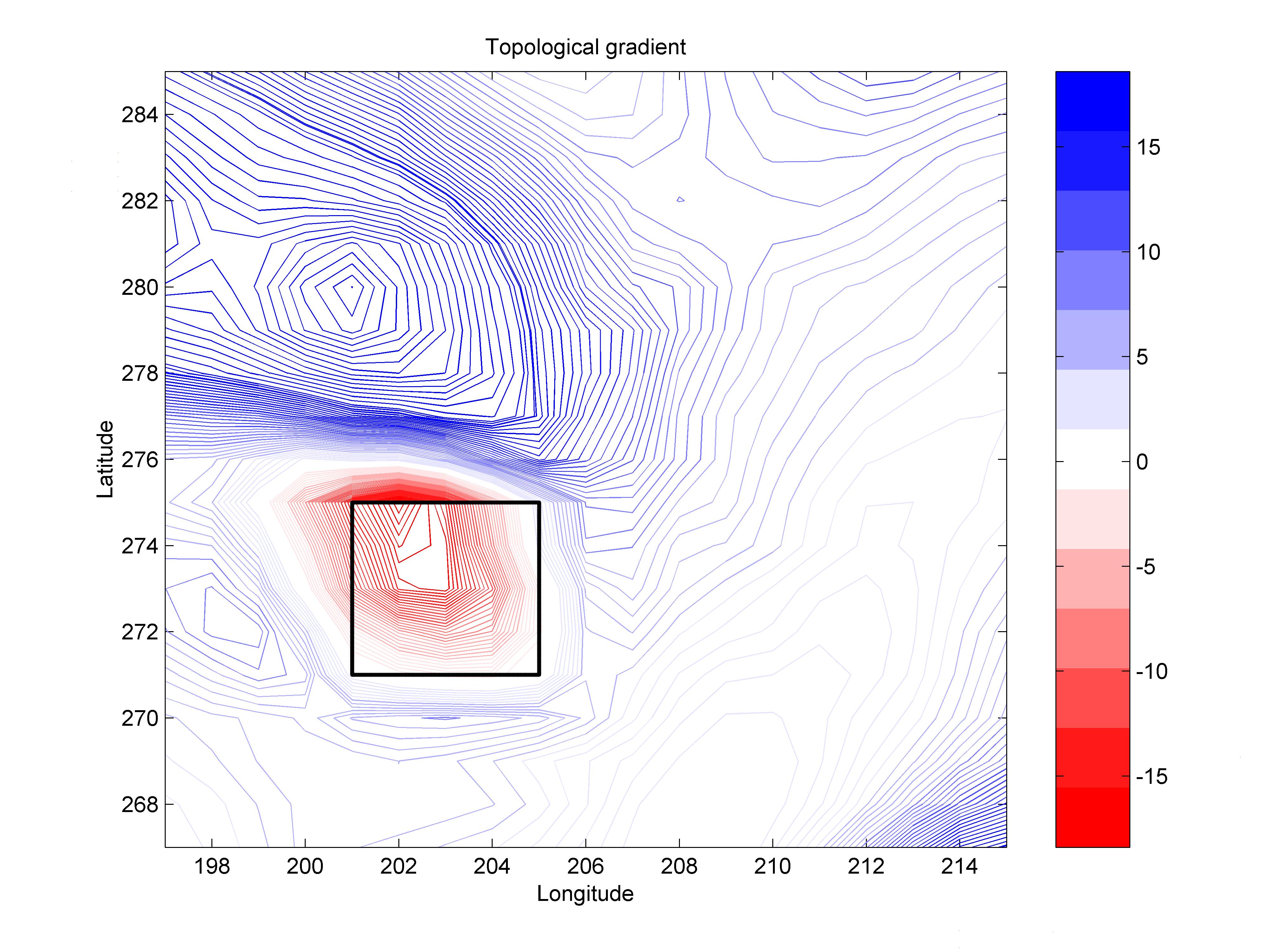} \\
        (d) Zoom of (a) & (e) Zoom of (b) & (f) Zoom of (c)
    \end{tabular}
    \caption{
    Topological gradient-based identification results for obstacles with decreasing vertical height. Subfigures (a), (b), and (c) display the iso-contour plots of \( D_K \) for obstacles with heights \( \mathrm{H}_0 \), \( \frac{1}{2}\mathrm{H}_0 \), and \( \frac{1}{4}\mathrm{H}_0 \), respectively.
    Subfigures (d), (e), and (f) present corresponding zoomed-in views around the true obstacle locations, highlighting the regions where \( D_K \) reaches its most negative values, indicating the likely positions of the inclusions.
}
\label{sizeV-2}
\end{figure}

\begin{figure}[!htb]
    \centering
    \begin{tabular}{cc}
        \includegraphics[width=50mm]{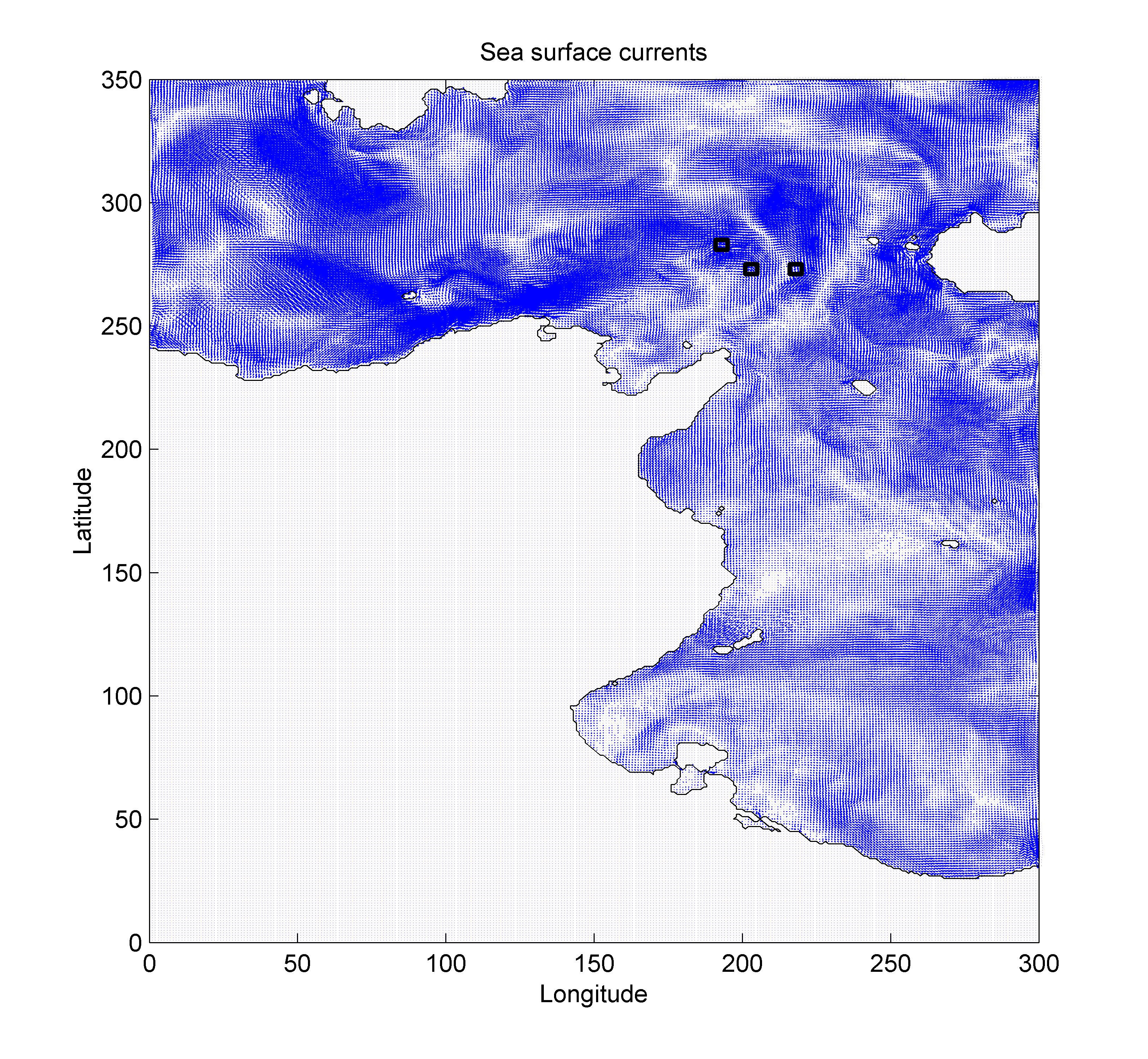} &
        \includegraphics[width=50mm]{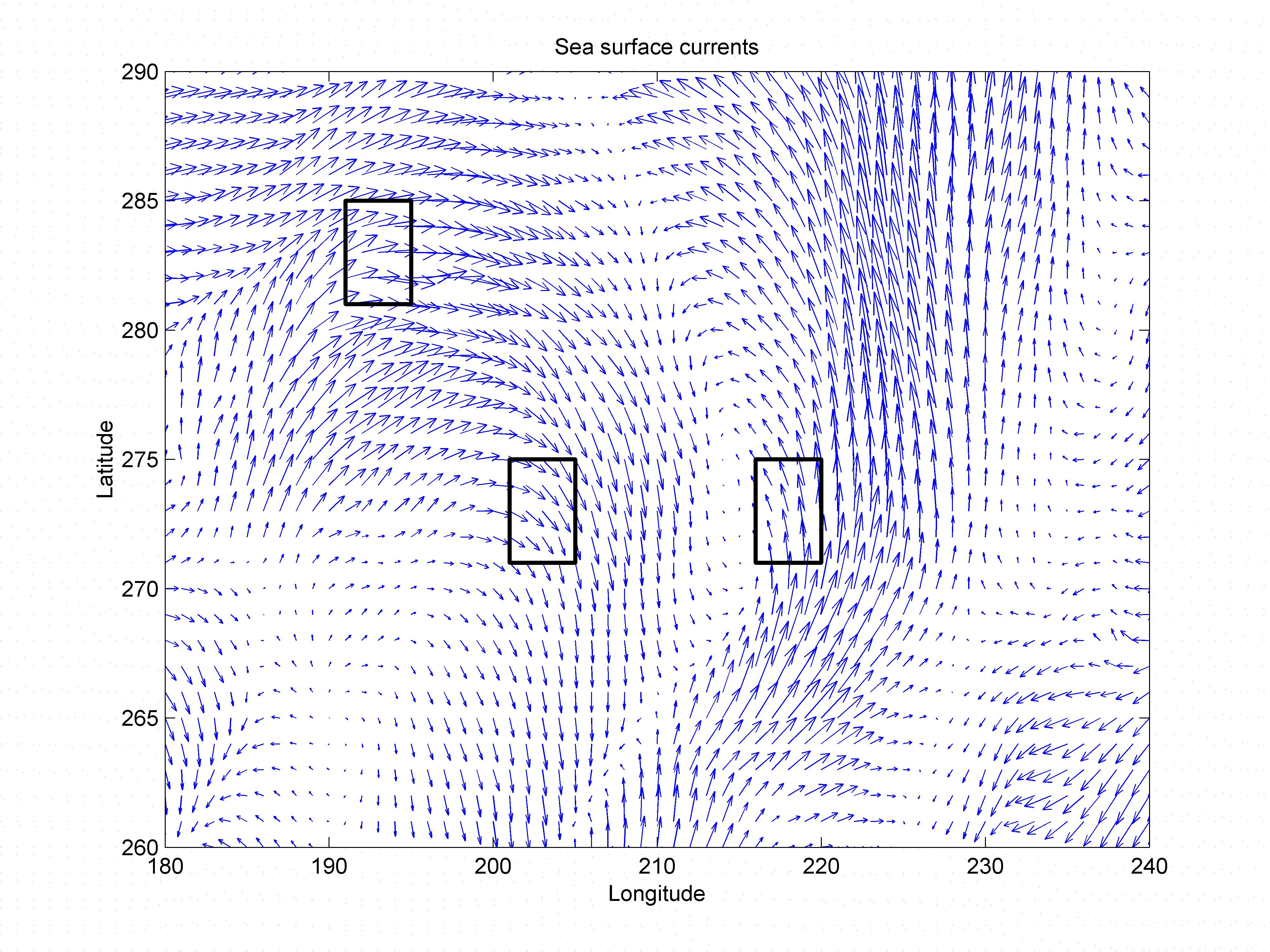} \\
        (a) &
        (b) 
    \end{tabular}
    \caption{
        (a) Locations of the three obstacles, shown as small black rectangles, to be identified within the computational domain under the given velocity field. 
        (b) Enlarged view of the region containing these obstacles.
    }
   \label{exact-locations-3obs33}
\end{figure}

\begin{figure}[!htb]
    \centering
    \begin{tabular}{ccc}
        \includegraphics[width=50mm]{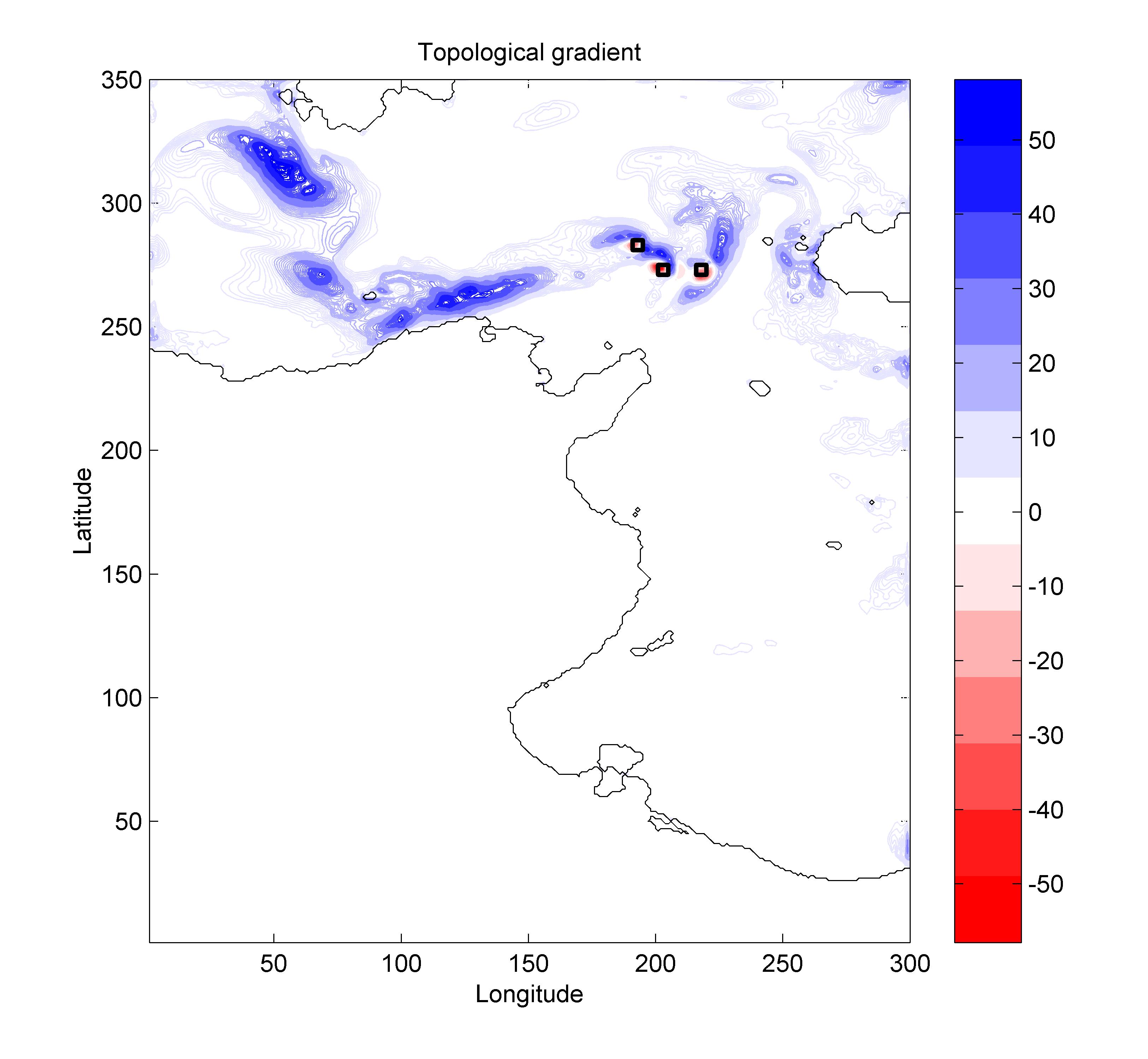} &
        \includegraphics[width=50mm]{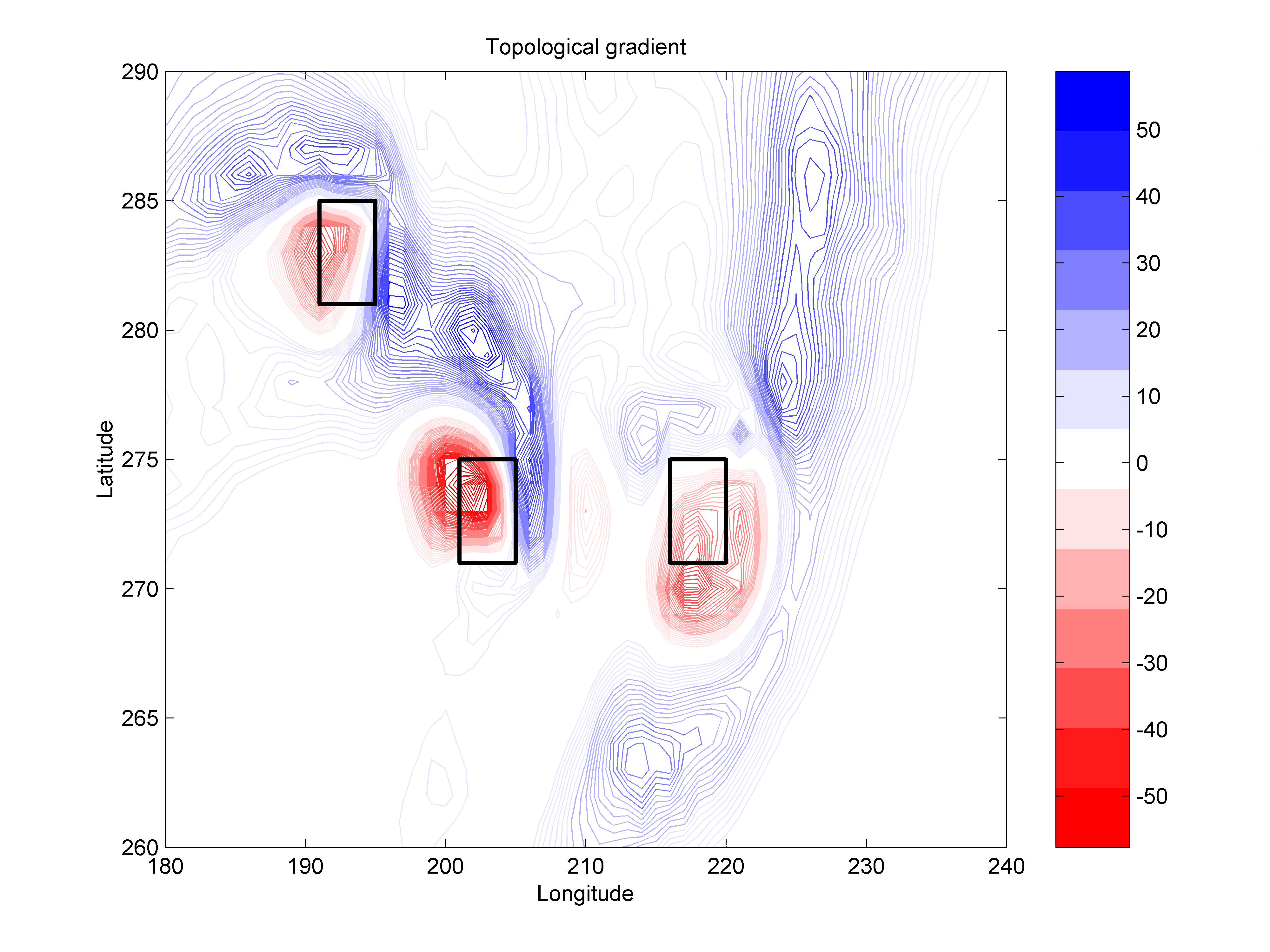} & 
        \includegraphics[width=50mm]{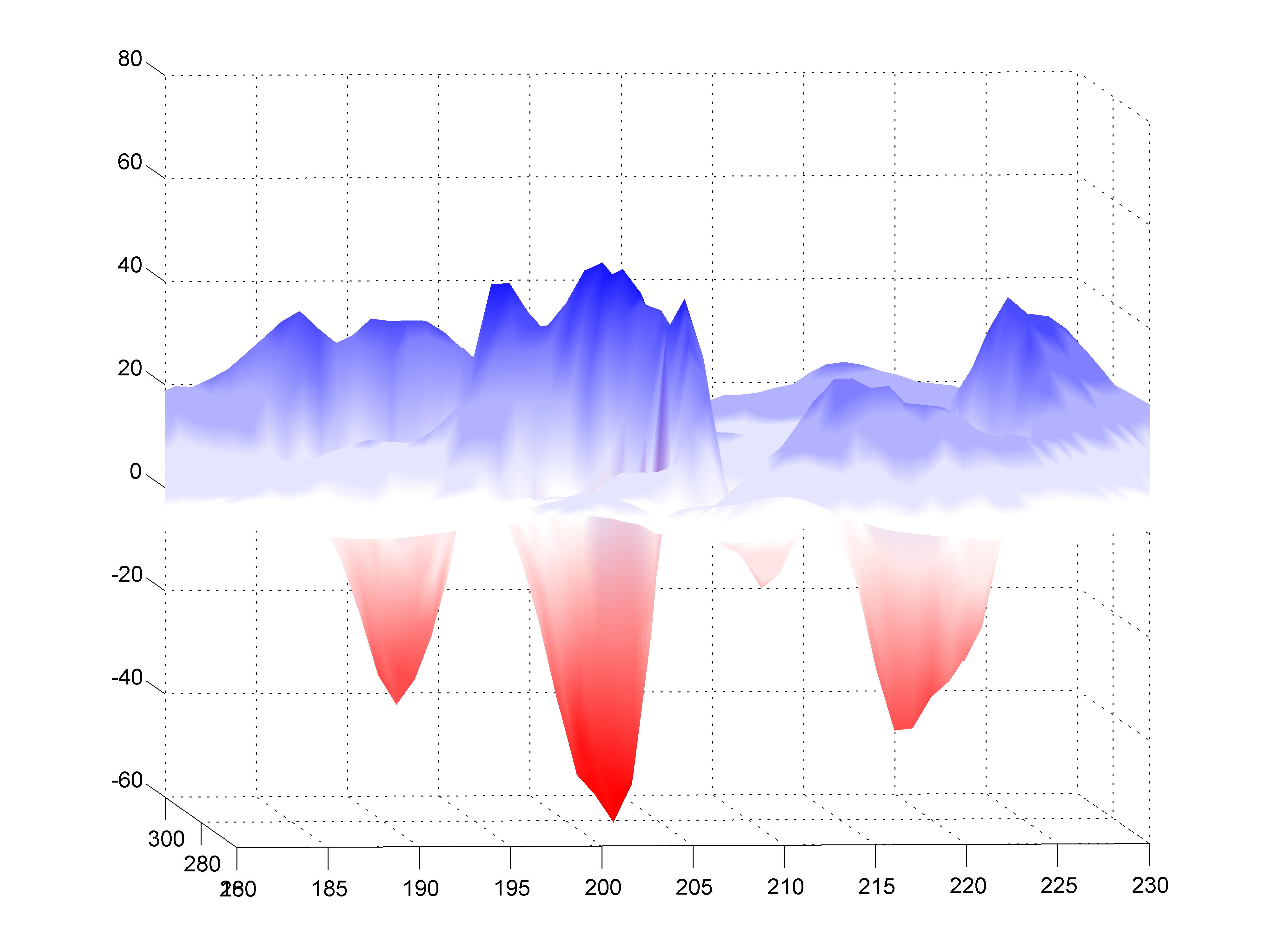} \\
        (a) &
        (b) &
        (c) 
    \end{tabular}
    \caption{
        Simultaneous identification of three submerged obstacles.
        (a) Iso-values of the topological gradient \( D_K \) across the full computational domain. (b) Zoomed-in view near the exact locations of the three obstacles (indicated by small black rectangles), highlighting the negative peaks of \( D_K \).
        (c) 3D surface plot of \( D_K \), showing distinct local minima corresponding to the positions of the three obstacles.
    }
   \label{obs45}
\end{figure}

\subsubsection{Example 6 : Influence of the distance between two obstacles}\label{example6}

In this section, we investigate the impact of the separation distance \( d > 0 \) between two submerged obstacles \( \omega^*_1 \) and \( \omega^*_2 \) on the accuracy of the identification process. Both obstacles are assumed to have the same size, and their exact locations are indicated by black squares in the domain.

We distinguish between four test cases, characterized by different values of the separation distance \( d \in \{1, 2, 11, 126\} \), measured in horizontal grid points. The reference obstacle \( \omega^*_1 \) is fixed at position:
\[
(201{:}205) \times (271{:}275) \times (8{:}10),
\]
corresponding to an averag
e depth of approximately \( 260\,\mathrm{m} \). The second obstacle \( \omega^*_2 \) is placed at different horizontal positions across the tests, while maintaining a similar vertical extent. The configurations are as follows:

\begin{itemize}
    \item \textbf{Test 1: Well-separated obstacles.}
\( \omega^*_2 \) located at: \( (71{:}75) \times (271{:}275) \times (8{:}10) \), with an average depth of \( 48\,\mathrm{m} \). Horizontal separation: \( d = 126 \) grid points.

    \item \textbf{Test 2: Moderately close obstacles.} \( \omega^*_2 \) located at: \( (216{:}220) \times (271{:}275) \times (8{:}10) \), with an average depth of \( 255\,\mathrm{m} \).
    Horizontal separation: \( d = 11 \) grid points.

    \item \textbf{Test 3: Very close obstacles.}
    \( \omega^*_2 \) located at: \( (207{:}211) \times (271{:}275) \times (8{:}10) \), with an average depth of \( 315\,\mathrm{m} \).
    Horizontal separation: \( d = 2 \) grid points.

    \item \textbf{Test 4: Nearly adjacent obstacles.}
    \( \omega^*_2 \) located at: \( (206{:}210) \times (271{:}275) \times (8{:}10) \), with an average depth of \( 222\,\mathrm{m} \).
    Horizontal separation: \( d = 1 \) grid point.
\end{itemize}

The corresponding identification results are presented in Figure \ref{iso-gt-2obs-d1}. When the two obstacles are well separated, the topological gradient clearly exhibits two distinct local minima, indicating that both obstacles are accurately detected (see Figures \ref{iso-gt-2obs-d1}(b)-(e)). However, as the distance \( d \) between the obstacles decreases, the ability to resolve them individually deteriorates. Specifically, the two minima gradually converge and eventually merge into a single dominant minimum, as illustrated in Figure \ref{iso-gt-2obs-d1}(i)-(l). This phenomenon suggests that when obstacles are too close, the algorithm perceives them as a single ``equivalent'' inclusion, highlighting a resolution limitation in the detection process.

\newpage

\begin{figure}[!htb]
    \centering
    \begin{tabular}{ccc}
        \includegraphics[width=50mm]{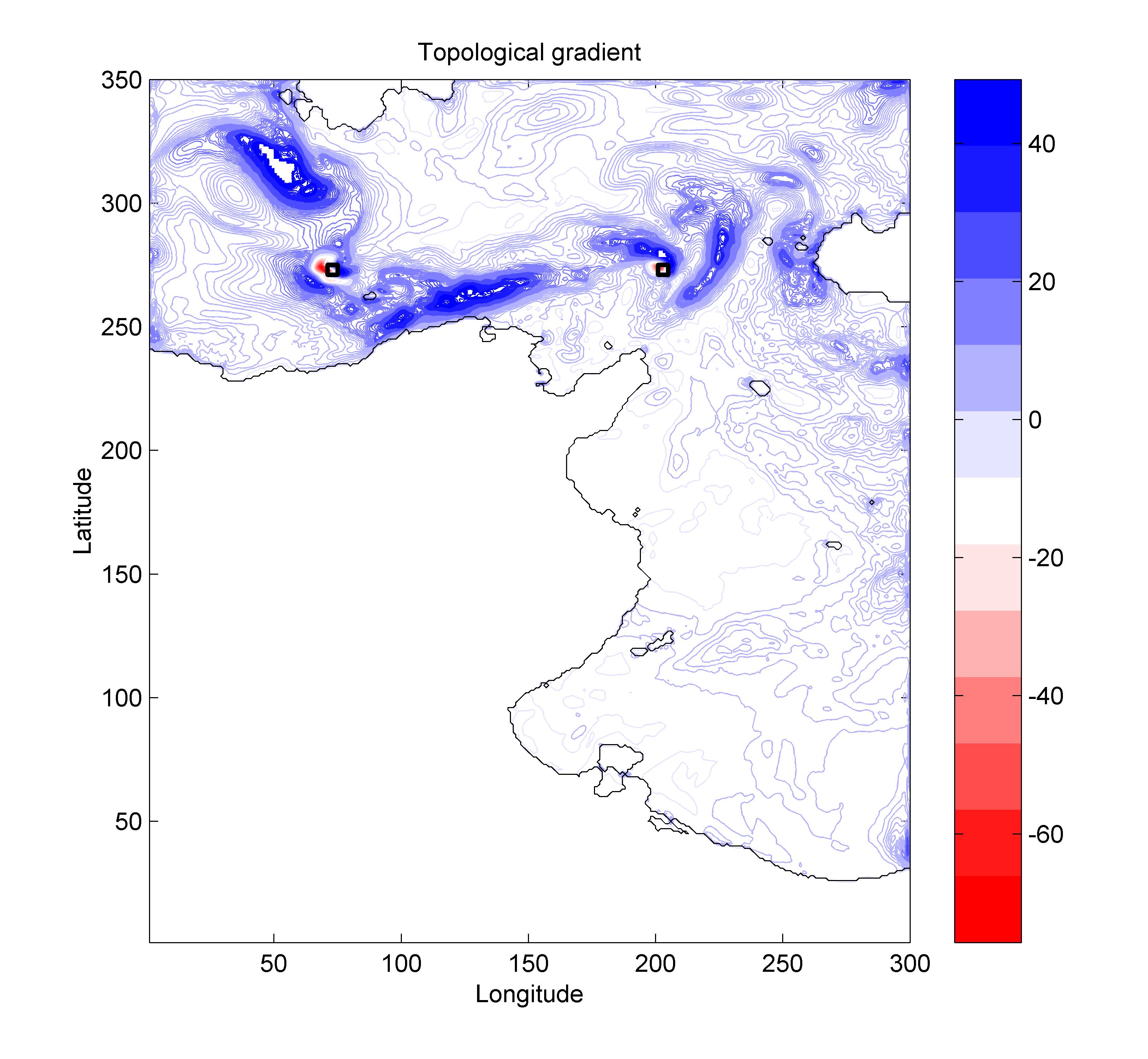} & 
        \includegraphics[width=50mm]{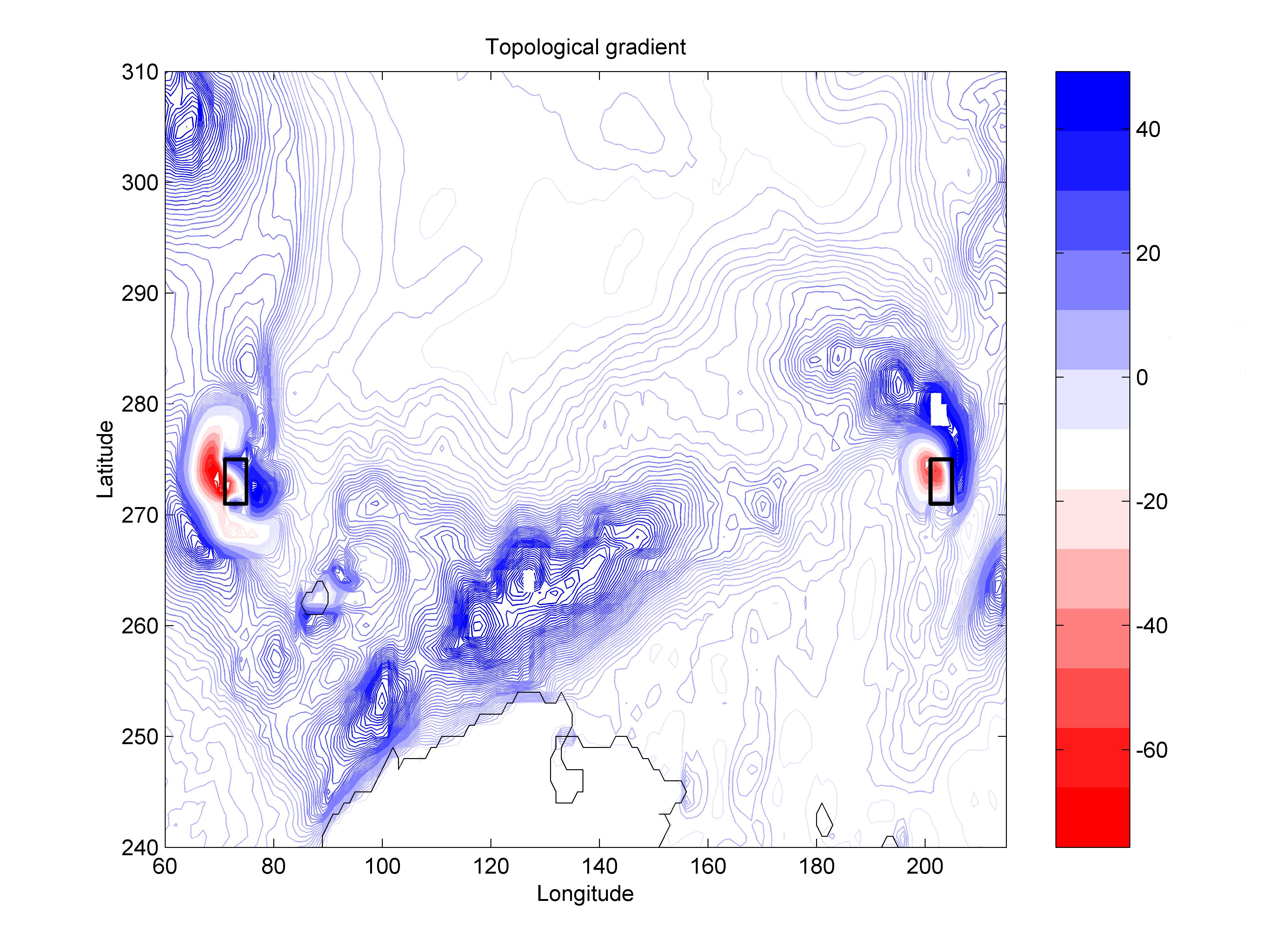}&  \includegraphics[width=50mm]{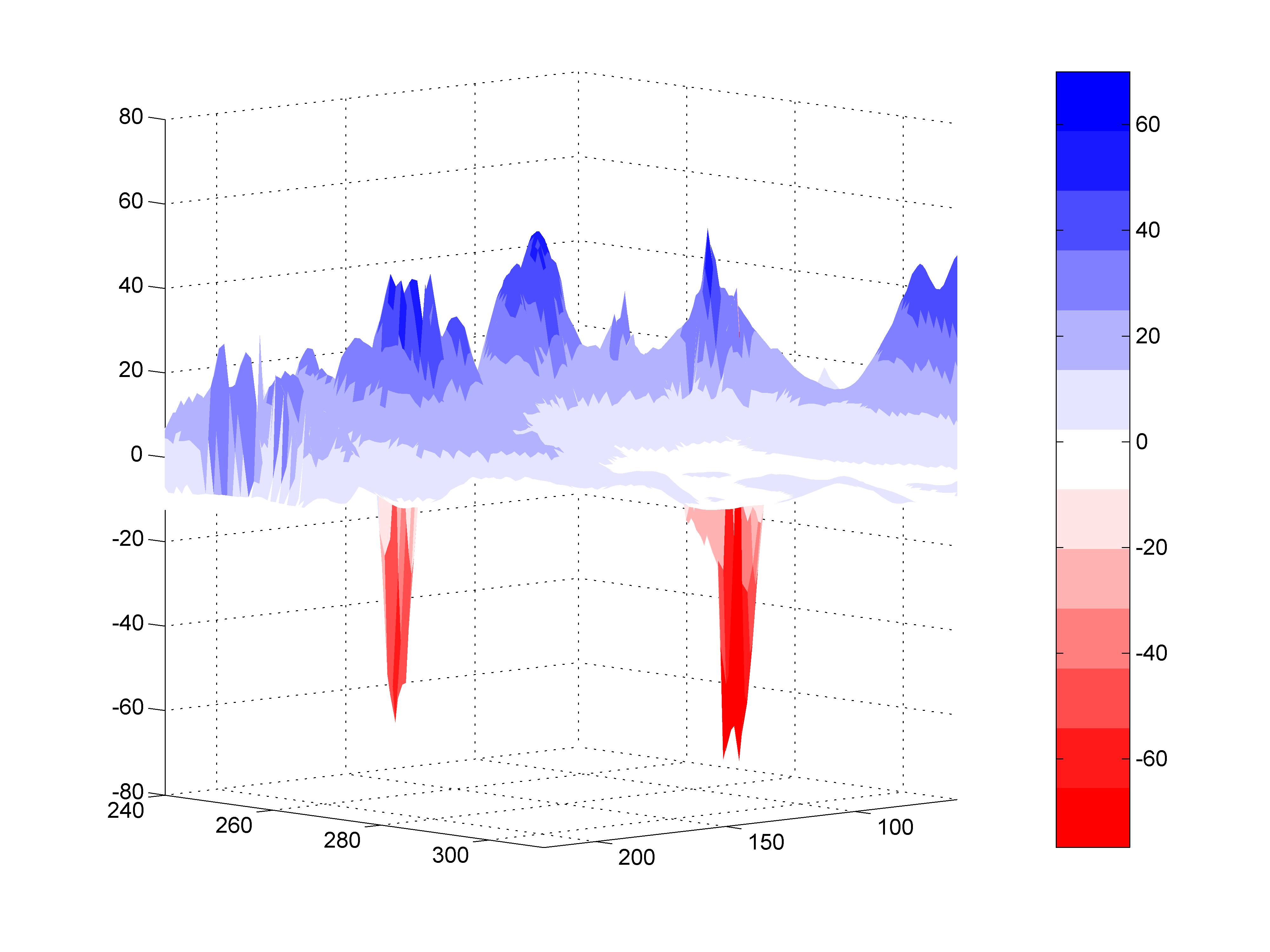}\\
        (a) \( d = 126 \) grid points & 
        (b) Zoom of (a)& (c) 3D plot of (b) \\
        \includegraphics[width=50mm]{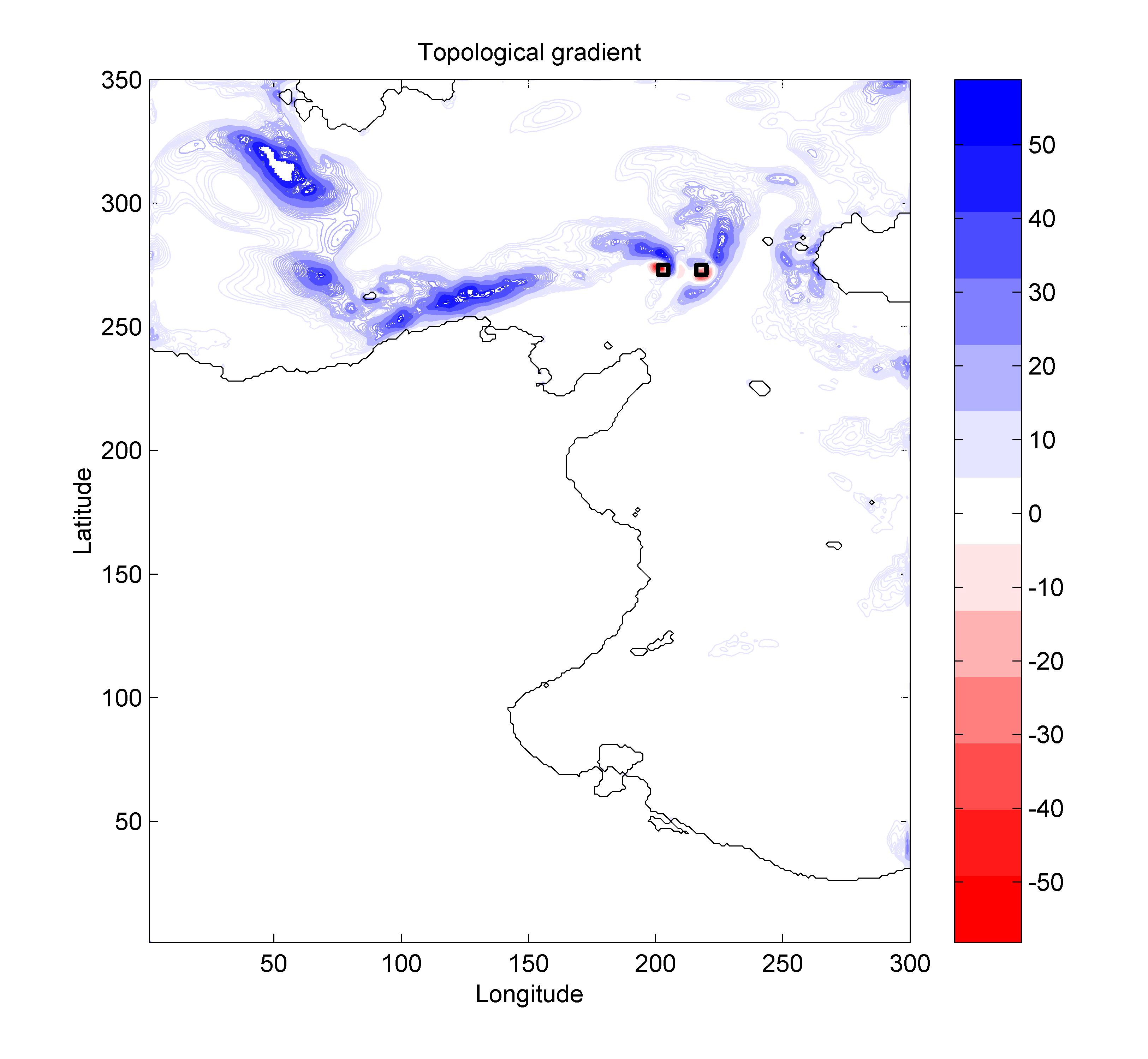} & 
        \includegraphics[width=50mm]{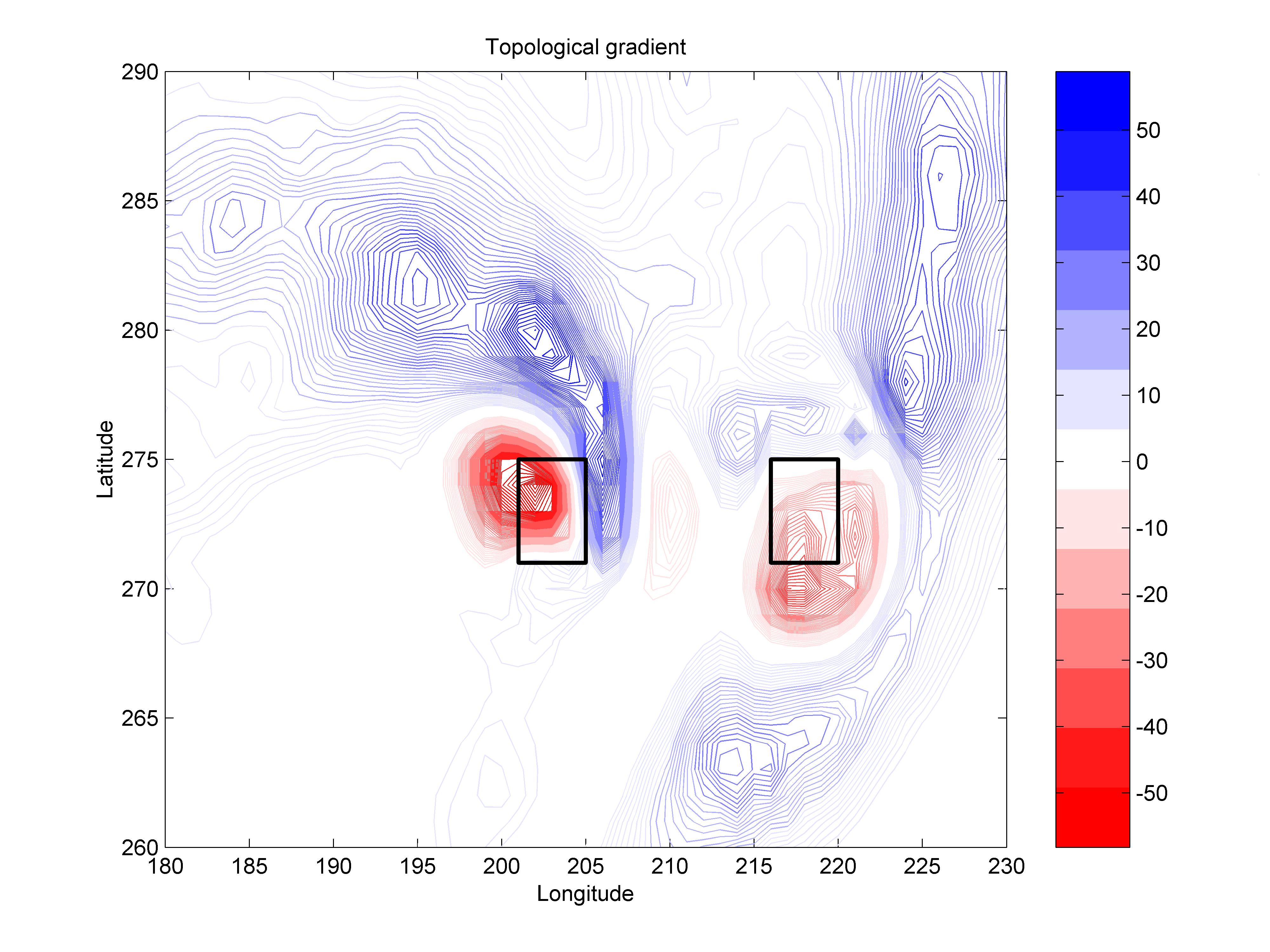}&\includegraphics[width=50mm]{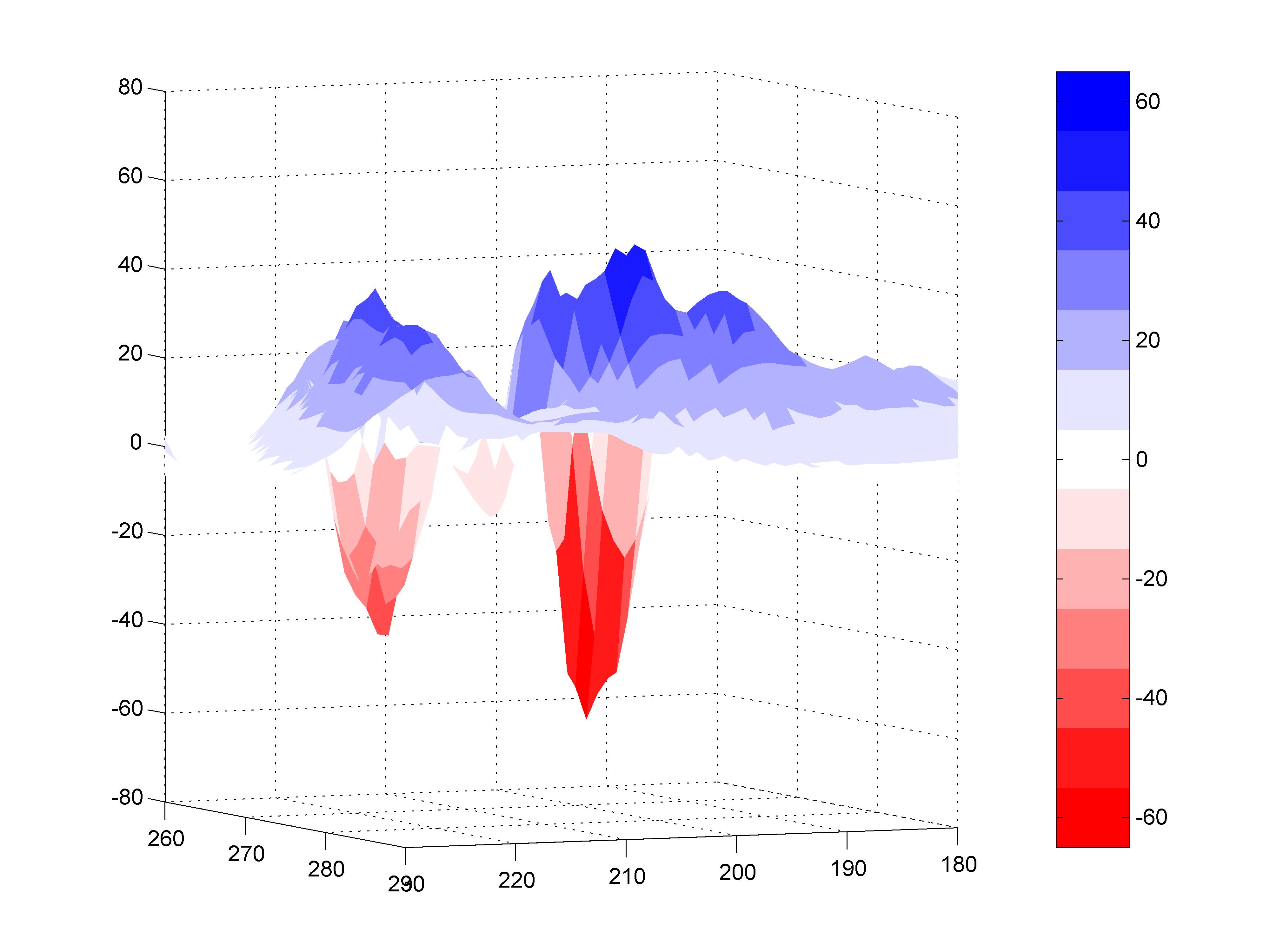} \\
        (d) \( d = 11 \) grid points   & 
        (e) Zoom of (d) & (f) 3D plot of (e) \\
        \includegraphics[width=50mm]{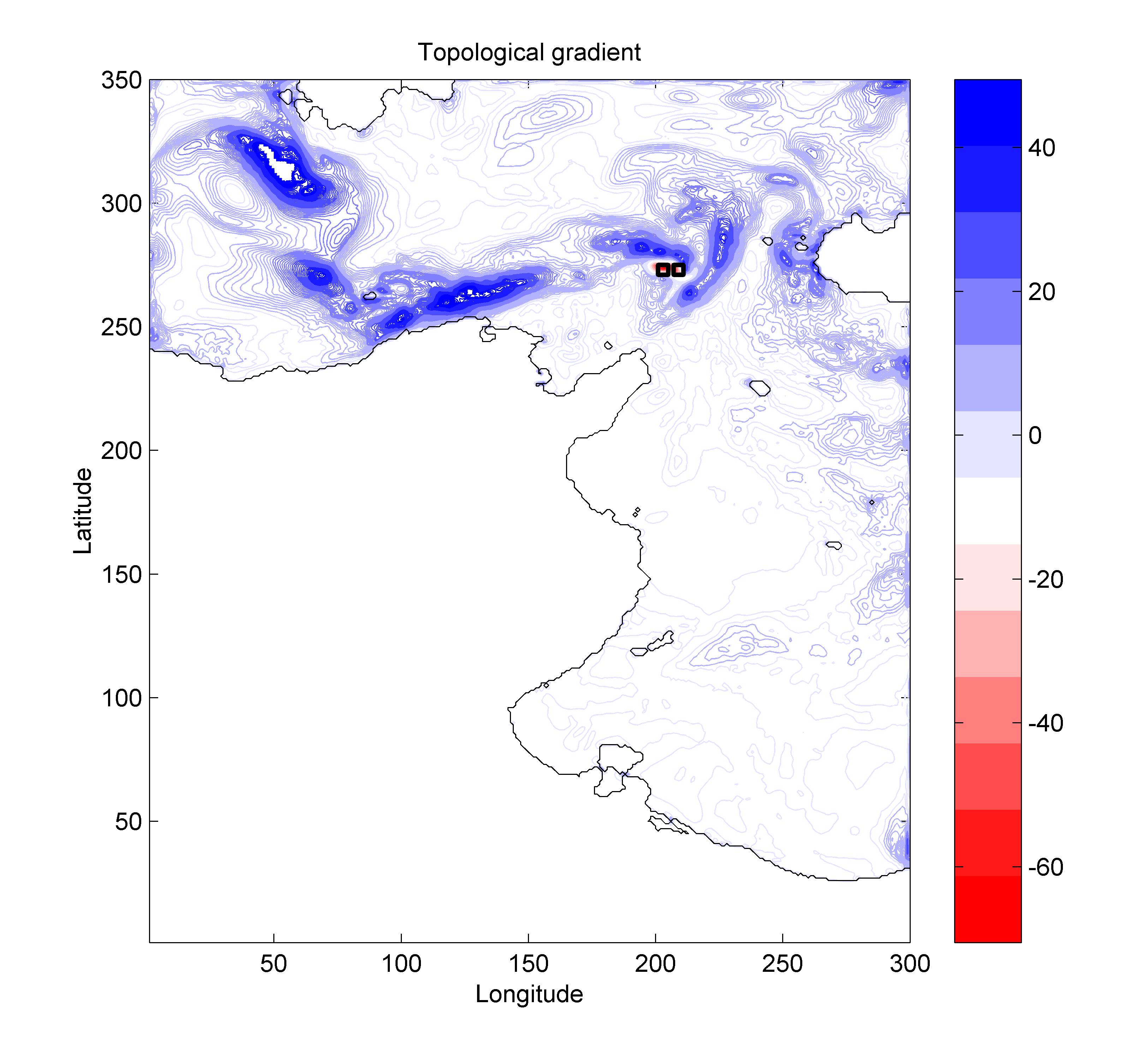} & 
        \includegraphics[width=50mm]{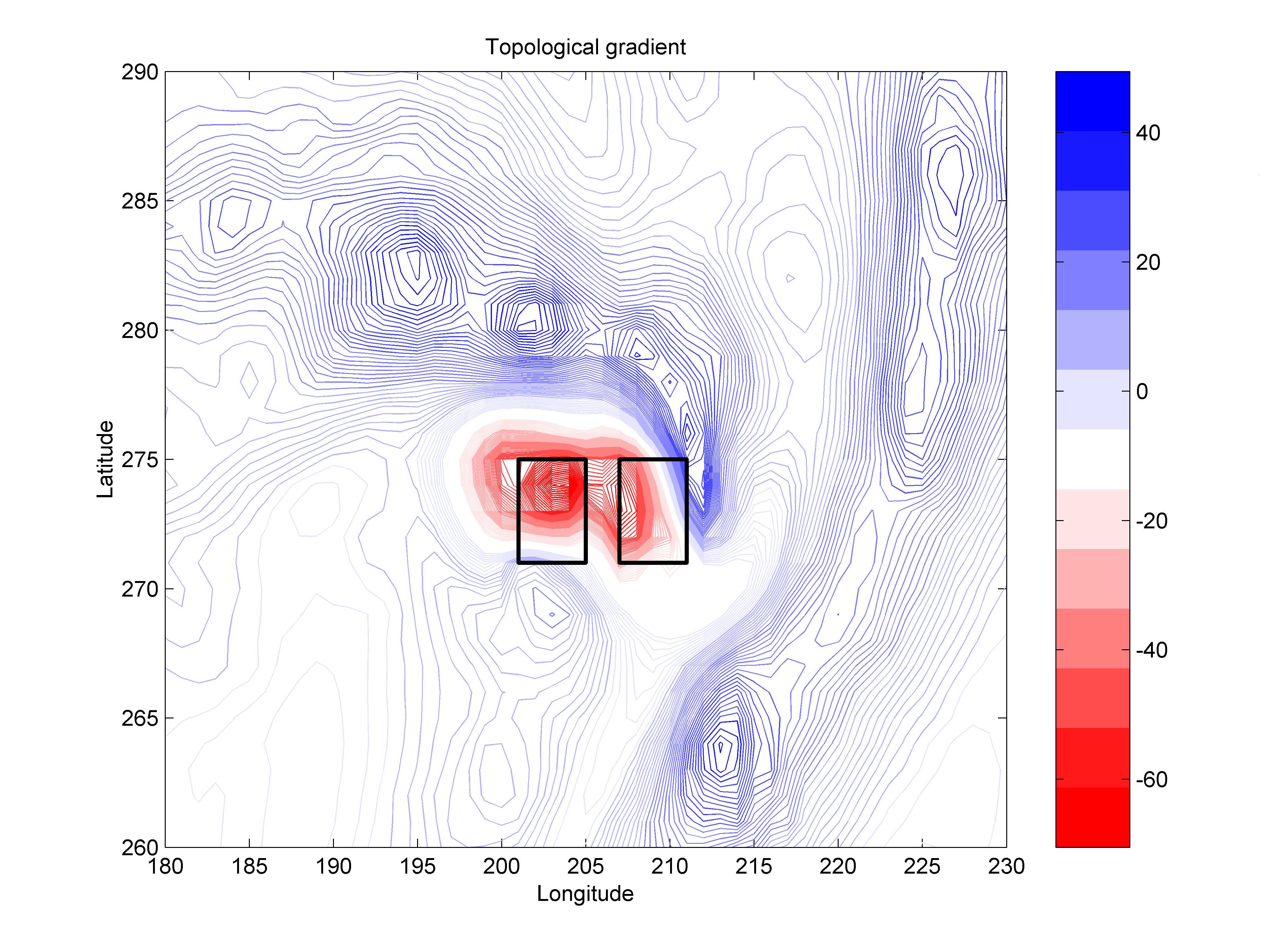}& \includegraphics[width=50mm]{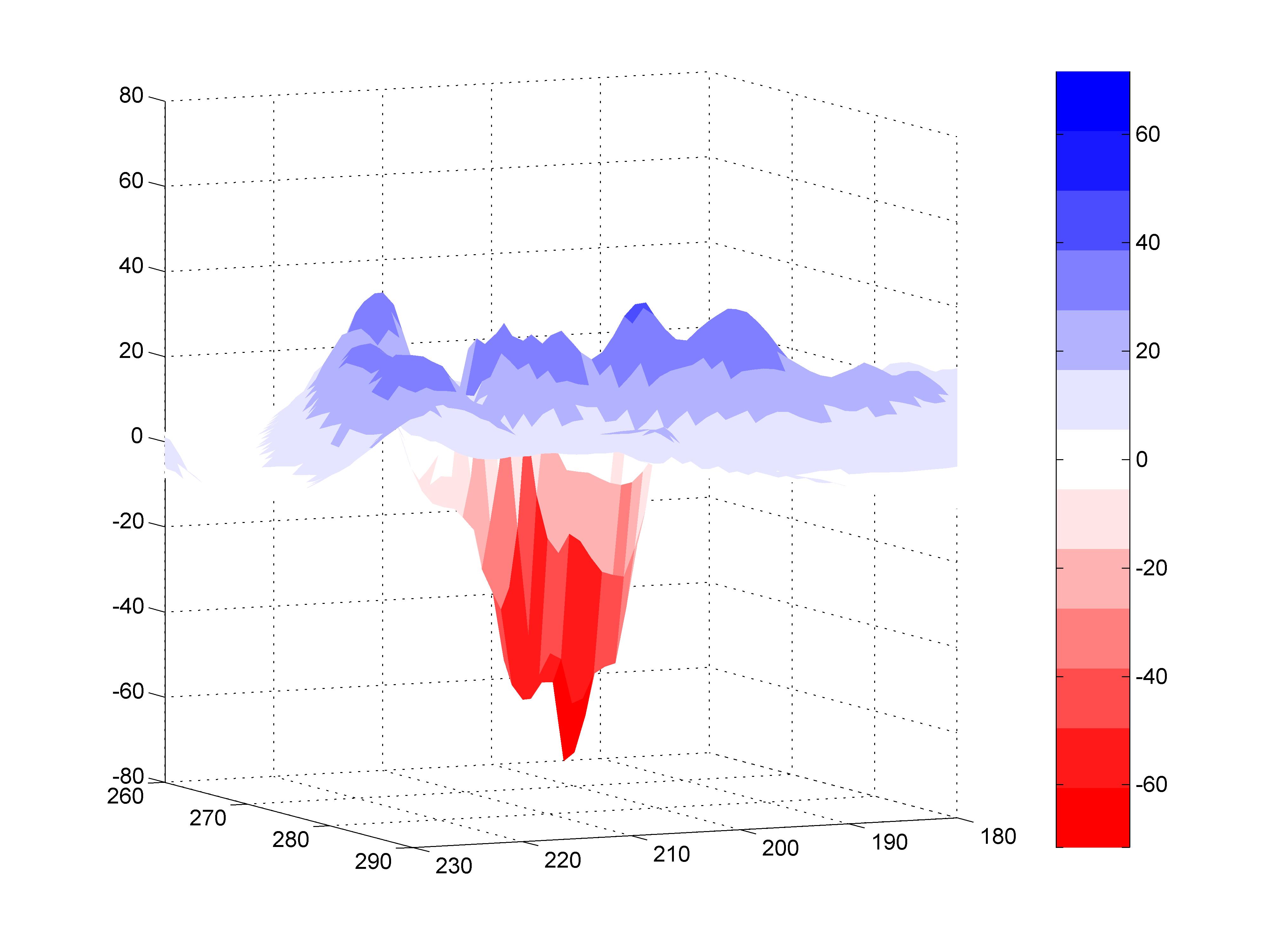} \\
        (g) \( d = 2 \) grid points & 
        (h) Zoom of (g) & (i) 3D plot of (h)\\
         \includegraphics[width=50mm]{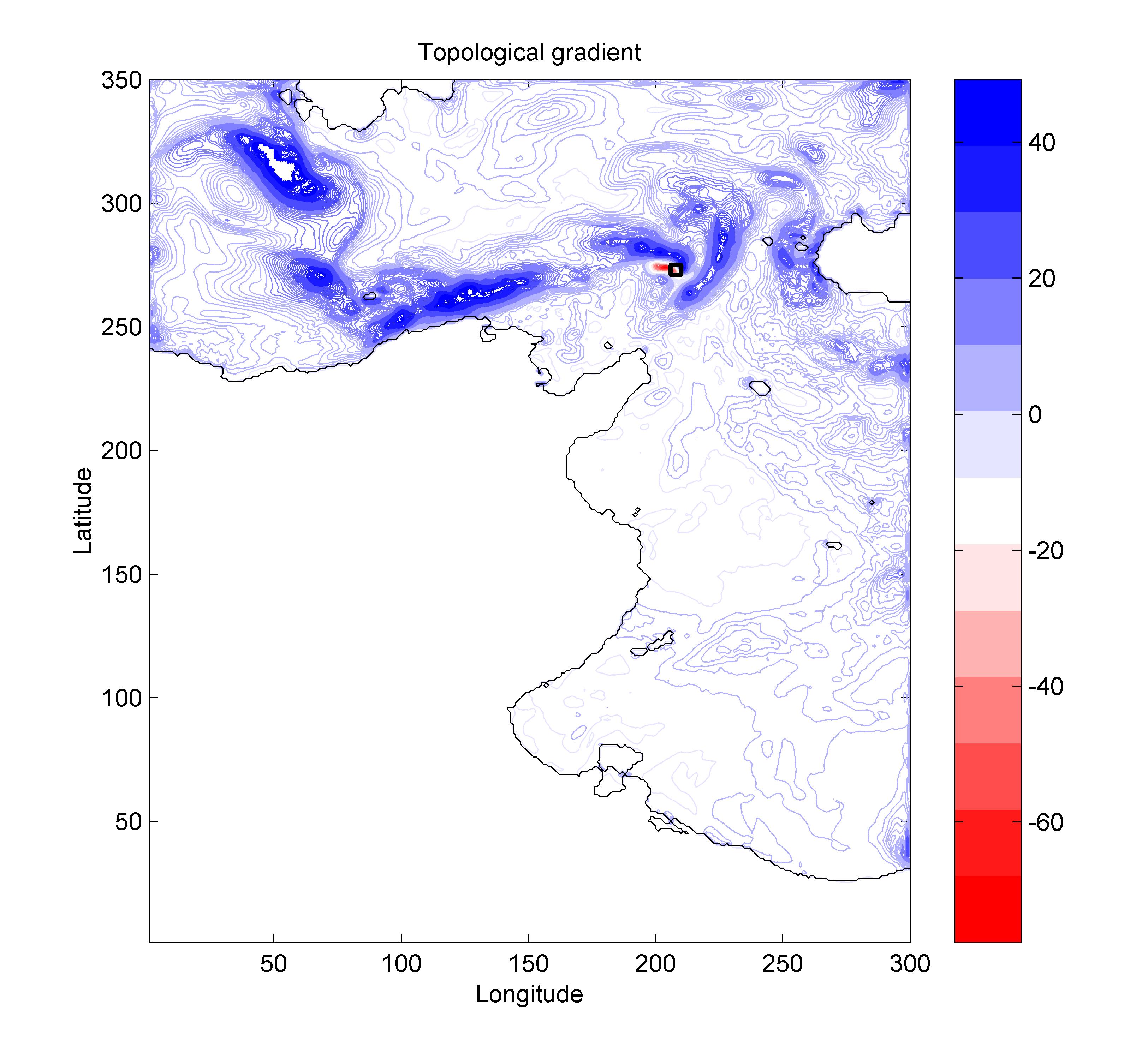} & 
        \includegraphics[width=50mm]{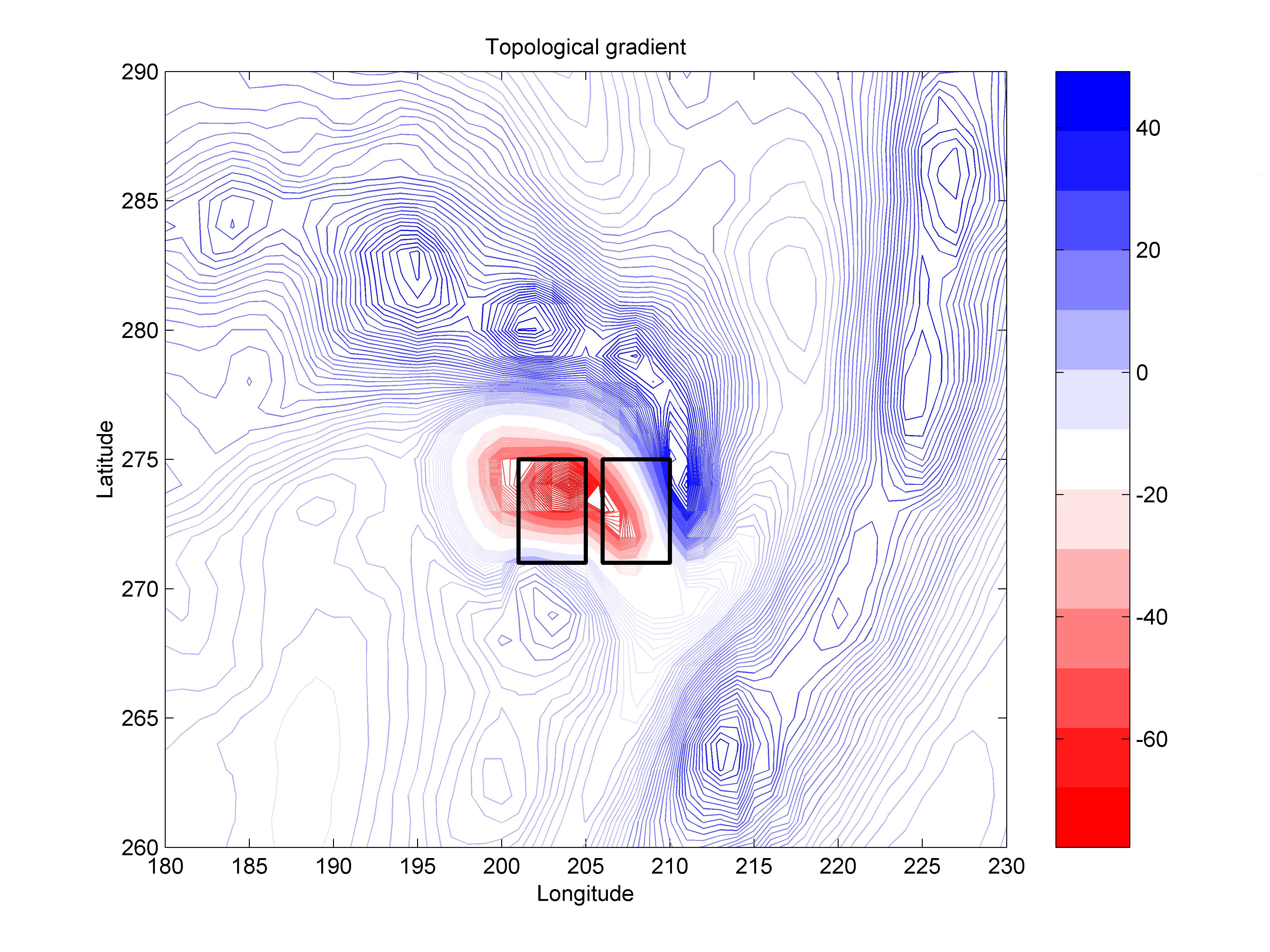}&\includegraphics[width=50mm]{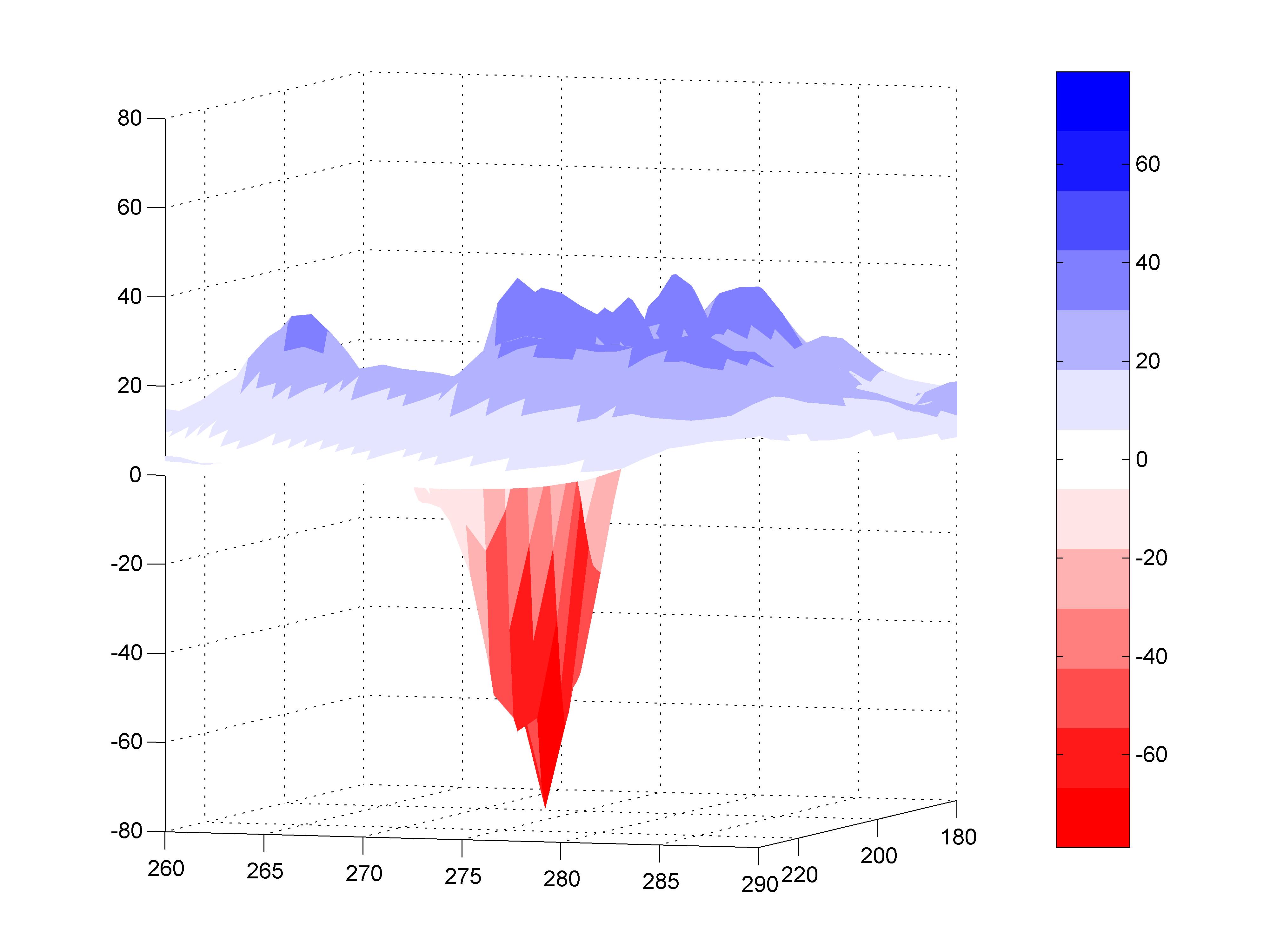} \\
        (j) \( d = 1 \) grid point   & 
        (k) Zoom of (j) & (l) 3D plot of (k)
    \end{tabular}
    \caption{Iso-Values of the topological gradient illustrating the influence of relative distance between two obstacles.}
    \label{iso-gt-2obs-d1}
\end{figure}

\newpage

\begin{figure}[!htb]
    \centering
    \begin{tabular}{cc}
        \includegraphics[width=50mm]{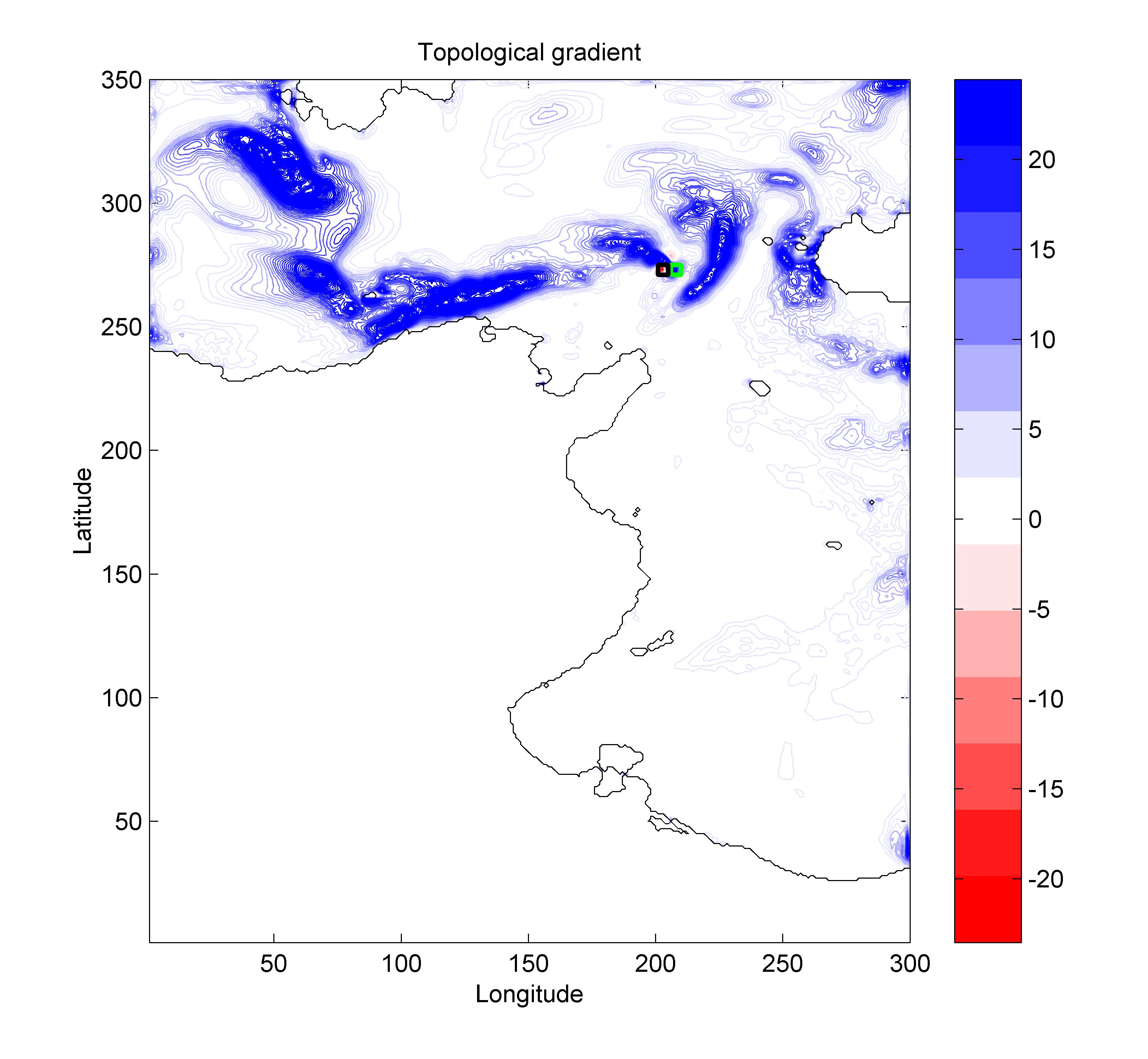} & 
        \includegraphics[width=50mm]{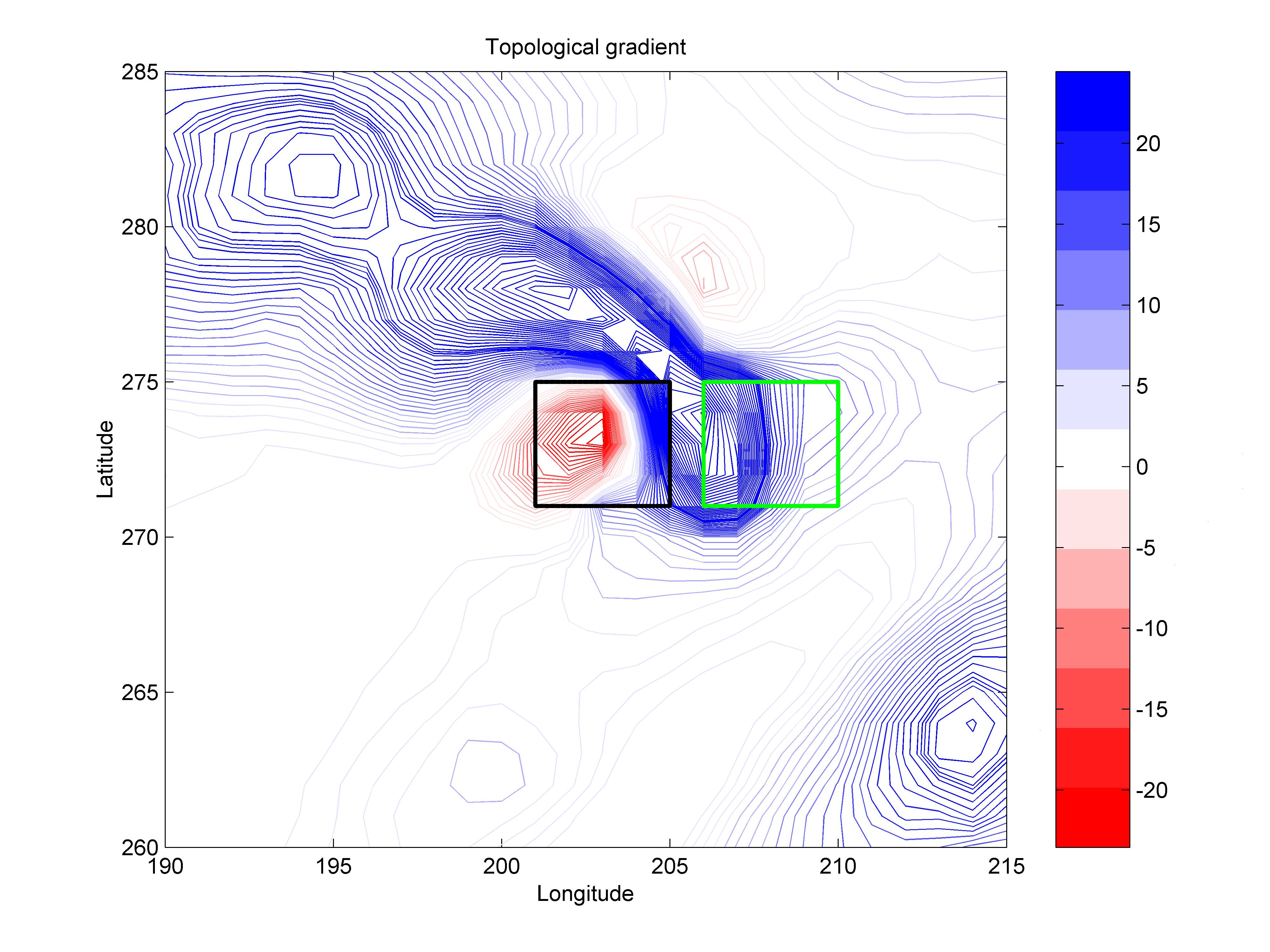}\\
        (a) Test 1  & (b) Zoom of (a)\\
        \includegraphics[width=50mm]{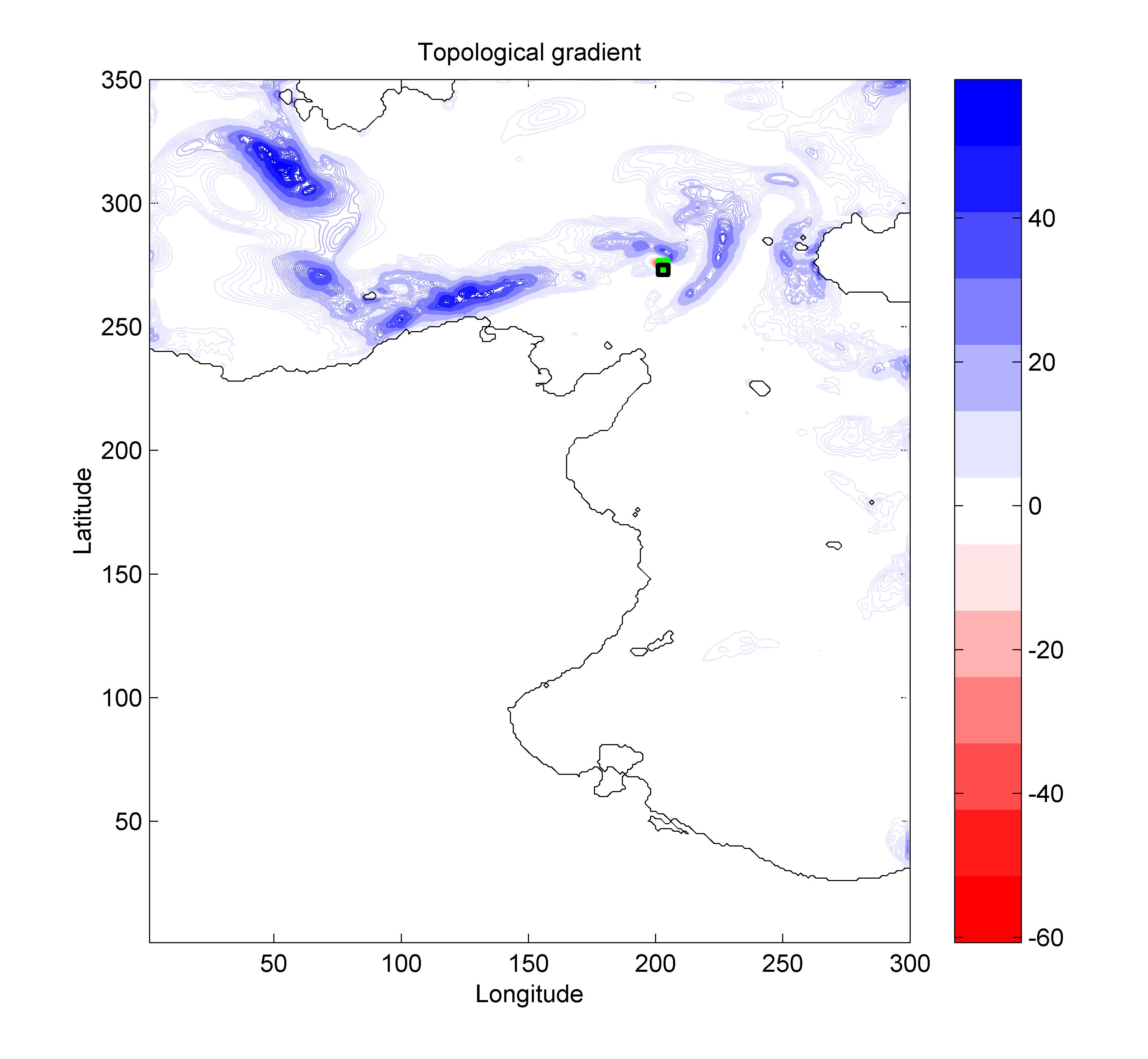}& 
        \includegraphics[width=50mm]{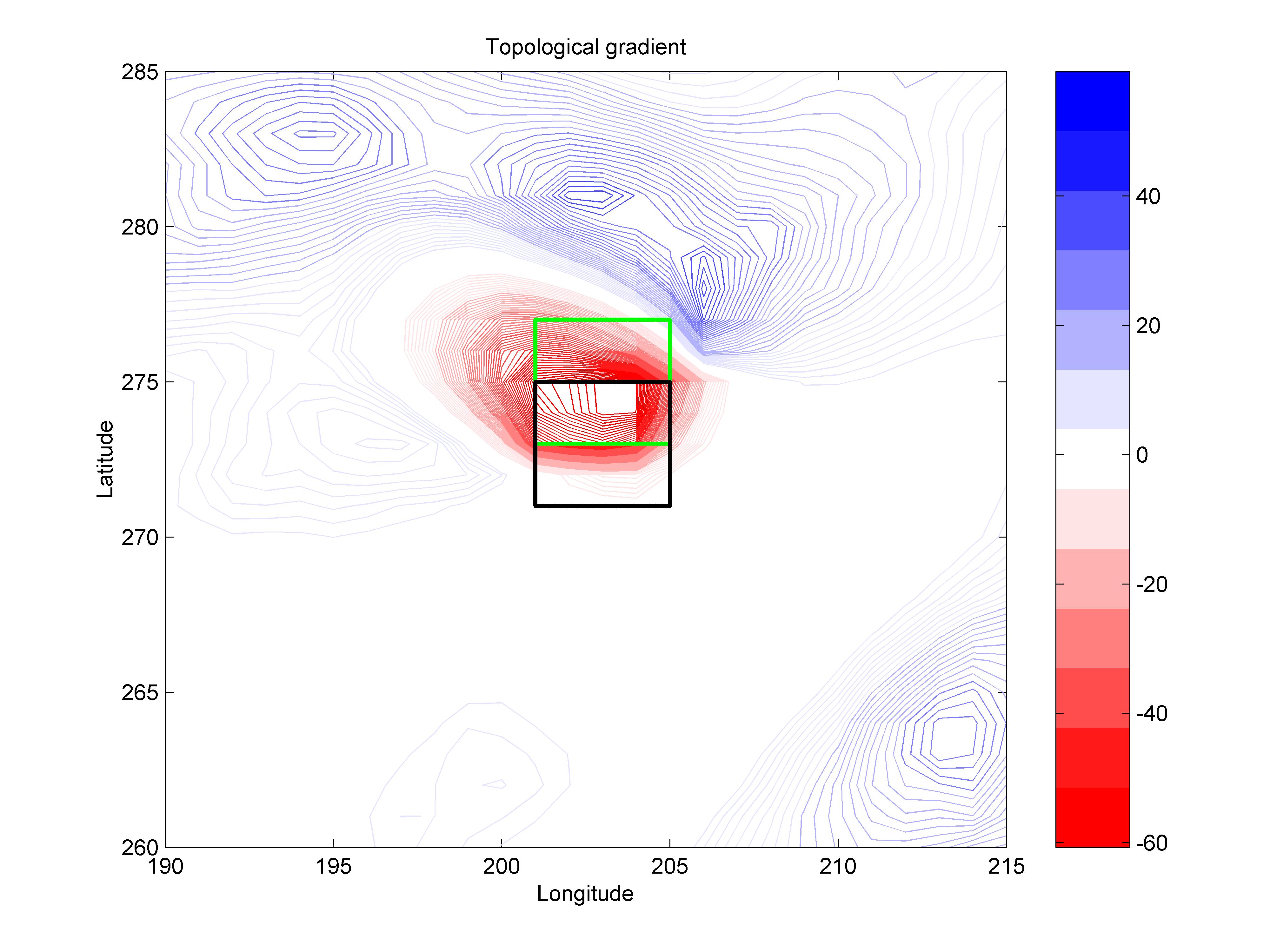}\\
        (c) Test 2  & (d) Zoom of (c) \\
        \includegraphics[width=50mm]{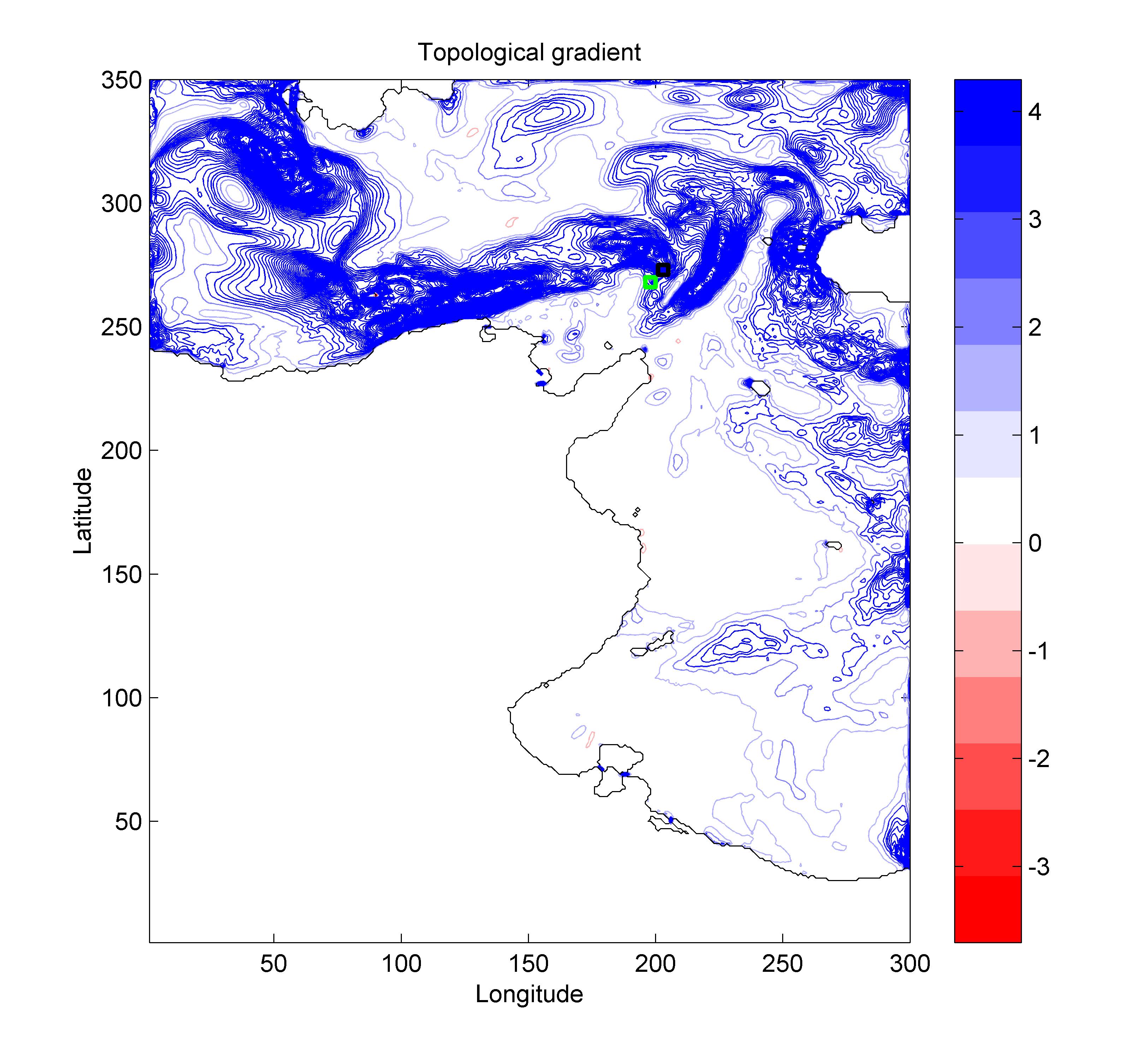}& 
        \includegraphics[width=50mm]{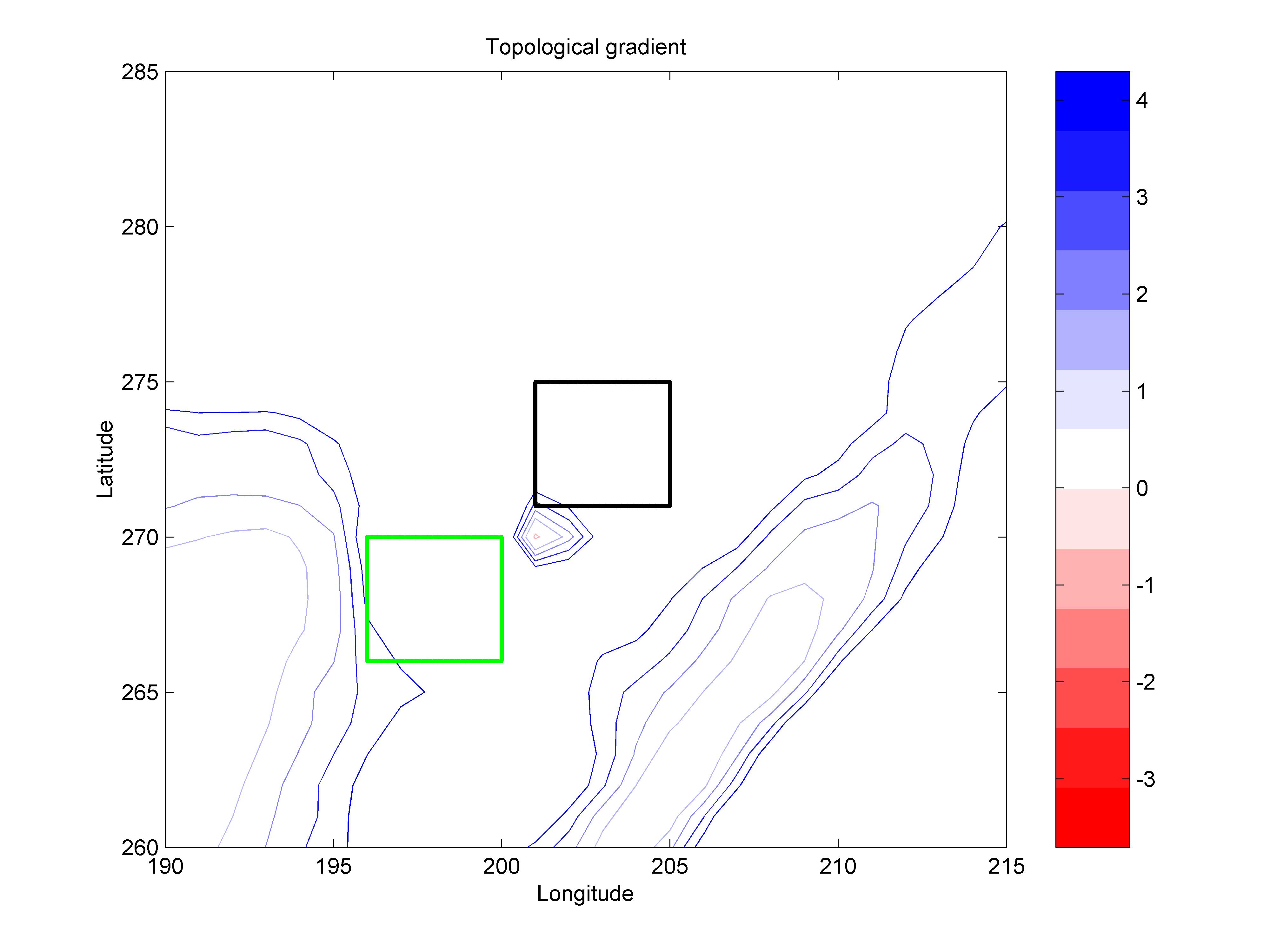}\\
        (e) Test 3  & (f) Zoom of (e) \\
        \includegraphics[width=50mm]{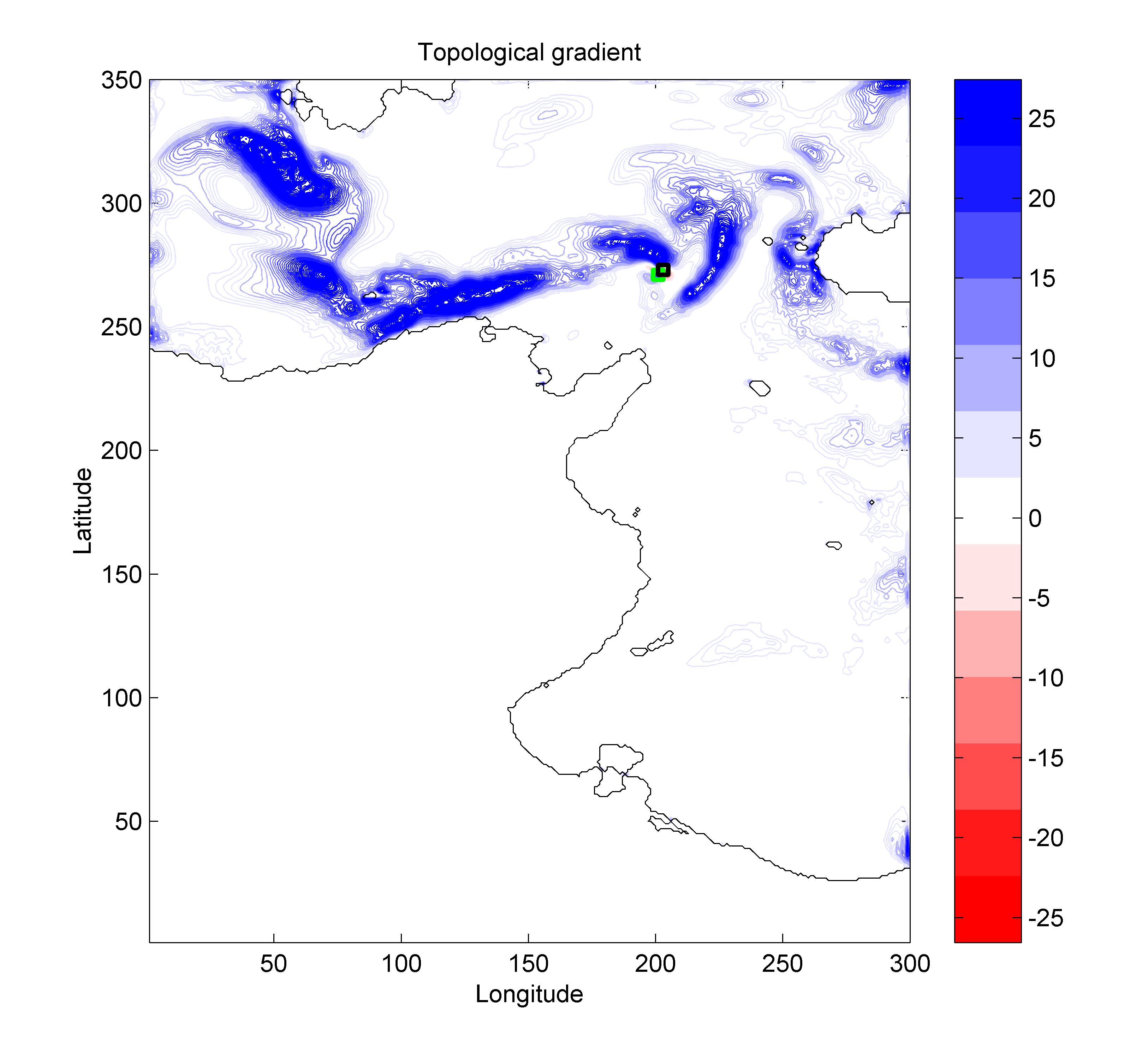}& 
        \includegraphics[width=50mm]{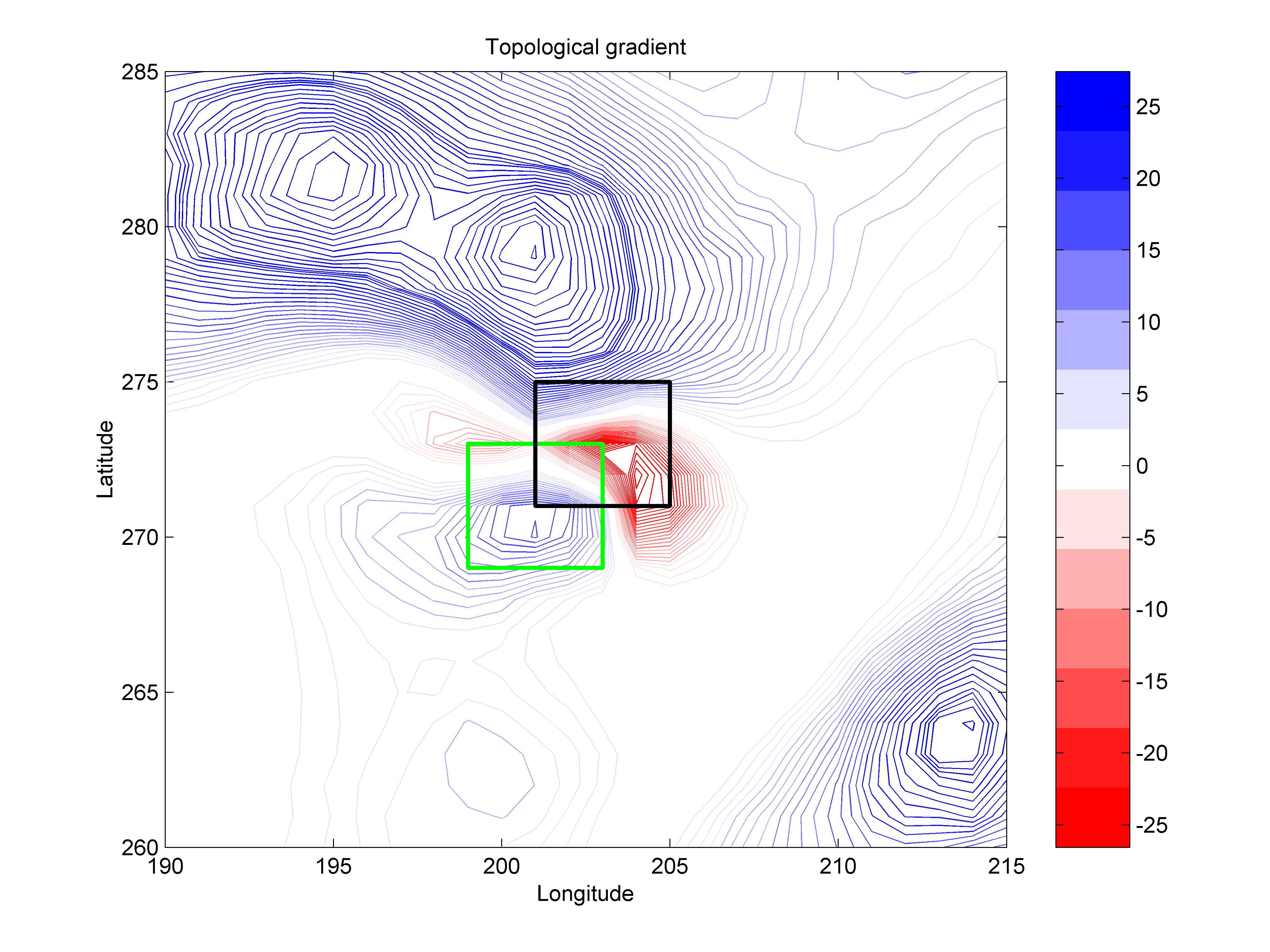}\\
        (g) Test 4  & (h) Zoom of (g) \\
        
    \end{tabular}
    \caption{Iso-Values of the topological gradient illustrating the influence of measurement area locations.}
    \label{multi-sub-meas-era}
\end{figure}

\subsubsection{Example 7: Identification of obstacles from varying measurement area locations}\label{example7}

In this example, we evaluate the performance of the proposed identification algorithm under the realistic constraint of limited measurement availability. Specifically, we examine how the spatial configuration and density of the measurement subdomain \( \Omega_0 \subset \Omega \) influence the detection of an unknown submerged obstacle \( \omega^* \) in the Mediterranean Sea. To this end, six test scenarios are considered, grouped into two categories.

\vspace{0.3cm}
\paragraph{\textbf{Tests 1–4: Influence of the location of the measurement subdomain}}  
In the first four tests, we investigate how the placement of a single measurement subdomain \( \Omega_0 \) affects the obstacle reconstruction. The true obstacle \( \omega^* \), shown as a black square in Figure~\ref{multi-sub-meas-era}, remains fixed across all tests. The measurement areas, marked as green squares in the figure, are configured as follows:

\begin{itemize}
    \item \textbf{Test 1: Very close horizontally.}  
    \( \Omega_0 \) is located at grid points \( (206{:}210) \times (271{:}275) \times (1{:}1) \), corresponding to the first vertical layer of the sigma-coordinate system. The depth ranges from 2.45~m to 3~m, depending on the bathymetry.  
    Horizontal separation from the obstacle: \( d = 1 \) grid point.

    \item \textbf{Test 2: Vertical crossing.}  
    The measurement domain is located at \( (201{:}205) \times (273{:}277) \times (1{:}1) \), with depths between 2~m and 2.9~m.  
    Vertical separation: \( d = -2 \) grid points.

    \item \textbf{Test 3: Very close in both directions.}  
    \( \Omega_0 \) is placed at \( (196{:}200) \times (266{:}270) \times (1{:}1) \), with a depth between 1.5~m and 2~m.  
    Horizontal and vertical separation: \( d = 1 \) grid point.

    \item \textbf{Test 4: Horizontal and vertical crossing.}  
    \( \Omega_0 \) is positioned at \( (199{:}203) \times (269{:}273) \times (1{:}1) \), with depth ranging from 1.5~m to 2.2~m.  
    Horizontal and vertical separation: \( d = -2 \) grid points.
\end{itemize}

\noindent The results in Figure~\ref{multi-sub-meas-era} show that the algorithm performs well when the measurement domain is sufficiently close to the obstacle. In Tests 1, 2, and 4, where the observation zone is either horizontally or both horizontally and vertically near \( \omega^* \), the topological gradient \( D_K \) exhibits a strong localized minimum that aligns accurately with the true obstacle location. This indicates successful identification, evidenced by the overlap between the black square and the zone of highest negative sensitivity.

In contrast, Test 3 demonstrates that when the observation region is vertically or diagonally distant from the obstacle, the detection fails. As illustrated in Figure~\ref{multi-sub-meas-era}(f), the topological gradient lacks a significant minimum near the obstacle, signaling poor reconstruction. This emphasizes the importance of measurement proximity: data collected close to the obstacle are substantially more informative for the inversion process.

While one solution is to relocate the observation zone closer to the expected obstacle, this is often impractical, especially in realistic geophysical settings where the obstacle location is unknown a priori. Furthermore, in cases where the obstacle acts as a radiating source, distant measurements may even be more appropriate. These factors suggest that adaptive measurement strategies, or the deployment of multiple observation regions, may offer more robust alternatives. In the next paragraph, we assess the impact of the number and density of measurement subdomains.

\vspace{0.3cm}
\paragraph{\textbf{Tests 5–6: Influence of the number and density of measurement areas}}  
In these tests, we explore how increasing the number and density of measurement subdomains improves obstacle detection. Here, \( \Omega_0 \) consists of multiple small regions located in the vicinity of \( \omega^* \):

\begin{itemize}
    \item \textbf{Test 5: 8 sub-measurement areas.}  
    The domain includes 8 non-overlapping subdomains, each of size \( 2~\text{km} \times 2~\text{km} \), with depths ranging approximately from 1.5~m to 3~m.  
    Horizontal and vertical separation: \( d = 3 \) grid points.

    \item \textbf{Test 6: 16 sub-measurement areas.}  
    This configuration includes 16 subdomains of the same size and depth, arranged more densely around the obstacle.  
    Horizontal and vertical separation: \( d = 1 \) grid point.
\end{itemize}

\noindent Figure~\ref{multi-sub-meas} shows a clear enhancement in detection quality as both the number and spatial density of measurement regions increase. In Test 6, the topological gradient displays a sharp, localized minimum that aligns precisely with the true obstacle, reflecting high detection accuracy. Test 5 also yields satisfactory results, albeit with a slightly less focused reconstruction.

\begin{figure} [!htbp]
     \centering
    \begin{tabular}{cc}
    \includegraphics[width=50mm]{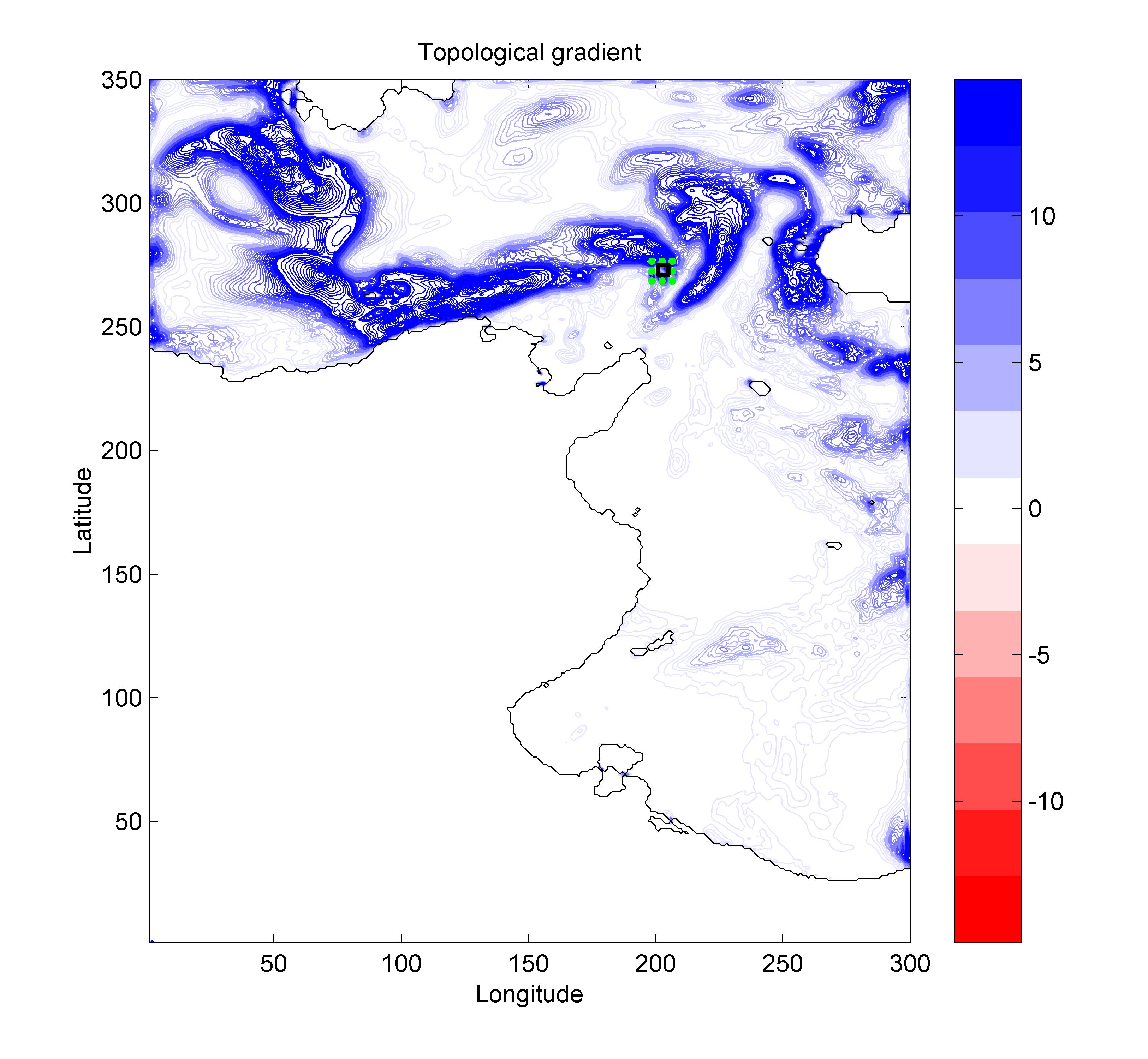}& \includegraphics[width=50mm]{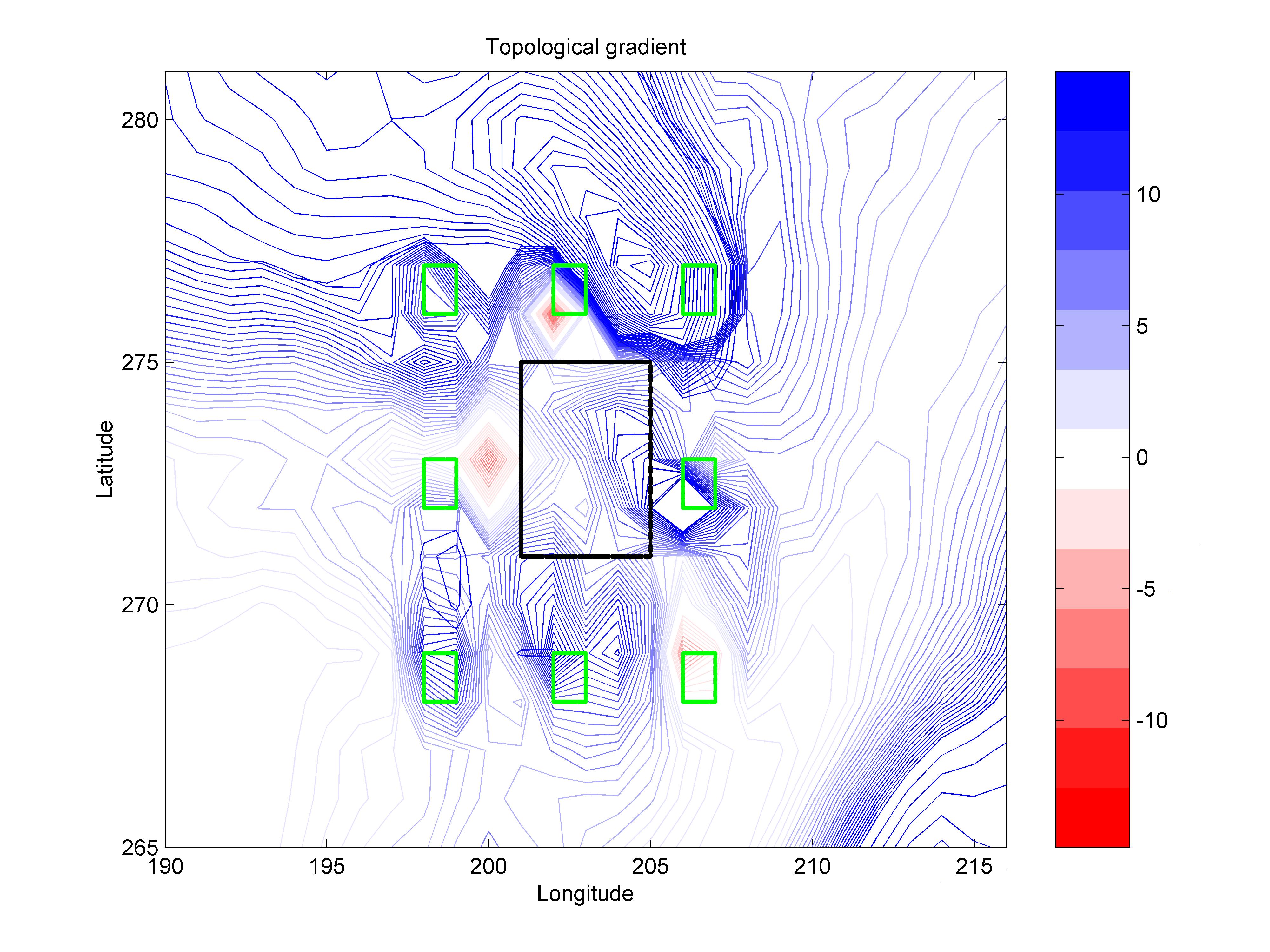}\\
(a) Test 5 & (b) Zoom of (a)\\
    \includegraphics[width=50mm]{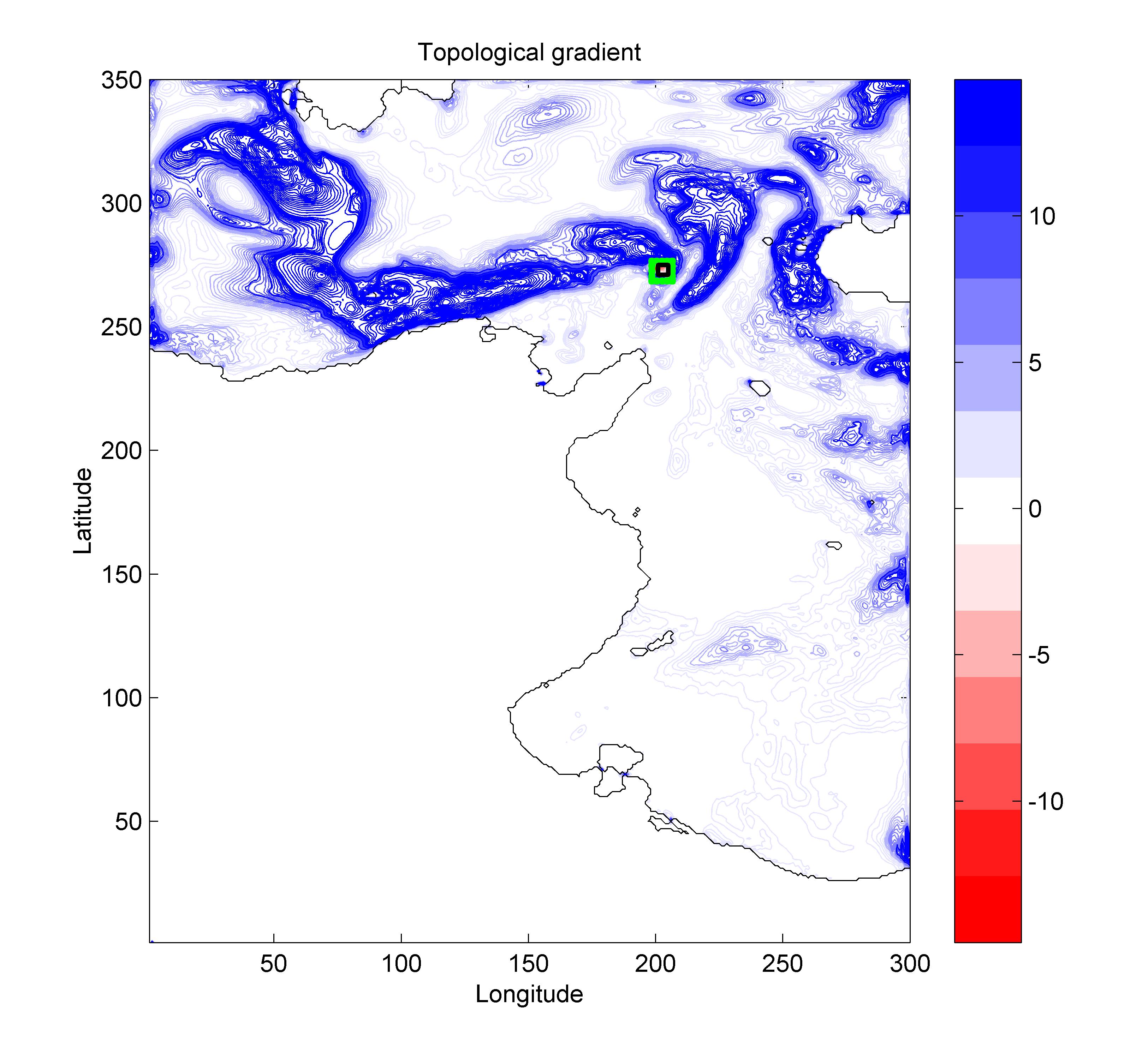}& \includegraphics[width=50mm]{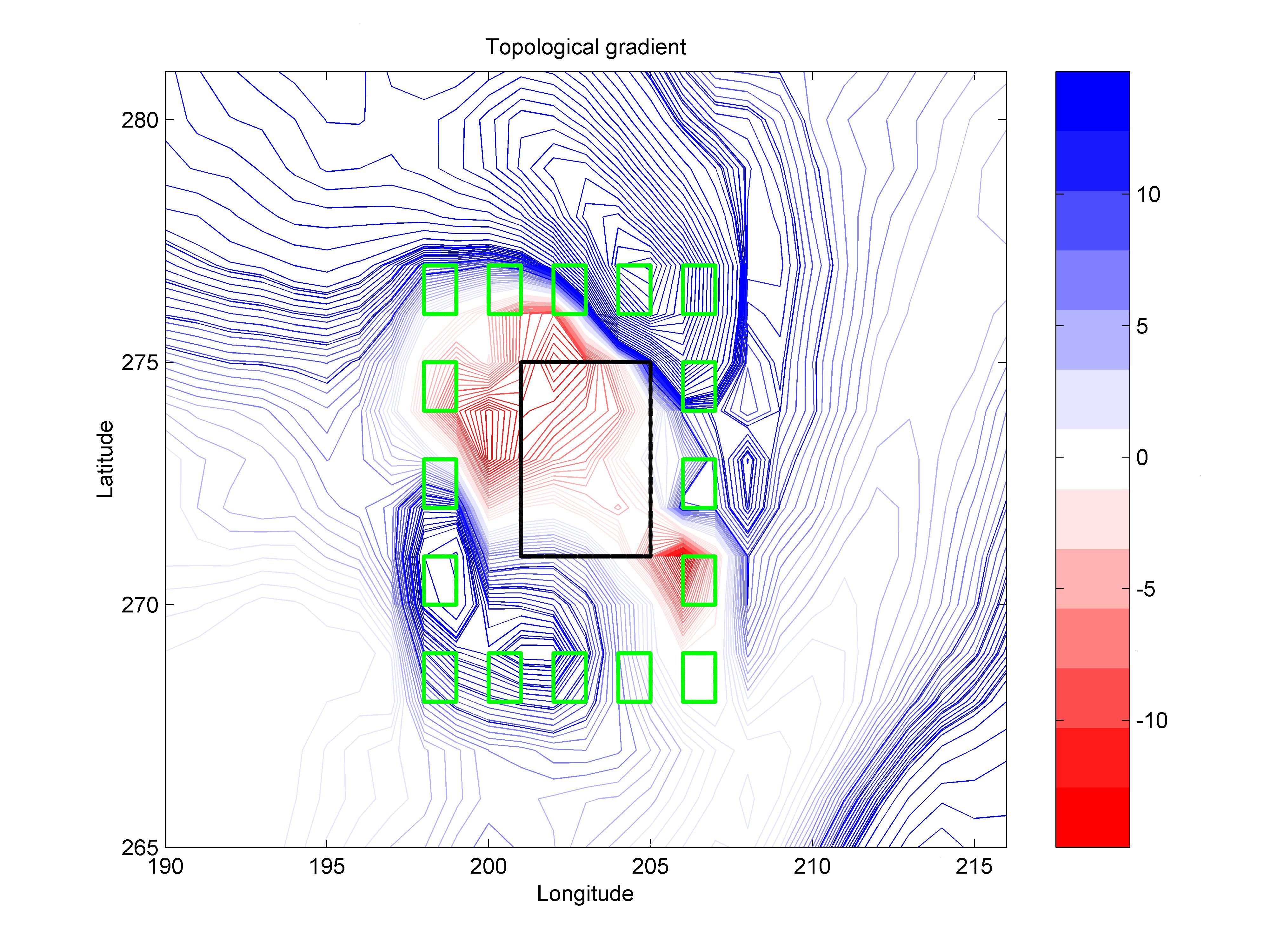}\\
(c) Test 6 & (d) Zoom of (c)
    \end{tabular}
    \caption{Iso-Values of the topological gradient illustrating the influence of the nombre of a mutiple measurement area.}\label{multi-sub-meas}
    \end{figure}

These results confirm that the effectiveness of the topological sensitivity method is strongly influenced by the configuration and distribution of the measurement domains. Increased spatial coverage and density enhance the resolution and reliability of the reconstruction by capturing more detailed flow perturbations induced by the obstacle.

\vspace{1em}

This example underscores the sensitivity of the identification process to the spatial arrangement of the measurement domain. Proximity and density are both critical factors: closer and more numerous subdomains lead to more accurate detection. In realistic scenarios—particularly in environmental and oceanographic applications—this supports the use of adaptive or exploratory measurement strategies, multi-zone observation networks, and the integration of prior information to guide sensor deployment. The combination of spatial diversity and high data density proves essential for robust obstacle identification under partial data conditions.

\section{Conclusion}\label{sec:conclusion}

In this work, we addressed the detection of multiple potential objects immersed in a three-dimensional fluid, along with their qualitative locations, using internal observations. The fluid flow is governed by the three-dimensional evolutionary Navier--Stokes equations. We established the uniqueness of the inverse problem and reformulated it as a topology optimization problem through the minimization of a regularized least-squares functional. The existence and stability of the corresponding optimization solution were also proved.  
Our detection approach relies on the concept of topological sensitivity. By deriving a topological asymptotic expansion via simplified mathematical analysis---avoiding the intricate truncation technique---we obtained the topological gradient, i.e., the leading term of the expansion. This gradient enabled the design of a fast, non-iterative reconstruction algorithm requiring no initial guess. The robustness and efficiency of the method were demonstrated through seven numerical experiments in a realistic Mediterranean Sea configuration, simulated with the \textsc{INSTMCOTRHD} ocean model.

\section{Mathematical justification}\label{sec:jutification}

\subsection{Proof of Theorem \ref{uniqueness}}\label{sec:uniqueness}

Following the strategy employed in the proof of Theorem 1.1 in \cite{AlvarezIPPIP2005} (see also \cite{doubova2007identification}), we proceed to establish Theorem~\ref{uniqueness}. Before presenting the proof, we recall a key preliminary result: a unique continuation property for the evolutionary Navier--Stokes equations, originally established by Fabre and Lebeau \cite{fabre1996prolongement} (see also \cite[Theorem 2.3]{AlvarezIPPIP2005}).

\begin{lemma}\label{u-c-l}
Let \( T_c > 0 \) and \( \O \subset \mathbb{R}^3 \) be a connected open set. Suppose that \( a \in L_{\text{loc}}^{\infty}(0,T_c;\L_{\text{loc}}^{\infty}(\O)) \) and \( b \in C\left([0, T_c] ; L_{\text{loc}}^r\left(\O, \mathbb{R}^{3 \times 3}\right)\right) \) is a matrix-valued function with \( r > 3 \). If \( (\varphi, h) \in L^2\left(0, T_c; \mathbf{H}_{\text{loc}}^1(\O)\right) \times L_{\text{loc}}^2(0,T_c;L_{\text{loc}}^2(\O)) \) is a solution of  
\begin{equation*}  
\left\{  
\begin{array}{rll}  
\displaystyle \frac {\partial \varphi}{\partial t} - \nu \Delta \varphi + (a \cdot \nabla) \varphi + b \,\varphi + \nabla h &= 0 &\text{ in } \O \times (0, T_c), \\  
\text{div}\, \varphi &= 0 &\text{ in } \O \times (0, T_c),  
\end{array}  
\right.  
\end{equation*}  
with \( \varphi = 0 \) in \( \mathcal{D}_0 \times (0, T_c) \), where \( \mathcal{D}_0 \) is an open subset of \( \O \), then \( \varphi \equiv 0 \) in \( \O \times (0, T_c) \), and \( h \) is constant.  
\end{lemma}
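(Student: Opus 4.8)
The plan is to follow the Carleman-estimate approach of Fabre and Lebeau \cite{fabre1996prolongement}, reducing the global statement to a local propagation-of-zeros argument. First I would reduce the problem to a local one: since $\mathcal{O}$ is connected, it suffices to prove that the set where $\varphi$ vanishes is open as well as (relatively) closed in $\mathcal{O}$, so that it must coincide with all of $\mathcal{O}\times(0,T_c)$. Accordingly, I would fix a point $x_0$ on the boundary of the vanishing set, work in a small space-time cylinder $\mathcal{Q}=\mathcal{B}\times(t_1,t_2)$ around $x_0$, and aim to show $\varphi\equiv 0$ on a possibly smaller subcylinder.

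Before invoking any weighted inequality I would upgrade the regularity of $(\varphi,h)$. Starting from $\varphi\in L^2(0,T_c;\mathbf{H}^1_{\mathrm{loc}})$, the Sobolev embedding $\mathbf{H}^1\hookrightarrow\mathbf{L}^6$ in dimension three, combined with the hypothesis $b\in C([0,T_c];L^r_{\mathrm{loc}})$ with $r>3$, guarantees $b\varphi\in L^2_{\mathrm{loc}}$, since $\tfrac1r+\tfrac16<\tfrac12$; likewise $(a\cdot\nabla)\varphi\in L^2_{\mathrm{loc}}$ because $a\in L^\infty_{\mathrm{loc}}$. Treating these as a right-hand side, interior parabolic regularity for the Stokes system yields enough smoothness of $\varphi$, and of $h$ through the pressure equation below, for the Carleman machinery to apply. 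This is precisely where the assumption $r>3$ is essential.

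The central difficulty is the pressure. Taking the divergence of the momentum equation and using $\mathrm{div}\,\varphi=0$, so that $\mathrm{div}\,\partial_t\varphi=0$ and $\mathrm{div}\,\Delta\varphi=0$, gives the elliptic identity $\Delta h=-\mathrm{div}\big((a\cdot\nabla)\varphi+b\varphi\big)$, which expresses $h$ nonlocally in terms of first derivatives of $\varphi$. The main obstacle of the whole argument is to control the weighted norm of $\nabla h$: unlike for a scalar heat equation, the coupling term $\nabla h$ cannot simply be moved to the right and discarded. The Fabre–Lebeau strategy is to prove a Carleman inequality for the parabolic operator $\partial_t-\nu\Delta$ with an exponential weight $e^{2\tau\psi}$ and, in tandem, an elliptic Carleman estimate for $\Delta$ acting on $h$; the latter bounds $\int e^{2\tau\psi}|\nabla h|^2$ by $\int e^{2\tau\psi}(|\varphi|^2+|\nabla\varphi|^2)$ up to controllable boundary contributions, so the pressure term can be absorbed into the left-hand side of the parabolic estimate for $\tau$ large.

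Once the two weighted inequalities are combined, all lower-order terms $(a\cdot\nabla)\varphi$, $b\varphi$ and the pressure coupling $\nabla h$ are absorbed for $\tau\gg1$, leaving an inequality of the form $\tau^{3}\int e^{2\tau\psi}|\varphi|^2\le C\,(\text{terms supported where }\varphi=0)$. Letting $\tau\to\infty$ forces $\varphi$ to vanish on a full neighborhood of $x_0$, proving that the vanishing set is open; by connectedness of $\mathcal{O}$ we conclude $\varphi\equiv0$ in $\mathcal{O}\times(0,T_c)$. Substituting $\varphi\equiv0$ back into the momentum equation then gives $\nabla h\equiv0$, so $h$ is spatially constant for almost every $t$, as claimed. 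The two steps I expect to consume most of the effort are the construction of a weight $\psi$ satisfying the pseudoconvexity condition adapted to the anisotropic parabolic scaling, and the careful bookkeeping of the pressure coupling across both Carleman estimates.
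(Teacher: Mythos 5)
Your Carleman-estimate outline is essentially the same route as the paper's, since the paper does not prove this lemma itself but defers entirely to Fabre and Lebeau \cite{fabre1996prolongement} (see also Theorem 2.3 of \cite{AlvarezIPPIP2005}), whose proof is exactly the strategy you describe: local propagation of the vanishing set via a parabolic Carleman inequality coupled with an elliptic Carleman estimate for the pressure, obtained from $\Delta h=-\mathrm{div}\big((a\cdot\nabla)\varphi+b\varphi\big)$, with the hypothesis $r>3$ entering through the Sobolev bound $b\varphi\in L^2_{\mathrm{loc}}$. Your sketch is a correct reconstruction of that argument at the level of detail one can reasonably expect without carrying out the weight construction and absorption estimates.
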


Using these ingredients, we split the proof of Theorem \ref{uniqueness} into two main steps:

\medskip

\noindent
\textbf{Step 1.}  
We first apply the unique continuation property (see Lemma \ref{u-c-l}) to show that
\[
u_1 = u_2 \; \text{ in } (\Omega \setminus \overline{\mathcal{S}}) \times (0, T_c),
\]
where $T_c\in(0,T)$ and \( \mathcal{S} \) denotes the smallest simply connected open set containing \( \omega_1^* \cup \omega_2^* \).

\medskip

\noindent
\textbf{Step 2.}  
We proceed by contradiction.  
Assume \( \omega_1^* \neq \omega_2^* \).  
By applying the unique continuation result (Lemma \ref{u-c-l}) again, we deduce that \( u_2 = 0 \) (respectively, \( u_1 = 0 \)) on \( \partial \Omega \times (0, T_c) \).  
This contradicts the fact that \( u_2 = \phi \neq 0 \) (respectively, \( u_1 = \phi \neq 0 \)) on \( \partial \Omega \times (0, T_c) \).  
Hence, we conclude that \( \omega_1^* = \omega_2^* \).\\

We now proceed to the proof of Theorem \ref{uniqueness}. Let us define the velocity and pressure differences by
$$
u_{1,2} := u_1 - u_2, \qquad \pi_{1,2} := \pi_1 - \pi_2,
$$
where \( (u_\ell,\,\pi_\ell) \) for \( \ell = 1,2 \) denote the solutions to the problem \eqref{u-ell}. Let \( \mathcal{S} \) be the smallest open subset of \( \Omega \) such that \( \omega_1^* \cup \omega_2^* \subset \mathcal{S} \) and such that \( \Omega \setminus \overline{\mathcal{S}} \) remains connected. In the particular case where \( \Omega \setminus \overline{\omega_1^* \cup \omega_2^*} \) is already connected, we have \( \mathcal{S} = \omega_1^* \cup \omega_2^* \). Then, from \eqref{u-ell} and \eqref{obsevtion-condition-1}, it follows that the pair \( (u_{1,2}, \pi_{1,2}) \) satisfies the following system:
\begin{equation}\label{u-ell-diff}
    \left\{
    \begin{array}{rll}
        \displaystyle \frac{\partial u_{1,2}}{\partial t} - \nu \Delta u_{1,2} +(u_{12}\cdot\nabla)u_1+(u_2\cdot\nabla)u_{1,2} + \nabla \pi_{1,2}
        &=0, &\quad \text{in } (\Omega \setminus \overline{\S}) \times (0, T), \\[8pt]
        \text{div}\, u_{1,2} &= 0, &\quad \text{in } (\Omega \setminus \overline{\S}) \times (0, T), \\[8pt]
        u_{1,2} &=0, &\quad \text{on } \partial\Omega \times (0, T),\\[8pt]
        u_{1,2} &=0, &\quad \text{in } \Omega_0 \times (0, T).
    \end{array}
    \right.
    \end{equation}
To apply the unique continuation property stated in Lemma \ref{u-c-l}, we first verify that the system \eqref{u-ell-diff} satisfies its assumptions. Since \( \phi \in  \mathcal{C}^1([0,T];\H^{3/2}(\partial \Omega)) \), classical existence results for the Navier--Stokes equations (see, e.g., \cite[Lemma 25.2, p. 144]{tartar2006introduction}) ensure that there exists \( T_c \in (0, T) \) such that a unique solution \( (u_\ell, \pi_\ell) \) to problem \eqref{u-ell} exists on the interval \( [0, T_c] \), with
\[
u_\ell \in \mathcal{C}\left([0, T_c], \H^2(\Omega \setminus \overline{\mathcal{S}})\right).
\]
Define \( b_{ij} := \partial_j u_1^i \) for \( 1 \leq i,j \leq 3 \). Then \( b_{ij} \in \mathcal{C}\left([0, T_c] ; \H^1(\Omega \setminus \overline{\mathcal{S}})\right) \). By the Sobolev embedding theorem, we have:
\[
u_2 \in \mathcal{C}\left([0, T_c], \H^2(\Omega \setminus \overline{\mathcal{S}})\right) \hookrightarrow \mathcal{C}\left([0, T_c], \L^{\infty}(\Omega \setminus \overline{\mathcal{S}})\right),
\]
and the matrix-valued function \( b := (b_{ij})_{1 \leq i,j \leq 3} \) satisfies:
\[
b \in \mathcal{C}\left([0, T_c], \H^1(\Omega \setminus \overline{\mathcal{S}}; \mathbb{R}^{3 \times 3})\right) \hookrightarrow \mathcal{C}\left([0, T_c], \L^6(\Omega \setminus \overline{\mathcal{S}}; \mathbb{R}^{3 \times 3})\right).
\]
Hence, all the assumptions of Lemma \ref{u-c-l} are fulfilled. We therefore conclude that \( u_{1,2} \equiv 0 \) in \( (\Omega \setminus \overline{\mathcal{S}}) \times (0, T_c) \), i.e., \[u_1 = u_2  \text{   in   } (\Omega \setminus \overline{\mathcal{S}}) \times (0, T_c) \].

We now proceed by contradiction, assuming that \( \omega_1^* \setminus \overline{\omega_2^*} \) is nonempty. In what follows, we suppose that \( \mathcal{S} \setminus \overline{\omega_2^*} \) is connected. If this is not the case, one can simply replace \( \mathcal{S} \setminus \overline{\omega_2^*} \) with one of its connected components in the argument below. We observe that \( u_2 \) satisfies the following system:
\begin{equation}\label{eq-u1-m}
    \frac{\partial u_2}{\partial t} - \nu \Delta u_2 + (u_2 \cdot \nabla) u_2 + \nabla \pi_2 = 0 \quad \text{in } (\mathcal{S} \setminus \overline{\omega_2^*}) \times (0, T_c).
\end{equation}

To clarify the proof, we begin by analyzing the particular case where \( \mathcal{S} = \omega_1^* \cup \omega_2^* \), and then extend the argument to the more general case where \( \mathcal{S} \neq \omega_1^* \cup \omega_2^* \).\\

\paragraph{$\star$ \emph{The case where \( \mathcal{S} = \omega_1^* \cup \omega_2^* \).}} In this setting, we distinguish two subcases depending on whether the set \( \omega_1^* \setminus \overline{\omega_2^*} \) is a Lipschitz domain or not (see Figure \ref{domainHM}).

\begin{figure} [!htbp]
     \centering
    \includegraphics[width=90mm]{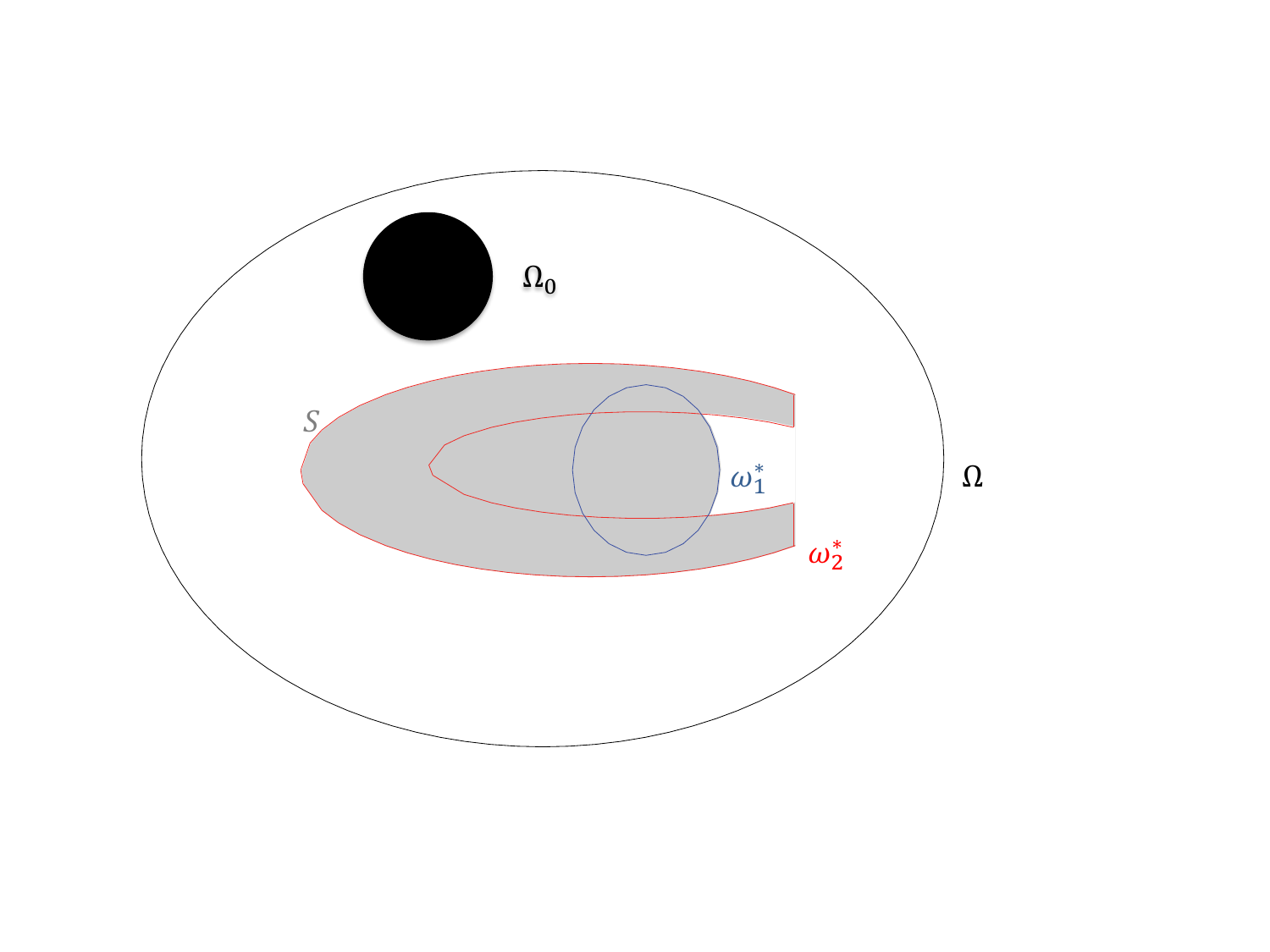}
    \caption{The set $\mathcal{S}$.}\label{domainHM}
    \end{figure}

\noindent$\bullet$ \textbf{Case 1:} Assume first that \( \omega_1^* \setminus \overline{\omega_2^*} \) is a Lipschitz domain. 
Multiplying equation~\eqref{eq-u1-m} by \( u_2 \), integrating by parts over \( \omega_1^* \setminus \overline{\omega_2^*} \), and using that \( u_2 = 0 \) on \( \partial \omega_2^*\times(0,T_c) \), for all  $t \in (0, T_c)$, we obtain
\begin{equation}\label{eq-oo}
\frac{\mathrm{d}}{\mathrm{d}t} \int_{\omega_1^* \setminus \overline{\omega_2^*}} |u_2(x, t)|^2 \, \dx = -\int_{\omega_1^* \setminus \overline{\omega_2^*}} |\nabla u_2(x, t)|^2 \, \dx -\int_{\omega_1^* \setminus \overline{\omega_2^*}} \left[ (u_2(x, t) \cdot \nabla) u_2(x, t) \right] \cdot u_2(x, t)\, \dx.
\end{equation}
On the other hand, integrating by parts in \( \omega_1^* \setminus \overline{\omega_2^*} \) and using the boundary condition \( u_2 = 0 \) on \( \partial \omega_2^* \), we have for all \( t \in (0, T_c) \),
\begin{align*} \int_{\omega_1^* \setminus \overline{\omega_2^*}} &\left[ (u_2(x, t) \cdot \nabla)u_2(x, t) \right] \cdot u_2(x, t)\, \dx=\frac{1}{2}\int_{\omega_1^* \setminus \overline{\omega_2^*}} u_2(x, t)\cdot\nabla \big|u_2(x, t)\big|^2\dx\\ 
&= -\frac{1}{2}\int_{\omega_1^* \setminus \overline{\omega_2^*}} \text{div}\, u_2(x, t)\; \big|u_2(x, t)\big|^2 \dx+ \frac{1}{2}\int_{{\partial\omega_1^* \setminus \overline{\omega_2^*}}} \big(u_2(x, t)\cdot\textbf{n}\big) \big|u_2(x, t)\big|^2 \ds.
\end{align*}
Since \( u_1 = u_2 \) in \( (\Omega \setminus \overline{\mathcal{S}}) \times (0, T_c) \), and \( u_1 = 0 \) on \( \partial \omega_1^*\times (0, T_c) \), it follows that \( u_2 = 0 \) on \( (\partial\omega_1^* \setminus \overline{\omega_2^*})\times (0, T_c) \). Moreover, using the incompressibility condition \( \text{div}\, u_2 = 0 \) in \( \Omega \times (0, T) \), we deduce that the nonlinear convective term vanishes:
\begin{align} \label{just}  \int_{\omega_1^* \setminus \overline{\omega_2^*}} \left[ (u_2(x, t) \cdot \nabla) u_2(x, t) \right] &\cdot u_2(x, t)\, \dx=0\quad\text{for all } t\in(0,T_c).
\end{align}
Inserting this identity into \eqref{eq-oo}, we obtain
\begin{equation}\label{eq-oo002}
\frac{\mathrm{d}}{\mathrm{d}t} \int_{\omega_1^* \setminus \overline{\omega_2^*}} |u_2(x, t)|^2 \, \dx = -\int_{\omega_1^* \setminus \overline{\omega_2^*}} |\nabla u_2(x, t)|^2 \, \dx \qquad \text{for all } t \in (0, T_c).
\end{equation}
This shows that the function
\[
\mathcal{Z}(t) := \int_{\omega_1^* \setminus \overline{\omega_2^*}} |u_2(x, t)|^2 \, \dx
\]
is non-negative and non-increasing on \( [0, T_c] \). Since \( u_2(\cdot,0) = 0 \), it follows that \( \mathcal{Z}(0) = 0 \), and hence \( \mathcal{Z}(t) = 0 \) for all \( t \in [0, T_c] \). Consequently,
\[
u_2 = 0 \quad \text{in } (\omega_1^* \setminus \overline{\omega_2^*}) \times (0, T_c).
\]
By the unique continuation result in Lemma \ref{u-c-l} (with $b\equiv0)$, one can deduce that
\[
u_2 = 0 \quad \text{in } (\Omega \setminus \overline{\omega_2^*}) \times (0, T_c),
\]
which contradicts the boundary condition \( u_2 = \phi \), with \( \phi \neq 0 \) on \( \partial \Omega \times (0, T) \). Therefore, it must be that \( \omega_1^* \setminus \overline{\omega_2^*} = \emptyset \). By a symmetric argument, we similarly obtain \( \omega_2^* \setminus \overline{\omega_1^*} = \emptyset \), and hence conclude that
\[
\omega_1^* = \omega_2^*.
\]

\medskip

\noindent$\bullet$ \textbf{Case 2:} We now address the situation where the set \( \omega_1^* \setminus \overline{\omega_2^*} \) is not necessarily a Lipschitz domain. In this case, the integration by parts leading to equation \eqref{eq-oo002} is not directly justified. To overcome this difficulty and still derive equation \eqref{eq-oo002}, we use a density argument: the space \( \mathcal{D}(\omega_1^* \setminus \overline{\omega_2^*}) \) is dense in \( \mathbf{H}_0^1(\omega_1^* \setminus \overline{\omega_2^*}) \). For this, let us recall the definition of the space \( \mathbf{H}_0^1 \) as given in \cite[Definition 3.3.43]{HenrotBook2006}, where \( \mathbf{H}_0^1(\omega_1^* \setminus \overline{\omega_2^*}) \) is defined as the set of functions in \( \mathbf{H}^1(\omega_1^* \setminus \overline{\omega_2^*}) \) whose extension by zero to \( \Omega \) belongs to \( \mathbf{H}^1(\Omega) \). We then multiply equation \eqref{eq-u1-m} by a test function \( \varphi \in \mathcal{C}(0,T_c;\mathcal{D}(\omega_1^* \setminus \overline{\omega_2^*})) \), and using standard integration by parts (valid for smooth test functions), we obtain for all \( t \in (0, T_c) \),
\begin{align}
    \begin{split}
    \label{eq-oo-1}
&\int_{\omega_1^* \setminus \overline{\omega_2^*}} \frac{\partial u_2}{\partial t}(x,t) \cdot \varphi(x,t) \, \dx + \int_{\omega_1^* \setminus \overline{\omega_2^*}} \nabla u_2(x,t) : \nabla \varphi(x,t) \, \dx\\
&\qquad+\int_{\omega_1^* \setminus \overline{\omega_2^*}} \big[(u_2(x,t)\cdot\nabla)u_2(x,t)\big] \cdot \varphi(x,t) \, \dx=\int_{\omega_1^* \setminus \overline{\omega_2^*}} \pi_2\,\text{div}\,\varphi(x,t) \, \dx.
\end{split}
 \end{align}
Then, it suffices to show that ${u_2}(\cdot,t)_{| \omega_1^* \setminus \overline{\omega_2^*}} \in \mathbf{H}_0^1\left(\omega_1^* \setminus \overline{\omega_2^*}\right)$ for all $t \in (0, T_c)$. To this end, we rewrite equation~\eqref{eq-oo-1} with $\varphi = \varphi_n$, where the sequence $\left(\varphi_n(\cdot, t)\right)_{n \in \mathbb{N}} \subset \mathcal{D}\left(\omega_1^* \setminus \overline{\omega_2^*}\right)$ satisfies
\[
\varphi_n (\cdot,t)\to {u_2(\cdot,t)}_{| \omega_1^* \setminus \overline{\omega_2^*}} \quad \text{in } \mathbf{H}^1\left(\omega_1^* \setminus \overline{\omega_2^*}\right), \quad \text{for all } t \in (0, T_c).
\]
Passing to the limit as $n \to \infty$ yields equation~\eqref{eq-oo}, and we conclude the proof as in the first case.\\

\noindent It remains to justify that ${u_2(\cdot,t)}_{| \omega_1^* \setminus \overline{\omega_2^*}} \in \mathbf{H}_0^1\left(\omega_1^* \setminus \overline{\omega_2^*}\right)$ for all $t \in (0, T_c)$. We define $\widetilde{u_2}$ as the extension of $u_2$ by zero in $\omega_2^* \times (0, T_c)$, that is,
\[
\widetilde{u_2} = 
\begin{cases}
u_2, & \text{in } \Omega \setminus \overline{\omega_2^*} \times (0, T_c), \\
0, & \text{in } \omega_2^* \times (0, T_c).
\end{cases}
\]
Since $u_2 = \mathbf{0}$ on $\partial \omega_2^* \times (0, T_c)$, we have $\widetilde{u_2}(\cdot,t) \in \mathbf{H}^1(\Omega)$ for all $t \in (0, T_c)$. We now consider the restriction $\left.\widetilde{u_2}(\cdot,t)\right|_{\omega_1^*} \in \mathbf{H}^1(\omega_1^*)$ for all $t \in (0, T_c)$, and extend it again by zero to $(\Omega \setminus \overline{\omega_1^*}) \times (0, T_c)$.\\

\noindent By construction, $\widetilde{u_2} = 0$ on $(\partial \omega_1^* \setminus \overline{\omega_2^*}) \times (0, T_c)$, and also on $(\partial \omega_1^* \cap \omega_2^*) \times (0, T_c)$, so the total extension
\[
\widetilde{u_2} = 
\begin{cases}
u_2, & \text{in } (\omega_1^* \setminus \overline{\omega_2^*}) \times (0, T_c), \\
0, & \text{in } \omega_2^* \times (0, T_c), \\
0, & \text{in } (\Omega \setminus \overline{\omega_1^*}) \times (0, T_c),
\end{cases}
\]
belongs to $\mathbf{H}^1(\Omega)$ for all $t \in (0, T_c)$. Thus, the restriction $\widetilde{u_2}(\cdot,t)_{|\omega_1^* \setminus \overline{\omega_2^*}} = {u_2(\cdot,t)}_{| \omega_1^* \setminus \overline{\omega_2^*}}$ belongs to $\mathbf{H}_0^1\left(\omega_1^* \setminus \overline{\omega_2^*}\right)$ for all $t \in (0, T_c)$, which completes the proof. Therefore, we have $\omega_1^* \setminus \overline{\omega_2^*}$. By symmetry, \( \omega_2^* \setminus \overline{\omega_1^*} = \emptyset \), and thus we conclude that \( \omega_1^* = \omega_2^* \).\\

\paragraph{$\star$ \emph{The case where \( \mathcal{S} \neq \omega_1^* \cup \omega_2^* \).}} 
We now highlight the differences and difficulties that arise in the general case where \( \mathcal{S} \) is not necessarily equal to \( \omega_1^* \cup \omega_2^* \). The key idea is to replace \( \omega_1^* \) with \( \mathcal{S} \setminus \overline{\left( \omega_2^* \setminus \overline{\omega_1^*} \right)} \). Indeed, we cannot directly work with \( \omega_1^* \), as we lack information on \( u_2 \) along \( \partial \omega_1^* \setminus \partial \mathcal{S} \); we only know that \( u_1 = u_0 \) in \( \Omega \setminus \overline{\mathcal{S}} \).

If \( \mathcal{S} \setminus \overline{\omega_2^*} \) is a Lipschitz domain, we proceed exactly as in the previous Lipschitz case, replacing \( \omega_1^* \setminus \overline{\omega_2^*} \) with \( \mathcal{S} \setminus \overline{\omega_2^*} \). However, in the general case where \( \mathcal{S} \setminus \overline{\omega_2^*} \) is not necessarily Lipschitz, we can no longer prove that \( u_2|_{\mathcal{S} \setminus \overline{\omega_2^*}} \in \mathbf{H}^1_0\left(\mathcal{S} \setminus \overline{\omega_2^*}\right) \) (for all \( t \in (0, T_c) \)) using the same approach. Specifically, even after extending \( u_2 \) by zero in \( \omega_2^* \), we cannot assert that the extension of \( \left.\widetilde{u}_2\right|_{\mathcal{S} \setminus \overline{\left( \omega_2^* \setminus \overline{\omega_1^*} \right)}} \) by zero in \( \Omega \) belongs to \( \mathbf{H}^1(\Omega) \). To overcome this, we enlarge the domain \( \mathcal{S} \setminus \overline{\omega_2^*} \) inside \( \omega_2^* \) to a smooth (at least Lipschitz) domain \( \widetilde{\omega}_1^* \) (see Figure~\ref{domain00}
). We then replicate the argument used in the Lipschitz case, applying a density argument and replacing \( \omega_1^* \) by \( \widetilde{\omega}_1^* \).
\begin{figure} [!htbp]
     \centering
    \includegraphics[width=90mm]{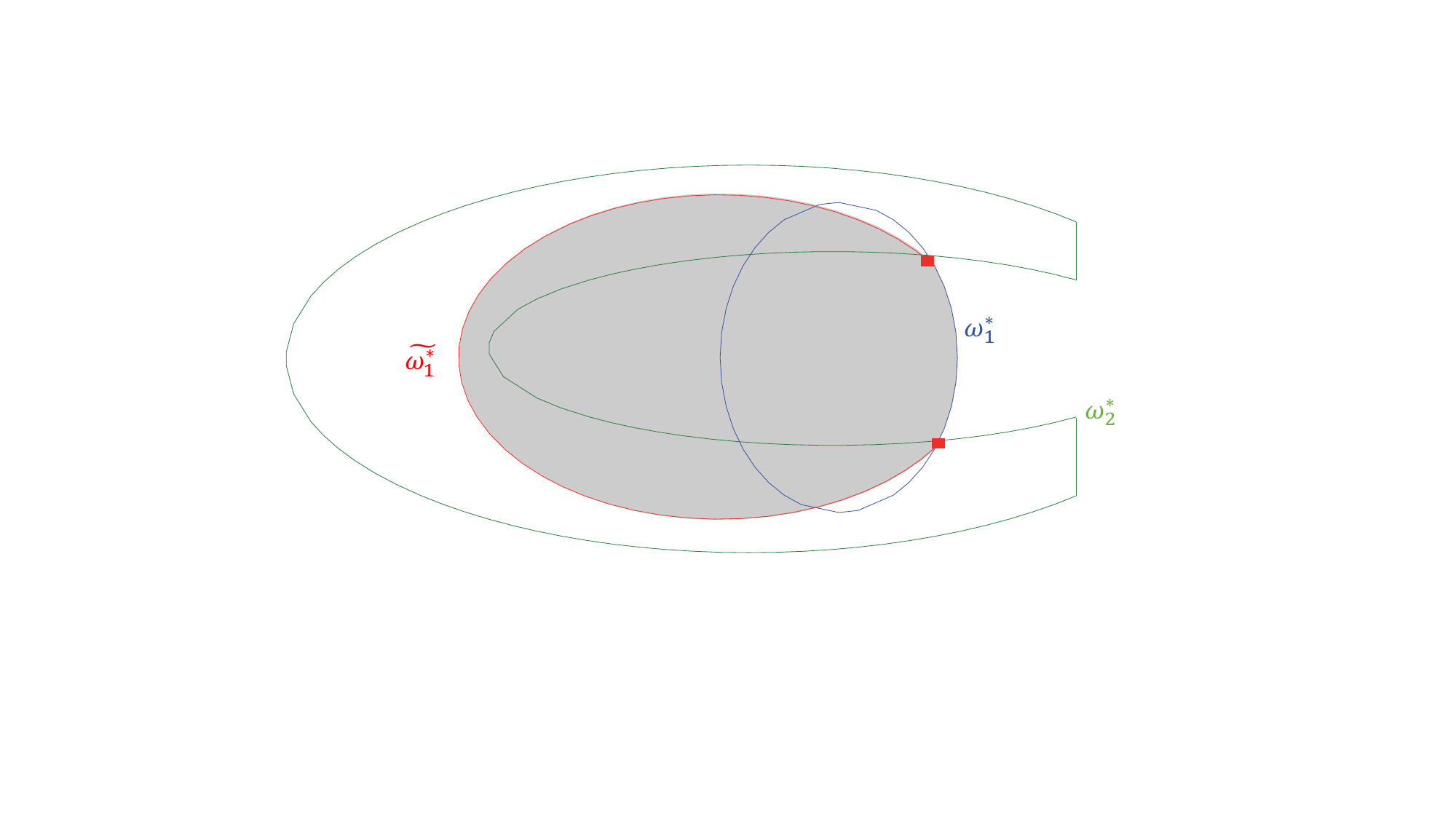}
    \caption{The set $\widetilde{\omega_1^*}$.}\label{domain00}
    \end{figure}

In conclusion, we have shown that \( \omega_1^* \setminus \overline{\omega_2^*} = \emptyset \). By symmetry, the reverse inclusion holds (i.e., \( \omega_2^* \setminus \overline{\omega_1^*} = \emptyset \)), and we deduce
$$\omega_1^* = \omega_2^*$$

\subsection{Convergence analysis of the penalized problem}\label{Justification-penalizing}

In this section, we establish the convergence of the solution of the penalized problem \eqref{problem-penalized-0} to that of the perturbed problem \eqref{problem-perturbed} as \( k \to +\infty \). To this end, for each \( k > 0 \), we denote by \( (u_\varepsilon^k,\, \pi_\varepsilon^k) \) the solution to the following boundary value problem
\begin{equation}\label{problem-penalized}
\left\{
\begin{array}{rll}
\displaystyle \frac {\partial u_\varepsilon^k}{\partial t}-\nu
\Delta u_\varepsilon^k+ \N(u_\varepsilon^k)+ \nabla \pi_\varepsilon^k
 +k\chi_{\mathcal{C}_{z,\varepsilon}}\, u_\varepsilon^k&= \mathcal{G} &\mbox{ in }  \,\,\Omega\times (0,\,T) ,\\
\mbox{div}\, u_\varepsilon^k &=  0 & \mbox{ in } \,\, \Omega \times (0,\,T) ,\\
u_\varepsilon^k &= 0 & \mbox{ on }\,\,  \partial\Omega \times (0,\,T) , \\
u_\varepsilon^k(\cdot,0)&= 0&\mbox{ in } \,\,  \Omega.
\end{array}
 \right.
\end{equation}

Under Assumption (A1), and by invoking \cite[Theorem 3.8]{temam1973theory}, it follows that problem \eqref{problem-perturbed-00} admits a unique solution \( u_\varepsilon^k \in L^\infty(0,T;\H^2(\Omega)) \), and hence \( \nabla u_\varepsilon^k \in L^\infty(0,T;\L^2(\Omega)) \). In light of this regularity, we further assume that there exists a constant \( \beta > 0 \) such that
\begin{equation}\label{condition-uvarepsilon}
    \left\|\nabla u_\varepsilon^k\right\|_{L^\infty(0,T;\,\L^2(\Omega))} <\beta<  \frac{\nu}{\rho(\Omega)},
\end{equation}
where \( \rho(\Omega) \) is defined in \eqref{uni-con11}.

\begin{proposition}\label{proposition-bounded}
Let $u_\varepsilon^k$ be the solution of the penalized problem \eqref{problem-penalized}. Then, there exists a constant $C>0$, independent of $k$ and $\varepsilon$, such that the following estimates hold:
    \begin{align} \label{ennq1}\big\|\frac{\partial u_\varepsilon^k}{\partial t}\big\|_{L^{4/3}(0,T;(\H^1_{0,\text{div}}(\Omega))^\prime)}+\big\|u_\varepsilon^k\big\|_{L^\infty(0,T;\L^2(\Omega))}+\big\|u_\varepsilon^k\big\|_{L^2(0,T;\H^1(\Omega))}&\leq C  \big\| \mathcal{G} \big\|_{L^2(0,T;(\H^1_{0,\div}(\Omega))^\prime)},\\
    \label{ennq2}\big\| u_\varepsilon^k \big\|_{L^2(0,T;\L^2(\mathcal{C}_{z,\varepsilon}))}&\leq \frac{C}{\sqrt{k}}\big\| \mathcal{G} \big\|_{L^2(0,T;(\H^1_{0,\div}(\Omega))^\prime)}.
    \end{align}
\end{proposition}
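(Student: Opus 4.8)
The plan is to establish both estimates through the standard energy method applied to the weak formulation \eqref{weak-f-penalized}, exploiting the two structural features of the penalized system: the antisymmetry of the convective trilinear form and the nonnegativity of the penalty term. First I would take $\varphi = u_\varepsilon^k(\cdot,t)$ as a test function in \eqref{weak-f-penalized}. The time term produces $\tfrac12\tfrac{d}{dt}\|u_\varepsilon^k\|_{\L^2(\Omega)}^2$, the viscous part of $a_\varepsilon$ produces $\nu\|\nabla u_\varepsilon^k\|_{\L^2(\Omega)}^2$, and the penalty part produces $k\|u_\varepsilon^k\|_{\L^2(\mathcal{C}_{z,\varepsilon})}^2\ge 0$; crucially, the convective term $\int_\Omega (u_\varepsilon^k\cdot\nabla)u_\varepsilon^k\cdot u_\varepsilon^k\,\dx$ vanishes by Lemma \ref{lem-NS-prelim2}, since $\operatorname{div} u_\varepsilon^k=0$.

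For the right-hand side I would bound $\big<\mathcal{G},u_\varepsilon^k\big>_\Omega\le \|\mathcal{G}\|_{(\H^1_{0,\div}(\Omega))'}\|u_\varepsilon^k\|_{\H^1(\Omega)}$ and absorb the gradient using the Poincar\'e inequality on the \emph{fixed} domain $\Omega$ (so the constant depends only on $\Omega$ and $\nu$, never on $k$ or $\varepsilon$) together with Young's inequality, hiding $\tfrac{\nu}{2}\|\nabla u_\varepsilon^k\|_{\L^2}^2$ on the left. Integrating in time and using $u_\varepsilon^k(\cdot,0)=0$ yields, for a.e. $\tau\in[0,T]$,
\[
\tfrac12\|u_\varepsilon^k(\cdot,\tau)\|_{\L^2(\Omega)}^2+\tfrac{\nu}{2}\int_0^\tau\|\nabla u_\varepsilon^k\|_{\L^2(\Omega)}^2\,\dt+k\int_0^\tau\|u_\varepsilon^k\|_{\L^2(\mathcal{C}_{z,\varepsilon})}^2\,\dt\le C\,\|\mathcal{G}\|_{L^2(0,T;(\H^1_{0,\div}(\Omega))')}^2 .
\]
Taking the supremum over $\tau$ gives the $L^\infty(0,T;\L^2)$ bound; the viscous term (plus Poincar\'e) gives the $L^2(0,T;\H^1)$ bound; and the penalty term, after dividing by $k$ and taking square roots, yields \emph{exactly} \eqref{ennq2} with its $k^{-1/2}$ factor. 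This settles \eqref{ennq2} and the two zeroth-order terms of \eqref{ennq1}.

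It remains to bound $\partial_t u_\varepsilon^k$ in $L^{4/3}(0,T;(\H^1_{0,\div}(\Omega))')$, which I would read off the equation: for $\varphi\in\H^1_{0,\div}(\Omega)$, the pairing $\big<\partial_t u_\varepsilon^k,\varphi\big>_\Omega$ equals $\big<\mathcal{G},\varphi\big>_\Omega$ minus the viscous, convective, and penalty contributions appearing in \eqref{bilinear}. The viscous and forcing terms are controlled pointwise in time by $\nu\|\nabla u_\varepsilon^k\|_{\L^2}\|\varphi\|_{\H^1}$ and $\|\mathcal{G}\|_{(\H^1)'}\|\varphi\|_{\H^1}$, both lying in $L^2(0,T)\subset L^{4/3}(0,T)$ by the energy bound. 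The convective term is exactly where the exponent $4/3$ originates: rewriting $\int_\Omega(u_\varepsilon^k\cdot\nabla)u_\varepsilon^k\cdot\varphi=-\int_\Omega(u_\varepsilon^k\cdot\nabla)\varphi\cdot u_\varepsilon^k$ by Lemma \ref{lem-NS-prelim20} and applying H\"older together with the Ladyzhenskaya inequality (Lemma \ref{estimat-L-4}) bounds it by $2\|u_\varepsilon^k\|_{\L^2}^{1/2}\|\nabla u_\varepsilon^k\|_{\L^2}^{3/2}\|\varphi\|_{\H^1}$; raising to the power $4/3$ and integrating gives $\int_0^T\|u_\varepsilon^k\|_{\L^2}^{2/3}\|\nabla u_\varepsilon^k\|_{\L^2}^{2}\,\dt\le\|u_\varepsilon^k\|_{L^\infty\L^2}^{2/3}\|u_\varepsilon^k\|_{L^2\H^1}^2$, which is finite and $k,\varepsilon$-uniform by the energy estimate already obtained.

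The main obstacle is the penalty contribution $k\int_{\mathcal{C}_{z,\varepsilon}}u_\varepsilon^k\cdot\varphi\,\dx$ to $\partial_t u_\varepsilon^k$. Here I would avoid extracting the full factor $k$ crudely, and instead split it as $\big(k^{1/2}\|u_\varepsilon^k\|_{\L^2(\mathcal{C}_{z,\varepsilon})}\big)\big(k^{1/2}\|\varphi\|_{\L^2(\mathcal{C}_{z,\varepsilon})}\big)$, pairing one power of $u_\varepsilon^k$ with the penalty energy controlled in \eqref{ennq2} and the other factor with the smallness of the inclusion through the Sobolev embedding $\H^1(\Omega)\hookrightarrow\L^6(\Omega)$ and $|\mathcal{C}_{z,\varepsilon}|=\varepsilon^3|\mathcal{C}|$, so that $k^{1/2}\|\varphi\|_{\L^2(\mathcal{C}_{z,\varepsilon})}\le C\,k^{1/2}\varepsilon\,|\mathcal{C}|^{1/3}\|\varphi\|_{\H^1}$. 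Keeping this bound \emph{uniform in $k$} is the delicate point: it is precisely where the $k^{-1/2}$ gain furnished by \eqref{ennq2} must be balanced against the penalty weight, and where assumption \eqref{condition-uvarepsilon} and the fixed geometry of $\Omega$ are used to avoid any dependence on $\varepsilon$; the $L^{4/3}(0,T)$ integrability is then recovered by H\"older in $t$ on the finite interval $(0,T)$, combining the $L^2(0,T)$ control of $k^{1/2}\|u_\varepsilon^k\|_{\L^2(\mathcal{C}_{z,\varepsilon})}$ with the embedding $L^2(0,T)\subset L^{4/3}(0,T)$. Assembling the four contributions then yields \eqref{ennq1}.
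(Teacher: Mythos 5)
Your energy argument for \eqref{ennq2} and for the $L^\infty(0,T;\L^2)$ and $L^2(0,T;\H^1)$ parts of \eqref{ennq1} is exactly the paper's proof: test with $u_\varepsilon^k$, kill the convective term by Lemma \ref{lem-NS-prelim2}, keep the nonnegative penalty term on the left, bound $\langle\mathcal{G},u_\varepsilon^k\rangle_\Omega$ by duality plus Poincar\'e and Young on the fixed domain $\Omega$, and read off the $k^{-1/2}$ decay from the penalty energy. Nothing to add there.

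The gap is in your treatment of the penalty contribution to $\partial_t u_\varepsilon^k$. Your splitting gives
\[
\Big|k\int_{\mathcal{C}_{z,\varepsilon}}u_\varepsilon^k\cdot\varphi\,\dx\Big|\le\big(k^{1/2}\|u_\varepsilon^k\|_{\L^2(\mathcal{C}_{z,\varepsilon})}\big)\big(k^{1/2}\|\varphi\|_{\L^2(\mathcal{C}_{z,\varepsilon})}\big)\le C\,k^{1/2}\varepsilon\,\big(k^{1/2}\|u_\varepsilon^k\|_{\L^2(\mathcal{C}_{z,\varepsilon})}\big)\|\varphi\|_{\H^1(\Omega)},
\]
and the first factor $k^{1/2}\|u_\varepsilon^k\|_{\L^2(\mathcal{C}_{z,\varepsilon})}$ is indeed controlled in $L^2(0,T)$ uniformly in $k$ by \eqref{ennq2} --- but that \emph{exhausts} the $k^{-1/2}$ gain. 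The remaining prefactor $k^{1/2}\varepsilon$ is unbounded as $k\to\infty$ for fixed $\varepsilon$, so the resulting bound on the dual norm of the penalty term is $O(k^{1/2})$, not $O(1)$. Smallness of $|\mathcal{C}_{z,\varepsilon}|$ buys powers of $\varepsilon$, never negative powers of $k$, and assumption \eqref{condition-uvarepsilon} (a smallness condition on $\nabla u_\varepsilon^k$ used for uniqueness and for the convergence argument in Theorem \ref{cv-penalized}) plays no role here. Your sentence about ``balancing the $k^{-1/2}$ gain against the penalty weight'' names the difficulty without resolving it; as written the step fails.

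For comparison, the paper does not confront this term at all: it disposes of the whole time-derivative estimate by asserting that it follows ``line-by-line'' from estimate (3.8) of \cite{pokorny2022navier}, which concerns the unpenalized Navier--Stokes system and therefore contains no analogue of $k\chi_{\mathcal{C}_{z,\varepsilon}}u_\varepsilon^k$. Your handling of the viscous, forcing, and convective contributions (the Ladyzhenskaya-based $L^{4/3}$ bound on the trilinear term) is correct and is presumably what that citation covers; the penalty term is precisely the part that the citation does not cover and that your argument does not close. A uniform-in-$k$ bound on $\partial_t u_\varepsilon^k$ in the dual of the full space $\H^1_{0,\mathrm{div}}(\Omega)$ cannot be extracted from the energy estimates alone --- one either has to measure $\partial_t u_\varepsilon^k$ in the dual of test functions vanishing on $\mathcal{C}_{z,\varepsilon}$, or obtain an independent uniform bound on $k\chi_{\mathcal{C}_{z,\varepsilon}}u_\varepsilon^k$ in a negative-order space, neither of which is supplied by your proposal (nor, explicitly, by the paper).
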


\begin{proof}
By applying the same analysis used in the proof of \eqref{estim-R3-1-new}, one can deduce that 
\begin{align*}
    \frac{1}{2}\int_\Omega \big|u_\varepsilon^k(\cdot,t_0)\big|^2\dx+\nu\int_0^{t_0}\int_\Omega\big|\nabla u_\varepsilon^k\big|^2\dx\dt+k\int_0^{t_0}\int_{\mathcal{C}_{z,\varepsilon}}\big|u_\varepsilon^k\big|^2\dx\dt=\int_0^{t_0}\big<\mathcal{G}, u_\varepsilon^k\big>_\Omega\dt,
\end{align*}  
for almost all $t_0\in[0,\,T].$ By applying the Poincaré inequality and taking the supremum over $t_0\in[0,\,T],$ we can conclude that there exists a constant $C>0$, which is independent of both $k$ and $\varepsilon$, such that
\begin{align}
\begin{split}\label{LK}
\big\|u_\varepsilon^k\big\|^2_{L^\infty(0,T;\L^2(\Omega))}+\big\|u_\varepsilon^k\big\|^2_{L^2(0,T;\H^1(\Omega))}&+k\big\|u_\varepsilon^k\big\|^2_{L^2(0,T;\L^2(\mathcal{C}_{z,\varepsilon}))}\\
&\leq C\big\|\mathcal{G}\big\|_{L^2(0,T;(\H^1_{0,\div}(\Omega))^\prime)}\big\|u_\varepsilon^k\big\|_{L^2(0,T;\H^1_{0,\div}(\Omega))}\\
&\leq C\big\|\mathcal{G}\big\|_{L^2(0,T;(\H^1_{0,\div}(\Omega))^\prime)}\big\|u_\varepsilon^k\big\|_{L^2(0,T;\H^1(\Omega))}.
\end{split}
\end{align} 
From the above inequality \eqref{LK}, we derive
\begin{align*}
\big\|u_\varepsilon^k\big\|^2_{L^\infty(0,T;\L^2(\Omega))}+\big\|u_\varepsilon^k\big\|^2_{L^2(0,T;\H^1(\Omega))}\leq C\big\|\mathcal{G}\big\|_{L^2(0,T;(\H^1_{0,\div}(\Omega))^\prime)}\big\|u_\varepsilon^k\big\|_{L^2(0,T;\H^1(\Omega))}
\end{align*}
which, combined with the Young inequality, provides
\begin{align}\label{LKK}
\big\|u_\varepsilon^k\big\|_{L^\infty(0,T;\L^2(\Omega))}+\big\|u_\varepsilon^k\big\|_{L^2(0,T;\H^1(\Omega))}\leq C\big\|\mathcal{G}\big\|_{L^2(0,T;(\H^1_{0,\div}(\Omega))^\prime)}.
\end{align}
From \eqref{LK}, we also obtain
\begin{equation}
k\big\|u_\varepsilon^k\big\|^2_{L^2(0,T;\L^2(\mathcal{C}_{z,\varepsilon}))}\leq C\big\|\mathcal{G}\big\|_{L^2(0,T;(\H^1_{0,\div}(\Omega))^\prime)}\big\|u_\varepsilon^k\big\|_{L^2(0,T;\H^1(\Omega))}.
\end{equation}
Substituting the bound for $\big\|u_\varepsilon^k\big\|_{L^2(0,T;H^1(\Omega))}$ from \eqref{LKK}, we find
\begin{equation*}
\big\|u_\varepsilon^k\big\|^2_{L^2(0,T;\L^2(\mathcal{C}_{z,\varepsilon}))}\leq \frac{C^2}{k}\big\|\mathcal{G}\big\|^2_{L^2(0,T;(\H^1_{0,\div}(\Omega))^\prime)}.
\end{equation*}
Finally, by following the same line-by-line analysis as in the proof of estimate (3.8) in \cite{pokorny2022navier}, we can show that $\big\|\frac{\partial u_\varepsilon^k}{\partial t}\big\|_{L^{4/3}(0,T;(\H^1_{0,\text{div}}(\Omega))^\prime)}\leq C  \big\| \mathcal{G} \big\|_{L^2(0,T;(\H^1_{0,\div}(\Omega))^\prime)}$. This completes the proof of the proposition.
\end{proof}

Next, we proceed to prove the weak convergence of the sequence $\{u_\varepsilon^k\}_{k>0}$.

\begin{proposition} \label{con-pro}The sequence $u_\varepsilon^k$ converges weakly to $u_\varepsilon$ in  $L^2(0,T;\H^1_{0,\div}(\Omega))$ as $k\longrightarrow\infty,$ where \( u_\varepsilon \) is the solution of problem \eqref{problem-perturbed}.
\end{proposition}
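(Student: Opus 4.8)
The plan is to obtain the limit by a standard compactness argument applied to the uniform bounds already established in Proposition~\ref{proposition-bounded}, and then to identify the limit as the unique solution of the perturbed problem~\eqref{problem-perturbed}. First I would invoke the $k$-independent bound \eqref{ennq1}. Since $L^2(0,T;\H^1_{0,\div}(\Omega))$ and $L^{4/3}(0,T;(\H^1_{0,\div}(\Omega))')$ are reflexive, Banach--Alaoglu yields, up to a subsequence (still denoted $u_\varepsilon^k$), an element $u_\varepsilon^*$ with
$$u_\varepsilon^k \rightharpoonup u_\varepsilon^* \text{ in } L^2(0,T;\H^1_{0,\div}(\Omega)), \qquad \frac{\partial u_\varepsilon^k}{\partial t} \rightharpoonup \frac{\partial u_\varepsilon^*}{\partial t} \text{ in } L^{4/3}(0,T;(\H^1_{0,\div}(\Omega))').$$
Because $\{u_\varepsilon^k\}$ is bounded in $L^2(0,T;\H^1(\Omega))$ while $\{\partial_t u_\varepsilon^k\}$ is bounded in $L^{4/3}(0,T;(\H^1_{0,\div}(\Omega))')$, the Aubin--Lions--Simon compactness lemma provides the strong convergence $u_\varepsilon^k \to u_\varepsilon^*$ in $L^2(0,T;\L^2(\Omega))$. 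This strong convergence is the crucial ingredient for treating the nonlinearity.

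Next I would locate the limit. The estimate \eqref{ennq2} gives $\|u_\varepsilon^k\|_{L^2(0,T;\L^2(\mathcal{C}_{z,\varepsilon}))} \leq C/\sqrt{k} \to 0$, so passing to the limit in the strong $\L^2$ topology forces $u_\varepsilon^* = 0$ a.e. in $\mathcal{C}_{z,\varepsilon} \times (0,T)$. Since $u_\varepsilon^*(\cdot,t) \in \H^1(\Omega)$ vanishes on $\mathcal{C}_{z,\varepsilon}$, its restriction to $\Omega \setminus \overline{\mathcal{C}_{z,\varepsilon}}$ has zero trace on $\partial\mathcal{C}_{z,\varepsilon}$, that is, $u_\varepsilon^*(\cdot,t) \in \H^1_{0,\div}(\Omega \setminus \overline{\mathcal{C}_{z,\varepsilon}})$ for a.e. $t$.

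I would then pass to the limit in the weak formulation of \eqref{problem-penalized} (analogous to \eqref{weak-f-penalized}). The key observation is that choosing test functions $\varphi \in \H^1_{0,\div}(\Omega \setminus \overline{\mathcal{C}_{z,\varepsilon}})$, extended by zero into $\mathcal{C}_{z,\varepsilon}$, annihilates the penalty term, since $\int_\Omega k\chi_{\mathcal{C}_{z,\varepsilon}} u_\varepsilon^k \cdot \varphi\,\dx = 0$. The linear terms pass to the limit by weak convergence. For the convective term I would use Lemma~\ref{lem-NS-prelim20} to rewrite $\int_\Omega (u_\varepsilon^k \cdot \nabla) u_\varepsilon^k \cdot \varphi\,\dx = -\int_\Omega (u_\varepsilon^k \cdot \nabla)\varphi \cdot u_\varepsilon^k\,\dx$, and then pass to the limit using the strong $L^2(0,T;\L^2(\Omega))$ convergence together with the uniform $L^2(0,T;\H^1(\Omega))$ bound and Lemma~\ref{estimat-L-4} to control integrability. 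The initial condition $u_\varepsilon^*(\cdot,0)=0$ is verified by the same integration-by-parts-in-time argument as in the proof of Theorem~\ref{existence-optimal-solution}.

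Consequently $u_\varepsilon^*$ satisfies the weak formulation of \eqref{problem-perturbed}; by uniqueness of its solution---guaranteed by the smallness condition \eqref{condition-uvarepsilon}---we conclude $u_\varepsilon^* = u_\varepsilon$. Since every subsequence admits a further subsequence converging to this same limit, a standard argument upgrades the convergence to the full sequence. The main obstacle I anticipate is the passage to the limit in the nonlinear convective term, which is precisely why the Aubin--Lions--Simon strong compactness step is indispensable; the second delicate point is the rigorous justification that the zero extension of the limit belongs to $\H^1_{0,\div}(\Omega \setminus \overline{\mathcal{C}_{z,\varepsilon}})$, which rests on the decay rate $C/\sqrt{k}$ in \eqref{ennq2}.
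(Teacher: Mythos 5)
Your proposal is correct and follows essentially the same route as the paper: the $k$-independent bounds of Proposition~\ref{proposition-bounded}, weak compactness together with strong $L^2(0,T;\L^2(\Omega))$ convergence, the vanishing of the limit on $\mathcal{C}_{z,\varepsilon}\times(0,T)$ via \eqref{ennq2}, passage to the limit in the weak formulation (treating the convective term through antisymmetry and the strong convergence), and identification of the limit with $u_\varepsilon$ by uniqueness of the perturbed problem. The only real difference is in the handling of the penalty term: you annihilate it from the outset by testing against functions vanishing on $\mathcal{C}_{z,\varepsilon}$, whereas the paper first passes to the limit with general test functions, identifies the weak limit $\varphi$ of $k\chi_{\mathcal{C}_{z,\varepsilon}}u_\varepsilon^k$ as a functional supported in $\overline{\mathcal{C}_{z,\varepsilon}}\times(0,T)$, and only then restricts the test functions to eliminate it --- a cosmetic rather than substantive distinction.
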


\begin{proof}According to Proposition \ref{proposition-bounded} (Eq. \eqref{ennq1}), there exists $\overline{u}\in L^2(0,T;\H^1(\Omega))$ and a subsequence, still denoted by $\{u_\varepsilon^k\}_{k}$, such that
\begin{align}
  \label{enq1}  &u_\varepsilon^k\rightharpoonup \overline{u} \quad\text{in}\quad L^2(0,T;\H^1(\Omega))\;\;\text{as}\;\; k\longrightarrow\infty,\\
 \label{enq2}    &u_\varepsilon^k\longrightarrow \overline{u} \quad\text{in}\quad L^2(0,T;\L^2(\Omega))\;\;\text{as}\;\; k\longrightarrow\infty,\\
\label{enq3}      &\nabla u_\varepsilon^k\rightharpoonup \nabla\overline{u} \quad\text{in}\quad L^2(0,T;\L^2(\Omega))\;\;\text{as}\;\; k\longrightarrow\infty,\\
   \label{enq4}    &\frac{\partial u_\varepsilon^k}{\partial t}\rightharpoonup \frac{\partial \overline{u}}{\partial t} \quad\text{in}\quad L^{4/3}(0,T;(\H^1_{0,\text{div}}(\Omega))^\prime)\;\;\text{as}\;\; k\longrightarrow\infty.
\end{align}
Using \eqref{enq2} and \eqref{ennq2}, one can deduce that $\overline{u}=0$ in $\mathcal{C}_{z,\varepsilon}\times(0,T).$ Moreover, by applying trace theorem, we can see that
\begin{equation}
\overline{u}=0\;\;\text{on}\;\;\partial\mathcal{C}_{z,\varepsilon}\times(0,T).
\end{equation}
On the other hand, from the weak formulation of the problem \eqref{problem-penalized}, we have that for all $w\in L^2(0,T;\H^1_{0,\div}(\Omega)),$
\begin{align}
\begin{split}\label{FVuk}
    \int_0^T\int_\Omega k\chi_{\mathcal{C}_{z,\varepsilon}}u_\varepsilon^k\cdot w \,\dx\dt&=-\int_0^T\big<\frac{\partial u_\varepsilon^k}{\partial t},\,w \big>_\Omega\dt-\nu\int_0^T\int_\Omega\nabla u_\varepsilon^k:\nabla w \,\dx\dt\\
    &\qquad\qquad-\int_0^T\int_\Omega\big(u_\varepsilon^k\cdot\nabla\big) u_\varepsilon^k\cdot w \,\dx\dt +\int_0^T\big<\mathcal{G},\,w \big>_\Omega\dt.
    \end{split}
\end{align}
Using the convergence results \eqref{enq1}, \eqref{enq3}, and \eqref{enq4}, we have
\begin{align}
\begin{split}
      \int_0^T\int_\Omega k\chi_{\mathcal{C}_{z,\varepsilon}}u_\varepsilon^k\cdot w \,\dx\dt&\longrightarrow-\int_0^T\big<\frac{\partial \overline{u}}{\partial t},\,w \big>_\Omega\dt-\nu\int_0^T\int_\Omega\nabla \overline{u}:\nabla w \,\dx\dt\\
    &\qquad\qquad-\int_0^T\int_\Omega\big(\overline{u}\cdot\nabla \big)\overline{u}\cdot w \,\dx\dt +\int_0^T\big<\mathcal{G},\,w \big>_\Omega\dt\end{split}
\end{align}
as $k\longrightarrow\infty.$ Consequently, the sequence $\{k\chi_{\mathcal{C}_{z,\varepsilon}}u_\varepsilon^k\}_k$ converges weakly to a function $\varphi\in L^2(0,T;(\H^1_{0,\div}(\Omega))^\prime)$ such that $\text{supp}(\varphi)\subset\mathcal{C}_{z,\varepsilon}\times(0,\,T).$ Then, taking the limit $k\longrightarrow\infty$ in \eqref{FVuk}, we have that for all $w\in L^2(0,T;\H^1_{0,\div}(\Omega))$
\begin{align}
\begin{split}\label{FVUbar}
    \int_0^T\big<\frac{\partial \overline{u}}{\partial t},\,w \big>_\Omega\dt+\nu\int_0^T\int_\Omega\nabla \overline{u}:\nabla w \,\dx\dt
    &+\int_0^T\int_{\Omega}\big(\overline{u}\cdot\nabla\big) \overline{u}\cdot w \,\dx\dt\\
    &\qquad+   \int_0^T\big<\varphi,\,w \big>_\Omega\dt=\int_0^T\big<\mathcal{G},\,w \big>_\Omega\dt.
    \end{split}
\end{align}
Since $u_\varepsilon^k=0$ on $\partial\Omega\times(0,T)$, we have $\overline{u}=0$ on $\partial\Omega\times(0,T)$ due the continuity of the trace operator. Thanks to \cite[Proposition I.1.3]{temam1973theory} (see also Theorem 3.2.1 in \cite{pokorny2022navier}), there exists $\overline{\pi}(t)\in L^2(\Omega)$ for all $t\in(0,T)$ such that

\begin{align}
\begin{split}\label{problemhh}
    \int_\Omega\overline{u}(\cdot,T)\cdot \zeta \,\dx- \int_\Omega\overline{u}(\cdot,0)\cdot \zeta\,\dx&+\nu\int_0^T\int_\Omega\nabla \overline{u}:\nabla \zeta \,\dx\dt
    +\int_0^T\int_{\Omega}\big(\overline{u}\cdot\nabla\big) \overline{u}\cdot \zeta \,\dx\dt\\
    &\;+\int_\Omega \overline{\pi}(t)\,\text{div}\, \zeta\,\dx+   \int_0^T\big<\varphi,\,\zeta \big>_\Omega\dt=\int_0^T\big<\mathcal{G},\,\zeta \big>_\Omega\dt,
    \end{split}
\end{align}
for all $\zeta\in \H^1_0(\Omega)$. In addition, by repeating the same argument as that in the proof of \eqref{initial-u0}, we can prove that $\overline{u}(\cdot,0)=0$ in $\Omega.$ From this and using that 
$$
\int_0^T\int_\Omega\frac{\partial \overline{u}}{\partial t}\cdot\zeta\dx\dt= \int_\Omega\overline{u}(\cdot,T)\cdot \zeta \,\dx- \int_\Omega\overline{u}(\cdot,0)\cdot \zeta\,\dx
$$
and using the fact that $\text{supp}( \varphi)\subset\mathcal{C}_{z,\varepsilon} \times(0,T)$, it follows that for all $\zeta\in \H^1_{0,\div}(\Omega)$, with $\zeta=0$ in $\mathcal{C}_{z,\varepsilon}\times(0,T),$ we have
\begin{align*}
\int_0^T\int_{\Omega\backslash\overline{\mathcal{C}_{z,\varepsilon}}}\frac{\partial \overline{u}}{\partial t}\cdot\zeta \,\dx\dt&+\nu\int_0^T\int_{\Omega\backslash\overline{\mathcal{C}_{z,\varepsilon}}}\nabla \overline{u}:\nabla \zeta \,\dx\dt\\
&\qquad+\int_0^T\int_{\Omega\backslash\overline{\mathcal{C}_{z,\varepsilon}}}\big(\overline{u}\cdot\nabla\big) \overline{u}\cdot \zeta \,\dx\dt =\int_0^T\int_{\Omega\backslash\overline{\mathcal{C}_{z,\varepsilon}}}\mathcal{G}\cdot\zeta \,\dx\dt,
\end{align*}
which implies that $(\overline{u}_{|_{\Omega\backslash\overline{\mathcal{C}_{z,\varepsilon}}}},\overline{\pi}_{|_{\Omega\backslash\overline{\mathcal{C}_{z,\varepsilon}}}})$  is a weak solution fo the perturbed problem \eqref{problem-perturbed}.  Since the problem \eqref{problem-perturbed} has a unique solution, this implies that $(\overline{u},\overline{\pi})=(u_\varepsilon,\,\pi_\varepsilon)$ in $(\Omega\backslash\overline{\mathcal{C}_{z,\varepsilon}})\times(0,\,T)$. Consequently, $(\overline{u},\overline{\pi})=(u_\varepsilon,\,\pi_\varepsilon)$ in $\Omega\times(0,\,T)$ and $u_\varepsilon^k$ converges weakly to $u_\varepsilon$ in  $L^2(0,T;\H^1_{0,\div}(\Omega))$. Recall that the pair \( (u_\varepsilon, \pi_\varepsilon) \) is the unique solution of \eqref{problem-perturbed}.
\end{proof}

Now we can establish the following strong convergence result.

\begin{theorem}\label{cv-penalized}
Let $k>0,$ $u_\varepsilon$ be the solution of the perturbed problem \eqref{problem-perturbed}, and $u_\varepsilon^k$ be the solution of the penalized problem \eqref{problem-penalized}. Then, we have:
 \begin{equation}\label{cvv-final}
        \big\| u_\varepsilon^k- u_\varepsilon \big\|_{L^2(0,T;\H^1(\Omega))}\longrightarrow0\quad\text{as}\quad k\longrightarrow\infty.
    \end{equation}
Moreover, there exists a constant $C>0$, independent of $k$, such that
\begin{equation}
    \big\| u_\varepsilon^k-u_\varepsilon \big\|_{L^2(0,T;\L^2(\mathcal{C}_{z,\varepsilon}))}\leq \frac{C}{\sqrt{k}}.
    \end{equation}
\end{theorem}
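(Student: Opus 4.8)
The plan is to upgrade the weak convergence $u_\varepsilon^k \rightharpoonup u_\varepsilon$ in $L^2(0,T;\H^1_{0,\div}(\Omega))$ already furnished by Proposition~\ref{con-pro} into strong convergence, by showing that the $\H^1$-seminorms converge; in a Hilbert space, weak convergence together with convergence of norms yields strong convergence. The second, quantitative estimate is essentially free: since the solution $u_\varepsilon$ of the perturbed problem \eqref{problem-perturbed} is understood to be extended by zero inside $\mathcal{C}_{z,\varepsilon}$, we have $u_\varepsilon^k - u_\varepsilon = u_\varepsilon^k$ on $\mathcal{C}_{z,\varepsilon}$, so that $\|u_\varepsilon^k-u_\varepsilon\|_{L^2(0,T;\L^2(\mathcal{C}_{z,\varepsilon}))} = \|u_\varepsilon^k\|_{L^2(0,T;\L^2(\mathcal{C}_{z,\varepsilon}))} \leq C/\sqrt{k}$ follows directly from estimate \eqref{ennq2} of Proposition~\ref{proposition-bounded}.

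For the strong $\H^1$-convergence I would first record the two energy identities obtained by testing \eqref{problem-penalized} with $u_\varepsilon^k$ and \eqref{problem-perturbed} with $u_\varepsilon$ (the $L^\infty(0,T;\H^2)$-regularity of both solutions justifies these as equalities rather than mere inequalities). In both cases the convective term vanishes by Lemma~\ref{lem-NS-prelim2} (the fields are divergence-free and vanish on the relevant boundaries), and the initial data are zero, so that
\begin{align*}
&\tfrac12\|u_\varepsilon^k(\cdot,T)\|_{\L^2(\Omega)}^2 + \nu\int_0^T\|\nabla u_\varepsilon^k\|_{\L^2(\Omega)}^2\,\dt + k\int_0^T\|u_\varepsilon^k\|_{\L^2(\mathcal{C}_{z,\varepsilon})}^2\,\dt = \int_0^T\big<\mathcal{G},u_\varepsilon^k\big>_\Omega\,\dt,\\
&\tfrac12\|u_\varepsilon(\cdot,T)\|_{\L^2(\Omega)}^2 + \nu\int_0^T\|\nabla u_\varepsilon\|_{\L^2(\Omega)}^2\,\dt = \int_0^T\big<\mathcal{G},u_\varepsilon\big>_\Omega\,\dt,
\end{align*}
where in the second line the integrals over $\Omega\setminus\overline{\mathcal{C}_{z,\varepsilon}}$ coincide with those over $\Omega$ since $u_\varepsilon\equiv0$ on $\mathcal{C}_{z,\varepsilon}$.

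Next I would pass to the limit $k\to\infty$. Because $\mathcal{G}$ is fixed and $u_\varepsilon^k\rightharpoonup u_\varepsilon$, the right-hand side of the first identity converges to that of the second. The penalty term is nonnegative, while weak lower semicontinuity of the $\L^2$-norms gives $\liminf_k \nu\int_0^T\|\nabla u_\varepsilon^k\|^2\,\dt \geq \nu\int_0^T\|\nabla u_\varepsilon\|^2\,\dt$ and $\liminf_k \|u_\varepsilon^k(\cdot,T)\|_{\L^2}^2 \geq \|u_\varepsilon(\cdot,T)\|_{\L^2}^2$, once one knows $u_\varepsilon^k(\cdot,T)\rightharpoonup u_\varepsilon(\cdot,T)$ in $\L^2(\Omega)$. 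Comparing the two identities and applying a $\liminf$/$\limsup$ sandwich to the sum $\tfrac12\|u_\varepsilon^k(\cdot,T)\|^2 + \nu\int_0^T\|\nabla u_\varepsilon^k\|^2\,\dt$ forces all these inequalities to be equalities, whence $\int_0^T\|\nabla u_\varepsilon^k\|_{\L^2}^2\,\dt \to \int_0^T\|\nabla u_\varepsilon\|_{\L^2}^2\,\dt$. Expanding $\|\nabla(u_\varepsilon^k-u_\varepsilon)\|_{L^2(0,T;\L^2)}^2 = \|\nabla u_\varepsilon^k\|^2 - 2\int_0^T\int_\Omega\nabla u_\varepsilon^k:\nabla u_\varepsilon\,\dx\dt + \|\nabla u_\varepsilon\|^2$ and using \eqref{enq3} then shows this tends to $0$, and the uniform Poincaré inequality on $\H^1_0(\Omega)$ promotes it to $\|u_\varepsilon^k-u_\varepsilon\|_{L^2(0,T;\H^1(\Omega))}\to0$, which is precisely \eqref{cvv-final}.

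The main obstacle is the terminal-value convergence $u_\varepsilon^k(\cdot,T)\rightharpoonup u_\varepsilon(\cdot,T)$ in $\L^2(\Omega)$: without the terminal term the energy comparison would shed exactly the nonnegative quantity $\tfrac12\|u_\varepsilon(\cdot,T)\|^2$ and the $\limsup$ bound would degrade, so this point is genuinely needed rather than cosmetic. I would establish it by the same device used earlier for the initial datum (cf. the derivation of \eqref{initial-u0}): for $\zeta\in\H^1_{0,\div}(\Omega)$ and $\Psi\in\mathcal{C}^1([0,T])$ with $\Psi(0)=0$ and $\Psi(T)=1$, integrate $\int_0^T\big<\partial_t u_\varepsilon^k,\zeta\big>_\Omega\Psi\,\dt$ by parts in time and let $k\to\infty$, invoking the weak convergence $\partial_t u_\varepsilon^k\rightharpoonup\partial_t u_\varepsilon$ in $L^{4/3}(0,T;(\H^1_{0,\div}(\Omega))')$ from \eqref{enq4} together with the strong $L^2(0,T;\L^2(\Omega))$ convergence supplied by the Aubin--Lions lemma (applicable thanks to the uniform bounds \eqref{ennq1}); the uniform $L^\infty(0,T;\L^2)$ bound then upgrades the resulting pairing convergence to full weak $\L^2$-convergence of the terminal values. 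The remaining steps are routine.
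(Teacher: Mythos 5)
Your proof is correct, but it follows a genuinely different route from the paper's. The paper works directly with the difference $w^k=u_\varepsilon^k-\overline{u}$: it subtracts the two variational formulations, tests with $w^k$, bounds the surviving trilinear term $\int_0^T\int_\Omega(w^k\cdot\nabla)u_\varepsilon^k\cdot w^k\,\dx\dt$ by $\beta\,\rho(\Omega)\|\nabla w^k\|^2_{L^2(0,T;\L^2(\Omega))}$ using the smallness hypothesis \eqref{condition-uvarepsilon}, and concludes both assertions at once from $(\nu-\beta\rho(\Omega))\|\nabla w^k\|^2+k\|w^k\|^2_{L^2(0,T;\L^2(\mathcal{C}_{z,\varepsilon}))}\leq\int_0^T\big<\varphi,w^k\big>_\Omega\dt\to0$, where $\varphi$ is the weak limit of the penalty term from Proposition~\ref{con-pro}; note this actually yields $o(k^{-1/2})$ for the second estimate and, because $\tfrac12\|w^k(\cdot,T)\|^2_{\L^2(\Omega)}$ sits with a favorable sign on the left, it never needs to discuss terminal values. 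Your route instead compares the two individual energy equalities and upgrades weak to strong convergence via convergence of norms (Radon--Riesz); this buys you independence from the smallness condition \eqref{condition-uvarepsilon} at this stage (you only need Proposition~\ref{con-pro}, whose proof does not absorb a trilinear term), and your derivation of the $C/\sqrt{k}$ bound directly from \eqref{ennq2} together with $u_\varepsilon\equiv0$ on $\mathcal{C}_{z,\varepsilon}$ is shorter than the paper's. The price is the terminal-value weak convergence $u_\varepsilon^k(\cdot,T)\rightharpoonup u_\varepsilon(\cdot,T)$ in $\L^2(\Omega)$, which you correctly identify as the genuine technical point rather than a cosmetic one, and your sketch (uniform $L^\infty(0,T;\L^2)$ bound plus the time-integration-by-parts identification in the spirit of \eqref{initial-u0}) is a valid way to close it; the reliance on the energy \emph{equality} rather than inequality is legitimate here since, as you note and as the paper itself does in \eqref{FV-diffnew1} and in the proof of Proposition~\ref{proposition-bounded}, Assumption (A1) provides strong solutions.
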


\begin{proof}
Let \( w^k = u^k_\varepsilon - \overline{u} \). From the variational formulations \eqref{FVuk} and \eqref{FVUbar}, it follows that for all \( w \in \H^1_{0,\mathrm{div}}(\Omega) \), we have
 \begin{align}
\begin{split}\label{FV-diff}
    \int_0^T\int_\Omega\frac{\partial w^k}{\partial t}\cdot w \,\dx\dt&+\nu\int_0^T\int_\Omega\nabla w^k:\nabla w \,\dx\dt+\int_0^T\int_\Omega\big[(u^k_\varepsilon\cdot\nabla) u^k_\varepsilon-(\overline{u}\cdot\nabla) \overline{u}\big]\cdot w \,\dx\dt\\
    &\;\;-   \int_0^T\big<\varphi,\,w \big>_\Omega\dt+k\int_0^T\int_\Omega\chi_{\mathcal{C}_{z,\varepsilon}}\,u_\varepsilon^k\cdot w \,\dx\dt=0.
    \end{split}
\end{align}
By choosing \( w = w^k \) as a test function in \eqref{FV-diff}, noting that \( w^k(\cdot,0) = 0 \) in \( \Omega \) and \( \overline{u} = 0 \) in \( \mathcal{C}_{z,\varepsilon} \times (0,T) \), and applying Lemma \ref{lem-NS-prelim2}, we deduce that
 \begin{align}
\begin{split}\label{FV-diffnew1}
    \frac{1}{2}\int_\Omega\big|w^k(\cdot,T)\big|^2\dx&+\nu\int_0^T\int_\Omega\big|\nabla w^k\big|^2\dx\dt+k\int_0^T\int_\Omega\chi_{\mathcal{C}_{z,\varepsilon}}|w^k|^2 \,\dx\dt\\
    &\qquad=  \int_0^T\big<\varphi,\,w^k \big>_\Omega\dt-\int_0^T\int_\Omega (w^k\cdot\nabla) u^k_\varepsilon\cdot w^k \,\dx\dt.
    \end{split}
\end{align}
By an adaptation of the same technique used in the proof of estimate \eqref{estim-R3-3} and using assumption \eqref{condition-uvarepsilon}, one can deduce that there exists a constant \( \beta > 0 \) such that
\begin{align}\label{estim-R3-3-new}
        \big| \int_0^T \int_\Omega (w^k \cdot \nabla) u_\varepsilon^k \cdot w^k \, \dx\dt \big|  
        \leq \beta\, \rho(\Omega)  \left\|\nabla w^k\right\|^2_{L^2(0,T;\L^2(\Omega))}
    \end{align}
By inserting \eqref{estim-R3-3-new} into \eqref{FV-diffnew1} and invoking the weak convergence result of Proposition \ref{con-pro}, we obtain
\begin{align*}
   (\nu-\beta \, \rho(\Omega))\big\|\nabla w^k\big\|^2_{L^2(0,T;\L^2(\Omega))}+k\big\|w^k\big\|^2_{L^2(0,T;\L^2(\mathcal{C}_{z,\varepsilon}))}\leq  \int_0^T\big<\varphi,\,w^k \big>_\Omega\dt\longrightarrow0\text{ as } k\to\infty
\end{align*}
so we have proved that $\big\|\nabla w^k\big\|_{L^2(0,T;\L^2(\Omega))}\longrightarrow0$ and $\big\|w^k\big\|_{L^2(0,T;\L^2(\mathcal{C}_{z,\varepsilon}))}=O({k}^{\frac{-1}{2}}).$ Finally, the proof of the convergence result in \eqref{cvv-final} is completed by invoking the Poincar\'e inequality.    
\end{proof}

\paragraph{\bf{Data availability}} No data have been used for this article.\\

\paragraph{\bf{ Declarations}}\, \\

\paragraph{\bf{Conflict of interest}} The author certifies that he has no affiliations with or involvement in any organization or
entity with any financial interest or non-financial interest in the subject matter or materials discussed in this
manuscript.\\

\paragraph{\bf{Acknowledgements}} The authors thank Professor Ali Harzallah (National Institute of Marine Sciences and Technologies) for his valuable discussions on the numerical implementation of the POM model used in this work.

\bibliographystyle{plain}
\bibliography{sample}

\end{document}